\newcommand{\R}{\mathbb{R}}
\newcommand{\N}{\mathbb{N}}
\newcommand{\pr}{\mathbb{P}}
\newcommand{\ex}{\mathbb{E}}
\newcommand{\h}{\mathcal{H}}
\newcommand{\blangle}{\big\langle}
\newcommand{\brangle}{\big\rangle}
\newtheorem{lem}{Lemma}
\newtheorem{cor}{Corollary}
\newtheorem{prop}{Proposition}
\newtheorem{thm}{Theorem}
\newtheorem{dfn}{Definition}
\newenvironment{customthm}[1]
{\innercustomthm}
{\endinnercustomthm}
\DeclareMathOperator*{\esssup}{ess\,sup}
\theoremstyle{definition}
\newtheorem{rem}{Remark}
\newcommand{\leqnomode}{\tagsleft@true\let\veqno\@@leqno}
\newcommand{\reqnomode}{\tagsleft@false\let\veqno\@@eqno}
\numberwithin{lem}{section}
\numberwithin{cor}{section}
\numberwithin{prop}{section}
\numberwithin{thm}{section}
\numberwithin{dfn}{section}
\begin{document}

\section{Introduction}\label{intro}

In this paper we study the asymptotic tail behavior of the following multiscale system of Stochastic Differential Equations (SDEs) 
\begin{equation}\label{model}
\left \{\begin{aligned}
& dX^{\epsilon,\eta}_t= \bigg[\tfrac{\sqrt{\epsilon}}{\sqrt{\eta}}b(Y^{\epsilon,\eta}_t)+ c(X_t^{\epsilon,\eta}, Y^{\epsilon,\eta}_t)\bigg]dt+\sqrt{\epsilon}\bigg[\sigma_1(X^{\epsilon,\eta}_t,Y^{\epsilon,\eta}_t)dB^H_t+\sigma_2(X^{\epsilon,\eta}_t,Y^{\epsilon,\eta}_t)dW_t\bigg]\\&
dY^{\epsilon,\eta}_t= \tfrac{1}{\eta}\bigg[f( Y^{\epsilon,\eta}_t)+\tfrac{\sqrt\eta}{\sqrt\epsilon}g(X^{\epsilon,\eta}_t, Y^{\epsilon,\eta}_t)\bigg]dt+\tfrac{1}{\sqrt{\eta}}\tau(Y^{\epsilon,\eta}_t)dW_t\\&
X^{\epsilon,\eta}_0=x_0\in\R^m\;,\;\; Y^{\epsilon,\eta}_0=y_0\in\R^{d-m},
\end{aligned} \right.
\end{equation}
where $t>0,$  $\epsilon>0$ is a small parameter and $\eta=\eta(\epsilon)$ is a time-scale separation parameter that vanishes as $\epsilon$ goes to zero. Here, $B^H$ is a $k$-dimensional fractional Brownian motion (fBm) with Hurst index $H\in (1/2, 1),$ $W$ is a standard $\ell$-dimensional Brownian motion independent of $B^H$ and the pair $(B^H, W)$ is defined on a complete filtered probability space $(\Omega,\mathscr{F},\{\mathscr{F}_t\}_{t\geq 0},\pr)$. The process $X^{\epsilon,\eta}$ is driven by small noise of intensity $\sqrt\epsilon$ and shall be called the slow motion, while the process $Y^{\epsilon,\eta}$ evolves on a time-scale of order $1/\eta$ and can be thought of as the fast motion. Throughout this work we assume that $\sqrt{\eta}/\sqrt{\epsilon}$ vanishes as $\epsilon$ goes to zero. Thus, the coefficients $b, g$ account for the effect of fast intermediate scales on the evolution of the slow and fast components respectively.

The typical (or effective) dynamics of the slow motion are described by the homogenized limit of $X^{\epsilon,\eta}$ as $\epsilon$ (and hence $\eta$) are taken to zero. In the case $H=1/2,$ $B^{1/2}$ is a standard Brownian motion and the corresponding homogenization theory of SDEs has been extensively studied; see e.g. \cite{freidlin1978averaging,khas1968averaging,pardoux2003poisson,veretennikov1991averaging} as well as \cite{bensoussan2011asymptotic} (Chapter 3), \cite{freidlin2012random} (Chapter 7), \cite{pavliotis2008multiscale} (Chapters 11,18). The evolution of the limiting process depends upon the asymptotic behavior of the singular term $\tfrac{\sqrt{\epsilon}}{\sqrt{\eta}}b,$ as well as the long-time behavior of the fast motion. Assuming that the latter is uniquely ergodic, we expect that the contribution of the fast variable to the dynamics of $X^{\epsilon,\eta}$ will be averaged with respect to the invariant measure of $Y^{\epsilon,\eta}$. Moreover, it turns out that the limiting contribution of the singular perturbation can be captured in terms of the solution of an associated Poisson equation (see e.g. Theorem 3 in \cite{veretennikov1991averaging}).

The development of averaging and homogenization theory in the case $H>1/2$ has only recently attracted attention in the literature. From a modeling perspective, such systems provide a more accurate description of random phenomena that feature memory and long-range dependence. From a mathematical perspective, the study of these SDEs is challenging due to the fact that $B^H$ is neither Markovian nor a martingale. Thus, the tools of It\^o calculus are no longer available and the analysis, i.e. estimates, well-posedness and properties of solutions, depends on the interpretation of the stochastic integral $\sigma_1dB^H.$ The reader is referred to  \cite{bourguin2020typical,hairer2020averaging,hairer2021generating,pei2020averaging} for recent results on averaging, homogenization and analysis of the fluctuations around the typical dynamics, under different sets of assumptions on the coefficients. In particular, if $b=\sigma_2=g=0,\sigma_1(x,y)=\sigma_1(y)$ and $\sigma_1dB^H$ is interpreted as a divergence integral (see e.g. \cite{nualart2006malliavin}, Section 5.2),  Theorem 1 of \cite{bourguin2020typical} asserts that for each $T>0,$ there exists a small enough $p>1$ such that $X^{\epsilon,\eta}$ converges in $L^p(\Omega; C([0,T];\R^m))$ to the unique solution of the deterministic differential equation
\begin{equation*}\label{ave1}
\left \{\begin{aligned}
& d\bar{X}_t=  \bar{c}(\bar{X}_t)dt\\&
\bar{X}_0=x_0\in\R^m,
\end{aligned} \right.
\end{equation*}
where $\bar{c}(x)=\int_{\R^{d-m}}c(x,y)d\mu(y)$ and $\mu$ is the unique invariant measure of the fast motion. The same conclusion can be recovered from Theorem 1.1 of \cite{pei2020averaging} in the case where $b=\sigma_2=g=0,\sigma_1(x,y)=\sigma_1(x)$ and $\sigma_1dB^H$ is interpreted as a pathwise Young integral in the sense of Z\"ahle (see e.g. \cite{zahle1999link} and \cite{nualart2006malliavin} Section 5.3.1).

The case where the coefficient $\sigma_1$ depends on both the slow and the fast variables presents additional challenges. These are related to the proof of estimates for the integral $\sigma_1dB^H$ that are uniform over small values of $\epsilon$ and $\eta$. If the latter is a divergence integral,  then standard estimates require bounds on the Malliavin derivative of $X^{\epsilon,\eta},$ which is then expressed in terms of its second Malliavin derivative, and lead to a closure problem (see e.g. Section 1 of \cite{bourguin2020typical} for a discussion of this issue). Turning to the setting of \cite{pei2020averaging} (where $\epsilon=1$ and $b=0$),  pathwise estimates for the Young integral require control over the H\"older seminorm of $Y^{\epsilon,\eta}$ which is unbounded with respect to $\epsilon.$ Finally, the authors of \cite{hairer2020averaging} were able to overcome this issue by constructing an extension of the pathwise Young integral that is well-defined for "rougher" integrands (we refer the reader to  \cite{hairer2020averaging}, Section 3.3 for more details).

As mentioned above, the homogenized limit describes the typical behavior of the slow motion and thus can be viewed as a Law of Large Numbers (LLN) for $X^{\epsilon,\eta}$. A subsequent step in the asymptotic analysis of such models lies in the study of large deviations from the LLN limit.  In particular, the goal is to characterize the exponential decay rate of rare-event probabilities, as $\epsilon\to 0,$ via an appropriate rate function. In the case $H=1/2,$ the Large Deviation theory of multiscale SDEs has been well studied under different assumptions on the coefficients and interaction regimes between $\epsilon$ and $\eta;$  see e.g. \cite{baldi1991large,dupuis2012large,freidlin1999comparison,spiliopoulos2013large} as well as \cite{freidlin2012random}, Chapter 7.4. 

In the present work we aim to prove a Large Deviation Principle (LDP) for the slow motion \eqref{model}, in the case where $\sigma_1dB^H$ is a (pathwise) Young integral (see Definition \ref{Young} below) and $\sigma_2dW$ is a standard It\^o integral. The latter amounts to finding a rate function $S^H_{x_0}: C([0,T];\R^m)\rightarrow[0,\infty]$ with compact sublevel sets, such that for all bounded, continuous $h:C([0,T];\R^m)\rightarrow\R $ 
\begin{equation}\label{ldp}
\lim_{\epsilon\to 0}\epsilon\log\ex_{x_0}\big[ e^{-h(X^{\epsilon,\eta})/\epsilon}      \big]= -\inf_{\phi\in C([0,T];\R^m) }\big[ S^{H}_{x_0}(\phi) +h(\phi) \big],
\end{equation}
where, for each $T>0$ and $x_0\in\R^m,$ $\{X_t^{\epsilon,\eta}\}_{t\in[0,T]}$ is the (unique) strong solution of \eqref{model} and $\ex_{x_0}$ indicates that the initial condition is given by $x_0.$ 

Our approach is based on the weak convergence method, which connects the proof of Laplace asymptotics for $X^{\epsilon,\eta}$ to the limit of a stochastic control problem for an associated controlled slow-fast system. Due to its effectiveness in streamlining the proofs of LDPs, this method has been widely used in several different settings and its development can be traced to the monograph \cite{dupuis2011weak} (see also \cite{boue1998variational,zhang2009variational} for the cases of Brownian motion and abstract Wiener spaces respectively). The weak convergence method relies on a variational formula for exponential functionals of the noise. To be more precise 
we have, for all bounded, continuous $h:C([0,T];\R^m)\rightarrow\R ,$
\begin{equation}\label{varrepcon}
\begin{aligned}-\epsilon\log\ex_{x_0}\big[ e^{-h(X^{\epsilon,\eta})/\epsilon}\big]=  \inf_{u\in\mathcal{A}_b}\ex_{x_0}\bigg[ \frac{1}{2}\|u\|^2_{\h_H\oplus\h_{1/2}}+h\big(X^{\epsilon,\eta,u}\big) \bigg],   \end{aligned}\end{equation}
where, for any $H\in(1/2, 1),$ $\h_H\oplus\h_{1/2}$ is the Cameron-Martin space of the noise $( B^H, W)$ and $\mathcal{A}_b$ denotes the family of stochastic controls $u=(u_1,u_2)$ that are adapted to the common filtration $\{\mathscr{F}_t\}_{t\in[0,T]}$ and take values in $\h_H\oplus\h_{1/2}$ with probability $1.$ Here, $X^{\epsilon,\eta,u}$ corresponds to a controlled slow-fast pair $(X^{\epsilon,\eta,u}, Y^{\epsilon,\eta,u})$ (see \eqref{consys} below) which results from \eqref{model} by perturbing the paths of the noise by an appropriately rescaled control.

In view of \eqref{ldp} and \eqref{varrepcon} , it becomes clear that the LDP follows after taking  the limit, as $\epsilon\to 0,$ of the right-hand side of \eqref{varrepcon}. In particular, one needs to understand the limiting behavior of $X^{\epsilon,\eta,u}$ and, before doing so, prove  tightness estimates for the family $\{X^{\epsilon,\eta,u}; \epsilon\ll 1, u\in\mathcal{A}_b  \}$ (in fact, we only need to obtain a tightness result with respect to a smaller class $\mathcal{A}_N\subset \mathcal{A}_b$ of uniformly bounded stochastic controls; for more details we refer the reader to \eqref{AN} and Section \ref{lapluppersec} below). The latter is the first technical part of the current work (Section \ref{tightsec}). The main difficulties lie in the proof of uniform estimates for the pathwise integral $\sigma_1dB^H,$ as well as the presence of the stochastic controls.  Taking advantage of the small-noise regime, we work with the standard Young integral and show tightness in the topology of an appropriate fractional Sobolev space by means of a fractional integration-by-parts formula (see Proposition \ref{tightnessprop}).

Turning to the limiting behavior of $X^{\epsilon,\eta,u},$ note that the presence of stochastic controls $u$ implies that the invariant measure of $Y^{\epsilon,\eta,u}$ depends a priori on $u$. In order to characterize the weak limit points, we introduce a family of random occupation measures $P^{\epsilon}$ \eqref{occupation} that keep track of the stochastic controls and controlled process $Y^{\epsilon,\eta,u},$ as well as a notion of viable pairs (Definition \ref{viable}). The latter are pairs of a trajectory and measure $(\psi,P)$ that capture both the limit averaging dynamics of $X^{\epsilon,\eta,u}$ and the long-time behavior of the controlled fast process $Y^{\epsilon,\eta, u}$. Using these tools, we show in Theorem \ref{main1}  that any limit point of the family $\{ (X^{\epsilon,\eta,u}, P^\epsilon); \epsilon\ll1, u\in\mathcal{A}_N     \},$ in the sense of convergence in distribution, is almost surely a viable pair. Moreover, our assumptions on $\epsilon$ and $\eta$ lead to a decoupling of the controls and the limiting invariant measure of $Y^{\epsilon,\eta,u}.$ In particular, the measure $P$ satisfies $P(dudydt)=\Theta(du|y,t)\mu(dy)dt$, where $ \Theta$ is a stochastic kernel characterizing the control and $\mu$ is the invariant measure of the uncontrolled fast motion.

The viable pair construction has been successfully applied to prove LDPs in the case $H=1/2$ (see e.g. Theorems 2.8 and 3.2 in \cite{dupuis2012large,spiliopoulos2013large} respectively). What is different in our setting is the fact that the controls $u$ have two components $u_1,u_2$ that take values in different Cameron-Martin spaces. In particular, $u_1$ corresponds to the fBm $B^H$ and introduces a non-local term to the limiting dynamics of $X^{\epsilon,\eta,u}.$ The limit of this term as $\epsilon\to 0$ can be expressed as an average with respect to the product measure $P\otimes P,$ per Proposition \ref{sigmaave}. With this characterization of limit points at hand, the Laplace Principle upper bound follows by an application of the Portmanteau lemma. 

As in the case $H=1/2,$ the proof of the lower bound is more complicated because it relies on the explicit construction of approximate minimizing controls that asymptotically achieve the bound. An additional issue, that is completely absent when $H=1/2,$ is that the diffusion coefficient of the limiting dynamics is expressed in terms of a fractional integral operator acting on the time variable. Consequently, our proof makes use of a non-local "effective diffusivity" operator $\mathcal{Q}_H$ \eqref{QH1}, defined on a space of square integrable functions on $[0,T],$ which allows us to construct nearly-optimal controls by inverting the dynamics. The Laplace Principle lower bound, along with the compactness of the sublevel sets of $S^H_{x_0},$ complete the proof of our LDP, Theorem \ref{main2}.

To the best of our knowledge, an LDP for slow-fast systems driven by fBm is established for the first time in this paper. Our contribution is twofold: first, we extend the classical theory to the setting $H>1/2$ by appropriately modifying the weak convergence method. Moreover, we identify a sufficient condition on $H$ and $\sqrt{\eta}/\sqrt{\epsilon}$ (Condition \ref{C6}$(i)$ below) that allows us to consider a fully dependent coefficient $\sigma_1.$ Second, we show that in certain cases the rate function $S^H$ has a non-variational form which is reminiscent of the classical Freidlin-Wentzell rate function $S^{1/2}$. Finally, we identify a functional $\tilde{S}^{1/2}$ \eqref{Sbar} with $\tilde{S}^{1/2}\neq S^{1/2} $ and show that $S^H(\phi)\rightarrow\tilde{S}^{1/2}(\phi),$ as $H\rightarrow{1/2}^+$ for all functions $\phi$ that satisfy certain regularity properties.  Thus, we rigorously prove that $S^H$ is discontinuous at $H=1/2$ (see Proposition \ref{Hlimprop} and Remark \ref{Hlimrem} below).

We believe that the aforementioned discontinuity of the LDP rate function is attributed to the vanishing of long-memory properties of fBm, as $H$ tends to $1/2$ from above, rather than path regularity reasons. To be more precise, the fractional Brownian scale for continuous-time models confounds two very different effects: memory and path regularity. While perturbations of $H$ at any point in $(0, 1)\setminus\{1/2\}$ lead to continuity in $H$ for many statistics of interest, this is not the case for perturbations around $H=1/2.$ However, the theory of Gaussian processes indicates that such processes' moduli of continuity behave well around $H=1/2.$ While a theory for non-linear functionals of Gaussian processes (such as the ones considered in this paper) is not as well-developed, there is no fundamental reason why the continuity of regularity scales should fail when the nonlinearities are sufficiently well behaved. On the contrary, the memory length and Markovian or non-Markovian character of fBm goes through a severe discontinuity as $H$ passes from below $1/2,$ through and at the critical point, to above $1/2.$ For this reason, our aforementioned discontinuity results (Proposition \ref{Hlimprop} and Remark \ref{Hlimrem}) seem to be a consequence of memory effects while many of the technical details of this work are geared at dealing with regularity properties.

The rest of this paper is organized as follows: In Section \ref{notationsec}, we introduce the necessary notation and a few preliminary facts on Young integrals, fractional Brownian motion and Cameron-Martin spaces. Then we state our assumptions for system \eqref{model}. In Section \ref{resultssec}, we give an outline of the weak convergence method and state our main results. Section \ref{tightsec} is devoted to the proof of tightness estimates for the family $\{X^{\epsilon,\eta,u}; \epsilon, u\}$ of controlled slow processes. In Section \ref{limsec} we study the limiting behavior of the family $\{(X^{\epsilon,\eta,u}, P^\epsilon); \epsilon, u\}$ where $P^\epsilon$ are the occupation measures introduced in Section \ref{resultssec}. The proof of the LDP for $X^\epsilon$ is given in Section \ref{LDPsec}. In Section \ref{comparisonsec}, we provide an explicit form of the rate function $S^H$, study its limit as $H\rightarrow{1/2}^+,$ and discuss its differences to the case $H=1/2$. Section \ref{concsec} is devoted to conclusions, directions for future work and also includes a brief discussion on the extendability of our LDP (Theorem \ref{main2}) to slow processes driven by different Gaussian noises. Finally,  Appendix \ref{AppA} collects the proofs of auxiliary lemmas some of which might be of independent interest.

\section{Notation assumptions and preliminaries}\label{notationsec}
\noindent Throughout this work and unless otherwise stated we set $\mathcal{X}:=\R^m, \mathcal{Y}:=\R^{d-m}, \mathcal{U}_1:=\R^k, \mathcal{U}_2:=\R^\ell $. We write $\lesssim$ to denote inequality up to a multiplicative constant that is independent of asymptotic parameters. The lattice notation $\vee,\wedge$ is used to denote maximum and minimum respectively. For any set $A,$ $\mathds{1}_A$ denotes the indicator function of $A.$ Finally, $\oplus$ denotes the Hilbert space direct sum.
\subsection{Function and measure spaces}  For a non-empty open subset $O$ of a Euclidean space, a Banach space $V$ and $k\in\N,$ we denote the  vector space of $k$-times continuously differentiable functions $f:O\rightarrow V$ by $C^k(O; V)$ and the subspace of functions with bounded derivatives up to the $k$-th order by $C^k_b(O; V)$. For a compact set $K,$ $C(K;V)$ denotes the Banach space of continuous functions $f:K\rightarrow V,$ endowed with the topology of uniform convergence. 

Let $T>0$.  The "time"-derivative of a function $\psi$ defined on $[0,T]\times V$ will be frequently denoted by $\dot{\psi}$. For any $\alpha\in(0,1),$ $C^\alpha([0,T];V)$ denotes the Banach space of $\alpha$-H\"older continuous paths, endowed with the norm 
\begin{equation}\label{+norm}
\|X\|_{C^\alpha}=\|X\|_{\infty}+[X]_{C^\alpha}:=\sup_{t\in[0,T]}|X_t|+\sup_{s,t\in[0,T],s\neq t}\frac{|X_t-X_s|}{|t-s|^\alpha}.
\end{equation}
For any $V-$valued path $\{X_t\}_{t\in[0,T]}$, $\alpha\in(0,1)$ and $0\leq s<t\leq T$ we define 
\begin{equation}\label{Deltapm}
\Delta_{\alpha}X_{s,t}:=\int_{s}^{t}\frac{X_t-X_r}{(t-r)^{\alpha+1}}dr\;,\quad	\Delta^{-}_{\alpha}X_{s,t}:=\int_{s}^{t}\frac{X_r-X_s}{(r-s)^{\alpha+1}}dr
\end{equation}
and
\begin{equation}\label{Deltapmabs}
|\Delta_{\alpha}|X_{s,t}:=\int_{s}^{t}\frac{|X_t-X_r|}{(t-r)^{\alpha+1}}dr\;,\quad	|\Delta^{-}_{\alpha}|X_{s,t}:=\int_{s}^{t}\frac{|X_r-X_s|}{(r-s)^{\alpha+1}}dr.
\end{equation}

We denote by $W_{0}^{\alpha,\infty}([0,T];V)$ and $W_{T}^{\alpha,\infty}([0,T];V)$ the vector spaces of measurable paths, up to equality almost everywhere, such that  
\begin{equation}\label{W0norm}
\|X\|_{0,\alpha,\infty}:=\sup_{t\in[0,T]}\big(|X_t|+|\Delta_\alpha|X_{0,t}\big)<\infty
\end{equation}
and 
\begin{equation}\label{-norm}
\|X\|_{T,\alpha,\infty}:=\sup_{s,t\in[0,T],t\neq s}\bigg(\frac{|X_t-X_s|}{|t-s|^{\alpha}}+|\Delta^{-}_{\alpha}|X_{s,t}\bigg)<\infty
\end{equation}
respectively. The spaces  $(W_{0}^{\alpha,\infty}([0,T];V), \|\cdot\|_{0,\alpha,\infty}  ),$ $(W_{T}^{\alpha,\infty}([0,T];V), \|\cdot\|_{T,\alpha,\infty}  )$ are Banach spaces that interpolate between H\"older spaces. In particular, for all $\alpha<1/2$, $\gamma_1\in(0,\alpha), \gamma_2>0$ we have the continuous inclusions
\begin{equation}\label{pmembeds}
\begin{aligned}
&C^{\alpha+\gamma_1}([0,T];V)\subset W_{0}^{\alpha,\infty}([0,T];V)\subset C^{\alpha-\gamma_1}([0,T];V),\\ \\&
C^{1-\alpha+\gamma_2}([0,T];V)\subset W_{T}^{1-\alpha,\infty}([0,T];V)\subset C^{1-\alpha}([0,T];V),
\end{aligned}	      
\end{equation}
see e.g. \cite{nualart2006malliavin}, Section 5.3.1.

For two Banach spaces $V_1,V_2,$ $\mathscr{L}(V_1;V_2)$ is the Banach space of bounded linear maps from $L:V_1\rightarrow V_2$ endowed with the norm
$$\|L\|_{V_1\to V_2}:=\sup_{|v|_1\leq 1}|Lv|_2.$$
We shall also use the simpler notation $\mathscr{L}(V_1):=\mathscr{L}(V_1;V_1)$ when the domain and codomain coincide.

The product measure space of two measure spaces $\{(E_i, \mathcal{M}_i, \mu_i)\}_{i=1,2}$ is denoted by $(E_1\times E_2, \mathcal{M}_1\otimes \mathcal{M}_2,\mu_1\otimes\mu_2).$ If $\mathcal{M}_1=\mathcal{M}_2=\mathcal{M}$ and $\mu_1=\mu_2=\mu$ we shall write  $\mu^{\otimes 2}$ for the product measure. For $p\in[1,\infty),$ we denote the Lebesgue spaces of $p$-integrable and essentially bounded classes of measurable functions $f:E_1\rightarrow V$ by $L^p(E_1;V)$ and $L^{\infty}(E_1;V)$ respectively. The latter are Banach spaces when endowed with the norms $\|f\|^p_{L^p}:=\int_{E_1}|f|^pd\mu_1$ and $\|f\|_{L^\infty}:=\esssup|f|$. The family of $\mu_1$-measurable functions will be denoted by $L^0(E_1,V).$

For the purposes of this paper a Polish space is defined to be a separable, completely metrizable topological space. The Borel $\sigma$-algebra on a Polish space $\mathcal{E}$ is denoted by $\mathscr{B}(\mathcal{E}).$ The space of finite Borel measures on $\mathcal{E},$  endowed with the topology of weak convergence of measures, is denoted by $\mathscr{P}(\mathcal{E})$ (the latter is itself a Polish space, see e.g. \cite{ethier1986markov}, Theorem 1.7, pp. 101 and Theorem 3.8, pp. 108 for a proof).
\subsection{Fractional calculus and generalized Stieltjes integration.} In this section we introduce a few necessary notions from fractional calculus. For any $f\in L^1(a,b)$ and $\alpha>0$,  the left-sided and right-sided fractional \textit{Riemann-Liouville integrals} of $f$ of order $\alpha$ are defined for almost all $t\in(a,b)$ by
\begin{equation}\label{Idef}
I^\alpha_{a^+}f(t):=\frac{1}{\Gamma(\alpha)}\int_{a}^{t}(t-r)^{\alpha-1}f(r)dr
\end{equation}
and
\begin{equation*}\label{RL}
I^\alpha_{b^-}f(t):=\frac{(-1)^{-\alpha}}{\Gamma(\alpha)}\int_{t}^{b}(r-t)^{\alpha-1}f(r)dr,
\end{equation*} 
where $\Gamma$ is the Euler gamma function. For $\alpha\in(0,1)$ and
$f\in I^\alpha_{a^+}[L^p(a,b)]$  the left-sided \textit{Marchaud derivative}
of $f$ of order $\alpha$ is defined by
\begin{equation}\label{Weyl+}
D^\alpha_{a^+}f(t)=\frac{d}{dt}I^{1-\alpha}_{a^+}f(t)=\frac{1}{\Gamma(1-a)}\bigg[\frac{f(t)}{(t-a)^\alpha}+\alpha \Delta_{\alpha}f_{a,t}\bigg]\mathds{1}_{(a,b)}(t).
\end{equation}
For $f\in I^\alpha_{b^-}[L^p(a,b)]$ we define the right-sided \textit{Marchaud derivative}
of $f$ of order $\alpha$ by
\begin{equation}\label{Weyl-}
D^\alpha_{b^-}f(t)=\frac{d}{dt}I^{1-\alpha}_{b^-}f(t)=\frac{(-1)^\alpha}{\Gamma(1-a)}\bigg[\frac{f(t)}{(b-t)^\alpha}+\alpha \Delta^-_{\alpha}f_{t,b}\bigg]\mathds{1}_{(a,b)}(t),
\end{equation}
where we recall that $\Delta_\alpha, \Delta^{-}_\alpha$ are defined in \eqref{Deltapm}. The second equalities in \eqref{Weyl+}, \eqref{Weyl-} are also known in the literature as the Weyl representations of the Marchaud fractional derivatives, see e.g. \cite{Samko1993FractionalIA}, Chapter 13.1 and \cite{zahle1999link}, Section 2. For a detailed exposition of fractional calculus and general properties of fractional operators the reader is referred to the monograph \cite{Samko1993FractionalIA}.  What is useful in our setting is a fractional integration-by-parts formula which provides the following extension of the Stieltjes integral. 
\begin{dfn}
	\label{Young} Let $p,q\in(1,\infty)$ be conjugate exponents, $\alpha<1/p$
	and for any measurable function $g$ define $$g(b^-):=\lim_{\epsilon\downarrow 0} g(b-\epsilon)$$ and 
	\begin{equation*}\label{gminus}
	g_{b^-}(t):=[g(t)-g(b^-)]\mathds{1}_{(a,b)}(t).
	\end{equation*}
	For any $f\in  I^\alpha_{a^+}[L^p(a,b)]$ and $g$ such that $g_{b^-}\in I^{1-\alpha}_{b^-}[L^q(a,b)]$, the integral of $f$ with respect to $g$ is defined by
	\begin{equation}\label{ibpformula}
	\int_{a}^{b}fdg:=(-1)^\alpha\int_{a}^{b} D^\alpha_{a^+}f(t)D^{1-\alpha}_{b^-}g_{b^-}(t)dt.
	\end{equation}
\end{dfn}
\begin{rem}
	For $f\in C^{\theta_1}, g\in C^{\theta_2}$ such that $\theta_1+\theta_2>1$ one can take $p=q=\infty$, $\alpha\in(1-\theta_2, \theta_1)$ and show that the integral coincides with the extension of the classical Stieltjes integral studied by Young in \cite{young1936inequality}. Due to the latter, the integral defined above is commonly known as a Young integral.  For the use of Young integrals in the development of stochastic calculus with respect to fractional Brownian motion, we refer the reader to the work of Z\"ahle \cite{zahle1999link}.
\end{rem} 

\subsection{Fractional Brownian motion and Cameron-Martin spaces.}\label{fbmsec} A (one-dimensional) fractional Brownian motion (fBm)  $\{B^H_t\}_{t\geq 0}\subset L^2(\Omega)$ is a centered Gaussian process characterized by its covariance function
$$R_H(t,s)=\ex[B^H_tB^H_s ]=\frac{1}{2}\bigg( s^{2H} +t^{2H} -|t-s|^{2H}     \bigg).$$
It is straightforward to verify that increments of fBm are stationary. The parameter $H\in (0, 1)$ is usually referred to as the Hurst exponent, Hurst parameter, or Hurst index. Note that for $H=1/2$ we obtain $R_{1/2}(t,s)=t\wedge s.$ Thus, one sees that $B^{1/2}$ is a standard Brownian motion, and in particular that its disjoint increments are independent. In contrast to this, when $H\neq 1/2$ , nontrivial increments are not independent and, when $H > 1/2$ , the process exhibits long-range dependence.

By Kolmogorov's continuity criterion, a $d$-dimensional fBm $B^H$ admits, for any $T>0$ and $\beta<H$, a modification with sample paths in $C^\beta([0,T];\R^d).$ Moreover, for any $\alpha\in(1-H,1/2),\theta\in(0,2),$ the random variable $\|B^H\|_{T,1-\alpha,\infty}$ has finite moments of all orders and,  by virtue of Fernique's theorem, 
\begin{equation}\label{fbmexpint}
\ex\bigg[\exp\bigg(\frac{\theta\|B^H\|_{T,1-\alpha,\infty}}{\Gamma(\alpha)\Gamma(1-\alpha)}\bigg)\bigg]<\infty;
\end{equation}
see e.g. Lemma 7.5 of \cite{rascanu2002differential} and (2.3) in \cite{pei2020averaging}.

From this point on, we fix $H\in(1/2,1).$ We denote by $\h_H$ the Cameron-Martin space of $B^H,$ defined by
\begin{equation}\label{CMdef}
\h_H:=\big( K_H(L^2[0,T]),   \langle \cdot, \cdot\rangle_{\h_H}        \big),
\end{equation}
where the operator $K_H:L^2[0,T]\rightarrow L^2[0,T]$ is given by
\begin{equation}\label{KH}
K_H(f)(t):=c_HI^{1}_{0^+}\big(\Phi\cdot I^{H-\frac{1}{2}}_{0^+} (\Phi^{-1}\cdot f     )  \big)(t),\;\;\Phi(t)=t^{H-\frac{1}{2}},
\end{equation}
\begin{equation}\label{cH}
c^2_H:=\frac{2H\Gamma(\frac{3}{2}-H)\Gamma(H+\frac{1}{2})}{\Gamma(2-2H)}
\end{equation}
and the inner product is defined by $$\langle f, g\rangle_{\h_H}:=\langle K^{-1}_Hf, K^{-1}_Hg\rangle_{L^2}.$$
Note that, by construction, $\h_H$ is a Hilbert space and the operator $K_H$ is an isomorphism between $L^2[0,T]$ and $I^{H+1/2}_{0^+}(L^2[0,T]).$ The inverse $K_H^{-1}$ is well-defined for functions $f\in I^{H+1/2}_{0^+}(L^2[0,T])$ and is given by
\begin{equation}\label{Kinverse}
\begin{aligned}
K^{-1}_H(f)(t)&= c^{-1}_H\Phi D^{H-\frac{1}{2}}_{0^+}\big( \Phi^{-1}\frac{df}{dt}\big)(t)\\&
=\bigg(c_H\Gamma\big(\tfrac{3}{2}-H\big)\bigg)^{-1}\bigg[t^{\frac{1}{2}-H}\dot{f}(t)+\bigg(H-\frac{1}{2}\bigg)t^{H-\frac{1}{2}}\int_{0}^{t}\frac{t^{\frac{1}{2}-H}\dot{f}(t)-s^{\frac{1}{2}-H}\dot{f}(s)}{(t-s)^{H+\frac{1}{2}}}ds\bigg].
\end{aligned}
\end{equation}
The interested reader is referred to \cite{budhiraja2020large},\cite{gross1967abstract}, Chapter 5.1.3 of \cite{nualart2006malliavin} and \cite{stroock2010probability} Sections 8.1.2, 8.2.3 for more details on the construction of the Cameron-Martin space of a Gaussian measure.
\begin{rem}\label{CM1/2rem}
	For $H=1/2,$ the vector space $\h_{1/2}$ coincides (with equivalence of norms) 
	with the Sobolev space $H^1_0([0,T])$ of absolutely continuous functions $f$ with a square-integrable weak derivative and $f(0)=0$. The latter is the Cameron-Martin space of a standard Brownian motion.
\end{rem}
The Cameron-Martin space of the fractional Brownian noise $"\dot{B}^H"$, viewed as a random distribution, is denoted by $\mathfrak{H}$. As a set, it consists of distributions $f$ such that $\frac{d}{dt}(K_Hf)\in L^2[0,T]$ (see \cite{pipiras2001classes}, as well as  \cite{pipiras2000integration}). 
In light of Remark 4.2 of \cite{pipiras2001classes} we have the continuous inclusions
\begin{equation}\label{Taqquembed}
L^2\subset L^{\frac{1}{H}}\subset |\mathfrak{H}|\subset \mathfrak{H},
\end{equation}
where $(|\mathfrak{H}|, \|\cdot\|_{|\mathfrak{H}|})$ denotes the Banach space of measurable functions $f$ such that 
\begin{equation*}
\|f\|_{|\mathfrak{H}|}:=H(2H-1)\iint_{[0,T]^{2}}|f(t)||f(s)||t-s|^{2H-2}dsdt<\infty.
\end{equation*}
Throughout this work, integrals with respect to the fBm $B^H$ are Young integrals, in the sense of Definition \ref{Young}. The integrals with respect to the Brownian motion $W$ are standard It\^o integrals. \\
\noindent \subsection{Assumptions}\label{subsecAssumptions} We shall now state our assumptions for system \eqref{model}. As we pointed out in Section \ref{intro}, we work with $\epsilon$ and $\eta=\eta(\epsilon)$  in the asymptotic regime 
\begin{equation}\label{regime}
\lim_{\epsilon\to0}\frac{\sqrt{\eta}}{\sqrt{\epsilon}}=0.
\end{equation}
\noindent Regarding the coefficients of the fast motion we assume:
\begin{customthm}{1}\label{C1} There exists a constant $C_g>0$ such that for all $(x,y)\in\mathcal{X}\times\mathcal{Y},$ \[\big|g(x,y)\big|\leq C_g\big(1+|y|\big).\]
\end{customthm}
\begin{customthm}{2}\label{C2} The matrix-valued function $\tau\in C^1_b$ and $ \tau\tau^T$ is uniformly nondegenerate.
\end{customthm}
\begin{customthm}{3}\label{C3}  For $y\in\mathcal{Y},$ $f(y)=-\Gamma y+\zeta(y)$, where $\Gamma$ is a positive matrix, $\zeta\in C^1_b$ and there exists 
	$ C_f>0$ such that for all $y\in\mathcal{Y}, \langle (\Gamma-L_\zeta\cdot I)y,y\rangle\geq C_f|y|^2$,
	where $L_\zeta:=\|\nabla\zeta\|_{\infty}$. 
\end{customthm}

\noindent Conditions \ref{C2}-\ref{C3} guarantee that the the It\^o diffusion with infinitesimal generator 
\begin{equation}\label{generator}
\mathcal{L}\phi(y)=\frac{1}{2}[D^2\phi:(\tau\tau^T)](y)+\nabla \phi(y)f(y)\;,y\in\mathcal{Y},\;\;\phi\in C^2(\mathcal{Y}),
\end{equation}
where $D^2\phi:(\tau\tau^T)(y):=\text{trace}(D^2\phi(y)(\tau\tau^T)(y)),$ is strongly mixing and has on $\mathcal{Y}$ a unique invariant measure $\mu.$ Our next set of assumptions concerns the coefficients of the slow motion.

\begin{customthm}{4}\label{C4}The functions $c,\sigma_1,\sigma_2$ are Lipschitz continuous and the matrix-valued function $\sigma_2$ is uniformly bounded.
	The function $b$ is differentiable with $\nabla b\in C_b$ and moreover it satisfies the centering condition 
	\begin{equation}\label{centering}
	\bar{b}:=\int_{\mathcal{Y}}bd\mu=0,
	\end{equation}
	where $\mu$ is the invariant measure corresponding to the operator $\mathcal{L}$ \eqref{generator}.
\end{customthm}
\begin{customthm}{5}\label{C5}There exist constants $ K_1>0,\nu\in(0,1)$ such that for all $(x,y)\in\mathcal{X}\times\mathcal{Y},$ $|c(x,y)|\leq K_1(1+|x|^\nu+|y|).$
\end{customthm}

\noindent The following condition concerns the coefficient $\sigma_1$ in \eqref{model}. In particular, we provide two different sets of assumptions on $\sigma_1, H,\epsilon$ and $\eta,$ under which our main results hold.
\begin{customthm}{6}\label{C6} We assume that one of the following holds:\\
	\textbf{(i)}
	$\sigma_1=\sigma_1(x,y)$, $H\in(\frac{3}{4},1)$  and there exists $\beta\in(2(1-H), \frac{1}{2})$ such that 
	\begin{equation}\label{betaregime}
	\lim_{\epsilon\to 0}\frac{\sqrt{\epsilon}}{\eta^\beta}=0.
	\end{equation}
	Moreover, there exist constants $K_2>0,\nu_1\in(0, \frac{1}{2})$ and $\nu_2\in (2(1-H), \frac{1}{2})$, such that for all $(x,y)\in\mathcal{X}\times\mathcal{Y},$ \begin{equation}\label{sigmagrowth1}
	|\sigma_1(x,y)|\leq K_2(1+|x|^{\nu_1}+|y|^{\nu_2}).
	\end{equation}
	\textbf{(ii)} $\sigma_1=\sigma_1(x), H\in(\frac{1}{2},1)$ and there exist constants $K_2>0,\nu_1\in(0,\frac{1}{2})$ such that for all $(x,y)\in\mathcal{X}\times\mathcal{Y},$ \begin{equation}
	\label{sigmagrowth2}|\sigma_1(x)|\leq K_2(1+|x|^{\nu_1}).
	\end{equation}  	 
\end{customthm}

\noindent A few preliminary comments on Conditions \ref{C1}-\ref{C6} are given in the following remark:
\begin{rem} 1) The coefficient $g$ is allowed to depend on both the slow and fast variables and, due to \eqref{regime}, is asymptotically unimportant for the long-time behaviour of the process $Y^{\epsilon, \eta}$. While Condition \ref{C1} is not optimal, an investigation of optimal growth conditions for $g$ is beyond the scope of our work. The assumption that $g$ is bounded in the slow variable is made to simplify our estimates for the fast motion (Lemmas \ref{Ybndlem}, \ref{Yfraclem}) which, in turn, play an important role for the proof of our tightness results (Proposition \ref{tightnessprop}).  2) The assumption that $\sigma_2$ is Lipschitz continuous is not required for the analysis of Section \ref{tightsec} or the proof of the Laplace principle upper bound and will only be used in the proof of the lower bound (see Lemma \ref{tcont}). 3) The growth assumptions of Conditions \ref{C4}, \ref{C5}, \ref{C6}(i), (ii) are used in Section \ref{tightsec}, to prove the tightness estimates of Proposition \ref{tightnessprop}. 4) In  Sections \ref{limsec}-\ref{comparisonsec} we replace the growth assumptions \eqref{sigmagrowth1}, \eqref{sigmagrowth2} of Condition 6 with the stronger condition that $\sigma_1$ is bounded. 5) The asymptotic regime
	$$ \sqrt{\eta}\lesssim \sqrt{\epsilon}\lesssim \eta^\beta\;,\;\;\text{as}\;\epsilon\to 0$$
	of \eqref{regime}, and Condition \ref{C6}$(i)$ allows us to show both that the presence of stochastic controls preserves the ergodic properties of the fast motion and that the stochastic integral term $\sigma_1dB^H$ is uniformly bounded over small values of $\epsilon$ (see also Remarks \ref{ergodicproprem}, \ref{relativeraterem} below).
\end{rem} 

\noindent Conditions \ref{C2}, \ref{C3} and the centering condition \eqref{centering} guarantee that the Poisson equation
\begin{equation}\label{Poisson}
\left \{\begin{aligned}
&	\mathcal{L}\Psi(y)=-b(y)\;,\;\;y\in\mathcal{Y}\\&
\textstyle\int_{\mathcal{Y}}\Psi(y)d\mu(y)=0,
\end{aligned}\right.
\end{equation}
with $\mathcal{L}$ as in \eqref{generator}, has a unique solution $\Psi\in C^2(\mathcal{Y};\mathcal{X})$ in the class of functions that grow at most polynomially.  Moreover, we have for all $y\in\mathcal{Y}$
\begin{equation*}\label{Psilin}
|\Psi(y)|\leq C(1+|y|)
\end{equation*}
for some constant $C>0$ (see e.g. \cite{Pardoux} or \cite{ganguly2021inhomogeneous}, Proposition A.2 for a proof). Such equations have been studied under general assumptions and applied to the theory of non-periodic homogenization; see e.g. \cite{Pardoux}. As we shall see in Section \ref{resultssec}, the solution of \eqref{Poisson} is connected to the asymptotic analysis of the singular term $\tfrac{\sqrt{\epsilon}}{\sqrt{\eta}}bdt.$ 

\begin{customthm}{7}\label{C7} The map $\mathcal{Y}\ni y\longmapsto \nabla\Psi(y)\in\mathscr{L}(\mathcal{Y};\mathcal{X})$ is bounded and Lipschitz continuous.
\end{customthm}

\begin{rem} Condition \ref{C7} is made to simplify the exposition and proofs of the following sections and is by no means optimal for the results of this paper to hold. In Lemmas \ref{psider}, \ref{psiliplem} of Appendix \ref{AppA} we provide sufficient conditions on the coefficients of the fast motion under which Condition \ref{C7} is satisfied.
\end{rem}

\noindent Throughout this work we assume that the following condition is in effect.
\begin{customthm}{8}\label{C8}	For each $\epsilon,\eta, T>0$, \eqref{model} has a unique strong solution $$\{ (X^{\epsilon,\eta}_t,Y^{\epsilon,\eta}_t)\}_{t\in[0,T]}\subset L^0(\Omega; C([0,T];\mathcal{X}\times\mathcal{Y})). $$
\end{customthm}
\begin{rem} The reader is referred to \cite{guerra2008stochastic,kubilius2002existence,mishura2011existence, rascanu2002differential} for existence and uniqueness results for SDEs driven by fractional Brownian motion. Our setup is close to that of \cite{guerra2008stochastic} where the authors consider mixed SDEs like \eqref{model} and $dW, dB^H$ are interpreted as It\^o and Young integrals respectively. In particular, if the coefficients $b,c,f,g,\sigma_2,\tau$ are Lipschitz continuous and grow at most linearly in all the arguments and $\sigma_1\in C^1$ has a bounded derivative that is $\delta$-H\"older continuous, for some $\delta\in(0,1],$ then Theorem 2.2 of \cite{guerra2008stochastic} asserts that Condition \ref{C8} is satisfied. Moreover, the same theorem yields that, for all $\alpha\in(1-H,\frac{1\wedge \delta}{2}),$ $X^{\epsilon,\eta}\in W_0^\alpha([0,T];\mathcal{X})$ with probability $1.$	
\end{rem}
Before we conclude this section, we emphasize that we use the notation 
$$ \bar{\phi}(x):=\int_{\mathcal{Y}}\phi(x,y)d\mu(y)\;,\; x\in\mathcal{X}$$
to denote the integral of a function $\phi$ with respect to the invariant measure $\mu.$

\section{Weak convergence method and main results}\label{resultssec}
In this section we review the weak convergence approach to large deviations for \eqref{model} and then we state our main results on the averaging principle for the controlled process $X^{\epsilon,\eta,u}$ and the LDP for $\{X^{\epsilon,\eta}\}$. To this end, we fix $T>0, H\in(1/2,1)$ and an independent pair $\{(B_t^H, W_t)\}_{t\in[0,T]}$ of an fBm and Brownian motion defined on the filtered probability space $(\Omega,\mathscr{F},\{\mathscr{F}_t\}_{t\geq 0},\pr).$

The starting point of the weak convergence method lies in a variational representation for exponential functionals of the noise. In particular, from Theorem 3.2 of \cite{zhang2009variational} (see also \cite{budhiraja2020large}, Proposition 3.1) we have for any bounded, Borel  $F:C([0,T];\R^d)\rightarrow\R$ 
\begin{equation*}
\label{varrep}
-\log\ex e^{-F(B^H,W)}=\inf_{u=(u_1,u_2)\in\mathcal{A}_b}\ex\bigg[ \frac{1}{2}\|u\|^2_{\h_H\oplus\h_{1/2}}+F\big((B^H,W)+(u_1,u_2)\big)    \bigg],
\end{equation*}
where $\h_H\oplus\h_{1/2}$ is the Cameron-Martin space of the driving noise $(B^H, W),$ see \eqref{CMdef} and Remark \ref{CM1/2rem} above, and $\mathcal{A}_b$ is a family of stochastic controls given by
\begin{equation}\label{Ab}
\mathcal{A}_b=\bigg\{ u\in L^0(\Omega,\h_H\oplus\h_{1/2} ) :\forall t\in[0,T]\; u(t)\;\text{is}\;\mathscr{F}_t-\text{measurable},\; \|u\|_ {\h_H\oplus\h_{1/2}}<\infty\;\text{a.s.}        \bigg\}.
\end{equation}
Moreover, we shall consider the smaller class $\mathcal{A}_N$ of stochastic controls in $\mathcal{A}_b$ that are uniformly bounded by a constant $N>0$ i.e.
\begin{equation}\label{AN}
\mathcal{A}_N=\bigg\{ u\in \mathcal{A}_b: \|u\|_ {\h_H\oplus\h_{1/2}}\leq N\;\text{a.s.}        \bigg\}.
\end{equation}
As we shall explain in Section \ref{lapluppersec}, the class $\mathcal{A}_N$ is sufficient for the proof of the Laplace Principle upper bound as long as $N$ is taken sufficiently large.

In view of Condition \ref{C8}, there exists, for each $\epsilon,\eta>0$ and initial conditions $(x_0,y_0)\in\R^d$ a measurable map $G^{\epsilon,\eta}: C([0,T];\R^d)\rightarrow C([0,T];\mathcal{X})$ such that 
$$ X^{\epsilon, \eta}= G^{\epsilon,\eta}(B^H, W)$$
with probability $1.$ Letting $h\in C_b\big(C([0,T];\mathcal{X});\R)$, we replace $F$ by $h\circ G^{\epsilon,\eta}$ and rescale to obtain the variational formula
\begin{equation}\label{varform}
\begin{aligned}-\epsilon\log\ex_{x_0}\big[ e^{-h(X^{\epsilon,\eta})/\epsilon}\big]=  \inf_{u\in\mathcal{A}_b}\ex_{x_0}\bigg[ \frac{1}{2}\|u\|^2_{\h_H\oplus\h_{1/2}}+h\big(X^{\epsilon,\eta,u}\big) \bigg],   \end{aligned}\end{equation}
where the process $$ X^{\epsilon, \eta,u}:= G^{\epsilon,\eta}\bigg(B^H+\frac{1}{\sqrt{\epsilon}}u_1, W+\frac{1}{\sqrt{\epsilon}}u_2\bigg)$$
corresponds to the controlled slow-fast system
\begin{equation}\label{consys}
\left \{\begin{aligned}
& dX^{\epsilon,\eta,u}_t= \tfrac{\sqrt{\epsilon}}{\sqrt{\eta}}b(Y^{\epsilon,\eta,u}_t)dt+c(X_t^{\epsilon,\eta,u}, Y^{\epsilon,\eta,u}_t)dt+\sigma_1(X^{\epsilon,\eta,u}_t,Y^{\epsilon,\eta,u}_t)du_1(t)\\&\quad\quad\quad+\sigma_2(X^{\epsilon,\eta,u}_t,Y^{\epsilon,\eta,u}_t)du_2(t)+\sqrt{\epsilon}\bigg[\sigma_1(X^{\epsilon,\eta,u}_t,Y^{\epsilon,\eta,u}_t)dB^H_t+\sigma_2(X^{\epsilon,\eta,u}_t,Y^{\epsilon,\eta,u}_t)dW_t\bigg]\\&
dY^{\epsilon,\eta,u}_t= \frac{1}{\eta}f( Y^{\epsilon,\eta,u}_t)dt+\frac{1}{\sqrt{\epsilon\eta}}g(X^{\epsilon,\eta,u}_t,Y^{\epsilon,\eta,u}_t)dt+\frac{1}{\sqrt{\epsilon\eta}}\tau(Y^{\epsilon,\eta,u}_t)du_2(t)\\&\quad\quad\quad+\frac{1}{\sqrt{\eta}}\tau(Y^{\epsilon,\eta,u}_t)dW_t\\&
X^{\epsilon,\eta,u}_0=x_0\in\R^m, Y^{\epsilon,\eta,u}_0=y_0\in\R^{d-m}.
\end{aligned} \right.
\end{equation}
As mentioned in Section \ref{intro}, a Laplace Principle for the family $\{X^{\epsilon,\eta};\epsilon>0\}$ follows by studying the limit of \eqref{varform} as $\epsilon\to 0.$ The Laplace Principle is equivalent to an LDP with the same rate function, provided that the latter has compact sublevel sets, see e.g. \cite{dupuis2011weak}, Theorems 1.2.1, 1.2.3 for a proof. In order to understand the limiting behavior of the controlled process $X^{\epsilon,\eta,u}$ one needs to keep track of both the stochastic controls $(u_1,u_2)$ and the long-time behavior of the controlled fast motion $Y^{\epsilon,\eta,u}$. For this reason, we introduce a family of random occupation measures $\{P^{\epsilon}\;;\epsilon\in(0,1)\}$ on $\mathscr{B}([0,T]\times\mathcal{U}_1\times\mathcal{U}_2\times\mathcal{Y})$ given by
\begin{equation}
\label{occupation}
\begin{aligned}
P^{\epsilon}(A_1\times A_2\times A_3\times A_4)&:=
\int_{A_1}\mathds{1}_{A_2}\big( K^{-1}_{H}u^{\epsilon}_1(s) \big)\mathds{1}_{A_3}\big( \dot{u}^{\epsilon}_2(s) \big)\mathds{1}_{A_4}\big( Y^{\epsilon,\eta,u^\epsilon}_s \big)ds.
\end{aligned}
\end{equation}
Here, for each $\epsilon>0,$ $Y^{\epsilon,\eta,u^\epsilon}$ is controlled by  $(u^{\epsilon}_1, u^{\epsilon}_2)\in\mathcal{A}_b, \dot{u}^\epsilon_2$ is the time-derivative of $u^\epsilon_2$ and $K^{-1}_H$ is the operator defined in \eqref{Kinverse}. Note that since $(u^\epsilon_1,u^\epsilon_2)\in\h_H\oplus \h_{1/2} $ almost surely, $(K^{-1}_Hu^\epsilon_1,\dot{u}^\epsilon_2) \in L^2([0,T];\mathcal{U}_1)\oplus L^2([0,T];\mathcal{U}_2) $ is well-defined. The reason why we define $P^\epsilon$ with respect to a family of controls $u^\epsilon=\{(u^{\epsilon}_1, u^{\epsilon}_2);\epsilon>0\}$ instead of a single control $u=(u_1, u_2)$ shall become clear in our proof of the Laplace Principle upper bound (see Section \ref{lapluppersec} below).

Assuming for the moment that,  as $\epsilon\to 0,$  the sequence $(X^{\epsilon,\eta,u^\epsilon}, P^\epsilon)$ converges in distribution to a pair $(\psi, P)\in C([0,T];\mathcal{X})\times\mathscr{P}([0,T]\times\mathcal{U}_1\times\mathcal{U}_2\times\mathcal{Y}), $ the next step of the method is to characterize the law of $(\psi,P).$  To this end, note that the analysis of the second and fourth terms in the first equation of \eqref{consys} is straightforward. In particular, for each $t\in[0,T]$ we have
\begin{equation*}
\begin{aligned}
&\int_{0}^{t}c(X_s^{\epsilon,\eta,u}, Y^{\epsilon,\eta,u}_s)ds+\int_{0}^{t}\sigma_2(X^{\epsilon,\eta,u}_s,Y^{\epsilon,\eta,u}_s)du_2(s)\\&=\int_{0}^{t}\bigg[c(X_s^{\epsilon,\eta,u}, Y^{\epsilon,\eta,u}_s)+\sigma_2(X^{\epsilon,\eta,u}_s,Y^{\epsilon,\eta,u}_s)\dot{u}_2(s)\bigg]ds\\&
=\int_{[0,t]\times\mathcal{U}_1\times\mathcal{U}_2\times\mathcal{Y}}\bigg[c(X_s^{\epsilon,\eta,u},y)+\sigma_2(X^{\epsilon,\eta,u}_s,y)u_2\bigg]dP^{\epsilon}(s,u_1,u_2,y)
\end{aligned}
\end{equation*}
and thus we expect that, as $\epsilon\to 0,$ the latter will converge in distribution to 
\begin{equation*}
\begin{aligned}
\int_{[0,t]\times\mathcal{U}_1\times\mathcal{U}_2\times\mathcal{Y}}\bigg[c(\psi_s,y)+\sigma_2(\psi_s,y)u_2\bigg]dP(s,u_1,u_2,y).
\end{aligned}
\end{equation*}

\noindent From an application of It\^o's formula to the process $\{\Psi(Y^{\epsilon,\eta,u}_t)\}_{t\in[0,T]},$ where $\Psi$ is the solution of the Poisson equation \eqref{Poisson}, it is then possible to show that the limit of the singular term $\tfrac{\sqrt{\epsilon}}{\sqrt{\eta}}bdt$ is captured by
$$    \int_{[0,t]\times\mathcal{U}_1\times\mathcal{U}_2\times\mathcal{Y}}\nabla\Psi(y)\big[\tau(y)u_2+g(\psi_s,y)\big]dP(s,u_1,u_2,y). $$

The terms considered up to this point also appear in the case $H=1/2$ and their limiting behavior is the subject of Lemma \ref{locave} below. In our setting, the essential difference in the asymptotic analysis of the controlled slow motion is related to the term $\sigma_1du_1$ in \eqref{consys}. In order to treat this term, we first note that $K^{-1}_Hu_1\in L^2([0,T];\mathcal{U}_1)$ almost surely and furthermore, in view of \eqref{KH} and \eqref{Idef}, $u_1$ has a square-integrable weak derivative, defined almost everywhere on $[0,T].$ Thus we can write $$\dot{u}_1=\frac{d}{dt}K_H\big(K^{-1}_Hu_1)=\dot{K}_H\big(K^{-1}_Hu_1),$$
where, for any $v\in L^2([0,T];\mathcal{U}_1),$
\begin{equation}\label{Kdot}
\dot{K}_Hv(s):=\frac{d}{dt}\circ  K_Hv(s)=\frac{c_H}{\Gamma(H-\frac{1}{2})}s^{H-\frac{1}{2}}\int_{0}^{s}z^{\frac{1}{2}-H}(s-z)^{H-\frac{3}{2}}v(z)dz\;,\;\;s\in[0,T].
\end{equation} Combining the last two displays, we derive the following expression in terms of the occupation measures \eqref{occupation}:
\begin{equation}\label{sigma1comp}
\begin{aligned}
&\int_{0}^{t}\sigma_1( X_s^{\epsilon,\eta,u},Y^{\epsilon,\eta,u}_s)du_1(s)=\int_{0}^{t}\sigma_1( X_s^{\epsilon,\eta,u},Y^{\epsilon,\eta,u}_s)\dot{u}_1(s)ds\\&= \frac{c_H}{\Gamma(H-\frac{1}{2})} \int_{0}^{t}\int_{0}^{s}s^{H-\frac{1}{2}}z^{\frac{1}{2}-H}(s-z)^{H-\frac{3}{2}} \sigma_1(X_s^{\epsilon,\eta,u},Y^{\epsilon,\eta,u}_s)[K^{-1}_Hu_1(z)]dzds\\&
= \frac{c_H}{\Gamma(H-\frac{1}{2})}\int_{[0,t]\times\mathcal{Y} }\int_{[0,s]\times\mathcal{U}_1}\\&\quad\quad\quad s^{H-\frac{1}{2}}z^{\frac{1}{2}-H}(s-z)^{H-\frac{3}{2}} \sigma_1(X_s^{\epsilon,\eta,u},y_2)v_1dP^{\epsilon, \otimes 2}(z,v_1,v_2,y_1,s,u_1,u_2,y_2).
\end{aligned}\end{equation}
Thus, as $\epsilon\to 0$, we expect that the latter converges in distribution to
\begin{equation}\label{nonlocal}
\frac{c_H}{\Gamma(H-\frac{1}{2})}\int_{[0,t]\times\mathcal{Y} }\int_{[0,s]\times\mathcal{U}_1}s^{H-\frac{1}{2}}z^{\frac{1}{2}-H}(s-z)^{H-\frac{3}{2}} \sigma_1(\psi_s,y_2)v_1dP(z,v_1,v_2,y_1)dP(s,u_1,u_2,y_2),
\end{equation}
as we will show in Proposition \ref{sigmaave} below. 

\begin{rem}\label{rem:occupationDomainConvention} Notice that in \eqref{sigma1comp}, \eqref{nonlocal} we have omitted the domains of integration for the variables that do not appear on the integrand of the occupation measures $P^\epsilon$ and limiting measures $P.$ This slight abuse of notation is made for the sake of lighter notation and will be used in several places throughout the rest of this article.
\end{rem}

The arguments above conclude the analysis of the limiting dynamics of $X^{\epsilon,\eta,u}$ and motivate the following definition of viable pairs:

\begin{dfn}\label{viable} Let $T>0, x_0\in\mathcal{X}$, $\Lambda_1:\mathcal{X}\times\mathcal{U}_1\times\mathcal{U}_2\times\mathcal{Y}\rightarrow\mathcal{X}$, $\Lambda_2:  \mathcal{X}\times([0,T]\times\mathcal{U}_1\times\mathcal{U}_2\times\mathcal{Y})^2\rightarrow\mathcal{X}$ and set $\Lambda=(\Lambda_1,\Lambda_2)$.     A pair $(\psi,P)\in C([0,T];\mathcal{X})\times\mathscr{P}([0,T]\times\mathcal{U}_1\times\mathcal{U}_2\times\mathcal{Y})$ is called \textit{viable} with respect to $\Lambda$ if the following hold:
	(i) $P$ has a finite second moment i.e.
	$$\int_{[0,T]\times\mathcal{U}_1\times\mathcal{U}_2\times\mathcal{Y}}\big(|u|^2+|v|^2+|y|^2\big)dP(t,u,v,y)<\infty$$
	(ii) For all $ h\in C_b([0,T]\times\mathcal{U}_1\times\mathcal{U}_2\times\mathcal{Y})$ we have the decomposition
	\begin{equation}\label{Pdec}
	\int_{[0,T]\times\mathcal{U}_1\times\mathcal{U}_2\times\mathcal{Y}}hdP=\int_{0}^{T}\int_{\mathcal{Y}}\int_{\mathcal{U}_1\times\mathcal{U}_2}h(s,u, v,y)d\Theta(u,v|y,s)d\mu(y)ds,
	\end{equation}
	where $\Theta(\cdot|\cdot)$ is a stochastic kernel on $\mathcal{U}_1\times\mathcal{U}_2$ given $\mathcal{Y}\times[0,T]$ (see Appendix A.5 in \cite{dupuis2011weak} for stochastic kernels) and $\mu$ is the unique invariant measure corresponding to $\mathcal{L}$ \eqref{generator}.\\
	(iii) $\forall t\in[0,T]$
	\begin{equation}\label{limdynamics}
	\begin{aligned}
	\psi&(t)=x_0+\int_{[0,t]\times\mathcal{U}_1\times\mathcal{U}_2\times\mathcal{Y}}\bigg[\Lambda_1( \psi(s),u_2,v_2,y_2)\\&+\int_{[0,s]\times\mathcal{U}_1\times\mathcal{U}_2\times\mathcal{Y}}\Lambda_2( \psi(s),s,u_2,v_2,y_2,z,u_1,v_1,y_1)dP(z,u_1,v_1,y_1)\bigg]dP(s,u_2,v_2,y_2).
	\end{aligned}
	\end{equation}
	The set of viable pairs with respect to $\Lambda$ is denoted by $\mathscr{V}_{\Lambda,x_0}$ and for each $\psi\in C([0,T];\mathcal{X}),$ $\mathscr{V}_{\Lambda,x_0,\psi}$ denotes the $\psi$-section $\{ P\in\mathscr{P}([0,T]\times\mathcal{U}_1\times\mathcal{U}_2\times\mathcal{Y}) :(\psi, P)\in\mathscr{V}_{\Lambda,x_0}\}.$
\end{dfn}
\begin{rem} A notion of viable pairs has been used in the study of large deviations for the case $H=1/2,$ see e.g. \cite{dupuis2012large}, Definition 2.7. In order to extend the method to the case $H>1/2,$ we have modified the definition to account for the non-local term \eqref{nonlocal} that appears in the limiting dynamics. This term is captured by the term $\Lambda_2$ in \eqref{limdynamics}.
\end{rem}
\noindent With Definition \ref{viable} at hand, we are ready to state our main results. The next theorem provides a characterization of the limit points of the family $\{ (X^{\epsilon,\eta,u}, P^\epsilon); \epsilon\ll1, u\in\mathcal{A}_N      \},$ in the sense of convergence in distribution. The reason why we can restrict the analysis to the smaller class of controls $\mathcal{A}_N$ is explained in Section \ref{lapluppersec} below.

\begin{thm}\label{main1} Let $T>0, 
	u\in\mathcal{A}_N$ as in \eqref{AN} and $(X^{\epsilon,\eta,u}, Y^{\epsilon,\eta,u})$ solve the controlled system \eqref{consys} with initial conditions $(x_0,y_0)\in\mathcal{X}\times\mathcal{Y}$. Moreover, let $\Lambda_1:\mathcal{X}\times\mathcal{U}_1\times\mathcal{U}_2\times\mathcal{Y}\rightarrow\mathcal{X}$, $\Lambda_2:  \mathcal{X}\times([0,T]\times\mathcal{U}_1\times\mathcal{U}_2\times\mathcal{Y})^2\rightarrow\mathcal{X}$ with 
	\begin{equation}\label{Lambda1}
	\Lambda_1(x,u_1,u_2,y)=c\big(x, y\big)+\nabla\Psi(y)g\big(x, y\big)+\big[\nabla\Psi(y)\tau(y)+\sigma_2(x,y)\big]u_2,
	\end{equation}
	\begin{equation}\label{Lambda2}
	\Lambda_2(x,t,u_1,v_1,y_1, s,u_2,v_2,y_2 )= \frac{c_H}{\Gamma(H-\frac{1}{2})}t^{H-\frac{1}{2}}s^{\frac{1}{2}-H}(t-s)^{H-\frac{3}{2}} \sigma_1(x,y_2)u_1,
	\end{equation}
	where $\Psi$ is the unique strong solution of \eqref{Poisson}, $c_H$ as in \eqref{cH} and set $\Lambda=(\Lambda_1,\Lambda_2).$ Under Conditions \ref{C1}-\ref{C7} and assuming that $\sigma_1$ is bounded, there exists a sufficiently small $\epsilon_0>0$ such that the family of processes  $\{ X^{\epsilon,\eta, u} ;\epsilon<\epsilon_0, u\in\mathcal{A}_N  \}$ is tight in $C([0,T];\mathcal{X})$ and the family $\{ P^\epsilon;  \epsilon<\epsilon_0 \}$ of occupation measures \eqref{occupation} is tight in $\mathscr{P}([0,T]\times\mathcal{U}_1\times\mathcal{U}_2\times\mathcal{Y}).$ Then for any sequence in $\{(X^{\epsilon,\eta, u}, P^{\epsilon} ),\;\epsilon<\epsilon_0, u\in\mathcal{A}_N\}$ there exists a subsequence that converges in distribution with limit $(\psi,P)$. With probability $1$, $(\psi,P)\in\mathscr{V}_{\Lambda,x_0}$, per Definition \ref{viable}.
\end{thm}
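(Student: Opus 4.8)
The plan is to establish the three defining properties of viability in Definition \ref{viable} for any subsequential limit $(\psi, P)$, working on the Skorokhod representation space where the convergence $(X^{\epsilon,\eta,u^\epsilon}, P^\epsilon) \to (\psi, P)$ holds almost surely. First I would record the tightness claims: tightness of $\{X^{\epsilon,\eta,u}\}$ in $C([0,T];\mathcal{X})$ follows from the fractional-Sobolev estimates of Proposition \ref{tightnessprop} (Section \ref{tightsec}), and tightness of $\{P^\epsilon\}$ in $\mathscr{P}([0,T]\times\mathcal{U}_1\times\mathcal{U}_2\times\mathcal{Y})$ follows from the uniform $L^2$-bound on the controls (from the defining constraint $\|u\|_{\h_H\oplus\h_{1/2}}\le N$ in \eqref{Ab}, which controls $\|K_H^{-1}u_1\|_{L^2}$ and $\|\dot u_2\|_{L^2}$) together with the uniform moment bounds on $Y^{\epsilon,\eta,u}$ coming from Conditions \ref{C1}--\ref{C3}. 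These two facts also yield the finite second moment of $P$ in property (i), by lower semicontinuity of the relevant integral functionals under weak convergence combined with uniform integrability from the uniform $L^2$/moment bounds.

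For property (ii), the decomposition $P(du\,dv\,dy\,dt) = \Theta(du\,dv|y,t)\mu(dy)\,dt$, the key point is the decoupling forced by the asymptotic regime $\sqrt{\eta}/\sqrt{\epsilon}\to 0$ in \eqref{regime}. I would fix $\varphi\in C^2_b(\mathcal{Y})$ and apply It\^o's formula to $\varphi(Y^{\epsilon,\eta,u}_t)$ along \eqref{consys}; multiplying through by $\eta$ and using that the controlled drift $\tfrac{1}{\sqrt{\epsilon\eta}}g$ and control term $\tfrac{1}{\sqrt{\epsilon\eta}}\tau\,du_2$ are of lower order than $\tfrac{1}{\eta}f$ (precisely because $\sqrt{\eta}/\sqrt{\epsilon}\to 0$), the leading balance forces $\int_0^T \mathcal{L}\varphi(Y^{\epsilon,\eta,u}_s)\,ds \to 0$. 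Expressing this in terms of $P^\epsilon$ and passing to the limit shows that the $\mathcal{Y}$-marginal of $P$ (at each time, after disintegration) is annihilated by $\mathcal{L}$, hence equals $\mu$ by the unique ergodicity from Conditions \ref{C2}--\ref{C3}; a standard argument (conditioning on $[0,t]$ and using adaptedness) upgrades this to the disintegration over $[0,T]\times\mathcal{Y}$, giving the stochastic kernel $\Theta$. This is essentially Lemma \ref{locave}-type reasoning and I expect it to go through as in the $H=1/2$ case, e.g. \cite{dupuis2012large}.

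For property (iii), the limiting dynamics \eqref{limdynamics}, I would pass to the limit term by term in the integral form of the first equation of \eqref{consys}. The small-noise terms $\sqrt{\epsilon}\sigma_1\,dB^H + \sqrt{\epsilon}\sigma_2\,dW$ vanish using the uniform moment bound \eqref{fbmexpint} on $\|B^H\|_{T,1-\alpha,\infty}$ and boundedness of $\sigma_1,\sigma_2$. The terms $\int_0^t c\,ds + \int_0^t\sigma_2\,du_2$ and the singular term $\tfrac{\sqrt{\epsilon}}{\sqrt{\eta}}b\,dt$ (handled via It\^o applied to $\Psi(Y^{\epsilon,\eta,u}_t)$, as sketched in the text, producing $\nabla\Psi(y)[\tau(y)u_2 + g(\psi_s,y)]$) converge against $P^\epsilon \to P$ exactly as in the classical case, using that $\psi$ is continuous, $c,\sigma_2,g,\nabla\Psi$ are Lipschitz (Conditions \ref{C4},\ref{C5},\ref{C7}) and grow mildly, together with uniform integrability of the occupation measures. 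These produce the $\Lambda_1$ part of \eqref{limdynamics}. The genuinely new term is $\int_0^t\sigma_1\,du_1$, written in \eqref{sigma1comp} as a double integral against $P^\epsilon\otimes P^\epsilon$ with kernel $\tfrac{c_H}{\Gamma(H-1/2)}s^{H-1/2}z^{1/2-H}(s-z)^{H-3/2}\sigma_1(X_s,y)v_1$; its limit is \eqref{nonlocal}, which is exactly the $\Lambda_2$ contribution. This convergence is the content of Proposition \ref{sigmaave}, which I would invoke.

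The main obstacle is the passage to the limit in the non-local term \eqref{sigma1comp}: unlike the other terms, its kernel $(s,z)\mapsto s^{H-1/2}z^{1/2-H}(s-z)^{H-3/2}$ is singular along the diagonal $s=z$ (the exponent $H-3/2\in(-1,-1/2)$ is negative) and is not bounded, so one cannot simply apply the Portmanteau lemma to $P^\epsilon\otimes P^\epsilon$ against a bounded continuous test function. Handling this requires a truncation of the kernel away from the diagonal, a uniform (in $\epsilon$) estimate controlling the contribution of the near-diagonal region — which is where the extra constraints in Condition \ref{C6}(i), namely $H>3/4$ and the rate $\sqrt{\epsilon}/\eta^\beta\to 0$ with $\beta\in(2(1-H),1/2)$, enter to bound the Young integral uniformly — followed by passing to the limit in the truncated kernel and removing the truncation. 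This is precisely what Proposition \ref{sigmaave} is designed to deliver, so in the present proof I would cite it; the remaining work is the routine (if lengthy) term-by-term bookkeeping described above.
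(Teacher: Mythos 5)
Your proposal is correct and follows the paper's own proof in essentially the same order: tightness of $X^{\epsilon,\eta,u}$ from Proposition \ref{tightnessprop} together with a tightness-function argument for $P^\epsilon$ (Lemma \ref{tightlem}), the decomposition (ii) via It\^o's formula applied to compactly supported test functions, the regime \eqref{regime}, and unique ergodicity (this is Lemma \ref{occulem}, not Lemma \ref{locave} as you wrote), averaging of the $\Lambda_1$ contribution against $P^\epsilon$ (Lemma \ref{locave}), and convergence of the non-local $\sigma_1\,du_1$ term via the kernel-truncation argument of Proposition \ref{sigmaave}. One small misattribution of roles: the rate constraint $\sqrt{\epsilon}/\eta^\beta\to 0$ of Condition \ref{C6}$(i)$ is what powers the uniform estimates of Section \ref{tightsec} and the vanishing of the $O(\sqrt\epsilon/\eta^\beta)$ remainder in \eqref{Xlimeq}; inside Proposition \ref{sigmaave} itself the near-diagonal and near-origin contributions are controlled purely by the second-moment bound \eqref{occutight}, Cauchy--Schwarz, the Portmanteau lemma, and the cutoff $F_M$, not by that rate constraint.
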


\begin{rem}\label{ergodicproprem} The importance of \eqref{Pdec} lies in the fact that the $y$-marginal of the measure $P$ does not depend on the variables $u_1,u_2$ and is given by the invariant measure $\mu.$ This is a consequence of the asymptotic regime \eqref{regime} which guarantees that the ergodic properties of $Y^{\epsilon,\eta}$ are preserved under perturbations by the stochastic control $u_2$ (see \eqref{consys}).
\end{rem}

\noindent The LDP for the slow motion $X^{\epsilon,\eta}$ is stated in the following theorem:
\begin{thm}\label{main2} Let $T>0, \Lambda $ as in Theorem \ref{main1}, $\mathscr{V}_{\Lambda,x_0,\phi}$ as in Definition \ref{viable}  and $(X^{\epsilon,\eta}, Y^{\epsilon,\eta})$ be the unique strong solution of \eqref{model} with initial conditions $(x_0,y_0)\in\mathcal{X}\times\mathcal{Y}$. Define a functional $S_{x_0}:C([0,T];\mathcal{X})\rightarrow[0,\infty]$ by
	\begin{equation}\label{ratefun}
	S_{x_0}(\phi)= \inf_{P\in\mathscr{V}_{\Lambda,x_0,\phi}}\frac{1}{2}\int_{[0,T]\times\mathcal{U}_1\times\mathcal{U}_2\times\mathcal{Y}}\big[|u_1|^2+|u_2|^2\big]dP(t,u_1,u_2,y)
	\end{equation}
	with the convention that $\inf\varnothing=\infty.$ Under Conditions \ref{C1}-\ref{C8} and assuming that $\sigma_1$ is bounded we have, for all bounded, continuous $h:C([0,T];\mathcal{X})\rightarrow\R,$
	\begin{equation*}
	\lim_{\epsilon\to 0}\epsilon\log\ex_{x_0}\big[ e^{-h(X^{\epsilon,\eta})/\epsilon}      \big]= -\inf_{\phi\in C([0,T];\R^m) }\big[ S_{x_0}(\phi) +h(\phi) \big],
	\end{equation*}
	where $\ex_{x_0}$ denotes that the initial condition is given by $x_0.$ In particular, $\{X^{\epsilon,\eta}\}$ satisfies a Large Deviation Principle in $C([0,T];\mathcal{X})$ with rate function $S_{x_0}$.
\end{thm}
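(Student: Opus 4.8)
The plan is to deduce the Laplace principle from the variational representation \eqref{varform} and then to invoke its equivalence with the LDP. Write
\[
V^\epsilon:=\inf_{u\in\mathcal{A}_b}\ex_{x_0}\Big[\tfrac12\|u\|^2_{\h_H\oplus\h_{1/2}}+h(X^{\epsilon,\eta,u})\Big]=-\epsilon\log\ex_{x_0}\big[e^{-h(X^{\epsilon,\eta})/\epsilon}\big],\qquad V:=\inf_{\phi\in C([0,T];\mathcal{X})}\big[S_{x_0}(\phi)+h(\phi)\big].
\]
It suffices to prove (a) the Laplace upper bound $\liminf_{\epsilon\to0}V^\epsilon\ge V$; (b) the Laplace lower bound $\limsup_{\epsilon\to0}V^\epsilon\le V$; and (c) that $S_{x_0}$ has compact sublevel sets; granting these, the LDP with rate function $S_{x_0}$ follows from Theorems 1.2.1 and 1.2.3 of \cite{dupuis2011weak}. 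Throughout I will use the identity $\|u\|^2_{\h_H\oplus\h_{1/2}}=\int_{[0,T]\times\mathcal{U}_1\times\mathcal{U}_2\times\mathcal{Y}}(|u_1|^2+|u_2|^2)\,dP^\epsilon$, which is immediate from \eqref{Kinverse}, \eqref{Kdot} and the definition \eqref{occupation} of the occupation measure attached to $u=(u_1,u_2)\in\mathcal{A}_b$.

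For (a): fix $\delta>0$ and, for each $\epsilon$, choose $u^\epsilon\in\mathcal{A}_b$ with $\ex_{x_0}[\tfrac12\|u^\epsilon\|^2+h(X^{\epsilon,\eta,u^\epsilon})]\le V^\epsilon+\delta$. By Proposition \ref{tightnessprop} the family $\{(X^{\epsilon,\eta,u^\epsilon},P^\epsilon)\}$ is tight, and by Theorem \ref{main1} every subsequential limit in distribution $(\psi,P)$ lies almost surely in $\mathscr{V}_{\Lambda,x_0}$. Passing to a subsequence along which $V^\epsilon\to\liminf_{\epsilon\to0}V^\epsilon$ and along which $(X^{\epsilon,\eta,u^\epsilon},P^\epsilon)$ converges in distribution, and using the Skorokhod representation theorem, we may take the convergence to hold a.s. Since $h$ is bounded continuous and $\nu\mapsto\int(|v_1|^2+|v_2|^2)\,d\nu$ is nonnegative and weakly lower semicontinuous, the functional $(\nu,x)\mapsto\tfrac12\int(|v_1|^2+|v_2|^2)\,d\nu+h(x)$ is lower semicontinuous and bounded below; Fatou's lemma, the identity above, and the bound $\tfrac12\int(|v_1|^2+|v_2|^2)\,dP\ge S_{x_0}(\psi)$ a.s. (valid because $(\psi,P)$ is viable, by \eqref{ratefun}) then give $\liminf_{\epsilon\to0}V^\epsilon\ge\ex[S_{x_0}(\psi)+h(\psi)]\ge V$.

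For (b) — the core of the argument — fix $\delta>0$ and choose $\phi^\ast$ with $S_{x_0}(\phi^\ast)+h(\phi^\ast)\le V+\delta$ and, by \eqref{ratefun}, a viable pair $(\phi^\ast,P)\in\mathscr{V}_{\Lambda,x_0}$ with $\tfrac12\int(|v_1|^2+|v_2|^2)\,dP\le S_{x_0}(\phi^\ast)+\delta$. The task is to construct controls $u^\epsilon\in\mathcal{A}_b$ with $X^{\epsilon,\eta,u^\epsilon}\to\phi^\ast$ and $\tfrac12\|u^\epsilon\|^2_{\h_H\oplus\h_{1/2}}\to\tfrac12\int(|v_1|^2+|v_2|^2)\,dP$. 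I would proceed in three steps: (i) using the disintegration \eqref{Pdec} together with the affineness of $\Lambda_1$ in the $\mathcal{U}_2$-variable (see \eqref{Lambda1}) and the linearity of $\Lambda_2$ in the $\mathcal{U}_1$-variable (see \eqref{Lambda2}), Jensen's inequality lets us replace the kernel $\Theta$ by the one concentrated at its conditional means $\bar v_i(y,s):=\int u_i\,d\Theta(\cdot\,|\,y,s)$ without increasing the cost or changing the solution of \eqref{limdynamics}, and a mollification makes $(y,s)\mapsto\bar v_i(y,s)$ bounded and continuous at the price of a further $\delta$; (ii) defining feedback controls in \eqref{consys} by $K^{-1}_Hu_1^\epsilon(t):=\bar v_1(Y^{\epsilon,\eta,u^\epsilon}_t,t)$ and $\dot u_2^\epsilon(t):=\bar v_2(Y^{\epsilon,\eta,u^\epsilon}_t,t)$, which lie in $\mathcal{A}_b$ since $\bar v$ is bounded; (iii) showing, via the ergodic behavior of the fast motion and continuity of $\bar v$, that $P^\epsilon\to P$ and $X^{\epsilon,\eta,u^\epsilon}\to\phi^\ast$ in probability (up to an $O(\delta)$ error from the mollification), using uniqueness for \eqref{limdynamics} with a fixed kernel, which at each time reduces to a non-autonomous Lipschitz ODE. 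The subtlety — absent when $H=1/2$ — is that the $u_1$-control feeds into $X^{\epsilon,\eta,u^\epsilon}$ through the time-singular kernel of \eqref{nonlocal}, equivalently through $\dot{K}_H$ of \eqref{Kdot}; identifying the limiting $\sigma_1\,du_1$ term, and, in constructing near-optimal controls, inverting this non-local relation, is carried out by means of the effective-diffusivity operator $\mathcal{Q}_H$ on $L^2([0,T])$, and must be done while preserving the uniform bound that keeps the controls in $\mathcal{A}_b$. Granting (i)--(iii), $\limsup_{\epsilon\to0}V^\epsilon\le\limsup_{\epsilon\to0}\ex_{x_0}[\tfrac12\|u^\epsilon\|^2+h(X^{\epsilon,\eta,u^\epsilon})]=\tfrac12\int(|v_1|^2+|v_2|^2)\,dP+h(\phi^\ast)+o_\delta(1)\le V+O(\delta)$, and $\delta\to0$ gives (b).

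For (c): fix $M<\infty$, take $\phi_n$ with $S_{x_0}(\phi_n)\le M$, and pick $P_n\in\mathscr{V}_{\Lambda,x_0,\phi_n}$ with $\tfrac12\int(|v_1|^2+|v_2|^2)\,dP_n\le M+1/n$. The uniform second-moment bound, together with \eqref{Pdec} (which pins the $(t,y)$-marginal of each $P_n$ at $\mathrm{Leb}\otimes\mu$), makes $\{P_n\}$ tight, and \eqref{limdynamics} with the growth and Lipschitz hypotheses on $c,g,\sigma_1,\sigma_2,\nabla\Psi,\tau$ yields a uniform bound and equicontinuity for $\{\phi_n\}$; passing to a subsequence, $P_n\to P$ weakly and $\phi_n\to\phi$ uniformly. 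Continuity of $\Lambda_1,\Lambda_2$ and integrability of the time-kernel in \eqref{Lambda2} allow passage to the limit in \eqref{Pdec}--\eqref{limdynamics}, so $(\phi,P)\in\mathscr{V}_{\Lambda,x_0}$, and weak lower semicontinuity of the cost gives $\tfrac12\int(|v_1|^2+|v_2|^2)\,dP\le M$, i.e. $S_{x_0}(\phi)\le M$; hence $\{S_{x_0}\le M\}$ is compact. Combining (a), (b) and (c) proves the Laplace principle and, via \cite{dupuis2011weak}, the LDP with rate function $S_{x_0}$. I expect step (b) to be the main obstacle: realizing a near-optimal viable measure by admissible controls while simultaneously steering the slow process onto the prescribed trajectory, the difficulty being that the $u_1$-component of the control enters the limiting drift non-locally in time and must be recovered through the operator $\mathcal{Q}_H$ rather than pointwise.
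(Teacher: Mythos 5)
Your outline for the Laplace upper bound (a) and the compactness step (c) is essentially what the paper does, and the upper bound in particular matches the paper's argument almost verbatim (Portmanteau plus lower semicontinuity of the cost functional, applied along a near-minimizing sequence of controls). Your route to (c), however, is heavier than the paper's: the paper first proves (Lemma~\ref{ordform}) that the measure-valued infimization in \eqref{ratefun} is equivalent to an \emph{ordinary} control formulation in which $u_1\in L^2([0,T];\mathcal{U}_1)$ is a genuine function of time alone (not of $(t,y)$) and $u_2\in L^2([0,T]\times\mathcal{Y};\mathcal{U}_2)$, and then works the compactness argument in that concrete $L^2$ setting with weak convergence and an Arzel\`a–Ascoli bootstrap through \eqref{Abar}; your argument in terms of sequences of viable measures $P_n$ can be made to work but has to reproduce essentially the content of Lemma~\ref{ordform} inside the passage to the limit.

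The genuine gap is in step (b). You propose feedback controls for \emph{both} components, in particular $K_H^{-1}u_1^\epsilon(t)=\bar v_1(Y^{\epsilon,\eta,u^\epsilon}_t,t)$. This is exactly the construction that works for $H=1/2$ (\`a la \cite{dupuis2012large}), but it does not carry over cleanly here: with $H>1/2$, specifying $K_H^{-1}u_1^\epsilon$ as a feedback of the (controlled) fast process defines $u_1^\epsilon$ only through the non-local integral operator $K_H$, and the corresponding term $\int\sigma_1\,du_1^\epsilon$ in \eqref{consys} does not reduce to a time-local average against the occupation measure in the way that $\int\sigma_2\,du_2^\epsilon$ does — it produces the double integral \eqref{sigma1comp}, whose $\epsilon\to 0$ limit couples the past of $P^\epsilon$ with its present via the singular kernel. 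Making the feedback and the homogenization limit compatible through $\dot K_H$ is the very obstacle the paper is careful to avoid. The paper's mechanism is Lemma~\ref{ordform}$(ii)$: the infimum in \eqref{ratefun} is not changed if $u_1$ is taken to be a deterministic function of $t$ alone, so in the lower bound one may take the $u_1$-component of the near-optimal control to be \emph{deterministic} ($\tilde u^\epsilon_1=K_H[u_1^*]$ in Section~\ref{LPlbsec}), removing the feedback issue entirely. You gesture at the operator $\mathcal{Q}_H$, but you do not extract this key reduction, and without it your step (ii) does not produce controls whose Young integral term converges to the prescribed $\Lambda_2$-contribution. Relatedly, the paper's lower bound is only established under additional structural hypotheses — either $b=\sigma_2=0$ with $\bar\sigma_1\neq 0$ (Case 1), or uniform non-degeneracy of $\overline{QQ^T}$ (Case 2, where Lemma~\ref{minimizerlem} gives a bounded pseudo-inverse $\mathcal{Q}_H^+(\phi)$ and hence explicit minimizers with the right regularity and boundedness to define admissible controls) — and your proposal does not flag where an assumption of this kind is needed, e.g. to ensure the near-optimal controls are in $\mathcal{A}_b$ and that the limiting equation \eqref{limdynamics} with the chosen control has a unique solution equal to $\phi^*$.
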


\noindent Theorems \ref{main1}, \ref{main2} are proved in Sections \ref{limsec}, \ref{LDPsec} respectively. As we discuss in Section \ref{comparisonsec}, in certain cases, the rate function $S_{x_0}$ admits the explicit form
\begin{equation}\label{nonvarrateRev}
\begin{aligned}
S_{x_0}(\phi)&\equiv S^{H}_{x_0}(\phi)\\&=\frac{1}{2}\int_{0}^{T}\bigg\langle \dot{\phi}_t-\bar{c}(\phi_t)-\overline{\nabla\Psi g}(\phi_t), [\mathcal{Q}_H(\phi)\mathcal{Q}_H(\phi)^*]^{-1}\big[ \dot{\phi}-\bar{c}(\phi)-\overline{\nabla\Psi g}(\phi)      \big](t)\bigg\rangle_{\mathcal{X}}dt,
\end{aligned}
\end{equation}
for all $\phi\in C([0,T];\mathcal{X})$ such that $\phi_0=x_0$ and $\dot{\phi}-\bar{c}(\phi)-\overline{\nabla\Psi g}(\phi)\in L^2([0,T];\mathcal{X}),$ and $S_{x_0}(\phi)=\infty$ otherwise. For each $H\in(1/2,1)$ and $\phi\in C([0,T];\mathcal{X}),$  $\mathcal{Q}_H(\phi)\mathcal{Q}_H(\phi)^*:L^2([0,T];\mathcal{X})\rightarrow L^2([0,T];\mathcal{X})$ and $\mathcal{Q}_H(\phi):L^2([0,T];\mathcal{U}_1)\oplus L^2([0,T]\times\mathcal{Y}, dt\otimes d\mu;\mathcal{U}_2)\rightarrow L^2([0,T];\mathcal{X})$ denotes the "effective diffusivity" operator
\begin{equation}\label{QH1}
\begin{aligned}
\mathcal{Q}_H(\phi)[(u_1,u_2)](t)&:=\bar\sigma_1(\phi_t)\dot{K}_Hu_1(t)+\int_{\mathcal{Y}}Q(\phi_t,y)u_2(t,y)d\mu(y)\;,\;\; t\in[0,T],
\end{aligned}		
\end{equation}
with $\dot{K}_H$ as in \eqref{Kdot} and 
\begin{equation}\label{Qdef1}
\mathcal{X}\times\mathcal{Y}\ni (x,y)\mapsto Q(x,y):=[\nabla\Psi(y)\tau(y)+\sigma_2(x,y)]\in\mathscr{L}(\mathcal{U}_2;\mathcal{X}).
\end{equation}

\begin{rem}
	In Section \ref{LPlbsec} we prove the Laplace Principle lower bound in two cases with different degrees of generality. First we consider the case $b=\sigma_2=0, \bar{\sigma}_1\neq 0$ which essentially reduces to the setting of a small-noise LDP for a fractional SDE with an averaged diffusion coefficient. Then we prove the lower bound for the full model \eqref{model}, in the case where the averaged matrix-valued function $$\overline{QQ^T}(x)=\int_{\mathcal{Y}}QQ^T(x,y)d\mu(y)$$ is uniformly non-degenerate. For more details on this condition see Section \ref{assumptionsection} below.
\end{rem}

The  term $\bar{\sigma}_1$ on the right-hand side of \eqref{QH1} reflects the na\"ive averaging of the diffusion coefficient proved in \cite{hairer2020averaging}, Theorem A (see also Remark 1.1 therein). The latter is very different from the case $H=1/2$ and is mainly attributed to the pathwise interpretation of the integral $\sigma_1dB^H.$ Roughly speaking, when $H>1/2$ the corresponding equations behave more like ODEs, while in the case $H=1/2$ one also has to average the quadratic variation of $B^{1/2}$ (see \cite{brehier2021averaging} for an explanation based on the quadratic variation of Brownian motion). 

From the perspective of Large Deviations, a closely related fact is that the LDP rate function $S^H_{x_0}$ turns out to be discontinuous at $H=1/2$ when $\sigma_1=\sigma_1(x,y).$ A rigorous proof of this is given in Section \ref{comparisonsec}, Proposition \ref{Hlimprop} below. The latter means that, in general, the two cases are rather different in terms of both typical dynamics and tail behavior.

\section{Tightness estimates for the controlled slow dynamics}\label{tightsec}
\noindent In this section we prove estimates for the process $X^{\epsilon,\eta,u}$ that are uniform over small values of $\epsilon$ and controls $u\in\mathcal{A}_N.$  We work in the topology of the fractional Sobolev space $W^{a,\infty}_{0}$ \eqref{W0norm} which, in view of Definition \ref{Young}, comes as a natural choice. The estimates proved here will then be used in Section \ref{limsec} to show that the family $\{(X^{\epsilon,\eta,u}, P^\epsilon) ;\epsilon, u\}$ of processes and occupation measures is tight. The following is the main result of this section:

\begin{prop}\label{tightnessprop}
	Let $T>0$ and $\mathcal{A}_N$ as in \eqref{AN}. Under Conditions \ref{C1}-\ref{C7} the following hold:\\ \textbf{(i)} Under Condition \ref{C6}$(i)$ and with $\nu_2$ as in \eqref{sigmagrowth1}  we have that, for all $a\in(1-H,\frac{1}{4}\wedge\frac{\nu_2}{2}),$ there exists $\epsilon_0>0$
	such that
	\begin{equation*}
	\sup_{\epsilon<\epsilon_0, u\in\mathcal{A}_N}\ex\big\|X^{\epsilon,\eta,u}\big\|_{0,a,\infty}<\infty.
	\end{equation*}
	In view of \eqref{pmembeds} the latter implies that for any $\theta<a$
	\begin{equation}\label{tightnessX}
	\sup_{\epsilon<\epsilon_0, u\in\mathcal{A}_N}\ex\big\|X^{\epsilon,\eta,u}\big\|_{C^\theta([0,T];\mathcal{X})}<\infty.
	\end{equation}
	\textbf{(ii)} Under Condition \ref{C6}(ii), the conclusion of (i) holds for all $a\in(1-H,\frac{1}{2})$. 
\end{prop}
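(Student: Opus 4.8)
\noindent The plan is to eliminate the singular drift by the Poisson-equation device and then run the fractional-calculus machinery for Young differential equations, tracking the small parameters throughout. I suppress the superscripts $\epsilon,\eta,u$ below.

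\textbf{Reduction.} Conditions \ref{C1}--\ref{C3} and the regime \eqref{regime} (which forces $(\epsilon\eta)^{-1/2}\le\eta^{-1}$ for small $\epsilon$) make the $Y$-equation uniformly dissipative, so Itô's formula applied to $|Y_t|^{2p}$, with the control absorbed through $\|\dot u_2\|_{L^2}\le N$, gives $\sup_{\epsilon<\epsilon_0,\,u\in\mathcal A_b}\ex\sup_{t\le T}|Y_t|^{2p}<\infty$ for every $p$, together with the (non-uniform) increment bound $(\ex|Y_s-Y_\theta|^{2p})^{1/2p}\lesssim_N\big((\epsilon\eta)^{-1/2}|s-\theta|^{1/2}\big)\wedge1$. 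Next, by Condition \ref{C7} the Poisson equation \eqref{Poisson} has a solution $\Psi$ with bounded Lipschitz gradient; applying Itô's formula to $\Psi(Y_t)$ — whose $d[Y]$-term is cancelled exactly by $\tfrac1\eta\mathcal L\Psi=-\tfrac1\eta b$, $\mathcal L$ as in \eqref{generator} — and rearranging yields
$$\tfrac{\sqrt\epsilon}{\sqrt\eta}\!\int_0^t\! b(Y_s)\,ds=-\sqrt{\epsilon\eta}\,[\Psi(Y_t)-\Psi(y_0)]+\!\int_0^t\!\nabla\Psi(Y_s)\big[g(X_s,Y_s)+\tau(Y_s)\dot u_2(s)\big]ds+\sqrt\epsilon\!\int_0^t\!\nabla\Psi(Y_s)\tau(Y_s)\,dW_s.$$
Substituted into the slow equation of \eqref{consys}, $X$ satisfies an integral equation whose right-hand side equals $x_0-\sqrt{\epsilon\eta}\,\Psi(Y_t)+\sqrt{\epsilon\eta}\,\Psi(y_0)$ plus: a Lipschitz, sublinearly growing drift $[c+\nabla\Psi g](X,Y)$; a $\dot u_2$-term $[\nabla\Psi\tau+\sigma_2](X,Y)\dot u_2$; the $\dot u_1$-term $\sigma_1(X,Y)\dot u_1$; the Young integral $\sqrt\epsilon\!\int\sigma_1(X,Y)\,dB^H$; and Itô integrals carrying the prefactor $\sqrt\epsilon$.

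\textbf{Local estimates.} Fix $a$ in the stated range, abbreviate $Z_t:=\|X\|_{0,a,\infty}$ computed over $[0,t]$ (the norm \eqref{W0norm}; the suffix $[r,t]$ below denotes restriction to that subinterval), and localize at $\tau_M:=\inf\{t:Z_t\ge M\}$ so that all quantities are a priori integrable with $M$-uniform constants. For the Young integral the integration-by-parts representation \eqref{ibpformula} and the Weyl forms \eqref{Weyl+}--\eqref{Weyl-} give the pathwise bound, valid for $a\in(1-H,1/2)$,
$$\Big|\int_r^t\sigma_1(X,Y)\,dB^H\Big|\lesssim\|B^H\|_{T,1-a,\infty}\,\|\sigma_1(X,Y)\|_{0,a,\infty;[r,t]}\,(t-r)^{1-a},$$
where $\|B^H\|_{T,1-a,\infty}$ has finite exponential moments by \eqref{fbmexpint} (so all its moments are bounded uniformly in $\epsilon$). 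The routine pieces — the Lipschitz/sublinear drift; the $\dot u_2$-term (Cauchy--Schwarz and $\|\dot u_2\|_{L^2}\le N$, $\sigma_2$ and $\nabla\Psi\tau$ bounded); the $\dot u_1$-term written as $\int\sigma_1(X,Y)\,\dot K_H(K_H^{-1}u_1)\,ds$ and estimated via Hölder's inequality, the boundedness of $\dot K_H$ \eqref{Kdot} from $L^2([0,T])$ into some $L^q([0,T])$ with $q>1$, $\|K_H^{-1}u_1\|_{L^2}=\|u_1\|_{\h_H}\le N$ and the growth \eqref{sigmagrowth1}; the Itô integrals (Burkholder--Davis--Gundy, prefactor $\sqrt\epsilon$); and the boundary term (contributing $\lesssim\sqrt\epsilon\,\eta^{1/2-a}(1+\|Y\|_\infty)$ to the fractional part, after splitting the inner integral at $|s-r|=\eta$) — all contribute, over an interval of length $\delta$, a bound of the form $O(\delta^{1-a})$ or $O(\delta)$ times constants with moments uniform in $\epsilon,u$. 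The term $\|\sigma_1(X,Y)\|_{0,a,\infty;[r,t]}$ is the only one feeding $Z$ back, and its estimate is the crux.

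\textbf{The $y$-dependence (crux).} It remains to bound $\int_r^s\tfrac{|\sigma_1(X_s,Y_s)-\sigma_1(X_\theta,Y_\theta)|}{(s-\theta)^{a+1}}d\theta$ uniformly. Split the numerator into $[\sigma_1(X_s,Y_s)-\sigma_1(X_\theta,Y_s)]+[\sigma_1(X_\theta,Y_s)-\sigma_1(X_\theta,Y_\theta)]$. The first bracket is $\le L|X_s-X_\theta|$ and produces the self-dependent contribution $\lesssim\|B^H\|_{T,1-a,\infty}\,(t-r)^{1-a}\,\|X\|_{0,a,\infty;[r,t]}$, which the patching below absorbs. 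For the second bracket, combining the Lipschitz bound with \eqref{sigmagrowth1} gives $|\sigma_1(X_\theta,Y_s)-\sigma_1(X_\theta,Y_\theta)|\lesssim|Y_s-Y_\theta|^{\nu_2}(1+|Y_s|^{\nu_2}+|Y_\theta|^{\nu_2})$; inserting the fast increment bound of the Reduction step and integrating in $\theta$ — splitting at the crossover $|s-\theta|\sim\epsilon\eta\,\|Y\|_\infty^2$ of its two regimes — gives, because $\nu_2/2>a$, a contribution $\lesssim(\epsilon\eta)^{-a}(1+\|Y\|_\infty)^{2\nu_2}$. Multiplied by the $\sqrt\epsilon$ prefactor this is $\epsilon^{1/2-a}\eta^{-a}$ times finite-moment random constants, and here \eqref{betaregime} enters: $\sqrt\epsilon\lesssim\eta^\beta$ forces $\epsilon^{1/2-a}\eta^{-a}\lesssim\eta^{\beta(1-2a)-a}$, which stays bounded — indeed $\to0$ — once $a<\tfrac{\beta}{1+2\beta}$; choosing $\beta<1/2$ close enough to $1/2$, this holds for any $a<1/4$, while the requirement $\nu_2/2>a$ (equivalently $\nu_2>2(1-H)$, which forces $H>3/4$) is exactly what makes the $\theta$-integral converge — this is the origin of the range $a\in(1-H,\tfrac{1}{4}\wedge\tfrac{\nu_2}{2})$.

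\textbf{Conclusion.} Patching the local estimates over $O(T/\delta)$ intervals of length $\delta\sim(\sqrt\epsilon\|B^H\|_{T,1-a,\infty})^{-1/(1-a)}$, so that the self-dependent term $\sqrt\epsilon\|B^H\|_{T,1-a,\infty}(t-r)^{1-a}\|X\|_{0,a,\infty;[r,t]}$ is absorbed, one obtains a pathwise bound $Z_T\lesssim(1+\text{finite-moment random constants})(\sqrt\epsilon\|B^H\|_{T,1-a,\infty})^{1/(1-a)}$; taking expectations, using Fernique's estimate \eqref{fbmexpint}, the uniform $Y$-moments, and $\sqrt\epsilon\le1$, gives $\sup_{\epsilon<\epsilon_0,\,u\in\mathcal A_b}\ex\|X\|_{0,a,\infty}<\infty$, whence \eqref{tightnessX} via \eqref{pmembeds}; letting $M\to\infty$ removes the localization. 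For part (ii) the integrand $\sigma_1=\sigma_1(x)$ does not depend on $y$, so the second bracket above vanishes identically; no $(\epsilon\eta)^{-a}$ factor arises, \eqref{betaregime} and $H>3/4$ are not needed, and the argument closes for every $a\in(1-H,1/2)$. The only genuinely delicate point is the crux step: the $y$-dependence of $\sigma_1$ forces increments of the fast process into the Young estimate, and the Hölder norm of $Y$ diverges like $(\epsilon\eta)^{-1/2}$ (through the control $u_2$) as $\epsilon\to0$; the scheme works only because this divergence is multiplied by $\sqrt\epsilon$ and because the window $\sqrt\eta\lesssim\sqrt\epsilon\lesssim\eta^\beta$ with $H>3/4$ of Condition \ref{C6}$(i)$ is calibrated precisely to balance the two.
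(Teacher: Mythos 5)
Your overall plan coincides with the paper's: eliminate the singular drift via the Poisson equation, estimate the Young integral via the fractional integration-by-parts formula and $\|B^H\|_{T,1-a,\infty}$, control the fast-variable contribution to the fractional seminorm by splitting near/far from the diagonal, and then absorb the self-referential term and take expectations (the paper uses Gr\"onwall on the pathwise bound, see Lemma~\ref{DeltaX1} and the $\|B^H\|$-splitting Lemma~\ref{Yfraclem}; your patching over small intervals is the equivalent device). So the architecture is right.

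The genuine gap is the claim in your Reduction step that $\sup_{\epsilon<\epsilon_0,u\in\mathcal A_b}\ex\sup_{t\le T}|Y_t^{\epsilon,\eta,u}|^{2p}<\infty$. This is false. The controlled $Y$-equation carries the term $(\epsilon\eta)^{-1/2}\tau(Y)\dot u_2$, and although the $-\Gamma/\eta$ drift relaxes on time-scale $\eta$, the energy injected in the most recent $O(\eta)$-window is not yet damped. Concretely, from Duhamel,
\begin{equation*}
\Big|\tfrac{1}{\sqrt{\epsilon\eta}}\int_0^t e^{-\Gamma(t-s)/\eta}\tau(Y_s)\dot u_2(s)\,ds\Big|
\le \tfrac{\|\tau\|_\infty}{\sqrt{\epsilon\eta}}\Big(\int_0^t e^{-2\gamma(t-s)/\eta}\,ds\Big)^{1/2}\|\dot u_2\|_{L^2}\lesssim \frac{N}{\sqrt{\epsilon}},
\end{equation*}
so $\ex\sup_t|Y_t|^{2p}$ scales like $\epsilon^{-p}$; it is exactly this that forces the paper to provide only the $L^2$-in-time moment bound in Lemma~\ref{Ybndlem}$(i)$ (the additional time-integral supplies the factor $\eta$ that converts $\epsilon^{-1}$ into $\eta/\epsilon\to 0$) and the H\"older-seminorm bound $\sim(\epsilon\eta)^{-1/2}$ in Lemma~\ref{Ybndlem}$(ii)$. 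Consequently the increment bound you use, $(\ex|Y_s-Y_\theta|^{2p})^{1/2p}\lesssim((\epsilon\eta)^{-1/2}|s-\theta|^{1/2})\wedge 1$, has the wrong constant in the second branch — it is $\wedge\,\epsilon^{-1/2}$, not $\wedge\,1$.

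This propagates directly into your crux step: the factor you discard as a ``finite-moment random constant,'' $(1+\|Y\|_\infty)^{2\nu_2}$, has $\ex(1+\|Y\|_\infty)^{2\nu_2}\sim(\epsilon\eta)^{-\nu_2}$, so the true contribution is $\sqrt\epsilon\,(\epsilon\eta)^{-a-\nu_2}$ rather than $\sqrt\epsilon\,(\epsilon\eta)^{-a}$, and for $a$ near $\nu_2/2$ and $\nu_2$ near $1/2$ this diverges. The paper avoids this entirely by \emph{not} interpolating: it uses the raw Lipschitz bound $|\sigma_1(X_r,Y_r)-\sigma_1(X_q,Y_q)|\le L(|X_r-X_q|+|Y_r-Y_q|)$ to land on $|\Delta_a|Y$ directly (Lemma~\ref{Youngbnd}), and then, in the far-from-diagonal part of Lemma~\ref{Yfraclem}, applies Cauchy--Schwarz in time so that only $\|Y\|_{L^2([0,T];\mathcal Y)}$ appears — which \emph{is} uniformly bounded. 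The repair to your argument is thus to drop the interpolation in $\nu_2$ and to replace $\|Y\|_\infty$ by $\|Y\|_{L^2}$ via Cauchy--Schwarz in the $\theta$-integral, matching the paper's Lemma~\ref{Yfraclem}; along the way you should also note that your crossover choice $|s-\theta|\sim\epsilon\eta$ gives the final power $\epsilon^{1/2-a}\eta^{-a}$, whereas the paper's free-parameter split $\rho\sim\eta^{1/\theta}$ with $\theta=a/\beta$ gives $\sqrt\epsilon/\eta^\beta$, which is what Condition~\ref{C6}$(i)$ directly controls; and that $\beta$ in \eqref{betaregime} is determined by the $\eta(\epsilon)$ relation and cannot be ``chosen close to $1/2$'' to enlarge the admissible range of $a$.
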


\noindent The proof of Proposition \ref{tightnessprop} is preceded by several auxiliary lemmas and is deferred to the end of this section. In order to avoid repetition, we shall only provide the complete proof for the first part of Proposition \ref{tightnessprop}. The second part treats the case $\sigma_1(x,y)=\sigma_1(x),$ follows by similar arguments and its proof is in fact simpler. For the sake of completeness, we will provide comments on the differences between the two cases when necessary. 

\begin{rem}\label{Yintrem}
	One of the challenges in obtaining the estimates that follow lies in that the weak derivatives $\dot{u}_i, i=1,2$  are only known to be square integrable. This, along with the recurrence condition (Condition \ref{C3}), only allows us to obtain uniform estimates for $Y^{\epsilon,\eta,u}$ in $L^p(\Omega, L^2([0,T];\mathcal{Y})), p\geq 1$, see Lemma \ref{Ybndlem}$(i)$.
\end{rem}
Our strategy for proving \eqref{Xbnd} is based on the following lemma, which
provides pathwise estimates for the Young integral $\sigma_1dB^H$ via the integration-by-parts formula \eqref{ibpformula}.
\begin{lem}\label{Youngbnd} Let $a\in(1-H, \frac{1}{2})$, $T>0$, $0\leq s<t\leq T$, $|\Delta_a|$ as in \eqref{Deltapmabs}, $\nu_1$ as in \eqref{sigmagrowth1} 
	and $L_{\sigma_1}$ denote the Lipschitz constant of $\sigma_1$. Under Condition \ref{C6} the following hold: \\
	(i) There exists a constant $C>0$ that depends on $a,L_{\sigma_1}, \nu_2,T$ such that, with probability $1,$ 
	\begin{equation}\label{preYoung}
	\begin{aligned}
	\bigg|\int_{s}^{t}\sigma_1&\big(X_r^{\epsilon,\eta,u}, Y_r^{\epsilon,\eta,u}\big)dB^H_r\bigg|\\&\leq C\|B^H\|_{T,1-a,\infty}\bigg[(t-s)^{\frac{1}{2}-a}\bigg(1+\sup_{r\in[0,t]}\big| X_r^{\epsilon,\eta,u}\big|^{\nu_1}+\big\|Y^{\epsilon,\eta,u}\big\|_{L^2([0,T];\mathcal{Y})}     \bigg)\\&
	+\int_{s}^{t}|\Delta_a|X_{s,r}^{\epsilon,\eta,u}dr+\int_{s}^{t}|\Delta_a|Y_{s,r}^{\epsilon,\eta,u}dr \bigg].
	\end{aligned}
	\end{equation}
	(ii) There exists a constant $C>0$ such that, with probability 1,
	\begin{equation*}
	\begin{aligned}
	|\Delta_a&|\bigg(\int_{0}^{\cdot}  \sigma_1\big(X_r^{\epsilon,\eta,u}, Y_r^{\epsilon,\eta,u}\big)dB^H_r\bigg)_{0,t}\\&\leq  C\|B^H\|_{T,1-a,\infty}\bigg[  1+\sup_{r\in[0,t]}\big| X_r^{\epsilon,\eta,u}\big|^{\nu_1}+\big\|Y^{\epsilon,\eta,u}\big\|_{L^2([0,T];\mathcal{Y})}+    \int_{0}^{t}   (t-r)^{-a}|\Delta_a|X_{0,r}^{\epsilon,\eta,u}dr\\&+\int_{0}^{t}(t-s)^{-a-1}\int_{s}^{t}|\Delta_a|Y_{s,r}^{\epsilon,\eta,u}dr ds  \bigg].
	\end{aligned}
	\end{equation*}      	  		
\end{lem}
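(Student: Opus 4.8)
The plan is to derive both inequalities from the fractional integration-by-parts formula \eqref{ibpformula}, applied on the interval $[s,t]$ with exponent $\alpha=a$. First one checks that the object in question is a genuine Young integral: under the standing assumptions the paths of $X^{\epsilon,\eta,u}$ and $Y^{\epsilon,\eta,u}$ are H\"older continuous of some order exceeding $a$ (recall $a<1/2<H$), so, writing $f_r:=\sigma_1(X^{\epsilon,\eta,u}_r,Y^{\epsilon,\eta,u}_r)$, the map $r\mapsto f_r$ is H\"older of some order $\theta_1\in(a,1/2)$ while $B^H\in C^{\theta_2}$ for any $\theta_2<H$; taking $\theta_2\in(1-a,H)$ (non-empty since $a>1-H$) one has $\theta_1+\theta_2>1$, so the Young integral exists and \eqref{ibpformula} holds with exponent $a$ (the H\"older exponents being complementary in the sense of the Remark following Definition \ref{Young}), whence
\[
\Big|\int_s^t f_r\,dB^H(r)\Big|\le\int_s^t\big|D^a_{s^+}f(r)\big|\,\big|D^{1-a}_{t^-}(B^H)_{t^-}(r)\big|\,dr .
\]
The $B^H$-factor is bounded uniformly in $r$: by the Weyl representation \eqref{Weyl-}, the continuity of $B^H$ and the definition \eqref{-norm} of $\|\cdot\|_{T,1-a,\infty}$, $\big|D^{1-a}_{t^-}(B^H)_{t^-}(r)\big|\le C(a)\big(\tfrac{|B^H_t-B^H_r|}{(t-r)^{1-a}}+|\Delta^-_{1-a}|B^H_{r,t}\big)\le C(a)\,\|B^H\|_{T,1-a,\infty}$ for every $r\in(s,t)$, so it can be pulled out of the integral.

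It remains to estimate $\int_s^t|D^a_{s^+}f(r)|\,dr$, which by \eqref{Weyl+} is at most $C(a)\int_s^t\big((r-s)^{-a}|f_r|+|\Delta_a|f_{s,r}\big)\,dr$. For the first part I insert the growth bound \eqref{sigmagrowth1}: the contributions of $1$ and $|X^{\epsilon,\eta,u}_r|^{\nu_1}$ are at most $C\big(1+\sup_{r\le t}|X^{\epsilon,\eta,u}_r|^{\nu_1}\big)(t-s)^{1-a}$, while for $|Y^{\epsilon,\eta,u}_r|^{\nu_2}$ I apply H\"older's inequality with exponents $\tfrac{2}{\nu_2}$ and $\tfrac{2}{2-\nu_2}$ to get $C\,\|Y^{\epsilon,\eta,u}\|_{L^2([0,T];\mathcal{Y})}^{\nu_2}\,(t-s)^{1-\nu_2/2-a}$ (the exponent $\tfrac{2a}{2-\nu_2}$ is $<1$ since $a<1/2$, $\nu_2<1$). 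As $\nu_2\in(0,1/2)$ one has $x^{\nu_2}\le 1+x$ and $(t-s)^{1-\nu_2/2-a}\le T^{1/2-\nu_2/2}(t-s)^{1/2-a}$, and likewise $(t-s)^{1-a}\le T^{1/2}(t-s)^{1/2-a}$, so the three contributions merge into $C(t-s)^{1/2-a}\big(1+\sup_{r\le t}|X^{\epsilon,\eta,u}_r|^{\nu_1}+\|Y^{\epsilon,\eta,u}\|_{L^2([0,T];\mathcal{Y})}\big)$. For the second part, Lipschitz continuity of $\sigma_1$ gives $|f_r-f_w|\le L_{\sigma_1}\big(|X^{\epsilon,\eta,u}_r-X^{\epsilon,\eta,u}_w|+|Y^{\epsilon,\eta,u}_r-Y^{\epsilon,\eta,u}_w|\big)$, hence $\int_s^t|\Delta_a|f_{s,r}\,dr\le L_{\sigma_1}\int_s^t\big(|\Delta_a|X^{\epsilon,\eta,u}_{s,r}+|\Delta_a|Y^{\epsilon,\eta,u}_{s,r}\big)\,dr$. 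Collecting these and multiplying by the uniform $B^H$-bound yields \eqref{preYoung}; under Condition \ref{C6}(ii) the $Y^{\epsilon,\eta,u}$- and $\nu_2$-terms are absent and the argument is the same (and simpler).

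For part (ii) I would use additivity of the Young integral together with \eqref{Deltapmabs} to write $|\Delta_a|\big(\int_0^{\cdot}f_r\,dB^H(r)\big)_{0,t}=\int_0^t(t-r)^{-a-1}\big|\int_r^t f_\rho\,dB^H(\rho)\big|\,dr$ and then bound the inner increment by part (i) applied on $[r,t]$. The bulk term contributes $\big(1+\sup_{\rho\le t}|X^{\epsilon,\eta,u}_\rho|^{\nu_1}+\|Y^{\epsilon,\eta,u}\|_{L^2([0,T];\mathcal{Y})}\big)\int_0^t(t-r)^{-1/2-2a}\,dr$, the remaining integral being finite — and absorbed into $C$ — precisely when $a<1/4$, which is the range in which part (ii) is later invoked (cf. Proposition \ref{tightnessprop}(i), where $H\in(3/4,1)$ forces $1-H<1/4$); in the setting of Condition \ref{C6}(ii) one keeps the exponent $1-a$ in (i) and obtains instead $\int_0^t(t-r)^{-2a}\,dr$, finite for all $a<1/2$. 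The iterated $X^{\epsilon,\eta,u}$-term $\int_0^t(t-r)^{-a-1}\int_r^t|\Delta_a|X^{\epsilon,\eta,u}_{r,w}\,dw\,dr$ is handled by the monotonicity $|\Delta_a|X^{\epsilon,\eta,u}_{r,w}\le|\Delta_a|X^{\epsilon,\eta,u}_{0,w}$ (immediate from \eqref{Deltapmabs}, since $[r,w]\subseteq[0,w]$ and the integrand is nonnegative), followed by Fubini and the elementary estimate $\int_0^w(t-r)^{-a-1}\,dr\le a^{-1}(t-w)^{-a}$, which yields $a^{-1}\int_0^t(t-w)^{-a}|\Delta_a|X^{\epsilon,\eta,u}_{0,w}\,dw$; the iterated $Y^{\epsilon,\eta,u}$-term is kept as it stands. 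This gives precisely the claimed bound.

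The main obstacle, and the reason for this organisation, is that neither $\sup_{t\le T}|Y^{\epsilon,\eta,u}_t|$ nor the H\"older seminorm of $Y^{\epsilon,\eta,u}$ is bounded uniformly over $\epsilon$ and $u\in\mathcal{A}_b$ (cf. Remark \ref{Yintrem}): the only quantity attached to $Y^{\epsilon,\eta,u}$ that one can control uniformly in the sequel is $\|Y^{\epsilon,\eta,u}\|_{L^2([0,T];\mathcal{Y})}$ (cf. Lemma \ref{Ybndlem}), so the polynomial growth $|\sigma_1|\lesssim 1+|y|^{\nu_2}$ must be absorbed through the H\"older-inequality step above rather than by taking a supremum — this is the delicate point. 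A secondary matter is the iterated-integral reduction in part (ii), where the monotonicity of $|\Delta_a|X^{\epsilon,\eta,u}$ combined with Fubini is the key move. Keeping all powers of $(t-s)$ and the H\"older exponents admissible is exactly where the sub-exponent restrictions of Condition \ref{C6}(i) and the range of $a$ in Proposition \ref{tightnessprop} enter.
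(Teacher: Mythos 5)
Your proof is correct and follows essentially the same route as the paper: fractional integration-by-parts on $[s,t]$ with exponent $a$, a uniform bound on the $D^{1-a}_{t^-}B^H_{t^-}$ factor via the Weyl representation and \eqref{-norm}, splitting $D^a_{s^+}\sigma_1$ into the bulk $(r-s)^{-a}|\sigma_1|$ and the $\Delta_a$ part, and then the growth/Lipschitz assumptions of Condition \ref{C6}. Two small organisational differences are worth noting. First, you handle the $|Y|^{\nu_2}$ contribution by a single H\"older step with exponents $2/\nu_2$ and $2/(2-\nu_2)$ (then $x^{\nu_2}\le 1+x$), whereas the paper uses Cauchy--Schwarz to produce $(\int_0^t|Y_r|^{2\nu_2}dr)^{1/2}$ and then Jensen together with Young's inequality for products; the two give the same $(t-s)^{1/2-a}(1+\sup|X|^{\nu_1}+\|Y\|_{L^2})$. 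Second, for the iterated $X$-term in part (ii) you use the monotonicity $|\Delta_a|X_{r,w}\le|\Delta_a|X_{0,w}$ followed by a two-variable Fubini exchange, while the paper unwinds $|\Delta_a|X_{s,r}$ into a triple integral and reorders; your version is a touch cleaner and arrives at the same bound $a^{-1}\int_0^t(t-w)^{-a}|\Delta_a|X_{0,w}\,dw$.

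You are also more careful than the paper on one point. Substituting the bulk estimate $(t-s)^{1/2-a}$ from \eqref{preYoung} into part (ii) produces the singularity $\int_0^t(t-s)^{-2a-\frac12}\,ds$, so one needs $a<1/4$ under Condition \ref{C6}(i) — exactly what Proposition \ref{tightnessprop}(i) demands and what you record. The paper's written proof of (ii) displays the exponent $-\tfrac12-a$ and asserts integrability from $a<1/2$ alone; that is only correct under Condition \ref{C6}(ii) (where the bulk exponent stays $1-a$, giving $\int_0^t(t-s)^{-2a}\,ds$). Your careful bookkeeping of the exponents resolves this, and correctly ties the admissible range of $a$ in (ii) back to the hypotheses of Proposition \ref{tightnessprop}.
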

\begin{proof} We shall only prove the estimates under Condition \ref{C6}$(i).$   It is straightforward to verify that similar estimates hold under Condition \ref{C6}$(ii)$ and their proof is in fact simpler since $\sigma_1$ only depends on $x.$ In this case, the right-hand side of the estimates no longer depends on $Y^{\epsilon,\eta,u}$.\\
	(i) In view of Section \ref{fbmsec}, we can fix a version of $B^H$ with paths in $W^{1-a,\infty}_{T}[0,T].$ For this version, \eqref{Weyl-} and \eqref{-norm} yield the estimate 
	\begin{equation*}
	\sup_{0\leq r<t\leq T}\big|D^{1-a}_{t^-}B^H_{t^-}(r)\big|\leq C_a\|B^H\|_{T,1-a,\infty}\;,
	\end{equation*}
	where the positive constant $C_a$ does not depend on $t.$ Recalling \eqref{ibpformula} and using the Lipschitz continuity and growth of $\sigma_1$ the latter implies
	\begin{equation*}
	\begin{aligned}
	\bigg|&\int_{s}^{t} D^a_{s^+}\sigma_1(X_r^{\epsilon,\eta,u},Y^{\epsilon,\eta,u}_r)D^{1-a}_{t^-}B^H_{t^-}(r)dr\bigg|\\&\leq \sup_{0\leq r<t\leq T}\big|D^{1-a}_{t^-}B^H_{t^-}(r)\big|\int_{s}^{t} \big|D^a_{s^+}\sigma_1(X_r^{\epsilon,\eta,u},Y^{\epsilon,\eta,u}_r)\big|dr
	\\& \leq C_{a}\|B^H\|_{T,1-a,\infty}\int_{s}^{t}\bigg[ \frac{\big|\sigma_1(X_r^{\epsilon,\eta,u},Y^{\epsilon,\eta,u}_r)\big|}{(r-s)^a} \\&+\int_{s}^{r}\frac{\big|\sigma_1(X_r^{\epsilon,\eta,u},Y^{\epsilon,\eta,u}_r)-\sigma_1(X_q^{\epsilon,\eta,u},Y^{\epsilon,\eta,u}_q)\big|}{(r-q)^{a+1}} dq \bigg]dr\\&
	\leq C_{a,\sigma_1}(K_2+L_{\sigma_1})\|B^H\|_{T,1-a,\infty}\bigg[\int_{s}^{t} (r-s)^{-a}\big(1+\big|X_r^{\epsilon,\eta,u}|^{\nu_1}+ \big|Y_r^{\epsilon,\eta,u}\big|^{\nu_2}\big)dr\\&+\int_{s}^{t}\int_{s}^{r}\frac{\big|X_r^{\epsilon,\eta,u}-X_q^{\epsilon,\eta,u}\big|+\big|Y^{\epsilon,\eta,u}_r-Y^{\epsilon,\eta,u}_q\big|}{(r-q)^{a+1}}dqdr\bigg]\\&
	\leq C_{a,\sigma_1}\|B^H\|_{T,1-a,\infty}\bigg[ (t-s)^{1-a}\bigg(1+\sup_{r\in[0,t]}\big| X_r^{\epsilon,\eta,u}\big|^{\nu_1}\bigg)+(t-s)^{\frac{1}{2}-a} \bigg(\int_{0}^{t}\big|Y^{\epsilon,\eta,u}_r\big|^{2\nu_2}dr\bigg)^{\frac{1}{2}}\\&+ \int_{s}^{t}|\Delta_a|X_{s,r}^{\epsilon,\eta,u}dr+\int_{s}^{t}|\Delta_a|Y_{s,r}^{\epsilon,\eta,u}dr  \bigg],
	\end{aligned}
	\end{equation*}
	where the last line follows from the Cauchy-Schwarz inequality and the definition of $|\Delta_a|$. The estimate follows by noting that $2\nu_2<1$ and applying Young's inequality for products.

	\noindent$ (ii)$ From \eqref{preYoung} we have
	\begin{equation*}\label{young1}
	\begin{aligned}
	|\Delta_a|\bigg(\int_{0}^{\cdot} & \sigma_1\big(X_r^{\epsilon,\eta,u}, Y_r^{\epsilon,\eta,u}\big)dB^H_r\bigg)_{0,t}= \int_{0}^{t}(t-s)^{-a-1}\bigg|\int_{s}^{t}\sigma_1\big(X_r^{\epsilon,\eta,u}, Y_r^{\epsilon,\eta,u}\big)dB^H_r\bigg|ds\\&
	\leq C\|B^H\|_{T,1-a,\infty}\bigg[\bigg(1+ \sup_{r\in[0,t]}\big| X_r^{\epsilon,\eta,u}\big|^{\nu_1}+\big\|Y^{\epsilon,\eta,u}\big\|_{L^2([0,T];\mathcal{Y})}\bigg)\int_{0}^{t}(t-s)^{-\frac{1}{2}-a}ds\\&+\int_{0}^{t}(t-s)^{-a-1}\int_{s}^{t}|\Delta_a|Y_{s,r}^{\epsilon,\eta,u}dr ds +    \int_{0}^{t}(t-s)^{-a-1}\int_{s}^{t}|\Delta_a|X_{s,r}^{\epsilon,\eta,u}drds \bigg].
	\end{aligned}
	\end{equation*}
	Since $a<1/2,$ the singularity in the first integral of the last display is integrable. Finally, an application of Fubini's theorem on the last term yields
	\begin{equation*}
	\begin{aligned}
	\int_{0}^{t}(t-s)^{-a-1}\int_{s}^{t}|\Delta_a|X_{s,r}^{\epsilon,\eta,u}drds&=\int_{0}^{t}\int_{s}^{t}\int_{s}^{r}(t-s)^{-a-1}\frac{\big|X_{r}^{\epsilon,\eta,u}-X_{q}^{\epsilon,\eta,u}\big|}{(r-q)^{a+1}}dqdrds\\&= \int_{0}^{t}   \int_{0}^{r} \bigg(\int_{0}^{q}(t-s)^{-a-1}    ds\bigg)\frac{\big| X_r^{\epsilon,\eta,u}-X_q^{\epsilon,\eta,u} \big|    }{(r-q)^{a+1}}dqdr\\&
	\leq \int_{0}^{t}   \int_{0}^{r} \bigg(\int_{0}^{r}(t-s)^{-a-1}    ds\bigg)\frac{\big| X_r^{\epsilon,\eta,u}-X_q^{\epsilon,\eta,u} \big|    }{(r-q)^{a+1}}dqdr
	\\&
	=\frac{1}{a}\int_{0}^{t}   \int_{0}^{r} \big[(t-r)^{-a}-t^{-a}\big]\frac{\big| X_r^{\epsilon,\eta,u}-X_q^{\epsilon,\eta,u} \big|    }{(r-q)^{a+1}}dqdr\\&\leq 
	\frac{1}{a}\int_{0}^{t}   (t-r)^{-a}\int_{0}^{r}\frac{\big| X_r^{\epsilon,\eta,u}-X_q^{\epsilon,\eta,u} \big|    }{(r-q)^{a+1}}dqdr\\&
	=\frac{1}{a}\int_{0}^{t}   (t-r)^{-a}|\Delta_a|X_{0,r}^{\epsilon,\eta,u}dr.
	\end{aligned}
	\end{equation*}
	\noindent The proof is complete.
\end{proof}
\begin{rem} In view of \eqref{preYoung} we see that, in order to estimate the Young integral, one has to control both the sup-norm $\|X^{\epsilon,\eta,u}\|_{C([0,T];\mathcal{X})},$ and the integrated difference ratio $|\Delta_a|X^{\epsilon,\eta,u}_{0,t}$ of some order $a.$ This is our goal for the rest of this section, as well as the reason for working in the space $W^{a,\infty}_0.$
	
\end{rem}
\noindent The following lemma collects some preliminary estimates for the Stieltjes integrals $du_i, i=1,2.$ 
\begin{lem}\label{u1bnd} Let $|\Delta_a|$ as in \eqref{Deltapmabs}, $u=(u_1,u_2)\in\mathcal{A}_N$ \eqref{AN}, $\nu_1,\nu_2$ as in \eqref{sigmagrowth1}, $T>0$ and $0<a<\frac{\nu_2}{2}.$ There exists a constant $C>0$ such that for $i=1,2$ the following hold:
	\begin{equation}\label{supcon}
	(i)\sup_{t\in[0,T]}\bigg|\int_{0}^{t}\sigma_i\big(X_r^{\epsilon,\eta,u}, Y_r^{\epsilon,\eta,u}\big)du_i(r)\bigg|\leq  C\bigg(1+\int_{0}^{T}\sup_{s\in[0,r]}\big|X_s^{\epsilon,\eta,u}\big|dr+ \big\|Y^{\epsilon,\eta,u}\big\|_{L^2([0,T];\mathcal{Y})}\bigg).
	\end{equation}
	\begin{equation*}
	(ii)|\Delta_a|\bigg(\int_{0}^{\cdot}\sigma_i\big(X_r^{\epsilon,\eta,u}, Y_r^{\epsilon,\eta,u}\big)du_i(r)\bigg)_{0,t}\leq C\bigg( 1+\sup_{t\in[0,T]}\big|X_t^{\epsilon,\eta,u}\big|^{2\nu_1}+\|Y^{\epsilon,\eta,u}\|_{L^2([0,T];\mathcal{Y})}\bigg).
	\end{equation*}
\end{lem}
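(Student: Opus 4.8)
The plan is to isolate the genuinely ``fractional'' contribution $i=1$ and to get around the fact that $\dot u_1\notin L^2$ by working one Lebesgue exponent below $2$. First, since $u_1\in\mathcal{H}_H=I^{H+\frac12}_{0^+}(L^2[0,T])$ and $u_2\in\mathcal{H}_{1/2}\simeq H^1_0([0,T])$ (Remark~\ref{CM1/2rem}) are absolutely continuous, the Stieltjes integrals are ordinary Lebesgue integrals $\int_0^t\sigma_i(X^{\epsilon,\eta,u}_r,Y^{\epsilon,\eta,u}_r)\,du_i(r)=\int_0^t\sigma_i(\cdot)\dot u_i(r)\,dr$, with $\|\dot u_2\|_{L^2}\lesssim N$. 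Writing $v:=K_H^{-1}u_1$ so that $\|v\|_{L^2}=\|u_1\|_{\mathcal{H}_H}\le N$, differentiating \eqref{KH} gives $\dot u_1=c_H\Phi\cdot I^{H-\frac12}_{0^+}(\Phi^{-1}v)$ with $\Phi(s)=s^{H-\frac12}$; since $\Phi^{-1}\in L^r[0,T]$ for every $r<\frac{2}{2H-1}$, Hölder's inequality gives $\Phi^{-1}v\in L^p$ with $\|\Phi^{-1}v\|_{L^p}\lesssim\|v\|_{L^2}$ for every $p<\frac1H$, and the Hardy--Littlewood--Sobolev (fractional integration) theorem for $I^{H-\frac12}_{0^+}$ then yields the preliminary fact that $\dot u_1\in L^q([0,T];\mathcal{U}_1)$ for every $q<2$, with $\|\dot u_1\|_{L^q}\le C(q,H,T)\,N$.

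The term $i=2$ is handled directly by the uniform bound on $\sigma_2$ (Condition~\ref{C4}): for $0\le r\le t\le T$, $\big|\int_r^t\sigma_2(\cdot)\dot u_2\,ds\big|\le\|\sigma_2\|_\infty(t-r)^{1/2}\|\dot u_2\|_{L^2}\lesssim N(t-r)^{1/2}$, which gives the $i=2$ summand of $(i)$ (take $r=0$, then $\sup_t$) and, since $a<\frac12$, the $i=2$ summand of $(ii)$ via $|\Delta_a|\big(\int_0^\cdot\sigma_2\,du_2\big)_{0,t}=\int_0^t(t-r)^{-a-1}\big|\int_r^t\sigma_2\,du_2\big|\,dr\lesssim N\int_0^t(t-r)^{-a-\frac12}\,dr<\infty$.

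For $i=1$ I would fix $q<2$ whose conjugate $q'$ is close enough to $2$ that $\nu_1q'<1$, $\nu_2q'<2$ and (for $(ii)$) $\frac1{q'}>a+\frac{\nu_2}{2}$; this is possible since $\nu_1,\nu_2<\frac12$ and $a<\frac{\nu_2}{2}$ force $a+\frac{\nu_2}{2}<\nu_2<\frac12$. By Hölder's inequality and the preliminary fact,
\[
\Big|\int_r^t\sigma_1(X^{\epsilon,\eta,u}_s,Y^{\epsilon,\eta,u}_s)\,du_1(s)\Big|\le\|\dot u_1\|_{L^q([0,T])}\,\|\sigma_1(X^{\epsilon,\eta,u},Y^{\epsilon,\eta,u})\|_{L^{q'}([r,t];\mathcal{X})}\lesssim N\,\|\sigma_1(X^{\epsilon,\eta,u},Y^{\epsilon,\eta,u})\|_{L^{q'}([r,t];\mathcal{X})}.
\]
Using the sublinear growth \eqref{sigmagrowth1}, the $1$- and $|X|^{\nu_1}$-parts contribute $\lesssim(t-r)^{1/q'}\big(1+\sup_{s\le T}|X^{\epsilon,\eta,u}_s|^{\nu_1}\big)$, and a further Hölder (licit as $\nu_2q'<2$) bounds the $Y$-part by $(t-r)^{\frac1{q'}-\frac{\nu_2}{2}}\|Y^{\epsilon,\eta,u}\|^{\nu_2}_{L^2([0,T];\mathcal{Y})}$; linearising with $\lambda^{c}\le1+\lambda$ ($c\le1$) gives $\big|\int_r^t\sigma_1(\cdot)\,du_1\big|\lesssim N(t-r)^{\frac1{q'}-\frac{\nu_2}{2}}\big(1+\sup_{s\le T}|X^{\epsilon,\eta,u}_s|^{\nu_1}+\|Y^{\epsilon,\eta,u}\|_{L^2}\big)$. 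For $(i)$, take $r=0$ and bound $\int_0^t|X^{\epsilon,\eta,u}_s|^{\nu_1q'}\,ds\le T+\int_0^T\sup_{s\le r}|X^{\epsilon,\eta,u}_s|\,ds$ (using $\nu_1q'\le1$), which is exactly the right-hand side of $(i)$; then take $\sup_{t\le T}$ and add the $i=2$ bound. For $(ii)$, insert the $(t-r)$-bound into $|\Delta_a|\big(\int_0^\cdot\sigma_1\,du_1\big)_{0,t}=\int_0^t(t-r)^{-a-1}\big|\int_r^t\sigma_1\,du_1\big|\,dr$; the resulting integral $\int_0^t(t-r)^{\frac1{q'}-\frac{\nu_2}{2}-a-1}\,dr$ converges precisely because $\frac1{q'}>a+\frac{\nu_2}{2}$, and with $\lambda^{\nu_1}\le1+\lambda^{2\nu_1}$ plus the $i=2$ bound this gives $(ii)$.

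The main obstacle is precisely the borderline regularity of $\dot u_1$: since $v=K_H^{-1}u_1$ is merely $L^2$ and $\Phi^{-1}$ has a non-integrable square, $\dot u_1$ just fails to be square-integrable and cannot be paired with $\sigma_1(X,Y)$ by Cauchy--Schwarz; passing to $L^q$--$L^{q'}$ with $q$ slightly below $2$ is the fix, and it works only because $\sigma_1$ grows with exponents $\nu_1,\nu_2<\frac12$, which leaves exactly the room needed. In $(ii)$ the extra positive power of $t-r$ comes from restricting the time-integral to $[r,t]$, and the hypothesis $a<\frac{\nu_2}{2}$ is what makes this gain dominate the singularity $(t-r)^{-a-1}$. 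An alternative for $i=1$ that sidesteps Hardy--Littlewood--Sobolev is to keep the explicit kernel of $\dot K_H$ from \eqref{Kdot}, apply Fubini and Cauchy--Schwarz twice, and evaluate $\int_0^s z^{1-2H}(s-z)^{H-\frac32}\,dz=s^{\frac12-H}B(2-2H,H-\tfrac12)$; this reduces $(i)$ to $\big|\int_0^t\sigma_1(\cdot)\,du_1\big|\lesssim N\|\sigma_1(X^{\epsilon,\eta,u},Y^{\epsilon,\eta,u})\|_{L^2([0,t];\mathcal{X})}$ and is the route that naturally produces the exponent $2\nu_1$ in $(ii)$.
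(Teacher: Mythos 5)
Your central premise---that $\dot u_1 = \dot K_H(K_H^{-1}u_1)$ ``just fails to be square-integrable,'' so Cauchy--Schwarz against $\sigma_1(X^{\epsilon,\eta,u},Y^{\epsilon,\eta,u})$ is unavailable---is false, and this misconception drives the whole detour. Your Hardy--Littlewood--Sobolev chain ($\Phi^{-1}v\in L^p$ for $p<1/H$, then $I^{H-1/2}_{0^+}\colon L^p\to L^m$ with $1/m=1/p-(H-1/2)$) establishes $\dot u_1\in L^q$ for every $q<2$; that is a \emph{positive} regularity statement and cannot rule out $q=2$. In fact the endpoint holds: for $H>1/2$ the Cameron--Martin space $\h_H$ embeds continuously into $H^1_0([0,T])=\h_{1/2}$, i.e.\ $\dot K_H\colon L^2\to L^2$ is bounded. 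This is precisely Lemma~\ref{udotL2} in the appendix, proved via the identity $\|\dot K_H v\|_{L^2}^2 = \mathrm{const}\cdot\iint v(s)v(t)|s-t|^{2H-2}\,ds\,dt$ (display (4.5) of Pipiras--Taqqu) together with the continuous inclusion $L^2\subset|\mathfrak H|$ from~\eqref{Taqquembed}. Your HLS argument simply is not sharp at $q=2$: it treats $I^{H-1/2}_{0^+}$ as a generic fractional integral and loses the cancellation between the inner weight $\Phi^{-1}$ and the outer weight $\Phi$, which is exactly what closes the endpoint. Notice also that your own concluding ``alternative''---Fubini plus two Cauchy--Schwarzes landing on $\lesssim N\|\sigma_1(X,Y)\|_{L^2([0,t])}$---implicitly rederives the $L^2$ bound on $\dot u_1$, and thereby contradicts the obstacle you announced at the outset.

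Once $\|\dot u_1\|_{L^2}\lesssim N$ is known, the paper's proof is a single Cauchy--Schwarz. For~$(i)$, $\big|\int_0^t\sigma_1\,du_1\big|\le K_2\big(T+\int_0^t|X_r^{\epsilon,\eta,u}|^{2\nu_1}dr+\int_0^t|Y_r^{\epsilon,\eta,u}|^{2\nu_2}dr\big)^{1/2}\|\dot u_1\|_{L^2}$, which closes because $\nu_1,\nu_2\le 1/2$. For~$(ii)$, after the Fubini step that you also use, one Cauchy--Schwarz produces $\big(\int_0^t(t-r)^{-2a}|Y_r|^{2\nu_2}dr\big)^{1/2}$, Young's product inequality with exponent $1/(2\nu_2)$ turns this into $\int_0^t(t-r)^{-a/\nu_2}|Y_r|\,dr$, and a final Cauchy--Schwarz requires $\int_0^t(t-r)^{-2a/\nu_2}dr<\infty$, i.e.\ $a<\nu_2/2$---which is where the hypothesis on $a$ enters, and which also produces the exponent $2\nu_1$ in the statement. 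Your $L^q$--$L^{q'}$ bookkeeping (choosing $a+\nu_2/2<1/q'<1/2$, feasible precisely because $a<\nu_2/2<1/4$) does in fact close, so the estimates you write down are correct; but they constitute a longer route grown out of a false belief, and the claim that $\dot u_1\notin L^2$ should be retracted.
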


\begin{proof}$ (i)$ Since $u=(u_1,u_2)\in\h_H\oplus\h_{1/2},$ it has $\pr$-almost surely classically differentiable paths. Thus, for $i=1,2,$ $du_i$ is in fact a classical Stieltjes integral. Starting with $i=1$ and using \eqref{sigmagrowth1}, Lemma \ref{udotL2} and the Cauchy-Schwarz inequality we obtain
	\begin{equation*}
	\begin{aligned}
	\bigg|\int_{0}^{t}\sigma_1\big(X_r^{\epsilon,\eta,u}, Y_r^{\epsilon,\eta,u}\big)du_1(r)\bigg|&\leq K_2\int_{0}^{t}\big(1+\big|X_r^{\epsilon,\eta,u}\big|^{\nu_1}+ \big|Y_r^{\epsilon,\eta,u}\big|^{\nu_2}\big)\big|\dot{u}_1(r)\big|dr\\&
	\leq C_{\nu_1, \nu_2}\bigg(T+\int_{0}^{t}\big|X_r^{\epsilon,\eta,u}\big|^{2\nu_1}dr+ \int_{0}^{t}\big|Y_r^{\epsilon,\eta,u}\big|^{2\nu_2}dr\bigg)^{\frac{1}{2}}\|\dot{u}_1\|_{L^2}\\&
	\leq C_{\nu_1,\nu_2}\bigg(1+\int_{0}^{t}\big|X_r^{\epsilon,\eta,u}\big|dr+ \big\|Y^{\epsilon,\eta,u}\big\|_{L^2([0,T];\mathcal{Y})}\bigg),
	\end{aligned}
	\end{equation*}
	where we used that $\nu_2,\nu_1\leq 1/2$ and Lemma \ref{udotL2} to deduce that  $\|\dot{u}_1\|_{L^2([0,T];\mathcal{U}_1)}\leq C\|u_1\|_{\h_H}\leq C N$ with probability $1.$ For $i=2,$ the estimate for $\sigma_2du_2$ follows similarly and is in fact simpler since $\sigma_2$ is uniformly bounded (see Condition \ref{C4}).\\
	$(ii)$ Let $0\leq s<t\leq T$.  In view of \eqref{sigmagrowth1}, Lemma \ref{udotL2}, Fubini's theorem and the Cauchy-Schwarz inequality we have 
	\begin{equation*}
	\begin{aligned}
	\int_{0}^{t}&(t-s)^{-a-1}\bigg|\int_{s}^{t}\sigma_1\big(X_r^{\epsilon,\eta,u}, Y_r^{\epsilon,\eta,u}\big)du_1(r)\bigg|ds\\&\leq 	\int_{0}^{t}(t-s)^{-a-1}\int_{s}^{t}\big|\sigma_1\big(X_r^{\epsilon,\eta,u}, Y_r^{\epsilon,\eta,u}\big)\dot{u}_1(r)\big|drds\\&
	\leq K_2\int_{0}^{t}(t-s)^{-a-1}\int_{s}^{t}\big(1+\big|X_r^{\epsilon,\eta,u}\big|^{\nu_1}+ \big|Y_r^{\epsilon,\eta,u}\big|^{\nu_2}\big)\big|\dot{u}_1(r)\big|drds\\&	=K_2\int_{0}^{t}\big(1+\big|X_r^{\epsilon,\eta,u}\big|^{\nu_1}+ \big|Y_r^{\epsilon,\eta,u}\big|^{\nu_2}\big)|\dot{u}_1(r)|\int_{0}^{r}(t-s)^{-a-1}dsdr\\&
	\leq \frac{K_2}{a}\int_{0}^{t}(t-r)^{-a}\big(1+\big|X_r^{\epsilon,\eta,u}\big|^{\nu_1}+ \big|Y_r^{\epsilon,\eta,u}\big|^{\nu_2}\big)|\dot{u}_1(r)|dr\\&
	\leq C_{a}\|\dot{u}_1\|_{L^2}\bigg[T^{1-2a}\bigg(1+\sup_{r\in[0,T]}\big|X_r^{\epsilon,\eta,u}\big|^{2\nu_1}\bigg)+ \bigg(\int_{0}^{t}(t-r)^{-2a}\big|Y_r^{\epsilon,\eta,u}\big|^{2\nu_2}dr\bigg)\bigg]^{\frac{1}{2}}\\&
	\leq  C_{\nu_2,a,T}\bigg[1+\sup_{t\in[0,T]}\big|X_t^{\epsilon,\eta,u}\big|^{2\nu_1}+\bigg(\int_{0}^{t}(t-r)^{-a/\nu_2} \big|Y_r^{\epsilon,\eta,u}\big|dr\bigg)\bigg],
	\end{aligned} 
	\end{equation*}
	where we applied Young's inequality for products with exponent $\tfrac{1}{2\nu_2}\geq 1$ to obtain the last line. Yet another application of the Cauchy-Schwarz inequality yields
	\begin{equation*}
	\begin{aligned}
	|\Delta_a|\bigg(\int_{0}^{\cdot}\sigma_i&\big(X_r^{\epsilon,\eta,u}, Y_r^{\epsilon,\eta,u}\big)du_1(r)\bigg)_{0,t}\\&
	\leq  C\bigg[ 1+\sup_{t\in[0,T]}\big|X_t^{\epsilon,\eta,u}\big|^{2\nu_1}+\bigg(\int_{0}^{t}(t-r)^{-2a/\nu_2}dr\bigg)^{\frac{1}{2}}\|Y^{\epsilon,\eta,u}\|_{L^2([0,T];\mathcal{Y})}\bigg]
	\end{aligned} 
	\end{equation*}
	The latter concludes the estimate since $a<\nu_2/2$ implies that the singularity is integrable.
	The estimate for $\sigma_2du_2$ follows from an identical argument and is in fact simpler due to the uniform boundedness of $\sigma_2.$
\end{proof}

\begin{rem} The restriction $a<\nu_2/2$ in Lemma \ref{u1bnd} is related to the integrability properties of the controlled fast process $Y^{\epsilon,\eta,u}$ (see Remark \ref{Yintrem} above) and is only necessary in the case $\sigma_1=\sigma_1(x,y)$ i.e. under Condition \ref{C6}(i).
\end{rem}

\begin{lem}\label{cbnd} Let $|\Delta_a|$ as in \eqref{Deltapmabs}, $a\in(1-H, \frac{1}{2})$, $T>0$. There exists a constant $C>0$ such that the following hold with probability $1$
	\begin{equation}\label{supc}
	(i) \sup_{t\in[0,T]}\bigg|\int_{0}^{t}  c\big(X_r^{\epsilon,\eta,u}, Y_r^{\epsilon,\eta,u}\big)dr\bigg|\leq C\bigg(  1+  \int_{0}^{T}\sup_{s\in[0,r]}\big|X_s^{\epsilon,\eta,u}    \big| dr+\big\|Y^{\epsilon,\eta,u}\big\|_{L^2([0,T];\mathcal{Y})}     \bigg),
	\end{equation}   
	\begin{equation*}
	(ii)\big|\Delta_a\big|\bigg(\int_{0}^{\cdot}  c\big(X_r^{\epsilon,\eta,u}, Y_r^{\epsilon,\eta,u}\big)dr\bigg)_{ 0,t}\leq C\bigg(  1+  \int_{0}^{t}\frac{\sup_{s\in[0,r]}\big|X_s^{\epsilon,\eta,u} \big|^{\nu}}{(t-r)^{a}}   dr+\big\|Y^{\epsilon,\eta,u}\big\|_{L^2([0,T];\mathcal{Y})}     \bigg).
	\end{equation*}        	
\end{lem}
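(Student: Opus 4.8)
The plan is to exploit the sublinear growth of $c$ furnished by Condition \ref{C5}, namely $|c(X_r^{\epsilon,\eta,u},Y_r^{\epsilon,\eta,u})|\le K_1(1+|X_r^{\epsilon,\eta,u}|^\nu+|Y_r^{\epsilon,\eta,u}|)$ with $\nu\in(0,1)$, together with Fubini's theorem to trade the a priori non-integrable kernel $(t-s)^{-a-1}$ for the integrable one $(t-r)^{-a}$. For part $(i)$, this growth bound immediately gives
\[
\sup_{t\in[0,T]}\bigg|\int_{0}^{t}c\big(X_r^{\epsilon,\eta,u},Y_r^{\epsilon,\eta,u}\big)dr\bigg|
\le K_1\bigg(T+\int_{0}^{T}\big|X_r^{\epsilon,\eta,u}\big|^{\nu}dr+\int_{0}^{T}\big|Y_r^{\epsilon,\eta,u}\big|dr\bigg).
\]
Since $\nu<1$, Young's inequality for products yields $|X_r^{\epsilon,\eta,u}|^{\nu}\le 1+|X_r^{\epsilon,\eta,u}|\le 1+\sup_{s\in[0,r]}|X_s^{\epsilon,\eta,u}|$, while the Cauchy--Schwarz inequality gives $\int_0^T|Y_r^{\epsilon,\eta,u}|dr\le\sqrt{T}\,\|Y^{\epsilon,\eta,u}\|_{L^2([0,T];\mathcal{Y})}$; collecting these bounds and absorbing the resulting constants into $C$ proves $(i)$.

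For part $(ii)$, I would write $F(t):=\int_0^t c(X_r^{\epsilon,\eta,u},Y_r^{\epsilon,\eta,u})dr$ and use the definition of $|\Delta_a|$ in \eqref{Deltapmabs} together with Condition \ref{C5} to get
\[
|\Delta_a|F_{0,t}=\int_{0}^{t}(t-s)^{-a-1}\bigg|\int_{s}^{t}c\big(X_r^{\epsilon,\eta,u},Y_r^{\epsilon,\eta,u}\big)dr\bigg|ds
\le K_1\int_{0}^{t}(t-s)^{-a-1}\int_{s}^{t}\big(1+|X_r^{\epsilon,\eta,u}|^{\nu}+|Y_r^{\epsilon,\eta,u}|\big)dr\,ds.
\]
An application of Fubini's theorem (precisely as in the computations in the proofs of Lemmas \ref{Youngbnd} and \ref{u1bnd}), using $\int_0^r(t-s)^{-a-1}ds=\tfrac1a[(t-r)^{-a}-t^{-a}]\le\tfrac1a(t-r)^{-a}$, reduces this to $\tfrac{K_1}{a}\int_0^t(t-r)^{-a}\big(1+|X_r^{\epsilon,\eta,u}|^{\nu}+|Y_r^{\epsilon,\eta,u}|\big)dr$. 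The constant term contributes $\tfrac{K_1}{a}\int_0^t(t-r)^{-a}dr\le\tfrac{K_1T^{1-a}}{a(1-a)}$, which is finite since $a<1/2<1$; the $X$-term is bounded by $\tfrac{K_1}{a}\int_0^t(t-r)^{-a}\sup_{s\in[0,r]}|X_s^{\epsilon,\eta,u}|^{\nu}dr$, already of the claimed form; and for the $Y$-term the Cauchy--Schwarz inequality gives $\int_0^t(t-r)^{-a}|Y_r^{\epsilon,\eta,u}|dr\le\big(\int_0^t(t-r)^{-2a}dr\big)^{1/2}\|Y^{\epsilon,\eta,u}\|_{L^2([0,T];\mathcal{Y})}$, where $\int_0^t(t-r)^{-2a}dr=\tfrac{t^{1-2a}}{1-2a}\le\tfrac{T^{1-2a}}{1-2a}<\infty$ precisely because $a<1/2$. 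Absorbing constants into $C$ completes $(ii)$.

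I do not expect any serious obstacle: both estimates are essentially a bookkeeping exercise combining Condition \ref{C5}, Fubini's theorem, and the Cauchy--Schwarz inequality. The only point that requires a little care is the order-of-integration swap that converts the singular kernel $(t-s)^{-a-1}$ into $(t-r)^{-a}$, and the verification that the residual singularities --- $(t-r)^{-a}$ in the constant term and $(t-r)^{-2a}$ after Cauchy--Schwarz on the $Y$-term --- are integrable, which is exactly where the standing hypothesis $a<1/2$ is used.
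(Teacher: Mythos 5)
Your proof is correct and follows essentially the same route as the paper's: growth of $c$ from Condition \ref{C5}, the order-of-integration swap $\int_0^r(t-s)^{-a-1}ds\le\tfrac1a(t-r)^{-a}$ via Fubini, and Cauchy--Schwarz on the $Y$-term with the $a<1/2$ integrability check. The only cosmetic difference is that you spell out part $(i)$ (including the $|X_r|^\nu\le 1+\sup_{s\le r}|X_s|$ step) which the paper dispatches in one sentence.
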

\begin{proof} $(i) $ The inequality follows from Condition \ref{C5} along with the Cauchy-Schwarz and Young's product inequalities.
	$(ii)$	From Condition \ref{C5} and Fubini's theorem we have
	\begin{equation*}
	\begin{aligned}
	|\Delta_a\big|\bigg(&\int_{0}^{\cdot}  c\big(X_r^{\epsilon,\eta,u}, Y_r^{\epsilon,\eta,u}\big)dr\bigg)_{0,t}\leq\int_{0}^{t}(t-s)^{-a-1}\int_{s}^{t}\big|c\big(X_r^{\epsilon,\eta,u}, Y_r^{\epsilon,\eta,u}\big)\big|drds\\&
	\leq K_1\int_{0}^{t}(t-s)^{-a-1}\int_{s}^{t}\big(1+\big|X_r^{\epsilon,\eta,u}\big|^{\nu}+\big| Y_r^{\epsilon,\eta,u}\big|\big)drds\\&
	= K_1\int_{0}^{t}\big(1+\big|X_r^{\epsilon,\eta,u}\big|^{\nu}+\big| Y_r^{\epsilon,\eta,u}\big|\big)\int_{0}^{r}(t-s)^{-a-1}dsdr\\&
	\leq K_1\int_{0}^{t}(t-r)^{-a}\big(1+\sup_{s\in[0,r]}\big|X_s^{\epsilon,\eta,u}\big|^{\nu}+\big| Y_r^{\epsilon,\eta,u}\big|\big)dr\\&     		
	\leq  C_T\bigg[1+\int_{0}^{t}(t-r)^{-a}\sup_{s\in[0,r]}\big|X_s^{\epsilon,\eta,u}    \big|^{\nu} dr+\big\|Y^{\epsilon,\eta,u}\big\|_{L^2([0,T];\mathcal{Y})}\bigg(\int_{0}^{t}(t-r)^{-2a}dr\bigg)^\frac{1}{2}\bigg],
	\end{aligned}
	\end{equation*}  
	where we applied the Cauchy-Schwarz inequality to obtain the last line. The estimate follows.
\end{proof}
\noindent Next, we consider the term of order $\sqrt{\epsilon/\eta}$ in the slow dynamics. To this end let $0\leq s<t\leq T$ and $\Psi$ be the unique strong solution of the Poisson equation \eqref{Poisson}. An application of It\^o's formula yields
\begin{equation*}
\begin{aligned}
\Psi(Y^{\epsilon,\eta,u}_t)-\Psi(Y^{\epsilon,\eta,u}_s)&=\int_{s}^{t}\nabla\Psi(Y^{\epsilon,\eta,u}_r)dY^{\epsilon,\eta,u}_r+\frac{1}{2}\int_{s}^{t}D^2\Psi(dY^{\epsilon,\eta,u}_r):d\langle Y^{\epsilon,\eta,u}\rangle_r\\&
=\frac{1}{\sqrt{\epsilon\eta}}\int_{s}^{t}\nabla\Psi(Y^{\epsilon,\eta,u}_r)\big[g(X^{\epsilon,\eta,u}_r,Y^{\epsilon,\eta,u}_r)+\tau(Y^{\epsilon,\eta,u}_r)\dot{u}_2(r)\big]dr\\&+\frac{1}{\eta}\int_{s}^{t}\mathcal{L}\Psi(Y^{\epsilon,\eta,u}_r)dr+\frac{1}{\sqrt{\eta}}\int_{s}^{t}\nabla\Psi(Y^{\epsilon,\eta,u}_r)\tau(Y^{\epsilon,\eta,u}_r)dW_r\\&=\frac{1}{\sqrt{\epsilon\eta}}\int_{s}^{t}\nabla\Psi(Y^{\epsilon,\eta,u}_r)\big[g(X^{\epsilon,\eta,u}_r,Y^{\epsilon,\eta,u}_r)+\tau(Y^{\epsilon,\eta,u}_r)\dot{u}_2(r)\big]dr\\&-\frac{1}{\eta}\int_{s}^{t}b\big(Y^{\epsilon,\eta,u}_r\big)dr+\frac{1}{\sqrt{\eta}}\int_{s}^{t}\nabla\Psi(Y^{\epsilon,\eta,u}_r)\tau(Y^{\epsilon,\eta,u}_r)dW_r.
\end{aligned}	
\end{equation*}
Therefore, 
\begin{equation}\label{bterm}
\begin{aligned}
\frac{\sqrt{\epsilon}}{\sqrt\eta}\int_{s}^{t}b(Y^{\epsilon,\eta,u}_r)dr=&-\sqrt{\epsilon\eta}[\Psi(Y^{\epsilon,\eta,u}_t)-\Psi(Y^{\epsilon,\eta,u}_s)]+\sqrt{\epsilon}\int_{s}^{t}\nabla\Psi(Y^{\epsilon,\eta,u}_r)\tau(Y^{\epsilon,\eta,u}_r)dW_r\\&+
\int_{s}^{t}\nabla\Psi(Y^{\epsilon,\eta,u}_r)\big[g(X^{\epsilon,\eta,u}_r,Y^{\epsilon,\eta,u}_r)+\tau(Y^{\epsilon,\eta,u}_r)\dot{u}_2(r)\big]dr.
\end{aligned}	
\end{equation}

\noindent Some preliminary estimates for this term are collected in the next lemma.

\begin{lem}\label{bbnd} Let $|\Delta_a|$ as in \eqref{Deltapmabs}, $a\in(1-H, \frac{1}{2})$, $\theta\in(a,\frac{1}{2})$, $T>0$. There exists a constant $C$ such that the following hold with probability $1;$
	\begin{equation}\label{supb}
\begin{aligned}
		(i)\; \sup_{t\in[0,T]}\bigg|\frac{\sqrt{\epsilon}}{\sqrt\eta}\int_{0}^{t}&b(Y^{\epsilon,\eta,u}_r)dr\bigg|\leq C\bigg[1+\|Y^{\epsilon,\eta,u}\|_{L^2([0,T];\mathcal{Y})}\\&+\sqrt{\epsilon\eta}\big[Y^{\epsilon,\eta,u}\big]_{C^\theta}+\sqrt{\epsilon}\sup_{t\in[0,T]}\bigg|\int_{0}^{t}\nabla\Psi(Y^{\epsilon,\eta,u}_r)\tau(Y^{\epsilon,\eta,u}_r)dW_r\bigg|\bigg],
	\end{aligned}	
	\end{equation} 
	\begin{equation*}
	 \begin{aligned}
	(ii)\;\big|\Delta_a\big|\bigg(\frac{\sqrt{\epsilon}}{\sqrt{\eta}}\int_{0}^{\cdot}  &b\big( Y_r^{\epsilon,\eta,u}\big)dr\bigg)_{0,t}\leq C\bigg[1+\|Y^{\epsilon,\eta,u}\|_{L^2([0,T];\mathcal{Y})}+\sqrt{\epsilon\eta}\big[Y^{\epsilon,\eta,u}\big]_{C^\theta}\\&+\sqrt{\epsilon}\big|\Delta_a\big|\bigg(\int_{0}^{\cdot}\nabla\Psi(Y^{\epsilon,\eta,u}_r)\tau(Y^{\epsilon,\eta,u}_r)dW_r\bigg)_{0,t}\bigg].
	\end{aligned}	
	\end{equation*} 
\end{lem}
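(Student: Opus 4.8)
The plan is to read off both estimates directly from the identity \eqref{bterm}, which — via It\^o's formula applied to $\Psi(Y^{\epsilon,\eta,u})$ and the Poisson equation \eqref{Poisson} — represents the singular term $\tfrac{\sqrt{\epsilon}}{\sqrt{\eta}}\int_s^t b(Y^{\epsilon,\eta,u}_r)dr$ as a sum of three pieces: a boundary term $-\sqrt{\epsilon\eta}[\Psi(Y^{\epsilon,\eta,u}_t)-\Psi(Y^{\epsilon,\eta,u}_s)]$, a rescaled stochastic integral $\sqrt{\epsilon}\int_s^t\nabla\Psi(Y^{\epsilon,\eta,u}_r)\tau(Y^{\epsilon,\eta,u}_r)dW_r$, and a Lebesgue integral $\int_s^t\nabla\Psi(Y^{\epsilon,\eta,u}_r)[g(X^{\epsilon,\eta,u}_r,Y^{\epsilon,\eta,u}_r)+\tau(Y^{\epsilon,\eta,u}_r)\dot{u}_2(r)]dr$. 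For part $(i)$ I would take $s=0$ in \eqref{bterm} and the supremum over $t\in[0,T]$; for part $(ii)$ I would apply \eqref{bterm} on the generic increment $(r,t)$, divide by $(t-r)^{a+1}$ and integrate in $r$ over $[0,t]$. In both cases the rescaled stochastic integral is exactly the term appearing on the right-hand side of the claimed bound, so only the other two pieces need to be estimated.

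For $(i)$: since $\nabla\Psi$ is bounded (Condition \ref{C7}), $\Psi$ is globally Lipschitz, hence $|\Psi(Y^{\epsilon,\eta,u}_t)-\Psi(Y^{\epsilon,\eta,u}_0)|\le\|\nabla\Psi\|_\infty[Y^{\epsilon,\eta,u}]_{C^\theta}t^\theta\lesssim[Y^{\epsilon,\eta,u}]_{C^\theta}$, which produces the $\sqrt{\epsilon\eta}[Y^{\epsilon,\eta,u}]_{C^\theta}$ term. For the Lebesgue integral I would use the boundedness of $\nabla\Psi$ and $\tau$ (Conditions \ref{C7}, \ref{C2}) and the linear growth of $g$ (Condition \ref{C1}) to dominate the integrand by $C(1+|Y^{\epsilon,\eta,u}_r|+|\dot{u}_2(r)|)$, and then invoke Cauchy--Schwarz together with the a.s. bound $\|\dot{u}_2\|_{L^2}\lesssim N$ for $u\in\mathcal{A}_b$; this gives $C(1+\|Y^{\epsilon,\eta,u}\|_{L^2([0,T];\mathcal{Y})})$, completing $(i)$.

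For $(ii)$, writing $|\Delta_a|\big(\tfrac{\sqrt{\epsilon}}{\sqrt{\eta}}\int_0^\cdot b(Y^{\epsilon,\eta,u}_r)dr\big)_{0,t}=\int_0^t(t-r)^{-a-1}\big|\tfrac{\sqrt{\epsilon}}{\sqrt{\eta}}\int_r^t b(Y^{\epsilon,\eta,u}_q)dq\big|dr$ and inserting \eqref{bterm} on $(r,t)$: the boundary piece becomes $\sqrt{\epsilon\eta}\int_0^t(t-r)^{-a-1}|\Psi(Y^{\epsilon,\eta,u}_t)-\Psi(Y^{\epsilon,\eta,u}_r)|dr\le\sqrt{\epsilon\eta}\,\|\nabla\Psi\|_\infty[Y^{\epsilon,\eta,u}]_{C^\theta}\int_0^t(t-r)^{\theta-a-1}dr$, and the kernel is integrable because $\theta>a$. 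The stochastic-integral piece is, by the very definition of $|\Delta_a|$, equal to $\sqrt{\epsilon}|\Delta_a|\big(\int_0^\cdot\nabla\Psi(Y^{\epsilon,\eta,u}_r)\tau(Y^{\epsilon,\eta,u}_r)dW_r\big)_{0,t}$. For the Lebesgue piece, a Cauchy--Schwarz estimate on $\int_r^t$ yields a factor $(t-r)^{1/2}$ times $(1+\|Y^{\epsilon,\eta,u}\|_{L^2([0,T];\mathcal{Y})}+\|\dot{u}_2\|_{L^2})$, and the surviving kernel $(t-r)^{-1/2-a}$ is integrable since $a<1/2$; using again $\|\dot{u}_2\|_{L^2}\lesssim N$ then gives the remaining terms. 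Summing the three contributions yields the bound.

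Since \eqref{bterm} already performs the structural decomposition and the stochastic integral is deliberately left unestimated here (it is estimated separately in the sequel), there is no deep obstacle: the argument is essentially bookkeeping with Cauchy--Schwarz and Fubini. The only points requiring genuine care are the integrability of the two singular kernels $(t-r)^{\theta-a-1}$ and $(t-r)^{-1/2-a}$ — which is precisely why the hypotheses $\theta\in(a,1/2)$ and $a<1/2$ are imposed, the upper constraint $\theta<1/2$ additionally ensuring that $[Y^{\epsilon,\eta,u}]_{C^\theta}$ is a.s. finite — and the systematic use of the boundedness and linear-growth conditions so that no $\sup_{t}|X^{\epsilon,\eta,u}_t|$ appears on the right-hand side.
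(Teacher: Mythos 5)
Your proposal is correct and follows essentially the same route as the paper: both start from the Poisson-equation decomposition \eqref{bterm}, bound the boundary term by the Lipschitz property of $\Psi$ (via Condition \ref{C7}) and $[Y^{\epsilon,\eta,u}]_{C^\theta}$, bound the Lebesgue terms via Conditions \ref{C1}, \ref{C2}, Cauchy--Schwarz and $\|\dot u_2\|_{L^2}\le N$, leave the stochastic integral untouched, and for part $(ii)$ rely on the integrability of $(t-s)^{\theta-a-1}$ and $(t-s)^{-1/2-a}$ granted by $\theta>a$ and $a<1/2$.
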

\begin{proof}$ (i) $ From \eqref{bterm}, the mean value inequality and Conditions \ref{C1}, \ref{C2} we obtain
	\begin{equation}\label{4.4.1}
	\begin{aligned}
	\frac{\sqrt{\epsilon}}{\sqrt\eta}&\bigg|\int_{s}^{t}b(Y^{\epsilon,\eta,u}_r)dr\bigg|\leq \sqrt{\epsilon\eta}\big|\Psi(Y^{\epsilon,\eta,u}_t)-\Psi(Y^{\epsilon,\eta,u}_s)\big|+\sqrt{\epsilon}\bigg|\int_{s}^{t}\nabla\Psi(Y^{\epsilon,\eta,u}_r)\tau(Y^{\epsilon,\eta,u}_r)dW_r\bigg|\\&+
	\int_{s}^{t}\big|\nabla\Psi(Y^{\epsilon,\eta,u}_r)\tau(Y^{\epsilon,\eta,u}_r)\dot{u}_2(r)\big|dr
	+\int_{s}^{t}\big|\nabla\Psi(Y^{\epsilon,\eta,u}_r)g(X^{\epsilon,\eta,u}_r,Y^{\epsilon,\eta,u}_r)\big|dr\\&\leq C\sqrt{\epsilon\eta}\|\nabla\Psi\|_{\infty}[Y^{\epsilon,\eta,u}]_{C^\theta}(t-s)^\theta+\sqrt{\epsilon}\bigg|\int_{s}^{t}\nabla\Psi(Y^{\epsilon,\eta,u}_r)\tau(Y^{\epsilon,\eta,u}_r)dW_r\bigg|\\&+\|\nabla\Psi\tau\|_{\infty}\|\dot{u}_2\|_{L^2}(t-s)^{1/2}+c_g\|\nabla\Psi\|_{\infty}(t-s)^{1/2}\big(1+\big\|Y^{\epsilon,\eta,u}\big\|_{L^2}\big)
	\end{aligned}
	\end{equation}
	where we also used the Cauchy-Schwarz term for the last term of the right-hand side. We remark that, in view of Condition \ref{C7}, $\nabla\Psi$ is bounded.  The estimate follows by setting $s=0.$

	$(ii) $ 	Continuing from \eqref{4.4.1} we have 
	\begin{equation*}
	\begin{aligned}
	\big|\Delta_a\big|\bigg(\frac{\sqrt{\epsilon}}{\sqrt{\eta}}\int_{0}^{\cdot}  &b\big( Y_r^{\epsilon,\eta,u}\big)dr\bigg)_{0,t}\leq    C\sqrt{\epsilon\eta}\|\nabla\Psi\|_{\infty}[Y^{\epsilon,\eta,u}]_{C^\theta}\int_{0}^{t}(t-s)^{\theta-a-1}ds\\&+\sqrt{\epsilon}\int_{0}^{t}(t-s)^{-a-1}\bigg|\int_{s}^{t}\nabla\Psi(Y^{\epsilon,\eta,u}_r)\tau(Y^{\epsilon,\eta,u}_r)dW_r\bigg|ds\\&
	+C_{\Psi,\tau}\big(1+\|\dot{u}_2\|_{L^2}+\big\|Y^{\epsilon,\eta,u}\big\|_{L^2}\big)\int_{0}^{t}(t-s)^{-a-\frac{1}{2}}ds,
	\end{aligned}	
	\end{equation*}
	where the Riemann integrals are finite since $\theta\in(a,1/2).$ The proof is complete.	
\end{proof}

\noindent Combining the previous bounds, we obtain the following preliminary estimate for the integrated difference ratio $|\Delta_a|X^{\epsilon,\eta,u},$ which depends on the sup-norm of $X^{\epsilon,\eta,u}.$

\begin{lem}\label{DeltaX1} Let $|\Delta_a|$ as in \eqref{Deltapmabs}, $\nu,\nu_1, \nu_2$ as in Conditions \ref{C5}, \ref{C6} respectively, $T>0, a\in(1-H,\frac{1}{4}\wedge\frac{\nu_2}{2})$ and 
	\begin{equation}\label{Feps}
	F_\epsilon:=(1\vee\|B^H\|_{T,1-a,\infty})\exp\big( CT^{1-a}\sqrt{\epsilon}\big\| B^H\big\|_{T,1-a,\infty} \big),
	\end{equation}
	for some constant $C>0$ that does not  depend on $\epsilon$. Under Condition \ref{C6}(i), there exists a constant $C'>0$ that does not depend on $\epsilon$ such that for all $\epsilon<1$ we have
	\begin{equation}\label{integralprebnd}
	\begin{aligned}
	\sup_{t\in[0,T]}\big|\Delta_a\big|X^{\epsilon,\eta,u}_{0,t}\leq C' F_\epsilon&\bigg[1+\sup_{t\in[0,T]}\big|X_t^{\epsilon,\eta,u}\big|^{2\nu_1\vee\nu}+\big\|Y^{\epsilon,\eta,u}\big\|_{L^2([0,T];\mathcal{Y})}\\&+\sqrt{\epsilon\eta}\big[Y^{\epsilon,\eta,u}\big]_{C^\theta}+\sqrt{\epsilon}\sup_{t\in[0,T]}\int_{0}^{t}(t-s)^{-a-1}\int_{s}^{t}\big|\Delta_a\big|Y_{s,r}^{\epsilon,\eta,u}dr ds \\&+\sqrt{\epsilon}\sup_{t\in[0,T]}\big|\Delta_a\big|\bigg(\int_{0}^{\cdot}\nabla\Psi(Y^{\epsilon,\eta,u}_r)\tau(Y^{\epsilon,\eta,u}_r)dW_r\bigg)_{0,t}\\&+\sqrt{\epsilon}\sup_{t\in[0,T]}\big|\Delta_a\big|\bigg(\int_{0}^{\cdot}\sigma_2(X^{\epsilon,\eta,u}_r,Y^{\epsilon,\eta,u}_r)dW_r\bigg)_{0,t}\bigg],
	\end{aligned}
	\end{equation}
	with probability $1$.

\end{lem}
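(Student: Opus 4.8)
The plan is to read off the estimate directly from the integral form of the controlled slow equation \eqref{consys}, use the subadditivity of the functional $X\mapsto|\Delta_a|X_{0,t}$ to split the contributions of the individual terms, apply the four preceding lemmas to bound all but the Itô term, and finally close the single self-referential contribution by a fractional Grönwall inequality with singular kernel. Throughout, fix $\theta\in(a,\tfrac12)$ as required by Lemma \ref{bbnd}; the standing assumption $a\in(1-H,\tfrac14\wedge\tfrac{\nu_2}{2})$ guarantees that $a$ lies in the admissibility range of Lemmas \ref{Youngbnd}, \ref{u1bnd}, \ref{cbnd}, \ref{bbnd} and that every time-singularity met below is integrable.

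Writing \eqref{consys} in integral form,
\[
X^{\epsilon,\eta,u}_t=x_0+\frac{\sqrt\epsilon}{\sqrt\eta}\int_0^t b(Y^{\epsilon,\eta,u}_r)dr+\int_0^t c(X^{\epsilon,\eta,u}_r,Y^{\epsilon,\eta,u}_r)dr+\sum_{i=1}^2\int_0^t\sigma_i(X^{\epsilon,\eta,u}_r,Y^{\epsilon,\eta,u}_r)du_i(r)+\sqrt\epsilon\int_0^t\sigma_1(X^{\epsilon,\eta,u}_r,Y^{\epsilon,\eta,u}_r)dB^H_r+\sqrt\epsilon\int_0^t\sigma_2(X^{\epsilon,\eta,u}_r,Y^{\epsilon,\eta,u}_r)dW_r,
\]
I would apply $|\Delta_a|(\cdot)_{0,t}$ to each summand. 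The constant $x_0$ contributes nothing; the singular term $\tfrac{\sqrt\epsilon}{\sqrt\eta}\int_0^\cdot b\,dr$ is controlled by Lemma \ref{bbnd}(ii), which already produces the pieces $\sqrt{\epsilon\eta}[Y^{\epsilon,\eta,u}]_{C^\theta}$ and $\sqrt\epsilon|\Delta_a|(\int_0^\cdot\nabla\Psi(Y^{\epsilon,\eta,u}_r)\tau(Y^{\epsilon,\eta,u}_r)dW_r)_{0,t}$ that appear in \eqref{integralprebnd}; the term $\int_0^\cdot c\,dr$ by Lemma \ref{cbnd}(ii); the two Stieltjes terms $\int_0^\cdot\sigma_i\,du_i$ by Lemma \ref{u1bnd}(ii) (admissible since $a<\nu_2/2$); and the Young integral $\sqrt\epsilon\int_0^\cdot\sigma_1\,dB^H$ by Lemma \ref{Youngbnd}(ii). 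The Itô term $\sqrt\epsilon\int_0^\cdot\sigma_2\,dW$ is left untouched, contributing the term $\sqrt\epsilon\,|\Delta_a|(\int_0^\cdot\sigma_2(X^{\epsilon,\eta,u}_r,Y^{\epsilon,\eta,u}_r)dW_r)_{0,t}$ on the right-hand side of \eqref{integralprebnd} (it will be handled later by a maximal inequality). Bounding each low power $|X^{\epsilon,\eta,u}|^{\nu_1}$ and $|X^{\epsilon,\eta,u}|^{\nu}$ by $1+|X^{\epsilon,\eta,u}|^{2\nu_1\vee\nu}$, absorbing harmless time-integral factors such as $\int_0^t(t-r)^{-a}dr\le T^{1-a}/(1-a)$, and taking $\sup_{s\le t}$ on both sides, I obtain an inequality of the form
\[
\sup_{s\le t}|\Delta_a|X^{\epsilon,\eta,u}_{0,s}\le C\,(1\vee\|B^H\|_{T,1-a,\infty})\,[\cdots]+\sqrt\epsilon\,C\|B^H\|_{T,1-a,\infty}\int_0^t(t-r)^{-a}\Big(\sup_{s\le r}|\Delta_a|X^{\epsilon,\eta,u}_{0,s}\Big)dr,
\]
where $[\cdots]$ denotes the bracketed expression in \eqref{integralprebnd}. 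Crucially, the only genuinely self-referential contribution is the singular integral originating from the $\int_s^t|\Delta_a|X^{\epsilon,\eta,u}_{s,r}dr$ term in Lemma \ref{Youngbnd}(i), and it carries the prefactor $\sqrt\epsilon\|B^H\|_{T,1-a,\infty}$.

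Since $X^{\epsilon,\eta,u}$ belongs a.s. to $W_0^{a,\infty}([0,T];\mathcal{X})$ (part of the well-posedness in the class relevant to Young integration, cf. Condition \ref{C8} and the remark following it), the left-hand side is a.s. finite, which justifies invoking a fractional Grönwall inequality with kernel $(t-r)^{-a}$, $a\in(0,1)$ (see e.g. \cite{rascanu2002differential}). This yields $\sup_{s\le t}|\Delta_a|X^{\epsilon,\eta,u}_{0,s}\le C'(1\vee\|B^H\|_{T,1-a,\infty})[\cdots]\exp(CT^{1-a}\sqrt\epsilon\|B^H\|_{T,1-a,\infty})$, and recognizing the right-hand prefactor as $C'F_\epsilon$ from \eqref{Feps} gives \eqref{integralprebnd}.

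\emph{Main obstacle.} The delicate point is the bookkeeping: one must verify that, among the many terms in the estimate that involve $\|X^{\epsilon,\eta,u}\|_{\infty}$ (through the growth bounds of Conditions \ref{C5}, \ref{C6}) or $Y^{\epsilon,\eta,u}$, the \emph{only} one that feeds back into $|\Delta_a|X^{\epsilon,\eta,u}$ is the singular Young-integral term, and that its coefficient comes with the factor $\sqrt\epsilon$. This is precisely what makes the resulting Grönwall multiplier $F_\epsilon$ converge to $1\vee\|B^H\|_{T,1-a,\infty}$ as $\epsilon\to0$ and hence, by Fernique's estimate \eqref{fbmexpint}, enjoy expectation bounded uniformly in $\epsilon$ — the property needed to pass from \eqref{integralprebnd} to the uniform bounds of Proposition \ref{tightnessprop} once $\|X^{\epsilon,\eta,u}\|_\infty$ and the $Y^{\epsilon,\eta,u}$-terms are estimated in the subsequent lemmas.
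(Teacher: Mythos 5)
Your proposal matches the paper's proof: decompose $|\Delta_a|X^{\epsilon,\eta,u}_{0,t}$ over the integral equation by subadditivity, invoke Lemmas \ref{Youngbnd}(ii), \ref{u1bnd}(ii), \ref{cbnd}(ii), \ref{bbnd}(ii) termwise, collect everything into the bracketed expression plus the single feedback term $\sqrt\epsilon\|B^H\|_{T,1-a,\infty}\int_0^t(t-r)^{-a}|\Delta_a|X^{\epsilon,\eta,u}_{0,r}\,dr$, and close by Gr\"onwall. Your emphasis on the bookkeeping point — that the only self-referential contribution comes from the Young integral and carries the $\sqrt\epsilon$ prefactor — is precisely the observation the paper relies on to obtain the $\epsilon$-uniform moment bounds for $F_\epsilon$ via Fernique.
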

\begin{proof} 
	From Lemmas \ref{Youngbnd}$(ii),$  \ref{u1bnd}$(ii),$   \ref{cbnd}$(ii),$ \ref{bbnd}$(ii),$ we have 
	\begin{equation*}\label{Deltaxprebnd}
	\begin{aligned}
	&\big|\Delta_a\big|X^{\epsilon,\eta,u}_{0,t}\leq\\&\big|\Delta_a\big|\bigg(\frac{\sqrt{\epsilon}}{\sqrt{\eta}}\int_{0}^{\cdot}  b\big( Y_r^{\epsilon,\eta,u}\big)dr\bigg)_{0,t}+\big|\Delta_a\big|\bigg(\int_{0}^{\cdot}  c\big(X_r^{\epsilon,\eta,u}, Y_r^{\epsilon,\eta,u}\big)dr\bigg)_{0,t}\\&+\sum_{i=1}^{2}|\Delta_a|\bigg(\int_{0}^{\cdot}\sigma_i\big(X_r^{\epsilon,\eta,u}, Y_r^{\epsilon,\eta,u}\big)du_i(r)\bigg)_{0,t}\\&
	+\sqrt{\epsilon}|\Delta_a|\bigg(\int_{0}^{\cdot}  \sigma_1\big(X_r^{\epsilon,\eta,u}, Y_r^{\epsilon,\eta,u}\big)dB^H_r\bigg)_{0,t}+\sqrt{\epsilon}\big|\Delta_a\big|\bigg(\int_{0}^{\cdot}\sigma_2\big(X_r^{\epsilon,\eta,u}, Y_r^{\epsilon,\eta,u}\big)dW_r\bigg)_{0,t}\\&
	\lesssim 1+\|Y^{\epsilon,\eta,u}\|_{L^2([0,T];\mathcal{Y})}+\sqrt{\epsilon\eta}\big[Y^{\epsilon,\eta,u}\big]_{C^\theta}+\sqrt{\epsilon}\big|\Delta_a\big|\bigg(\int_{0}^{\cdot}\nabla\Psi(Y^{\epsilon,\eta,u}_r)\tau(Y^{\epsilon,\eta,u}_r)dW_r\bigg)_{0,t}\\&+\sqrt{\epsilon}\big|\Delta_a\big|\bigg(\int_{0}^{\cdot}\sigma_2\big(X_r^{\epsilon,\eta,u}, Y_r^{\epsilon,\eta,u}\big)dW_r\bigg)_{0,t}\\&+\bigg(   1+  \int_{0}^{t}(t-r)^{-a}\sup_{s\in[0,r]}\big|X_s^{\epsilon,\eta,u}    \big|^{\nu} dr+\big\|Y^{\epsilon,\eta,u}\big\|_{L^2([0,T];\mathcal{Y})}   \bigg)
	\\&+\bigg( 1+\sup_{t\in[0,T]}\big|X_t^{\epsilon,\eta,u}\big|^{2\nu_1}+\|Y^{\epsilon,\eta,u}\|_{L^2}\bigg)
	\\&+\sqrt{\epsilon}\|B^H\|_{T,1-a,\infty}\bigg(  1+\sup_{r\in[0,t]}\big| X_r^{\epsilon,\eta,u}\big|^{2\nu_1}+\big\|Y^{\epsilon,\eta,u}\big\|_{L^2([0,T];\mathcal{Y})}\\&+    \int_{0}^{t}   (t-r)^{-a}\big|\Delta_a\big|X_{0,r}^{\epsilon,\eta,u}dr+\int_{0}^{t}(t-s)^{-a-1}\int_{s}^{t}\big|\Delta_a\big|Y_{s,r}^{\epsilon,\eta,u}dr ds  \bigg)\\&
	\lesssim \sqrt{\epsilon}\|B^H\|_{T,1-a,\infty}\int_{0}^{t}   (t-r)^{-a}\big|\Delta_a\big|X_{0,r}^{\epsilon,\eta,u}dr\\&+\sqrt{\epsilon}\|B^H\|_{T,1-a,\infty}\int_{0}^{t}(t-s)^{-a-1}\int_{s}^{t}\big|\Delta_a\big|Y_{s,r}^{\epsilon,\eta,u}dr ds\\&
	+\big(1\vee\big\|B^H\|_{T,1-a,\infty}\big)\bigg[1+\sup_{t\in[0,T]}\big|X_t^{\epsilon,\eta,u}|^{\nu_1\vee\nu}+\big\|Y^{\epsilon,\eta,u}\big\|_{L^2([0,T];\mathcal{Y})}+\sqrt{\epsilon\eta}\big[Y^{\epsilon,\eta,u}\big]_{C^\theta}\\&+\sqrt{\epsilon}\big|\Delta_a\big|\bigg(\int_{0}^{\cdot}\nabla\Psi(Y^{\epsilon,\eta,u}_r)\tau(Y^{\epsilon,\eta,u}_r)dW_r\bigg)_{0,t}+\sqrt{\epsilon}\big|\Delta_a\big|\bigg(\int_{0}^{\cdot}\sigma_2\big(X_r^{\epsilon,\eta,u}, Y_r^{\epsilon,\eta,u}\big)dW_r\bigg)_{0,t}\bigg]. 
	\end{aligned}
	\end{equation*}
	\noindent The proof is complete upon invoking Gr\"onwall's inequality.\end{proof}

\noindent Up to this point we have only proved pathwise estimates for the slow motion. The following maximal inequality for the stochastic integral $\sigma_1dB^H$ is the last ingredient needed for the proof of Proposition \ref{tightnessprop}. Lemmas \ref{Ybndlem}, \ref{Yfraclem} provide probabilistic bounds for the fast motion and play a key role in the proof of the maximal inequality.

\begin{prop}\label{maxprop} Let $T>0, p\geq 1,$ $a\in(1-H, \frac{1}{4}\wedge\frac{\nu_2}{2})$ and $\theta\in(a,\frac{1}{2})$. Under Condition \ref{C6}(i) and for all $\epsilon$ sufficiently small there exists a constant $C$ such that 
	\begin{equation}\label{maxbnd}
	\begin{aligned}
	\ex\bigg[\sup_{t\in[0,T]}\bigg|\int_{0}^{t}\sigma_1\big(X_r^{\epsilon,\eta,u}, Y_r^{\epsilon,\eta,u}\big)dB^H_r\bigg|^p\bigg]&\leq C\bigg(1+ \ex\sup_{t\in[0,T]}\big| X_t^{\epsilon,\eta,u}\big|^p+\eta^{-p\beta}        \bigg).
	\end{aligned}
	\end{equation}
\end{prop}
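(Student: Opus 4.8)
The starting point is the pathwise estimate of Lemma \ref{Youngbnd}$(i)$. Taking $s=0$ there and passing to the supremum over $t\in[0,T]$ (and absorbing the powers of $T$ into $\lesssim$) yields, almost surely,
\begin{equation*}
\sup_{t\in[0,T]}\bigg|\int_{0}^{t}\sigma_1\big(X_r^{\epsilon,\eta,u},Y_r^{\epsilon,\eta,u}\big)dB^H_r\bigg|\lesssim \|B^H\|_{T,1-a,\infty}\Big[1+\sup_{t\in[0,T]}|X_t^{\epsilon,\eta,u}|^{\nu_1}+\|Y^{\epsilon,\eta,u}\|_{L^2([0,T];\mathcal{Y})}+\sup_{t\in[0,T]}|\Delta_a|X_{0,t}^{\epsilon,\eta,u}+\sup_{t\in[0,T]}|\Delta_a|Y_{0,t}^{\epsilon,\eta,u}\Big].
\end{equation*}
Raising to the power $p$ and taking expectations, the task reduces to controlling, in every $L^{pq}(\Omega)$ with $q<\infty$, the four bracketed quantities, together with the multiplicative factor $\|B^H\|_{T,1-a,\infty}$ — the latter has finite moments of all orders, uniformly in $\epsilon$, by Fernique's theorem and the exponential integrability \eqref{fbmexpint}, so all products may be separated by H\"older's inequality. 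The term $\sup_{t}|X_t^{\epsilon,\eta,u}|^{p\nu_1}$ is handled directly by Jensen's inequality (since $\nu_1<1/2<1$), producing $1+\ex\sup_{t}|X_t^{\epsilon,\eta,u}|^p$, and the same device disposes of the cross term $\|B^H\|^p\sup|X|^{p\nu_1}$ once H\"older is applied.

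For $\sup_t|\Delta_a|X_{0,t}^{\epsilon,\eta,u}$ I would insert the a priori bound of Lemma \ref{DeltaX1}. That bound carries the random factor $F_\epsilon$ from \eqref{Feps}; by Fernique's theorem $\sup_{\epsilon<\epsilon_0}\ex F_\epsilon^q<\infty$ provided $\epsilon_0$ is small enough that $CpqT^{1-a}\sqrt{\epsilon_0}$ stays below the threshold in \eqref{fbmexpint} — this is exactly the meaning of ``for all $\epsilon$ sufficiently small'' in the statement. The remaining terms of Lemma \ref{DeltaX1} fall into three groups: (a) the powers $\sup_t|X_t^{\epsilon,\eta,u}|^{2\nu_1\vee\nu}$ with exponent strictly below $1$, which Young's inequality for products (applied to their products with $F_\epsilon$) converts into an $\epsilon$‑independent constant plus a multiple of $\ex\sup_t|X_t^{\epsilon,\eta,u}|^p$; (b) the fast‑motion quantities $\|Y^{\epsilon,\eta,u}\|_{L^2}$, $[Y^{\epsilon,\eta,u}]_{C^\theta}$ and the iterated fractional integral $\int_0^t(t-s)^{-a-1}\int_s^t|\Delta_a|Y^{\epsilon,\eta,u}_{s,r}dr\,ds$, which are estimated via Lemmas \ref{Ybndlem}, \ref{Yfraclem}; and (c) the It\^o terms $|\Delta_a|\big(\int_0^\cdot\nabla\Psi(Y)\tau(Y)dW\big)_{0,t}$ and $|\Delta_a|\big(\int_0^\cdot\sigma_2(X,Y)dW\big)_{0,t}$, whose integrands are bounded by Conditions \ref{C2}, \ref{C7}, \ref{C4}; for these I would combine the Burkholder–Davis–Gundy inequality with the Garsia–Rodemich–Rumsey (Kolmogorov) estimate — using $\theta\in(a,1/2)$ to dominate $|\Delta_a|$ by the $C^\theta$‑seminorm — to obtain uniform moment bounds, further damped by the explicit prefactor $\sqrt{\epsilon}<1$. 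The remaining quantity $\sup_t|\Delta_a|Y_{0,t}^{\epsilon,\eta,u}$ appearing in the displayed estimate is likewise controlled by Lemmas \ref{Ybndlem}, \ref{Yfraclem}.

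Assembling everything, every contribution is bounded by one of: an $\epsilon$‑independent constant; a multiple of $\ex\sup_{t\in[0,T]}|X_t^{\epsilon,\eta,u}|^p$ (absorbed into the constant $C$ of \eqref{maxbnd}); or a negative power of $\eta$ times a non‑negative power of $\sqrt{\epsilon}$. In this last case I would invoke the interaction regimes \eqref{regime} and \eqref{betaregime}, i.e. $\sqrt{\eta}\lesssim\sqrt{\epsilon}\lesssim\eta^{\beta}$, to absorb each such term into $\eta^{-p\beta}$, which gives \eqref{maxbnd}.

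\textbf{Main obstacle.} The delicate point is the final $\eta$‑bookkeeping of step (b): the fast process $Y^{\epsilon,\eta,u}$ lives on the time scale $\eta$ and is in addition driven by the rescaled control $\tfrac{1}{\sqrt{\epsilon\eta}}\tau(Y)du_2$ with only the $L^2$‑bound $\|\dot u_2\|_{L^2}\le N$, so its fractional‑regularity norms blow up polynomially in $1/\eta$ (and in $1/\sqrt{\epsilon}$). Showing that, once these blow‑ups are multiplied by the $\sqrt{\epsilon}$‑factors carried by the small‑noise scaling and the $\sqrt{\epsilon\eta}$‑factor produced by the Poisson‑equation identity \eqref{bterm}, the result never grows faster than $\eta^{-\beta}$ is precisely where the recurrence Condition \ref{C3} (which damps the control's effect on $Y^{\epsilon,\eta,u}$) and the relation $\sqrt{\epsilon}\lesssim\eta^{\beta}$ are used essentially; this is the technical heart of the estimate and is the reason Lemmas \ref{Ybndlem}, \ref{Yfraclem} are needed in sharp form rather than through crude pathwise bounds.
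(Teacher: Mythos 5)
Your proposal follows essentially the same route as the paper: start from the pathwise estimate of Lemma \ref{Youngbnd}$(i)$ with $s=0$, feed in Lemma \ref{DeltaX1} for $\sup_t|\Delta_a|X^{\epsilon,\eta,u}_{0,t}$, control $F_\epsilon$ and $\|B^H\|_{T,1-a,\infty}$ via Fernique's theorem \eqref{fbmexpint}, handle the subunitary powers of $\sup_t|X^{\epsilon,\eta,u}_t|$ by Young's product inequality, treat the It\^o terms by BDG plus Kolmogorov continuity and the embedding \eqref{pmembeds}, and dispatch the fast-motion terms with Lemmas \ref{Ybndlem}, \ref{Yfraclem}, finally invoking $\sqrt{\epsilon}\lesssim\eta^{\beta}$ to produce the $\eta^{-p\beta}$ factor. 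The only deviation is a bookkeeping one: you separate the random prefactor from the bracket by H\"older before bounding, whereas the paper first applies Young's product inequality to peel off $\sup_t|X^{\epsilon,\eta,u}_t|$ (gathering everything else into $Z_\epsilon Y_\epsilon$) and then uses Cauchy--Schwarz, but the ingredients and the resulting bound are identical.
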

\begin{proof}
	With $F_\epsilon$ as in \eqref{Feps},  let $Z_\epsilon:=(1+F_\epsilon)\|B^H\|_{T,1-a,\infty}.$  The pathwise estimates \eqref{preYoung} (with $s=0$) and \eqref{integralprebnd}, along with Young's product inequality with exponents $p_1=1/(\nu\vee 2\nu_1), p_2=1/(1-\nu\vee 2\nu_1)$,  furnish 
	\begin{equation*}\label{supH}
	\begin{aligned}
	\bigg|\int_{0}^{t}&\sigma_1\big(X_r^{\epsilon,\eta,u}, Y_r^{\epsilon,\eta,u}\big)dB^H_r\bigg|  \\&\leq C\|B^H\|_{T,1-a,\infty}
	\bigg(1+\sup_{t\in[0,T]}\big| X_r^{\epsilon,\eta,u}\big|^{\nu_1}+\big\|Y^{\epsilon,\eta,u}\big\|_{L^2([0,T];\mathcal{Y})}    	\\&+\sup_{t\in[0,T]}\big|\Delta_a\big|X_{0,t}^{\epsilon,\eta,u}+\int_{0}^{t}\big|\Delta_a\big|Y_{0,r}^{\epsilon,\eta,u}dr \bigg)\\&\leq c_1Z_\epsilon^{p_2}+c_2\sup_{t\in[0,T]}\big| X_t^{\epsilon,\eta,u}\big|\\&+C Z_\epsilon\bigg[ 1+\big\|Y^{\epsilon,\eta,u}\big\|_{L^2([0,T];\mathcal{Y})}+\int_{0}^{T}\big|\Delta_a\big|Y_{0,r}^{\epsilon,\eta,u}dr+     \sqrt{\epsilon\eta}\big[Y^{\epsilon,\eta,u}\big]_{C^\theta}\\&+\sqrt\epsilon\sup_{t\in[0,T]}\int_{0}^{t}(t-s)^{-a-1}\int_{s}^{t}\big|\Delta_a\big|Y_{s,r}^{\epsilon,\eta,u}drds\\&+  \sqrt{\epsilon}\sup_{t\in[0,T]}\big|\Delta_a\big|\bigg(\int_{0}^{\cdot}\nabla\Psi(Y^{\epsilon,\eta,u}_r)\tau(Y^{\epsilon,\eta,u}_r)dW_r\bigg)_{0,t}\\&+\sqrt{\epsilon}\sup_{t\in[0,T]}\big|\Delta_a\big|\bigg(\int_{0}^{\cdot}\sigma_2(X^{\epsilon,\eta,u}_r,Y^{\epsilon,\eta,u}_r)dW_r\bigg)_{0,t}\bigg]\\&
	=: c_1Z_\epsilon^{p_2}+c_2\sup_{t\in[0,T]}\big| X_t^{\epsilon,\eta,u}\big|+C Z_\epsilon Y_\epsilon.
	\end{aligned}
	\end{equation*}
	By virtue of Fernique's theorem, the $\mathscr{F}_T^H$-measurable random variable $Z_\epsilon$ has finite moments of all orders provided that $\epsilon$ is sufficiently small (see \eqref{fbmexpint}). The It\^o integral terms can be treated similarly so we only sketch the argument for $\int_{0}^{t}\nabla\Psi_r\tau_r dW_r.$ To this end, let $\theta\in(a,1/2).$ In view of Conditions \ref{C5}, \ref{C7} $\nabla\Psi(Y^{\epsilon,\eta,u}_\cdot)\tau(Y^{\epsilon,\eta,u}_\cdot)$ is uniformly bounded. From the Burkholder-Davis-Gundy (BDG) inequality and the Kolmogorov  continuity criterion it follows that the H\"older seminorm $[ \int_{0}^{\cdot}\nabla\Psi(Y^{\epsilon,\eta,u}_r)\tau(Y^{\epsilon,\eta,u}_r)dW_r    ]_{C^\theta}$ has moments of all orders that are uniformly bounded over $\epsilon<1$. Thus, the continuity of the first inclusion in the first line of \eqref{pmembeds} yields $\sup_{\epsilon<1}\sqrt{\epsilon}\ex[ \int_{0}^{\cdot}\nabla\Psi(Y^{\epsilon,\eta,u}_r)\tau(Y^{\epsilon,\eta,u}_r)dW_r    ]^p_{a,\infty}$ $<\infty.$ Finally, from Lemmas \ref{Ybndlem}, \ref{Yfraclem} and the Cauchy-Schwarz inequality we have 
	\begin{equation*}
	\ex\big[( Z_\epsilon Y_\epsilon)^p\big]^2 \leq  \ex\big[Z^{2p}_\epsilon]\ex\big[Y^{2p}_\epsilon]\leq C(1+\eta^{-2p\beta}).
	\end{equation*}
	The proof is complete.
\end{proof}

\begin{rem}\label{relativeraterem} Both Lemma \ref{DeltaX1} and Proposition \ref{maxprop} continue to hold under Condition \ref{C6}(ii). In this setting the proofs are simpler and the statements hold for all $a\in(1-H, 1/2).$ Moreover, if $\sigma=\sigma_1(x),$ it is possible to obtain the nonsingular bound
	\begin{equation*}\label{maxbnd2}
	\begin{aligned}
	\ex\bigg[\sup_{t\in[0,T]}\bigg|\int_{0}^{t}\sigma_1\big(X_r^{\epsilon,\eta,u}\big)dB^H_r\bigg|^p\bigg]&\leq C\bigg(1+ \ex\sup_{t\in[0,T]}\big| X_t^{\epsilon,\eta,u}\big|^p \bigg).
	\end{aligned}
	\end{equation*}
	Indeed, the  singular term $\eta^{-\beta}$ on the right-hand side of \eqref{maxbnd} accounts for the feedback of the fast motion which enters the calculations through the term $\int_{0}^{T}\big|\Delta_a\big|Y_{0,r}^{\epsilon,\eta,u}dr$ (see also Lemma \eqref{Yfraclem}$(i)$). The latter only appears in the case where $\sigma_1$ depends on $y.$
\end{rem}

\noindent We conclude this section with the proof of Proposition \ref{tightnessprop}.

\noindent \textit{Proof of Proposition \ref{tightnessprop}}.  We shall only prove $(i)$ since $(ii)$ is simpler and follows from identical arguments.  Combining \eqref{supb}, \eqref{supc}, \eqref{supcon} we have 
\begin{equation*}
\begin{aligned}
\big|X^{\epsilon,\eta,u}_t\big|^p&\lesssim |x_0|^p+\bigg(\frac{\sqrt{\epsilon}}{\sqrt\eta}\sup_{t\in[0,T]}\bigg|\int_{0}^{t}b(Y^{\epsilon,\eta,u}_s)ds\bigg|\bigg)^p+\sup_{t\in[0,T]}\bigg|\int_{0}^{t}  c\big(X_r^{\epsilon,\eta,u}, Y_r^{\epsilon,\eta,u}\big)dr\bigg|^p\\&+\sum_{i=1}^{2}\sup_{t\in[0,T]}\bigg|\int_{0}^{t}\sigma_i\big(X_r^{\epsilon,\eta,u}, Y_r^{\epsilon,\eta,u}\big)du_i(r)\bigg|^p\\&+\bigg(\sqrt{\epsilon}\sup_{t\in[0,T]}\bigg|\int_{0}^{t}\sigma_1\big(X_r^{\epsilon,\eta,u}, Y_r^{\epsilon,\eta,u}\big)dB^H_r\bigg|\bigg)^p\\&+\bigg(\sqrt{\epsilon}\sup_{t\in[0,T]}\bigg|\int_{0}^{t}\sigma_2\big(X_r^{\epsilon,\eta,u}, Y_r^{\epsilon,\eta,u}\big)dW_r\bigg|\bigg)^p\\&
\leq  C\bigg[1+\bigg(\sqrt{\epsilon}\sup_{t\in[0,T]}\bigg|\int_{0}^{t}\nabla\Psi(Y^{\epsilon,\eta,u}_s)\tau(Y^{\epsilon,\eta,u}_s)dW_s\bigg|\bigg)^p+  \int_{0}^{T}\sup_{s\in[0,r]}\big|X_s^{\epsilon,\eta,u}    \big|^p dr\\&+\big\|Y^{\epsilon,\eta,u}\big\|^p_{L^2([0,T];\mathcal{Y})}+\big(\sqrt{\epsilon\eta}\big[Y^{\epsilon,\eta,u}\big]_{C^\theta}\big)^p\\&
+\bigg(\sqrt{\epsilon}\sup_{t\in[0,T]}\bigg|\int_{0}^{t}\sigma_1\big(X_r^{\epsilon,\eta,u}, Y_r^{\epsilon,\eta,u}\big)dB^H_r\bigg|\bigg)^p\\&+\bigg(\sqrt{\epsilon}\sup_{t\in[0,T]}\bigg|\int_{0}^{t}\sigma_2\big(X_r^{\epsilon,\eta,u}, Y_r^{\epsilon,\eta,u}\big)dW_r\bigg|\bigg)^p\bigg].
\end{aligned}
\end{equation*}
Taking expectation and applying the Burkholder-Davis-Gundy inequality and \eqref{maxbnd} for the It\^o and Young integrals respectively, as well as Lemma \ref{Ybndlem} for the $L^2$ and H\"older norms of the fast process, we obtain
\begin{equation*}
\begin{aligned}
\ex\sup_{t\in[0,T]}\big|X^{\epsilon,\eta,u}_t\big|^p&
\leq  C\bigg[1+\int_{0}^{T}\ex\sup_{s\in[0,r]}\big|X_s^{\epsilon,\eta,u}    \big|^p dr\\&+\epsilon^{\frac{p}{2}}\bigg(\int_{0}^{T}\ex\big|\nabla\Psi(Y^{\epsilon,\eta,u}_s)\tau(Y^{\epsilon,\eta,u}_s)\big|^2ds\bigg)^\frac{p}{2}
\\&+\epsilon^{\frac{p}{2}} \ex\sup_{t\in[0,T]}\big| X_t^{\epsilon,\eta,u}\big|^p+\bigg(\frac{\sqrt{\epsilon}}{\eta^{\beta} }\bigg)^p\\&+\epsilon^{\frac{p}{2}}\bigg(\int_{0}^{T}\ex\big|\sigma_2(X^{\epsilon,\eta,u}_s,Y^{\epsilon,\eta,u}_s)\big|^2ds\bigg)^\frac{p}{2}\bigg].
\end{aligned}
\end{equation*}
For $\epsilon$ sufficiently small and due to the uniform boundedness of the It\^o integrands (Conditions \ref{C2}, \ref{C4}, \ref{C7}) we can rearrange and apply Gr\"onwall's inequality to deduce that
\begin{equation}\label{supbnd}
\ex\sup_{t\in[0,T]}\big|X^{\epsilon,\eta,u}_t\big|^p\leq C_{p,T,\Psi,\tau,\sigma_2,x_0}e^{CT}\bigg[1+\bigg(\frac{\sqrt{\epsilon}}{\eta^{\beta} }\bigg)^p\bigg]\leq C,
\end{equation}
where the last inequality holds for $\epsilon$ small from Condition \ref{C6}$(i)$.
It remains to estimate $|\Delta_a|X^{\epsilon}_{0,t}$. To this end, we return to the pathwise estimate \eqref{integralprebnd} and apply the Cauchy-Schwarz inequality for expectation to obtain	
\begin{equation*}
\begin{aligned}
\bigg(\ex& \sup_{t\in[0,T]}\big(\big|\Delta_a\big| X^{\epsilon,\eta,u}_{0,t}\big)^p\bigg)^2\\&\leq   C \ex [F_\epsilon^{2p}]\ex\bigg[1+\sup_{t\in[0,T]}\big|X_t^{\epsilon,\eta,u}|^{2p(2\nu_1\vee\nu)}+\big\|Y^{\epsilon,\eta,u}\big\|^{2p}_{L^2([0,T];\mathcal{Y})}+\big(\sqrt{\epsilon\eta}\big[Y^{\epsilon,\eta,u}\big]_{C^\theta}\big)^{2p}\\&+\sup_{t\in[0,T]}\big|\Delta_a\big|\bigg(\sqrt{\epsilon}\int_{0}^{\cdot}\nabla\Psi(Y^{\epsilon,\eta,u}_r)\tau(Y^{\epsilon,\eta,u}_r)dW_r\bigg)_{0,t}^{2p}\\&+\bigg(\sqrt{\epsilon}\sup_{t\in[0,T]}\int_{0}^{t}(t-s)^{-a-1}\int_{s}^{t}\big|\Delta_a\big|Y_{s,r}^{\epsilon,\eta,u}dr ds\bigg)^{2p}\\&
+\sup_{t\in[0,T]}\big|\Delta_a\big|\bigg(\sqrt{\epsilon}\int_{0}^{\cdot}\sigma_2(X^{\epsilon,\eta,u}_r,Y^{\epsilon,\eta,u}_r)dW_r\bigg)_{0,t}^{2p}\bigg]
\end{aligned}
\end{equation*}
Appealing once again to Fernique's theorem \eqref{fbmexpint}  along with \eqref{supbnd} and Lemmas \ref{Ybndlem}, \ref{Yfraclem} we obtain
\begin{equation}\label{Xbnd}
\begin{aligned}
\ex \sup_{t\in[0,T]}\big(\big|\Delta_a \big|X^{\epsilon,\eta,u}_{0,t}\big)^p&\leq   C\bigg[1+\bigg(\frac{\sqrt{\epsilon}}{\eta^\beta}\bigg)^p\bigg],
\end{aligned}
\end{equation}
and, due to Condition \eqref{C6}(i), the latter is finite for $\epsilon$ small. Note that the It\^o integrals can been treated as in the proof of Proposition \ref{maxprop}. Combining \eqref{supbnd} and \eqref{Xbnd} we conclude that 
\begin{equation*}
\sup_{\epsilon<\epsilon_0, u\in\mathcal{A}_N}\ex\big\|X^{\epsilon,\eta,u}\big\|^p_{a,\infty}\leq \sup_{\epsilon<\epsilon_0, u\in\mathcal{A}_N}\bigg(\ex\sup_{t\in[0,T]}\big|X^{\epsilon,\eta,u}_t\big|^p	+			\ex \sup_{t\in[0,T]}\big(\big|\Delta_a\big|X^{\epsilon,\eta,u}_{0,t}\big)^p\bigg)<\infty.
\end{equation*}

\section{Limiting behavior of the controlled dynamics}\label{limsec}
\noindent This section is devoted to the proof of Theorem \ref{main1}. To this end, we first show in Lemma \ref{tightlem} that, for some sufficiently small $\epsilon_0>0,$ the family \begin{equation}\label{Tfam}
\mathscr{T}=\{ (X^{\epsilon,\eta,u},P^\epsilon); \epsilon<\epsilon_0, u\in\mathcal{A}_N\}
\end{equation} of controlled slow processes \eqref{consys} and occupation measures \eqref{occupation} is tight. Then, we characterize the limiting behavior of $P^\epsilon$ in Lemma \ref{occulem} and the limiting dynamics of  $X^{\epsilon,\eta,u}$ in Lemma \ref{locave} and Proposition \ref{sigmaave}.  We emphasize that, from this point on and throughout the rest of this work, we shall replace the growth assumptions \eqref{sigmagrowth1}, \eqref{sigmagrowth2} of Conditions \ref{C6}$(i)$, $(ii)$ with the stronger assumption that $\sigma_1$ is bounded. This assumption simplifies the proofs of convergence (Proposition \ref{sigmaave}) and the Laplace lower bound (Section \ref{LPlbsec}). An investigation of the optimal growth rates of $\sigma_1$ under which our results still hold is beyond the scope of this paper and is left for future work.

Before we move on to the main body of this section let us recall the notion of tightness for a family of probability measures as well as the classical theorem of Prokhorov.
\begin{dfn} Let $\mathcal{E}$ be a Polish space. A family  $\Pi\subset\mathscr{P}(\mathcal{E})$ of probability measures is called tight if for any $\delta>0$ there exists a compact set $K_\delta\subset\mathcal{E}$ such that  \[\sup_{P\in\Pi}P(\mathcal{E}\setminus K_\delta)<\delta.\]
\end{dfn}

\noindent Prokhorov's theorem asserts that  the notions of tightness and relative weak sequential compactness on $\mathscr{P}(\mathcal{E})$ are equivalent, provided that $\mathcal{E}$ is a Polish space.

\begin{thm}(Prokhorov) Let $\Pi\subset\mathscr{P}(\mathcal{E})$ be a family of probability measures on the Polish space $\mathcal{E}.$ Every sequence in $\Pi$ has a convergent subsequence in the topology of weak convergence of measures if and only if $\Pi$ is tight.
\end{thm}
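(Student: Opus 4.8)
The plan is to treat this as the classical theorem of Prokhorov; a self-contained proof may be found, for instance, in \cite{ethier1986markov} (Chapter 3, Section 2) or in the appendix of \cite{dupuis2011weak}. Below I only outline the argument, since in the sequel we shall use exclusively the implication ``tightness $\Rightarrow$ relative sequential compactness''. For this direction I would first dispose of the case in which $\mathcal{E}$ is compact. Then $C(\mathcal{E})$ is a separable Banach space, and by the Riesz representation theorem $\mathscr{P}(\mathcal{E})$ embeds as a weak-$*$ closed subset of the closed unit ball of $C(\mathcal{E})^*$; the Banach--Alaoglu theorem together with the separability of $C(\mathcal{E})$ makes this ball weak-$*$ sequentially compact, and since the constant function $1$ belongs to $C(\mathcal{E})$ any weak-$*$ limit of a sequence of probability measures is again a probability measure. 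Hence on a compact metric space every sequence in $\mathscr{P}(\mathcal{E})$ admits a weakly convergent subsequence, which is exactly the assertion (tightness being automatic there).

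For a general Polish space $\mathcal{E}$ I would invoke the standard fact that $\mathcal{E}$ is homeomorphic to a subset of the Hilbert cube $[0,1]^{\mathbb{N}}$, a compact metrizable space. Given a tight family $\Pi$ and a sequence $(P_n)\subset\Pi$, I would push the $P_n$ forward under this embedding to obtain probability measures on $[0,1]^{\mathbb{N}}$, extract by the compact case a subsequence converging weakly to some $\mu\in\mathscr{P}([0,1]^{\mathbb{N}})$, and use tightness to localize $\mu$: if $K_\delta\subset\mathcal{E}$ is compact with $\sup_n P_n(\mathcal{E}\setminus K_\delta)<\delta$, then by the Portmanteau theorem $\mu(K_\delta)\geq\limsup_n P_n(K_\delta)\geq 1-\delta$, so $\mu(\mathcal{E})=1$ upon letting $\delta\downarrow 0$. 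A short additional argument, again via Portmanteau and the observation that every bounded continuous $f$ on $\mathcal{E}$ can be matched, on each $K_\delta$, by a bounded continuous function on the cube, upgrades this to weak convergence of the (sub)sequence to $\mu$ in $\mathscr{P}(\mathcal{E})$. The converse implication, that relative sequential compactness forces tightness, is where completeness and separability of $\mathcal{E}$ enter: one argues by contradiction, supposing that some mass $\varepsilon>0$ escapes every compact set uniformly over $\Pi$, covers $\mathcal{E}$ by countably many $1/k$-balls, and uses that no finite union of closed balls captures mass $1-\varepsilon$ to build a sequence in $\Pi$ with no convergent subsequence.

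The only genuinely delicate point is the passage from the compact case to the Polish case — verifying that the weak limit on the Hilbert cube is carried by (the image of) $\mathcal{E}$ and that weak convergence there coincides with weak convergence in $\mathscr{P}(\mathcal{E})$. This is precisely where tightness (respectively completeness, in the converse) is indispensable; it is carried out in full detail in the references cited above, so we shall not reproduce it here.
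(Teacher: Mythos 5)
The paper does not prove this statement at all: Prokhorov's theorem is recorded as a classical result and invoked without proof (the paper merely states it between the definition of tightness and Lemma 5.1). So there is no ``paper's proof'' to compare against. Your outline is the standard argument and is correct in its essentials: the compact case via Riesz representation and Banach--Alaoglu (with separability of $C(\mathcal{E})$ supplying weak-$*$ sequential compactness and $\mathbf{1}\in C(\mathcal{E})$ ensuring limits remain probability measures), the lift to the Polish case via an embedding into the Hilbert cube, the Portmanteau estimate $\mu(K_\delta)\ge\limsup_n P_n(K_\delta)\ge 1-\delta$ to force the limit to live on the image of $\mathcal{E}$, and the converse by a covering/contradiction argument. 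The only place worth tightening, were you to write it out in full, is the converse: the step ``no finite union of closed balls captures mass $1-\varepsilon$'' needs to be set up carefully (for each $k$ one covers $\mathcal{E}$ by countably many closed $1/k$-balls, uses relative compactness to get, along a convergent subsequence, a finite subfamily whose union has uniformly large mass, and then intersects over $k$ to build a compact set, contradicting the assumed escape of mass); completeness of $\mathcal{E}$ enters precisely in concluding that a totally bounded closed set is compact. But given that the paper itself offers no proof, citing \cite{ethier1986markov} or \cite{dupuis2011weak}, exactly as you do, is the appropriate level of detail here.
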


\begin{lem}\label{tightlem} The family $\mathcal{T} \eqref{Tfam}$ is tight in $C([0,T];\mathcal{X})\times\mathscr{P}([0,T]\times\mathcal{U}_1\times\mathcal{U}_2\times\mathcal{Y}).$	
\end{lem}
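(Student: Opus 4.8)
The plan is to prove tightness of the two marginal families $\{X^{\epsilon,\eta,u};\epsilon<\epsilon_0,u\in\mathcal{A}_b\}$ and $\{P^\epsilon;\epsilon<\epsilon_0,u\in\mathcal{A}_b\}$ separately and then combine them: since $C([0,T];\mathcal{X})\times\mathscr{P}([0,T]\times\mathcal{U}_1\times\mathcal{U}_2\times\mathcal{Y})$ is Polish, tightness of the marginals implies tightness of the joint laws, which is exactly the assertion of the lemma.

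For the slow component I would invoke directly the estimate \eqref{tightnessX} of Proposition \ref{tightnessprop}: fixing $a\in(1-H,\tfrac14\wedge\tfrac{\nu_2}{2})$ and $\theta\in(0,a)$ one has $\sup_{\epsilon<\epsilon_0,u\in\mathcal{A}_b}\ex\|X^{\epsilon,\eta,u}\|_{C^\theta([0,T];\mathcal{X})}<\infty$, and since $X^{\epsilon,\eta,u}_0=x_0$ deterministically, the sets $K_R:=\{\phi\in C([0,T];\mathcal{X}):\phi(0)=x_0,\ \|\phi\|_{C^\theta}\leq R\}$ are compact in $C([0,T];\mathcal{X})$ by the Arzel\`a--Ascoli theorem (the $C^\theta$-seminorm is lower semicontinuous under uniform convergence, so $K_R$ is closed). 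A Markov bound $\pr(X^{\epsilon,\eta,u}\notin K_R)\leq R^{-1}\ex\|X^{\epsilon,\eta,u}\|_{C^\theta}$, uniform in $(\epsilon,u)$, then yields tightness upon letting $R\to\infty$.

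For the occupation measures I would use the tightness-function criterion for random measures. Set $\mathcal{Z}:=[0,T]\times\mathcal{U}_1\times\mathcal{U}_2\times\mathcal{Y}$ and $G(t,u_1,u_2,y):=|u_1|^2+|u_2|^2+|y|^2$; since $[0,T]$ is compact and the Euclidean arguments enter coercively, $G$ is continuous with compact sublevel sets. By the definition \eqref{occupation} of $P^\epsilon$, the identity $\|K^{-1}_Hu_1\|_{L^2}=\|u_1\|_{\h_H}$, the norm equivalence of Remark \ref{CM1/2rem} relating $\|\dot u_2\|_{L^2}$ to $\|u_2\|_{\h_{1/2}}$, and the a.s.\ bound $\|u^\epsilon_1\|_{\h_H}^2+\|u^\epsilon_2\|_{\h_{1/2}}^2\leq N^2$ valid for $u^\epsilon\in\mathcal{A}_b$, one gets
\begin{equation*}
\ex\int_{\mathcal{Z}}G\,dP^\epsilon=\ex\int_{0}^{T}\Big(\big|K^{-1}_Hu^\epsilon_1(s)\big|^2+\big|\dot u^\epsilon_2(s)\big|^2+\big|Y^{\epsilon,\eta,u^\epsilon}_s\big|^2\Big)\,ds\leq C\Big(N^2+\ex\big\|Y^{\epsilon,\eta,u^\epsilon}\big\|^2_{L^2([0,T];\mathcal{Y})}\Big),
\end{equation*}
and the last expectation is bounded uniformly over $\epsilon<\epsilon_0$ and $u\in\mathcal{A}_b$ by Lemma \ref{Ybndlem}. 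Hence $M_0:=\sup_{\epsilon,u}\ex\int_{\mathcal{Z}}G\,dP^\epsilon<\infty$. Since each $P^\epsilon$ has deterministic total mass $P^\epsilon(\mathcal{Z})=T$, and the set $\{Q\in\mathscr{P}(\mathcal{Z}):Q(\mathcal{Z})=T,\ \int_{\mathcal{Z}}G\,dQ\leq M\}$ is relatively compact in $\mathscr{P}(\mathcal{Z})$ (a family of measures of fixed mass $T$ which put mass at most $M/\lambda$ outside the compact set $\{G\leq\lambda\}$ for every $\lambda>0$), the Markov bound $\pr(\int_{\mathcal{Z}}G\,dP^\epsilon>M)\leq M_0/M$ gives tightness of the laws of $P^\epsilon$ as $M\to\infty$.

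The only point requiring more than routine bookkeeping is the passage from the uniform moment bound on $\ex\int_{\mathcal{Z}}G\,dP^\epsilon$ to tightness of the random measures themselves, i.e.\ the tightness-function criterion; and, underlying it, the uniform-in-$(\epsilon,u)$ control $\sup\ex\|Y^{\epsilon,\eta,u}\|^2_{L^2([0,T];\mathcal{Y})}<\infty$ furnished by Lemma \ref{Ybndlem}, which is where the recurrence Condition \ref{C3} and the a priori $\mathcal{A}_b$-bound on the controls enter. Everything else is a direct consequence of Proposition \ref{tightnessprop} and standard facts about weak convergence of probability measures.
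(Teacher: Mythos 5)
Your proposal is correct and follows essentially the same route as the paper's proof: tightness of $\{X^{\epsilon,\eta,u}\}$ via the uniform $C^\theta$-moment bound \eqref{tightnessX} and Arzel\`a--Ascoli, and tightness of $\{P^\epsilon\}$ via the tightness-function criterion with a coercive integrand, the $\mathcal{A}_b$-bound on the controls, and Lemma \ref{Ybndlem}(i). The only cosmetic difference is that you take the quadratic $G=|u_1|^2+|u_2|^2+|y|^2$ directly whereas the paper takes $\phi=|u_1|+|u_2|+|y|$ and then applies Cauchy--Schwarz; both choices feed into the same criterion (Theorem A.3.17 of \cite{dupuis2011weak}), which you essentially re-derive from Prokhorov on measures of fixed mass $T$.
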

\begin{proof}
	We first show that the laws of the occupation measures $\{P^{\epsilon}\}_{\epsilon}$ form a tight family in   $\mathscr{P}(\mathscr{P}([0,T]\times\mathcal{U}_1\times\mathcal{U}_2\times\mathcal{Y})).$ To this end, let $\phi: [0,T]\times\mathcal{U}_1\times\mathcal{U}_2\times\mathcal{Y}\rightarrow[0,\infty]$ with 
	$\phi(t,u_1,u_2,y)=|u_1|+|u_2|+|y|$ and note that, for each $M>0$, the sub-level set $\{ \phi\leq M      \}$ is compact. In view of \cite{dupuis2011weak}, Theorem A.3.17, $F: \mathscr{P}([0,T]\times\mathcal{U}_1\times\mathcal{U}_2\times\mathcal{Y})\rightarrow[0,\infty]$, $$F(\theta)=\int_{[0,T]\times\mathcal{U}_1\times\mathcal{U}_2\times\mathcal{Y}}\phi d\theta$$
	is a tightness function and using the Cauchy-Schwarz inequality,  
	the definitions of $\mathcal{A}_N$ \eqref{AN} and the Cameron-Martin space $\h_H$ \eqref{CMdef} along with Lemma \ref{Ybndlem}(i),
	\begin{equation}\label{occutight}
	\begin{aligned}
	\sup_{\epsilon\in(0,1)}\ex F(P^{\epsilon})&=\sup_{\epsilon\in(0,1)}\int_{0}^{T}\ex\bigg[|K^{-1}_{H}u^{\epsilon}_1(s)|+|\dot{u}^{\epsilon}_2(s)|+|Y^{\epsilon,\eta,u^\epsilon}_s|\bigg]ds\\&\leq C_{T}\sup_{\epsilon\in(0,1)}\int_{0}^{T}\ex|K^{-1}_{H}u^{\epsilon}_1(s)|^2+\ex|\dot{u}^{\epsilon}_2(s)|^2+\ex|Y^{\epsilon,\eta,u^\epsilon}_s|^2ds<\infty.
	\end{aligned}
	\end{equation}
	An application of Chebyshev's inequality concludes the arguments. Turning to the controlled slow processes $\{X^{\epsilon,\eta,u};\epsilon,u\},$ the estimate \eqref{tightnessX} along with the Arzel\`a-Ascoli criterion (see Theorem 7.2 in \cite{94060261}) yields the desired conclusion.\end{proof}

\noindent The following uniform integrability property is a byproduct of tightness and will be used in the identification of the limiting slow dynamics.
\begin{cor}\label{UI} The occupation measures $P^\epsilon$ are uniformly integrable in the sense that
	\begin{equation*}
	\lim_{M\to\infty}\sup_{\epsilon<1}\ex\int_{[0,T]\times\{|u_1|> M\}\times\{|u_2|> M\}\times\{|y|> M\}}\big(|u_1|+|u_2|+|y|\big)dP^\epsilon(t,u_1,u_2,y)=0.
	\end{equation*}
\end{cor}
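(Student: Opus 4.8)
The plan is to reduce the statement to the uniform second-moment bounds that were already obtained in the proof of Lemma \ref{tightlem} (cf. \eqref{occutight}) and in Lemma \ref{Ybndlem}, invoking nothing beyond the elementary inequality $r\le r^2/M$ for $r>M$. In other words, uniform integrability here is a direct consequence of uniform boundedness in $L^2$, which is the standard de la Vallée-Poussin mechanism.

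Concretely, I would first unwind the definition \eqref{occupation}: since $P^\epsilon$ is the pushforward of Lebesgue measure on $[0,T]$ under $s\mapsto\big(s,K^{-1}_Hu^\epsilon_1(s),\dot u^\epsilon_2(s),Y^{\epsilon,\eta,u^\epsilon}_s\big)$, for any nonnegative Borel $h$ on $[0,T]\times\mathcal{U}_1\times\mathcal{U}_2\times\mathcal{Y}$ one has $\int h\,dP^\epsilon=\int_0^T h\big(s,K^{-1}_Hu^\epsilon_1(s),\dot u^\epsilon_2(s),Y^{\epsilon,\eta,u^\epsilon}_s\big)ds$. Applying this with $h=(|u_1|+|u_2|+|y|)\mathds{1}_{\{|u_1|>M\}}\mathds{1}_{\{|u_2|>M\}}\mathds{1}_{\{|y|>M\}}$ and using, termwise, that $|u_1|\le|u_1|^2/M$, $|u_2|\le|u_2|^2/M$, $|y|\le|y|^2/M$ on the relevant set, one gets
\begin{equation*}
\ex\int_{[0,T]\times\{|u_1|>M\}\times\{|u_2|>M\}\times\{|y|>M\}}\big(|u_1|+|u_2|+|y|\big)dP^\epsilon(t,u_1,u_2,y)\le\frac1M\,\ex\int_0^T\Big(|K^{-1}_Hu^\epsilon_1(s)|^2+|\dot u^\epsilon_2(s)|^2+|Y^{\epsilon,\eta,u^\epsilon}_s|^2\Big)ds.
\end{equation*}
It then remains to bound the right-hand side by $C/M$ with $C$ independent of $\epsilon$. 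By the definition \eqref{CMdef} of $\h_H$ and Remark \ref{CM1/2rem}, $\int_0^T\big(|K^{-1}_Hu^\epsilon_1(s)|^2+|\dot u^\epsilon_2(s)|^2\big)ds=\|K^{-1}_Hu^\epsilon_1\|_{L^2}^2+\|\dot u^\epsilon_2\|_{L^2}^2\lesssim\|u^\epsilon\|_{\h_H\oplus\h_{1/2}}^2\le N^2$ almost surely, since $u^\epsilon\in\mathcal{A}_b$ (see \eqref{Ab}); and $\int_0^T\ex|Y^{\epsilon,\eta,u^\epsilon}_s|^2ds$ is bounded uniformly over $\epsilon\in(0,1)$ and $u^\epsilon\in\mathcal{A}_b$ by Lemma \ref{Ybndlem}(i) with $p=2$. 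Taking the supremum over $\epsilon<1$ and letting $M\to\infty$ finishes the argument.

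There is essentially no obstacle. The only point worth a comment is that the domain of integration requires all three coordinates $|u_1|,|u_2|,|y|$ to exceed $M$ simultaneously, which only strengthens the bound; the same proof works, with the elementary estimate $|u_1|+|u_2|+|y|\le(|u_1|+|u_2|+|y|)^2/M\lesssim(|u_1|^2+|u_2|^2+|y|^2)/M$, if one replaces that domain by the single constraint $\{|u_1|+|u_2|+|y|>M\}$. All of the analytic input — uniform $L^2$ control of $K^{-1}_Hu^\epsilon_1$, $\dot u^\epsilon_2$ and of the fast process $Y^{\epsilon,\eta,u^\epsilon}$ — is already available from Lemma \ref{tightlem} and Lemma \ref{Ybndlem}.
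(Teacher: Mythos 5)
Your argument is correct and is essentially the paper's: both reduce uniform integrability to the uniform second-moment bound on $(K^{-1}_Hu^\epsilon_1,\dot u^\epsilon_2,Y^{\epsilon,\eta,u^\epsilon})$ established in \eqref{occutight} (via $\|u^\epsilon\|_{\h_H\oplus\h_{1/2}}\le N$ and Lemma \ref{Ybndlem}(i)) and then apply a Chebyshev/Markov-type estimate. You spell out the elementary pointwise inequality $|u_1|+|u_2|+|y|\le(|u_1|^2+|u_2|^2+|y|^2)/M$ on the tail set, which is exactly what the paper's terse invocation of ``\eqref{occutight} and Chebyshev's inequality'' amounts to.
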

\begin{proof} Let $M,\epsilon>0$. From \eqref{occutight} and Chebyshev's inequality we have 
	\begin{equation*}
	\ex\int_{[0,T]\times\{|u_1|> M\}\times\{|u_2|> M\}\times\{|y|> M\}}\big(|u_1|+|u_2|+|y|\big)dP^\epsilon(t,u_1,u_2,y)\leq \frac{1}{M}\sup_{\epsilon\in(0,1)}\ex F(P^\epsilon)
	\end{equation*}
	The proof is complete upon taking $M\to\infty$.
\end{proof}

\noindent As we showed, there exists $\epsilon_0>0$ such that the family $\mathscr{T}$ is tight. From Prokhorov's theorem, any sequence of elements in $\mathscr{T}$ has a subsequence that converges in distribution to a probability measure  $\mu_\ell$ on  $C([0,T];\mathcal{X})\times\mathscr{P}([0,T]\times\mathcal{U}_1\times\mathcal{U}_2\times\mathcal{Y})$. At this point we invoke the Skorokhod representation theorem which allows us to assume that the subsequence converges to $(\psi, P)$ almost surely in $C([0,T];\mathcal{X})\times\mathscr{P}([0,T]\times\mathcal{U}_1\times\mathcal{U}_2\times\mathcal{Y})$. Skorokhod's theorem introduces a new probability space which, for the sake of simplicity, will not be reflected in our notation. Our goal is to provide a characterization of the limit points $(\psi, P)$ as viable pairs, according to Definition \ref{viable}.

The next lemma shows that the limiting measures $P$ satisfy properties $(i)$ and $(ii)$ of Definition \ref{viable}.
\begin{lem}\label{occulem} Let $T>0,$ $\mathcal{L}$ as in \eqref{generator} and assume that $P^\epsilon\rightarrow P$ almost surely in $\mathscr{P}([0,T]\times\mathcal{U}_1\times\mathcal{U}_2\times\mathcal{Y})$. The following hold:\\
	\noindent (i) \begin{equation*}
	\pr\bigg[\int_{[0,T]\times\mathcal{U}_1\times\mathcal{U}_2\times\mathcal{Y}}\big(|u_1|^2+|u_2|^2+|y|^2\big)dP(s,u_1,u_2,y)<\infty\bigg]=1,
	\end{equation*} 	
	\noindent (ii)
	\begin{equation}\label{mumarg}
	\pr\bigg[ \forall t\in[0,T], h\in C_c^2(\mathcal{Y}),	\int_{[0,t]\times\mathcal{U}_1\times\mathcal{U}_2\times\mathcal{Y}}\mathcal{L}h(y)dP(s,u_1,u_2,y)=0\bigg]=1,
	\end{equation}
	(iii) for all $t\in[0,T]$, \begin{equation}\label{Lebmarg}
	P( [0,t]\times \mathcal{U}_1\times\mathcal{U}_2\times\mathcal{Y} )=t.
	\end{equation}
\end{lem}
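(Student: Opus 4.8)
The plan is to treat the three claims in the order (iii), (i), (ii), since the identification of the $[0,T]$-marginal of $P$ in (iii) will be used in the other two parts. Throughout I would work on the Skorokhod probability space on which $P^\epsilon\to P$ a.s.\ and keep the associated controlled pair $(X^{\epsilon,\eta,u^\epsilon},Y^{\epsilon,\eta,u^\epsilon})$ solving \eqref{consys}. For (iii): by the definition \eqref{occupation}, the image of $P^\epsilon$ under the coordinate projection $\pi$ onto $[0,T]$ is Lebesgue measure on $[0,T]$, deterministically, for every $\epsilon$. Since push-forward by the continuous map $\pi$ is continuous on spaces of probability measures, the a.s.\ convergence $P^\epsilon\to P$ forces $P\circ\pi^{-1}$ to equal Lebesgue measure on $[0,T]$ almost surely; this is exactly \eqref{Lebmarg} (for all $t$ simultaneously) and in addition shows that every slice $\{s=t\}$ is $P$-null a.s.

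For (i) I would use that $\varphi(s,u_1,u_2,y):=|u_1|^2+|u_2|^2+|y|^2$ is nonnegative and lower semicontinuous: truncating at level $M$, applying the Portmanteau lemma along $P^\epsilon\to P$, and letting $M\to\infty$ gives $\int\varphi\,dP\le\liminf_\epsilon\int\varphi\,dP^\epsilon$ a.s.; taking expectations and invoking Fatou then reduces everything to the uniform bound $\sup_\epsilon\ex\int\varphi\,dP^\epsilon=\sup_\epsilon\ex\int_0^T\!\big(|K_H^{-1}u_1^\epsilon(s)|^2+|\dot u_2^\epsilon(s)|^2+|Y_s^{\epsilon,\eta,u^\epsilon}|^2\big)ds<\infty$. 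Here the first two integrals are bounded by a constant multiple of $N^2$ via the definitions of $\mathcal{A}_b$ \eqref{Ab} and $\h_H$ \eqref{CMdef} together with Remark \ref{CM1/2rem}, and the third is uniformly bounded by Lemma \ref{Ybndlem}(i). Hence $\ex\int\varphi\,dP<\infty$, so $\int\varphi\,dP<\infty$ a.s.

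For (ii) I would fix $h\in C_c^2(\mathcal{Y})$ and apply It\^o's formula to $s\mapsto h(Y_s^{\epsilon,\eta,u^\epsilon})$ using the fast equation in \eqref{consys} (the $du_2^\epsilon$-term, being absolutely continuous, enters only the drift); grouping terms via $\tfrac12 D^2h:\tau\tau^T+\nabla h\,f=\mathcal{L}h$ and multiplying through by $\eta$ yields, for every $t\in[0,T]$,
\begin{align*}
\int_0^t\mathcal{L}h(Y_s^{\epsilon,\eta,u^\epsilon})\,ds
&=\eta\big[h(Y_t^{\epsilon,\eta,u^\epsilon})-h(y_0)\big]-\sqrt{\eta}\int_0^t\nabla h(Y_s^{\epsilon,\eta,u^\epsilon})\tau(Y_s^{\epsilon,\eta,u^\epsilon})\,dW_s\\
&\qquad-\tfrac{\sqrt\eta}{\sqrt\epsilon}\int_0^t\nabla h(Y_s^{\epsilon,\eta,u^\epsilon})\big[g(X_s^{\epsilon,\eta,u^\epsilon},Y_s^{\epsilon,\eta,u^\epsilon})+\tau(Y_s^{\epsilon,\eta,u^\epsilon})\dot u_2^\epsilon(s)\big]\,ds.
\end{align*}
The plan is to show that the supremum over $t\in[0,T]$ of the right-hand side tends to $0$ in $L^1(\Omega)$: the first term is $\le 2\eta\|h\|_\infty$; the stochastic integral is $O(\sqrt\eta)$ in every $L^p(\Omega)$ by the Burkholder--Davis--Gundy inequality, since $\nabla h$ has compact support and $\tau$ is bounded (Condition \ref{C2}); and the last term is dominated by $\tfrac{\sqrt\eta}{\sqrt\epsilon}\|\nabla h\|_\infty\int_0^T\!\big[C_g(1+|Y_s^{\epsilon,\eta,u^\epsilon}|)+\|\tau\|_\infty|\dot u_2^\epsilon(s)|\big]ds$ by Condition \ref{C1}, whose $L^1(\Omega)$-norm is $O\big(\tfrac{\sqrt\eta}{\sqrt\epsilon}\big)$ thanks to Lemma \ref{Ybndlem}(i), the uniform bound on $\|\dot u_2^\epsilon\|_{L^1([0,T])}$, and the regime \eqref{regime}. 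Consequently $\sup_t\big|\int_0^t\mathcal{L}h(Y_s^{\epsilon,\eta,u^\epsilon})\,ds\big|\to0$ in probability.

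On the other side, by \eqref{occupation} the left-hand side of the identity equals $\int_{[0,t]\times\mathcal{U}_1\times\mathcal{U}_2\times\mathcal{Y}}\mathcal{L}h(y)\,dP^\epsilon$, and since $\mathcal{L}h\in C_c(\mathcal{Y})$ is bounded while $(s,u_1,u_2,y)\mapsto\mathds{1}_{[0,t]}(s)\mathcal{L}h(y)$ has discontinuity set contained in the $P$-null slice $\{s=t\}$, the a.s.\ convergence $P^\epsilon\to P$ gives $\int_{[0,t]\times\cdots}\mathcal{L}h\,dP^\epsilon\to\int_{[0,t]\times\cdots}\mathcal{L}h\,dP$ a.s.; matching the two limits of the displayed identity yields $\int_{[0,t]\times\cdots}\mathcal{L}h(y)\,dP=0$ a.s.\ for each fixed $t$ and $h$. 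To upgrade this to the simultaneous statement \eqref{mumarg} I would disintegrate $P$ against its Lebesgue $s$-marginal (which makes $t\mapsto\int_{[0,t]\times\cdots}\mathcal{L}h\,dP$ Lipschitz, so vanishing at rational $t$ suffices) and approximate an arbitrary $h\in C_c^2(\mathcal{Y})$ by members of a countable family $\{h_n\}\subset C_c^2(\mathcal{Y})$ along which $h_n,\nabla h_n,D^2h_n$ converge uniformly with supports in a fixed compact set, so that $\mathcal{L}h_n\to\mathcal{L}h$ uniformly and the vanishing passes to all $h$; intersecting the corresponding countably many full-probability events then gives \eqref{mumarg}. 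The hard part is precisely this last passage to the limit on the left-hand side: the test integrand is discontinuous in the $s$-variable, which is why (iii) must be established first, so that $\{s=t\}$ is a $P$-continuity set; the remaining work (the three scaling estimates for the right-hand terms and the density argument in $h$) is routine given Conditions \ref{C1}, \ref{C2}, Lemma \ref{Ybndlem}(i), and the regime \eqref{regime}.
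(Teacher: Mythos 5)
Your proof is correct and follows the same overall strategy as the paper's (Itô formula on $h(Y^{\epsilon,\eta,u^\epsilon})$, rescaling by $\eta$, killing the right-hand side via \eqref{regime}, and passing to the limit in the occupation-measure representation), but you make two technical choices that streamline the argument. First, you establish (iii) up front via the observation that $P^\epsilon\circ\pi^{-1}$ is deterministically Lebesgue measure on $[0,T]$ and that push-forward by the continuous projection $\pi$ is weakly continuous; the paper instead proves (iii) last, via Portmanteau applied to $\mathds{1}_{\{t\}}$. Your ordering is arguably more natural, since the weak-convergence step in (ii) needs $\{s=t\}$ to be a $P$-null set, and this is exactly what (iii) delivers -- the paper uses this fact implicitly without having yet recorded it. Second, in (ii) you exploit that for $h\in C_c^2(\mathcal{Y})$ the function $\mathcal{L}h=\tfrac12 D^2h:\tau\tau^T+\nabla h\,f$ is itself in $C_c(\mathcal{Y})$ (since $D^2h,\nabla h$ have compact support and $\tau\in C^1_b$, $f\in C^1$), so the test integrand $\mathds{1}_{[0,t]}(s)\mathcal{L}h(y)$ is already bounded with discontinuity set contained in the $P$-null slice $\{s=t\}$; the paper's extra truncation on $\{|y|\le M\}$ and appeal to uniform integrability of the tails is therefore not needed here (it mirrors the argument of Lemma \ref{locave}, where the integrand $\Lambda_1$ genuinely is unbounded). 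The remaining ingredients -- the Portmanteau/Fatou argument for (i), the three scaling estimates for the right-hand side of the Itô identity, and the density argument over $t$ and $h$ (Lipschitz in $t$ once the $s$-marginal is known to be Lebesgue; uniform $C^2$-approximation with supports in a fixed compact for $h$) -- match the paper.
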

\begin{proof} $(i)$ In view of \eqref{occutight} we have 
	\begin{equation*}
	\begin{aligned}
	\sup_{\epsilon\in(0,1)}&\ex\bigg[\int_{[0,T]\times\mathcal{U}_1\times\mathcal{U}_2\times\mathcal{Y}}\big(|u_1|^2+|u_2|^2+|y|^2\big)dP^\epsilon(t,u_1,u_2,y)\bigg]\\&=
	\sup_{\epsilon\in(0,1)}\int_{0}^{T}\ex|K^{-1}_{H}u^{\epsilon}_1(s)|^2+\ex|\dot{u}^{\epsilon}_2(s)|^2+\ex|Y^{\epsilon,\eta,u^\epsilon}_s|^2ds<\infty.
	\end{aligned}
	\end{equation*}
	In view of the lower semicontinuity of the map $\mathcal{U}_1\times\mathcal{U}_2\times\mathcal{Y}\ni(u_1,u_2,y)\mapsto |u_1|^2+|u_2|^2+|y|^2\in[0,\infty]$ and the weak convergence of $P^\epsilon$ to $P$, the Portmanteau lemma (Theorem A.3.4 in \cite{dupuis2011weak}) concludes the proof.	
	
	\noindent $(ii)$ Let $\epsilon>0$, $D_1\subset [0,T], D_2\subset C_c^2(\mathcal{Y}) $ be countable, dense sets and $(t,h)\in D_1\times D_2$. An application of It\^o's formula yields
	\begin{equation*}	
	\begin{aligned}
	h(Y_t^{\epsilon,\eta,u})&-h(y_0)\\&=\frac{1}{\eta}\int_{0}^{t}\mathcal{L}h(Y_s^{\epsilon,\eta,u})ds+\tau(Y^{\epsilon,\eta,u}_s)\dot{u_2}(s)\big]ds+\frac{1}{\sqrt{\eta}}\int_{0}^{t}\nabla h(Y^{\epsilon,\eta,u}_s)\tau(Y^{\epsilon,\eta,u}_s)dW_s\\&+\frac{1}{\sqrt{\epsilon\eta}}\int_{0}^{t}\nabla h(Y^{\epsilon,\eta,u}_s)\big[g(X^{\epsilon,\eta,u}_s,Y^{\epsilon,\eta,u}_s)+\tau(Y^{\epsilon,\eta,u}_s)\dot{u_2}(s)\big]ds.
	\end{aligned}
	\end{equation*}
	Therefore,
	\begin{equation*}
	\begin{aligned}
	\int_{[0,t]\times\mathcal{U}_1\times\mathcal{U}_2\times\mathcal{Y}}\mathcal{L}&h(y)dP^\epsilon(s,u,v,y)=\int_{0}^{t}\mathcal{L}h(Y_s^{\epsilon,\eta,u})ds
	\\&= \eta \big[h(Y_t^{\epsilon,\eta,u})-h(y_0)\big]-\sqrt{\eta}\int_{0}^{t}\nabla h(Y^{\epsilon,\eta,u}_s)\tau(Y^{\epsilon,\eta,u}_s)dW_s\\&		
	-\frac{\sqrt{\eta}}{\sqrt{\epsilon}}\int_{0}^{t}\nabla h(Y^{\epsilon,\eta,u}_s)\big[g(X^{\epsilon,\eta,u}_s,Y^{\epsilon,\eta,u}_s)+\tau(Y^{\epsilon,\eta,u}_s)\dot{u_2}(s)\big]ds
	\end{aligned}
	\end{equation*}
	Since $h,\nabla h$ and $\tau$ (Condition \ref{C2}) are bounded, $g$ only grows at most linearly in $y$ (Condition \ref{C1}),  $Y^{\epsilon,\eta,u}$ is uniformly bounded in $L^2(\Omega\times[0,T]),$ $\dot{u_2}$ is square integrable with probability $1$ and $\sqrt{\eta/\epsilon}\rightarrow 0$, the right-hand side of the equality converges to $0$ in probability. Turning to the left-hand side, let $M>0$ and write
	\begin{equation*}
	\begin{aligned}
	\int_{[0,t]\times\mathcal{U}_1\times\mathcal{U}_2\times\mathcal{Y}}\mathcal{L}h(y)dP^\epsilon(s,u,v,y)&=	\int_{[0,t]\times\mathcal{U}_1\times\mathcal{U}_2\times\{|y|\leq M\}}\mathcal{L}h(y)dP^\epsilon(s,u,v,y)\\&+	\int_{[0,t]\times\mathcal{U}_1\times\mathcal{U}_2\times\{|y|>M\}}\mathcal{L}h(y)dP^\epsilon(s,u,v,y).
	\end{aligned}
	\end{equation*}
	Note that $y\mapsto\mathcal{L}h(y)=\frac{1}{2}[D^2h:(\tau\tau^T)](y)+\nabla h(y)f(y)$ is a continuous bounded function on $[0,t]\times\mathcal{U}_1\times\mathcal{U}_2\times\{|y|\leq M\}$. 
	Since $P^\epsilon\rightarrow P$ almost surely in the topology of weak convergence of measures, the first term on the right-hand side converges to $\int_{[0,t]\times\mathcal{U}_1\times\mathcal{U}_2\times\mathcal{Y}}\mathcal{L}h(y)dP(s,u,v,y)$ almost surely. The second term vanishes in $L^1(\Omega)$ as $M\to\infty$, uniformly in $\epsilon$, in light of the uniform integrability of  $\{P^\epsilon;\epsilon>0\}$ (Corollary \ref{UI}). It follows that \eqref{mumarg} holds on the complement of a $\pr-$null set $N_{t,h}\subset\Omega$. Using the continuity in $t,h$ and the density of $D_1,D_2$ we deduce that \eqref{mumarg} holds in the complement of the  $\pr-$null set $\cup_{(t,h)\in D_1\times D_2}N_{t,h}$.
	
	\noindent $(iii)$  Let $t\in(0,T].$ Since the set $[0, t]\times\mathcal{U}_1\times\mathcal{U}_2\times\mathcal{Y}$ is closed, the Portmanteau lemma furnishes
    $P([0, t]\times\mathcal{U}_1\times\mathcal{U}_2\times\mathcal{Y})\geq \limsup_{\epsilon\to0}P^{\epsilon}([0, t]\times\mathcal{U}_1\times\mathcal{U}_2\times\mathcal{Y})=t.$ To prove the reverse inequality, let $n\in\N$ large enough so that $t>\tfrac{2}{n}$ and apply the Portmanteau lemma to the open set $(\tfrac{1}{n}, t-\tfrac{1}{n})$ to obtain 
    $$  P( (\tfrac{1}{n}, t-\tfrac{1}{n})\times \mathcal{U}_1\times\mathcal{U}_2\times\mathcal{Y})\leq \liminf_{\epsilon\to 0}P^\epsilon( (\tfrac{1}{n}, t-\tfrac{1}{n})\times \mathcal{U}_1\times\mathcal{U}_2\times\mathcal{Y})=t-\tfrac{2}{n}.$$
    Taking $n\to\infty$ in both sides of the last inequality and noting that $(\tfrac{1}{n}, t-\tfrac{1}{n})\downarrow[0, t]$ we conclude that $ P( [0, t]\times\mathcal{U}_1\times\mathcal{U}_2\times\mathcal{Y})\leq t.$ The proof is complete.
    \end{proof}

\begin{rem} 
	The previous lemma implies that any weak limit point $P$ of the occupation measures $P^\epsilon$ satisfies properties (i), (ii) of Definition \ref{viable} with probability $1$. Indeed, from \eqref{Lebmarg} we see that $P$ agrees with Lebesgue measure on any closed sub-interval of $[0,T]$. Thus, with probability $1$, the last marginal of $P$ coincides with Lebesgue measure on $\mathscr{B}([0,T])$. Next, consider the differential operator $\mathcal{L}: Dom(\mathcal{L})\subset C_c(\mathcal{Y})\rightarrow C_c(\mathcal{Y})$ on the dense subspace $Dom(\mathcal{L})$ of  $C_c(\mathcal{Y})$ which contains $C^2_c(\mathcal{Y})$. In view of the Riesz-Markov-Kakutani theorem, the topological dual of $C_c(\mathcal{Y})$ can be identified with the space $\mathscr{M}(\mathcal{Y})$ of real-valued Radon measures on $\mathcal{Y}.$ Thus, the transpose operator can be viewed as a linear map $\mathcal{L}^*:\mathscr{M}(\mathcal{Y})\rightarrow \mathscr{D}^{' 2}(\mathcal{Y}),$ the latter being the space of Schwartz distributions of order at most $2$. Letting $\Pi_{\mathcal{Y}}:[0,T]\times\mathcal{U}_1\times\mathcal{U}_2\times\mathcal{Y}\rightarrow\mathcal{Y}$ be the projection map, \eqref{mumarg} implies that $P\circ\Pi_{\mathcal{Y}}^{-1}\in Ker(\mathcal{L}^*)$. Since the Markov process $Y^{1}$ is uniquely ergodic, the latter implies that $P\circ\Pi_{\mathcal{Y}}^{-1}=\mu$ (see e.g. \cite{bellet2006ergodic}, Chapters 7, 8). Finally, since $[0,T]\times\mathcal{U}_1\times\mathcal{U}_2\times\mathcal{Y}$ is a Polish space, we can disintegrate $dP(u,v,y,t)$ into $d\Theta(u,v|y,t)d\mu(y)dt$ where $\Theta$ is a regular conditional probability (or stochastic kernel) with probability $1$ (see Appendix A.5 of \cite{dupuis2011weak} for definition and properties of stochastic kernels). 
\end{rem}

It remains to show that any limiting pair $(\psi,P)$ satisfies property (iii) of Definition \ref{viable} with $\Lambda=(\Lambda_1, \Lambda_2)$ as in Theorem \ref{main1}. 	In view of \eqref{consys},\eqref{bterm} we have
\begin{equation}\label{Xlimeq}
\begin{aligned}
X_t^{\epsilon,\eta,u}&=x_0+\int_{0}^{t}\Lambda_1\big(X_s^{\epsilon,\eta,u},\dot{u}_1(s),\dot{u}_2(s),Y_s^{\epsilon,\eta,u}\big)ds+\int_{0}^{t}\sigma_1\big(X_s^{\epsilon,\eta,u}, Y_s^{\epsilon,\eta,u}\big)du_1(s)\\&-\sqrt{\epsilon\eta}[\Psi(Y^{\epsilon,\eta,u}_t)-\Psi(y)]
+\sqrt{\epsilon}\int_{0}^{t}\sigma_1\big(X_s^{\epsilon,\eta,u}, Y_s^{\epsilon,\eta,u}\big)dB^H_s\\&+\sqrt{\epsilon}\int_{0}^{t}\nabla\Psi(Y^{\epsilon,\eta,u}_s)\tau(Y^{\epsilon,\eta,u}_s)dW_s.
\end{aligned}  
\end{equation}
From the estimates of the previous section it follows that each term on the second line of the last display vanishes as $\epsilon\to 0$. In particular, Proposition \ref{maxprop}, Proposition \ref{tightnessprop}, Lemmas \ref{Ybndlem}, \ref{Yfraclem}(ii) and Conditions \ref{C6}, \ref{C7} imply that 
\begin{equation*}
\begin{aligned}
\bigg\|\sqrt{\epsilon}\int_{0}^{\cdot}\sigma_1\big(X_s^{\epsilon,\eta,u}, Y_s^{\epsilon,\eta,u}\big)dB^H_s
&-\sqrt{\epsilon\eta}[\Psi(Y^{\epsilon,\eta,u}_\cdot)-\Psi(y)]\\&+\sqrt{\epsilon}\int_{0}^{\cdot}\nabla\Psi(Y^{\epsilon,\eta,u}_s)\tau(Y^{\epsilon,\eta,u}_s)dW_s\bigg\|_{L^1(\Omega;C([0,T];\mathcal{X}))}
\end{aligned}
\end{equation*}
$=O(\sqrt{\epsilon}/\eta^\beta)$, as $\epsilon\to 0.$ In fact, under Condition \ref{C6}(ii), the latter is $O(\sqrt{\epsilon})$. The next lemma addresses the limiting behavior of the second summand on the right-hand side of \eqref{Xlimeq}.
\begin{lem}\label{locave}
	Assume that $(X^{\epsilon,\eta,u},P^\epsilon)\rightarrow (\psi, P)$ in $C([0,T];\mathcal{X})\times\mathscr{P}([0,T]\times\mathcal{U}_1\times\mathcal{U}_2\times\mathcal{Y})$ almost surely and let $\Lambda_1$ as in \eqref{Lambda1}. For all $t\in[0,T],$ the following limit is valid in distribution:
	\begin{equation*}
	\begin{aligned}
	\lim_{\epsilon\to 0 } \int_{0}^{t}\Lambda_1\big(X_s^{\epsilon,\eta,u},\dot{u}_1(s),\dot{u}_2(s),Y_s^{\epsilon,\eta,u} \big)ds=\int_{[0,t]\times\mathcal{U}_1\times\mathcal{U}_2\times\mathcal{Y}}\Lambda_1\big(\psi_s,u_1,u_2,y \big)dP(s,u_1,u_2,y).
	\end{aligned}       
	\end{equation*}
\end{lem}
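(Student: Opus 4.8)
The plan is to rewrite the left-hand side as an integral against the occupation measure $P^\epsilon$, then to replace $X^{\epsilon,\eta,u}$ by its limit $\psi$ and pass to the limit using the weak convergence $P^\epsilon\to P$ together with a truncation and uniform-integrability argument. I would work on the probability space furnished by the Skorokhod representation theorem, where, along the convergent subsequence, $(X^{\epsilon,\eta,u},P^\epsilon)\to(\psi,P)$ almost surely, and establish convergence in probability, which implies the stated distributional limit (and is mirrored on the argument pattern already used in the proof of Lemma \ref{occulem}(iii)).

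First, since $\Lambda_1$ does not depend on its $u_1$-argument and, by \eqref{occupation}, $P^\epsilon$ records $(K^{-1}_Hu^\epsilon_1(s),\dot u^\epsilon_2(s),Y^{\epsilon,\eta,u^\epsilon}_s)$ in the $(u_1,u_2,y)$-coordinates, one has the exact identity $\int_0^t\Lambda_1(X_s^{\epsilon,\eta,u},\dot u_1(s),\dot u_2(s),Y_s^{\epsilon,\eta,u})\,ds=\int_{[0,t]\times\mathcal{U}_1\times\mathcal{U}_2\times\mathcal{Y}}\Lambda_1(X_s^{\epsilon,\eta,u},u_1,u_2,y)\,dP^\epsilon(s,u_1,u_2,y)$, where $X_s^{\epsilon,\eta,u}$ in the integrand is evaluated at the running time variable $s$. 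I would then decompose the difference with the target into four pieces: (a) replacing $X_s^{\epsilon,\eta,u}$ by $\psi_s$; (b) truncating $\Lambda_1$ in $(u_2,y)$ by a smooth cutoff, $\Lambda_1^M:=\Lambda_1\cdot\rho_M(u_2)\rho_M(y)$ with $\rho_M(v)=\chi(|v|/M)$ and $\chi\in C^\infty_c$ equal to $1$ near $0$; (c) the weak-convergence step $\int\Lambda_1^M(\psi_s,\cdot)\,dP^\epsilon\to\int\Lambda_1^M(\psi_s,\cdot)\,dP$; and (d) removing the truncation under $P$.

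For (a), the Lipschitz continuity of $c,g,\sigma_2$ in $x$ (Conditions \ref{C4}, \ref{C1}) and the boundedness of $\nabla\Psi,\tau$ (Conditions \ref{C7}, \ref{C2}) give $|\Lambda_1(X_s^{\epsilon,\eta,u},u_1,u_2,y)-\Lambda_1(\psi_s,u_1,u_2,y)|\le C(1+|u_2|)\|X^{\epsilon,\eta,u}-\psi\|_\infty$; integrating against $P^\epsilon$ and using $\int|u_2|\,dP^\epsilon\le\sqrt{T}\big(\int|u_2|^2\,dP^\epsilon\big)^{1/2}\lesssim\sqrt{T}\,N$ almost surely (from the definition of $\mathcal{A}_b$ and Remark \ref{CM1/2rem}), this error is $\le C\|X^{\epsilon,\eta,u}-\psi\|_\infty\to0$ a.s. For (c), for each fixed $M$ the integrand $(s,u_1,u_2,y)\mapsto\Lambda_1^M(\psi_s,u_1,u_2,y)\mathds{1}_{[0,t]}(s)$ is bounded (a.s., since $\psi$ has continuous, hence bounded, paths) and its discontinuity set is contained in $\{t\}\times\mathcal{U}_1\times\mathcal{U}_2\times\mathcal{Y}$, which is $P$-null almost surely (as $P(\{t\}\times\mathcal{U}_1\times\mathcal{U}_2\times\mathcal{Y})=0$ by Lemma \ref{occulem}(iii)); the Portmanteau lemma and $P^\epsilon\to P$ a.s. then give a.s. convergence. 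For (b) and (d), the bounds $|\sigma_2|\le\|\sigma_2\|_\infty$, $|c(x,y)|\le K_1(1+|x|^\nu+|y|)$, $|g(x,y)|\le C_g(1+|y|)$ yield $|\Lambda_1(\psi_s,u_1,u_2,y)|\le C(1+\|\psi\|_\infty^\nu)(1+|y|+|u_2|)$; combining the uniform $L^2$-bounds on $Y^{\epsilon,\eta,u}$ and $\dot u^\epsilon_2$ (see \eqref{occutight}) with Corollary \ref{UI}, Chebyshev's inequality and Cauchy--Schwarz, the truncation error $\int|\Lambda_1-\Lambda_1^M|(\psi_s,\cdot)\,dP^\epsilon$ has expectation $o_M(1)$ uniformly in $\epsilon$, while the corresponding error under $P$ tends to $0$ as $M\to\infty$ by dominated convergence, using that $P$ has a finite second moment (Lemma \ref{occulem}(i)) and $\|\psi\|_\infty<\infty$ a.s. Assembling the four estimates—first letting $\epsilon\to0$ for fixed $M$, then $M\to\infty$—yields the claimed limit.

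The main obstacle is the unboundedness of $\Lambda_1$ in the $(u_2,y)$-variables, which rules out a direct application of the continuous mapping theorem to $(\phi,Q)\mapsto\int\Lambda_1(\phi_s,\cdot)\,dQ$; handling it cleanly is exactly what forces the smooth truncation together with the uniform-integrability estimate of Corollary \ref{UI} and the uniform moment bounds for $Y^{\epsilon,\eta,u}$ and the controls. A secondary technical point is that the replacement of $X^{\epsilon,\eta,u}$ by $\psi$ must be controlled uniformly over $u\in\mathcal{A}_b$, which is where the a priori bound $\|\dot u^\epsilon_2\|_{L^2}\lesssim N$ enters.
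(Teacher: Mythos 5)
Your proof is correct and follows essentially the same line of argument as the paper's: rewrite the left-hand side as an integral against $P^\epsilon$ (using that $\Lambda_1$ ignores its $u_1$-slot), pass from $X^{\epsilon,\eta,u}$ to $\psi$ via Lipschitz continuity of the coefficients and the a.s. $L^2$-bound on $\dot u_2$ from $\mathcal{A}_b$, truncate the linearly growing integrand in $(u_2,y)$, apply the weak-convergence/Portmanteau argument (with the observation that the only discontinuity of the truncated integrand is at $\{t\}$, which is $P$-null by Lemma~\ref{occulem}(iii)), and control the tail by the uniform integrability of Corollary~\ref{UI}, sending $\epsilon\to 0$ before $M\to\infty$. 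The only cosmetic differences are that you use a smooth mollified cutoff $\rho_M$ where the paper uses the Lipschitz radial projection $F_M$, and you spell out the step removing $X^{\epsilon,\eta,u}$ slightly more explicitly via the bound $C(1+|u_2|)\|X^{\epsilon,\eta,u}-\psi\|_\infty$ integrated against $P^\epsilon$; both routes use the same ingredients and buy the same conclusion.
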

\begin{proof}
	The proof follows along the lines of \cite{dupuis2012large}, Lemma 3.2. 	The difference here is that the fast motion evolves in the unbounded domain $\mathcal{Y}$. First note that from the Lipschitz continuity of $\Lambda_1$ in $x,y,$ the affine dependence of $\Lambda_1$ in the last argument, the square integrability of the control $\dot{u}_2$  and the $\pr$-almost-sure uniform convergence of $X^{\epsilon,\eta,u}$ to $\psi,$ it suffices to study the term $\int_{0}^{t}\Lambda_1\big(\psi_s,\dot{u}_1(s),\dot{u}_2(s),Y_s^{\epsilon,\eta,u} \big)ds.$ By definition of $\Lambda_1$ and the occupation measures $P^\epsilon,$ the latter is equal to
	$\int_{[0,t]\times\mathcal{U}_1\times\mathcal{U}_2\times\mathcal{Y}}\Lambda_1\big(\psi_s,u_1,u_2,y\big) dP^\epsilon(s,u_1,u_2,y)$.
	Now let $M>0$ and define a cutoff function $F_M:\mathcal{Y}\rightarrow\mathcal{Y} 
	$ by \begin{equation*}
	F_M(y)=\begin{cases}
	&y\;,\;\;|y|\leq M\\&
	\frac{M}{|y|}y\;,\;\;|y|> M.
	\end{cases}
	\end{equation*}
	Decomposing the domain of integration we have 
	\begin{equation*}
	\begin{aligned}
	&\bigg|\int_{[0,t]\times\mathcal{U}_1\times\mathcal{U}_2\times\mathcal{Y}}\Lambda_1\big(\psi_s,u_1,u_2,y\big) d[P^\epsilon(s,u_1,u_2,y)-P(s,u_1,u_2,y)]\bigg|\\&
	\leq \bigg|\int_{[0,t]\times\mathcal{U}_1\times\mathcal{U}_2\times\{|y_2|\leq M\}}\Lambda_1\big(\psi_s,u_1,u_2,y\big) d[P^\epsilon(s,u_1,u_2,y)-P(s,u_1,u_2,y)]\bigg|\\&+
	\bigg|  \int_{[0,t]\times\mathcal{U}_1\times\mathcal{U}_2\times  \{|y_2|> M\}}\Lambda_1\big(\psi_s,u_1,u_2,y\big) d[P^\epsilon(s,u_1,u_2,y)-P(s,u_1,u_2,y)]\bigg|\\&
	\leq \bigg|\int_{[0,t]\times\mathcal{U}_1\times\mathcal{U}_2\times\mathcal{Y}}\Lambda_1\big(\psi_s,u_1,F_M(u_2),F_M(y)\big) d[P^\epsilon(s,u_1,u_2,y)-P(s,u_1,u_2,y)]\bigg|\\&+
	C_{\Psi,\tau}  \int_{[0,t]\times\mathcal{U}_1\times\{|v_2|> M\}\times  \{|y_2|> M\}}\big[1+|\psi_s|+ |y|+|u_2|\big]d[P^\epsilon(s,u_1,u_2,y)+P(s,u_1,u_2,y)],
	\end{aligned}
	\end{equation*}
	where the last inequality holds since $\Lambda_1$ grows at most linearly in all its arguments. Regarding the first term in the last display, note that for each $M>0, t\in[0,T]$, the map
	\begin{equation*}
	[0,T]\times\mathcal{U}_1\times\mathcal{U}_2\times\mathcal{Y} \ni(s,u_1,u_2,y)\longmapsto \Lambda_1\big(\psi_s,u_1,F_M(u_2),F_M(y)\big)\mathds{1}_{[0,t]}(s)\in\mathcal{X}
	\end{equation*}
	is bounded with probability $1$ and its discontinuity set is contained in $\{t\}\times\mathcal{U}_1\times\mathcal{U}_2\times\mathcal{Y}$.
	Since $P^\epsilon\rightarrow P$ almost surely in the topology of weak convergence of measures and, by Lemma \ref{occulem}(iii), the last marginal of $P$ is Lebesgue measure we can invoke \cite{dupuis2011weak}, Theorem A.3.10  and the dominated convergence theorem to deduce that this term converges to $0$, as $\epsilon\to0$, in $L^1(\Omega).$ Finally, the second term is bounded in expectation by
	\begin{equation*}
	C \sup_{\epsilon>0}\ex\int_{[0,T]\times\mathcal{U}_1\times\{|v_2|> M\}\times  \{|y_2|> M\}}\big[1+\sup_{t\in[0,T]}|\psi_t|+ |y|+|u_2|\big]dP^\epsilon(s,u_1,u_2,y),
	\end{equation*}
	which converges to $0$, as $M\to\infty$, from the uniform integrability of $P^\epsilon$. The proof is thus complete upon taking limits first as $\epsilon\to 0$ and then as $M\to \infty$.
\end{proof}

\noindent We conclude the proof of Theorem \ref{main1} by identifying the averaging limit of the integral $\sigma_1du_1.$ As pointed out in Section \ref{resultssec}, this term is essentially different from the case $H=1/2$ since here one needs to account for the memory kernel of the fBm $B^H.$ Before we proceed, we remind the reader that Remark \ref{rem:occupationDomainConvention} is in effect, i.e. we shall omit domains of integration for variables that do not appear in the integrands of the occupation and limiting measures $P^\epsilon, P.$

\begin{prop}\label{sigmaave}
	(Averaging of $\sigma_1 du_1$) Assume that $(X^{\epsilon,\eta,u},P^\epsilon)\rightarrow (\psi, P)$ in $C([0,T];\mathcal{X})\times\mathscr{P}([0,T]\times\mathcal{U}_1\times\mathcal{U}_2\times\mathcal{Y})$ almost surely. With $\Lambda_2$ as in \eqref{Lambda2} and for all $t\in[0,T],$ the following limit is valid in distribution up to a subsequence: 
	\begin{equation*}
	\begin{aligned}&\lim_{\epsilon\to 0 } \int_{0}^{t}\sigma_1\big(X_r^{\epsilon,\eta,u}, Y_r^{\epsilon,\eta,u}\big)du_1(r)\\&=\int_{[0,t]\times\mathcal{Y} }\int_{[0,s]\times\mathcal{U}_1}         \Lambda_2(\psi_s,s,u_1,u_2,y_2, z,v_1,v_2,y_1 )dP(z,v_1,v_2,y_1)dP(s,u_1,u_2,y_2).
	\end{aligned}
	\end{equation*}
	
\end{prop}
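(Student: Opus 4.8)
We start from the identity \eqref{sigma1comp}, which, writing $k(s,z):=s^{H-1/2}z^{1/2-H}(s-z)^{H-3/2}$ for $0\le z\le s$, exhibits the left-hand side as
\begin{equation*}
\int_{0}^{t}\sigma_1\big(X_r^{\epsilon,\eta,u}, Y_r^{\epsilon,\eta,u}\big)du_1(r)=\frac{c_H}{\Gamma(H-\tfrac12)}\int_{[0,t]\times\mathcal{Y}}\int_{[0,s]\times\mathcal{U}_1}k(s,z)\,\sigma_1\big(X_s^{\epsilon,\eta,u},y_2\big)\,v_1\,dP^{\epsilon}(z,v_1,v_2,y_1)\,dP^{\epsilon}(s,u_1,u_2,y_2),
\end{equation*}
i.e. as the double integral of $\Lambda_2$ \eqref{Lambda2} against $P^\epsilon\otimes P^\epsilon$; since $\tfrac{c_H}{\Gamma(H-1/2)}\int_0^s k(s,z)g(z)\,dz=\dot K_H g(s)$ for $g\in L^2$ (see \eqref{Kdot}), the inner integral is proportional to $\dot u_1^\epsilon(s)=\dot K_H(K^{-1}_Hu_1^\epsilon)(s)$, and by Lemma \ref{udotL2}, $\|\dot u_1^\epsilon\|_{L^1([0,T])}\lesssim\|K^{-1}_Hu_1^\epsilon\|_{L^2}\le N$ almost surely. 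Recalling (as in the paragraph preceding the statement) that after the Skorokhod representation the convergence $(X^{\epsilon,\eta,u},P^\epsilon)\to(\psi,P)$ holds a.s., it suffices to show that the displayed double integral converges a.s.\ (hence in distribution) to the same expression with $X^{\epsilon,\eta,u}$ replaced by $\psi$ and $P^\epsilon$ replaced by $P$.

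First I would remove $X^{\epsilon,\eta,u}$ from $\sigma_1$. By the Lipschitz continuity of $\sigma_1$ and the a.s.\ uniform convergence of $X^{\epsilon,\eta,u}$ to $\psi$,
\begin{equation*}
\Big|\int_0^t\big[\sigma_1(X_s^{\epsilon,\eta,u},Y_s^{\epsilon,\eta,u})-\sigma_1(\psi_s,Y_s^{\epsilon,\eta,u})\big]\dot u_1^\epsilon(s)\,ds\Big|\le L_{\sigma_1}\|X^{\epsilon,\eta,u}-\psi\|_{C([0,T];\mathcal{X})}\,\|\dot u_1^\epsilon\|_{L^1}\longrightarrow 0
\end{equation*}
a.s.\ as $\epsilon\to0$, so we are reduced to studying $J^\epsilon_t:=\frac{c_H}{\Gamma(H-1/2)}\int_{[0,t]\times\mathcal{Y}}\int_{[0,s]\times\mathcal{U}_1}k(s,z)\sigma_1(\psi_s,y_2)v_1\,dP^\epsilon(z,v_1,v_2,y_1)\,dP^\epsilon(s,u_1,u_2,y_2)$ and its counterpart $J^0_t$ obtained by replacing $P^\epsilon$ with $P$ (which is finite a.s.\ by Lemma \ref{occulem}(i)).

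Next I would introduce two truncations. Fix a smooth $\chi_\delta:\R\to[0,1]$ vanishing on $(-\infty,\delta/2]$ and equal to $1$ on $[\delta,\infty)$, set $k_\delta(s,z):=k(s,z)\,\chi_\delta(z)\,\chi_\delta(s-z)\,\mathds{1}_{[0,t]}(s)$ (a bounded continuous function on $[0,T]^2$, supported in $\{z\le s\}$), and $\kappa_M(v):=(|v|\wedge M)\,v/|v|$. For fixed $\delta,M>0$ the map $(s,u_1,u_2,y_2,z,v_1,v_2,y_1)\mapsto k_\delta(s,z)\sigma_1(\psi_s,y_2)\kappa_M(v_1)$ is bounded and continuous on $\big([0,T]\times\mathcal{U}_1\times\mathcal{U}_2\times\mathcal{Y}\big)^2$, with discontinuity set contained in $\{s=t\}\times\cdots$, which is $(P\otimes P)$-null because $P(\{t\}\times\mathcal{U}_1\times\mathcal{U}_2\times\mathcal{Y})=0$ (Lemma \ref{occulem}(iii)). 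Since $P^\epsilon\to P$ a.s.\ implies $P^\epsilon\otimes P^\epsilon\to P\otimes P$ a.s., the Portmanteau theorem and the boundedness of $\sigma_1$ give $J^{\epsilon,\delta,M}_t\to J^{0,\delta,M}_t$ a.s.\ as $\epsilon\to0$, where $J^{\cdot,\delta,M}_t$ denotes the truncated integral. The truncation errors are uniform in $\epsilon$: replacing $v_1$ by $\kappa_M(v_1)$ costs at most $\|\sigma_1\|_\infty\,C_\delta\,T\,\ex\int_{\{|v_1|>M\}}|v_1|\,dP^\epsilon$, which tends to $0$ as $M\to\infty$ uniformly in $\epsilon$ by Corollary \ref{UI}; replacing $k$ by $k_\delta$ costs at most $\|\sigma_1\|_\infty\int_0^t\!\int_0^s(k-k_\delta)(s,z)\,|K^{-1}_Hu_1^\epsilon(z)|\,dz\,ds$, which I would estimate by Fubini — integrating $k(s,z)$ over $s$ first on each of the regions $\{z\le\delta\}$, $\{s-z\le\delta\}$ (the delicate one), $\{s\le2\delta\}$, producing a factor $z^{1/2-H}$ times a positive power of $\delta$ — followed by the Cauchy–Schwarz inequality in $z$ against $z^{1-2H}\in L^1([0,T])$ (valid since $H<1$) and $\|K^{-1}_Hu_1^\epsilon\|_{L^2}\le N$; this yields a bound $\lesssim_{H,T}N\,\delta^{c(H)}$ with $c(H)>0$, uniformly in $\epsilon$. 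The identical estimates, with $K^{-1}_Hu_1^\epsilon(\cdot)$ replaced by the $L^2$-barycenter $w(\cdot):=\int_{\mathcal{U}_1}|v_1|\,P(dv_1\,|\,\cdot)$, control the truncation errors for $J^0_t$. A standard three–parameter argument (letting $\epsilon\to0$, then $M\to\infty$, then $\delta\to0$) then gives $J^\epsilon_t\to J^0_t$ a.s., and, identifying $J^0_t$ with the right-hand side of the claimed identity via $\Lambda_2$ \eqref{Lambda2}, the proof is complete along the subsequence under consideration.

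The main obstacle is that the integrand is doubly unbounded — through the control coordinate $v_1\in\mathcal{U}_1$ and, more seriously, through the kernel $k(s,z)$, whose factor $(s-z)^{H-3/2}$ is integrable but not square-integrable in $z$ on $[0,s]$; consequently one cannot simply apply the Cauchy–Schwarz inequality against $K^{-1}_Hu_1^\epsilon\in L^2$ to control the contribution of $z$ near $s$. The device that resolves this is to exchange the order of integration and integrate the singular kernel over the outer variable $s$ first: since $\int_z^{z+\delta}s^{H-1/2}(s-z)^{H-3/2}\,ds\lesssim\delta^{H-1/2}\,(z+\delta)^{H-1/2}$, the non-integrable singularity is converted into a harmless factor $\delta^{H-1/2}\to0$, after which Cauchy–Schwarz in $z$ applies. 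This is exactly the point at which $H>1/2$ enters the averaging of $\sigma_1\,du_1$, and it is the reason the limiting diffusion coefficient retains the non-local fractional-integral structure encoded in $\mathcal{Q}_H$.
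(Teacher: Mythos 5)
Your proof is correct and follows essentially the same strategy as the paper's: reduce $\sigma_1(X^{\epsilon,\eta,u},\cdot)$ to $\sigma_1(\psi,\cdot)$ via Lipschitz continuity, represent the integral as a double integral against $P^{\epsilon}\otimes P^{\epsilon}$, cut off the kernel near its singularities and truncate $v_1$ to obtain a bounded continuous integrand amenable to the Portmanteau theorem, control the truncation errors uniformly in $\epsilon$ by integrating the singular kernel over the outer variable $s$ first and then applying Cauchy--Schwarz in $z$ (using $z^{1-2H}\in L^1$ and $\|K_H^{-1}u_1^\epsilon\|_{L^2}\le N$), and finish with the three-parameter limit $\epsilon\to 0$, $M\to\infty$, $\delta\to 0$. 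The paper's explicit four-region decomposition $R_0,\dots,R_3$ parametrized by $\rho=\lambda t$ is the hard-cutoff version of your smooth truncation $k_\delta$, and your identification of the Fubini trick ($\int_z^{z+\delta}(s-z)^{H-3/2}ds\lesssim\delta^{H-1/2}$) as the step that turns the non-square-integrable singularity into a small power of the cutoff parameter is precisely the mechanism used in the paper's $R_2$ estimate.
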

\begin{proof} 	Since $u_1\in\mathcal{H}_H$ \eqref{CMdef}, there exists $v_1\in L^2([0,T];\mathcal{Y})$ such that $u_1=K_Hv_1$. In view of Condition \ref{C4},  Lemma \ref{udotL2} and the weak convergence of $X^{\epsilon,\eta,u}$ to $\psi,$ we have
	\begin{equation*}
	\begin{aligned}
	\sup_{t\in[0,T]}\bigg|&\int_{0}^{t}\sigma_1\big(X_r^{\epsilon,\eta,u}, Y_r^{\epsilon,\eta,u}\big)du_1(r)-\int_{0}^{t}\sigma_1\big(\psi_r, Y_r^{\epsilon,\eta,u}\big)du_1(r)\bigg|\\&\leq \int_{0}^{t}\big|\sigma_1\big(X_r^{\epsilon,\eta,u}, Y_r^{\epsilon,\eta,u}\big)-\sigma_1\big(\psi_r, Y_r^{\epsilon,\eta,u}\big)\big|\big|\dot{K}_Hv_1(r)\big|dr\\&
	\leq L_{\sigma_1}\sup_{r\in[0,T]}\big|X_r^{\epsilon,\eta,u}- \psi_r\big|\int_{0}^{T}\big|\dot{K}_Hv_1(r)\big|dr\\&
	\leq C\big\|\dot{K}_Hv_1\big\|_{L^2} \sup_{r\in[0,T]}\big|X_r^{\epsilon,\eta,u}- \psi_r\big|\longrightarrow 0,
	\end{aligned}	
	\end{equation*}
	as $\epsilon\to 0$. Here $L_{\sigma_1}$ is the Lipschitz constant of $\sigma_1$ and $\dot{K}_H$ is given in $\eqref{Kdot}.$ From \eqref{sigma1comp} we can then write
	\begin{equation*}
	\begin{aligned}
	&\int_{0}^{t}\sigma_1(\psi_s,Y^{\epsilon,\eta,u}_s)du_1(s)= \frac{c_H}{\Gamma(H-\frac{1}{2})}\cdot\\&\cdot\int_{[0,t]\times\mathcal{Y} }\int_{[0,s]\times\mathcal{U}_1}s^{H-\frac{1}{2}}z^{\frac{1}{2}-H}(s-z)^{H-\frac{3}{2}} \sigma_1(\psi_s,y_2)v_1dP^{\epsilon}(z,v_1,v_2,y_1)dP^{\epsilon}(s,u_1,u_2,y_2).
	\end{aligned}\end{equation*}
	Note that the integrand on the right-hand side of the last display is singular at $s=z$ and $z=0.$ In order to apply a weak convergence argument  we decompose the domain of integration as follows: let $\lambda\in(0,\frac{1}{2}),$ $\rho:=\lambda t$ and set $C_H=c_H/(\Gamma(H-1/2)).$ Since  the last marginal of $P$ is Lebesgue measure on $[0,T]$ we can write
	\begin{equation*}
	\begin{aligned}
	&C_H^{-1}\bigg[\int_{0}^{t}\sigma_1(\psi_s,Y^{\epsilon,\eta,u}_s)du_1(s)\\&-\int_{[0,t]\times\mathcal{Y} }\int_{[0, s]\times\mathcal{U}_1}  \Lambda_2(\psi_s,s,u_1,u_2,y_2, z,v_1,v_2,y_1 )dP^{\otimes 2}(z,v_1,v_2,y_1,s,u_1,u_2,y_2)\bigg]\\&
	=\int_{[2\rho,t]\times\mathcal{Y} }\int_{[\rho, s-\rho]\times\mathcal{U}_1}s^{H-\frac{1}{2}}z^{\frac{1}{2}-H}(s-z)^{H-\frac{3}{2}}\\&\quad\quad\times \sigma_1(\psi_s,y_2)v_1d\big[\big(P^{\epsilon,\otimes 2}-P^{\otimes 2}\big)(z,v_1,v_2,y_1,s,u_1,u_2,y_2)\big]\\&
	+\int_{[0,\rho]\times\mathcal{U}_1 }\int_{[z,t] \times\mathcal{Y}}	+\int_{[\rho, t-\rho]\times\mathcal{U}_1 }\int_{[z, z+\rho]\times\mathcal{Y}}+\int_{[t-\rho, t]\times\mathcal{U}_1 }\int_{[z,t]\times\mathcal{Y}}\\&\;\bigg(s^{H-\frac{1}{2}}z^{\frac{1}{2}-H}(s-z)^{H-\frac{3}{2}} \sigma_1(\psi_s,y_2)v_1\bigg)d\big[\big(P^{\epsilon,\otimes 2}-P^{\otimes 2}\big)(s,u_1,u_2,y_2,z,v_1,v_2,y_1)\big]\\&
	=:\sum_{i=0}^{3}R_i(\epsilon,\lambda,t).
	\end{aligned}
	\end{equation*}
	We start by treating the term $R_0$. To this end, let $\xi,M>0$ and recall the cutoff function $F_M:\mathcal{U}_1\rightarrow \mathcal{U}_1$ from the proof of Lemma \ref{locave}.
	An application of Chebyshev's inequality yields
	\begin{equation*}
	\begin{aligned}
	&\xi^{-1}\pr[|R_0(\epsilon,\lambda,t)|\geq\xi ]\leq\\&  \ex\bigg|\int_{[2\rho,t]\times\mathcal{Y} }\int_{[\rho, s-\rho]\times\{|v_1|\leq M\}}s^{H-\frac{1}{2}}z^{\frac{1}{2}-H}(s-z)^{H-\frac{3}{2}}\\&\times \sigma_1(\psi_s,y_2)v_1d\big[\big(P^{\epsilon,\otimes 2}-P^{\otimes 2}\big)(z,v_1,v_2,y_1,s,u_1,u_2,y_2)\big]\bigg|\\&
	+ \ex\int_{[2\rho,t]\times\mathcal{Y} }\int_{[\rho, s-\rho]\times\{|v_1|> M\}}s^{H-\frac{1}{2}}z^{\frac{1}{2}-H}(s-z)^{H-\frac{3}{2}}\\&\times \big|\sigma_1(\psi_s,y_2)\big||v_1|d\big[\big(P^{\epsilon,\otimes 2}+P^{\otimes 2}\big)(z,v_1,v_2,y_1,s,u_1,u_2,y_2)\big]\\&
	\leq \ex\bigg|\int_{[2\rho,t]\times\mathcal{Y} }\int_{[\rho, s-\rho]\times\mathcal{U}_1}s^{H-\frac{1}{2}}z^{\frac{1}{2}-H}(s-z)^{H-\frac{3}{2}} \\&\times\sigma_1(\psi_s,y_2)F_M(v_1)d\big[\big(P^{\epsilon,\otimes 2}-P^{\otimes 2}\big)(z,v_1,v_2,y_1,s,u_1,u_2,y_2)\big]\bigg|\\&
	+ \frac{\|\sigma_1\|_\infty T^{H-\frac{1}{2}}(T-2\rho)}{\rho  }\ex\int_{[\rho, s-\rho]\times\{|v_1|> M\}} |v_1|d\big[P^{\epsilon}(z,v_1,v_2,y_1)+P(z,v_1,v_2,y_1)\big]\\&=:R_{0,1}(M,\lambda,t,\epsilon)+R_{0,2}(M,\lambda,t,\epsilon).
	\end{aligned}	
	\end{equation*}
	\noindent Regarding $R_{0,1}$,  fix $M>0,\lambda\in(0,1/2), t\in[0,T]$ and let $E_{\lambda,t}:=\{ (z,s) \in[0,T]^2 :  z\in [\rho, s-\rho] , s\in[2\rho,t]   \}$.  Since $\lim_{\epsilon\to 0}P^\epsilon=P$ a.s. in $\mathscr{P}([0,T]\times\mathcal{U}_1\times\mathcal{U}_2\times\mathcal{Y}),$ it follows that $\lim_{\epsilon\to 0}P^{\epsilon,\otimes 2}=P^{\otimes 2}$ a.s. in $\mathscr{P}(([0,T]\times\mathcal{U}_1\times\mathcal{U}_2\times\mathcal{Y})^2)$ (see e.g. Theorem 2.8,\cite{94060261}). Furthermore, the map
$\big([0,T]\times\mathcal{U}_1\times\mathcal{U}_2\times\mathcal{Y}\big)^2\ni (z,v_1,v_2,y_1,s,u_1,u_2,y_2)\longmapsto s^{H-\frac{1}{2}}z^{\frac{1}{2}-H}(s-z)^{H-\frac{3}{2}} \sigma_1(\psi_s,y_2)F_M(v_1)\mathds{1}_{E_{\lambda,t}}(s,z)\in\mathcal{X}$
	is bounded with probability $1$ and its discontinuity set $\mathcal{C}_{M,\lambda,t}$ is contained in the triangle $\partial E_{\lambda,t}=\{z=\rho\}\cup\{s=t\}\cup\{ z=s-\rho\}.$ From $\eqref{Lebmarg}$, it follows that $P(\mathcal{C}_{M,\lambda,t})\leq P(\partial E_{\lambda,t})=Leb_{[0,T]^2}(\partial E_{\lambda,t})=0$
	with probability $1$. Thus, from \cite{dupuis2011weak},Theorem A.3.10 along with the dominated convergence theorem we deduce that $\lim_{\epsilon\downarrow 0}R_{0,1}(M,\lambda,t,\epsilon)=0$. From an application of the Portmanteau lemma and the uniform integrability of $P^\epsilon$ (Corollary \ref{UI}) we also have
	\begin{equation*}
	\begin{aligned}
	\lim_{M\to\infty}R_{0,2}(M,\lambda,t,\epsilon)\leq \frac{C_{H,\sigma_1,T}}{\xi\lambda t}\lim_{M\to\infty}\sup_{\epsilon>0}\ex\int_{[0,T]\times\{|v_1|> M\}} |v_1|dP^{\epsilon}(z,v_1,v_2,y_1)=0.
	\end{aligned}
	\end{equation*}
	Therefore, for each $t\in[0,T],\lambda\in(0,\frac{1}{2})$, $\lim_{M\to\infty}\lim_{\epsilon\to0}R_0(\epsilon,\lambda,t)=0$ in probability. Turning to  $R_1$ we have 
	\begin{equation}\label{R1}
	\begin{aligned}\hspace*{-0.2cm}
	&\ex|R_1(\epsilon,\lambda,t)|\\&\leq\ex\int_{[0,\rho]\times\mathcal{U}_1 }\int_{[z,t] \times\mathcal{Y}}s^{H-\frac{1}{2}}z^{\frac{1}{2}-H}(s-z)^{H-\frac{3}{2}}\\&\times \big|\sigma_1(\psi_s,y_2)\big||v_1|d\big[\big(P^{\epsilon,\otimes 2}+P^{\otimes 2}\big)(s,u_1,u_2,y_2,z,v_1,v_2,y_1)\big]
	\\&\leq  C_{\sigma_1}T^{H-\frac{1}{2}}\ex\int_{[0,\rho]\times\mathcal{U}_1 }|v_1|z^{\frac{1}{2}-H}\int_{[z,t]}(s-z)^{H-\frac{3}{2}}dsd\big[\big(P^{\epsilon}+P\big)(z,v_1,v_2,y_1)\big]\\&
	\leq C_{H,\sigma_1,T}\ex\int_{[0,\rho]\times\mathcal{U}_1 }|v_1|z^{\frac{1}{2}-H}(t-z)^{H-\frac{1}{2}}d\big[\big(P^{\epsilon}+P\big)(z,v_1,v_2,y_1)\big]\\&
	\leq C_{H,\sigma_1,T}T^{H-\frac{1}{2}} ( [z^{1+1-2H} ]_0^\rho )^{\frac{1}{2}}
	\rho\ex\bigg(\int_{[0,\rho]\times\mathcal{U}_1 }|v_1|^2d\big[\big(P^{\epsilon}+P\big)(z,v_1,v_2,y_1)\big]\bigg)^{\frac{1}{2}}\\&
	\leq C_{H,\sigma_1,T}\rho^{2-H}\sup_{\epsilon>0}\ex\bigg[\int_{[0,T]\times\mathcal{U}_1 }|v_1|^2dP^{\epsilon}(z,v_1,v_2,y_1)\bigg]^{\frac{1}{2}}\\&\leq  C_{H,\sigma_1,T}T^{2-H}\lambda^{2-H}\sup_{\epsilon>0}\ex\bigg[\int_{[0,T]\times\mathcal{U}_1 }|v_1|^2dP^{\epsilon}(z,v_1,v_2,y_1)\bigg]^{\frac{1}{2}},
	\end{aligned}
	\end{equation}
	where we used the Cauchy-Schwarz inequality and Portmanteau lemma to obtain the last line.
	As for $R_2$,
	\begin{equation}\label{R2}
	\begin{aligned}
	&\ex|R_2(\epsilon,\lambda,t)|\\&\leq \ex\int_{[\rho, t-\rho]\times\mathcal{U}_1 }\int_{[z, z+\rho]\times\mathcal{Y}}s^{H-\frac{1}{2}}z^{\frac{1}{2}-H}(s-z)^{H-\frac{3}{2}}\\&\times \big|\sigma_1(\psi_s,y_2)\big||v_1|d\big[\big(P^{\epsilon,\otimes 2}+P^{\otimes 2}\big)(s,u_1,u_2,y_2,z,v_1,v_2,y_1)\big]\\&
	\leq C_{H,\sigma_1}T^{H-\frac{1}{2}}\ex\int_{[\rho, t-\rho]\times\mathcal{U}_1 }z^{\frac{1}{2}-H}|v_1|
	[(s-z)^{H-1/2}]_{z}^{z+\rho}d\big[\big(P^{\epsilon}+P\big)(z,v_1,v_2,y_1)\big]\\&
	\leq C_{H,\sigma_1,T}\rho^{H-\frac{1}{2}} \bigg(\int_{\rho}^{t-\rho}z^{1-2H}dz\bigg)^{\frac{1}{2}} \ex\bigg(\int_{[0,T]\times\mathcal{U}_1 }|v_1|^2d\big[\big(P^{\epsilon}+P\big)(z,v_1,v_2,y_1)\big]\bigg)^{\frac{1}{2}}\\&
	\leq  C_{H,\sigma_1,T}\lambda^{H-\frac{1}{2}}(T-\rho)^{1-H}\sup_{\epsilon>0}\ex\bigg(\int_{[0,T]\times\mathcal{U}_1 }|v_1|^2dP^{\epsilon}(z,v_1,v_2,y_1)\bigg)^{\frac{1}{2}},
	\end{aligned}
	\end{equation}
	where we applied the Cauchy-Schwarz inequality along with the Portmanteau lemma once again to obtain the last line. Similarly,
	\begin{equation}\label{R3}
	\begin{aligned}
	&\ex|R_3(\epsilon,\lambda,t)|\\&\leq\int_{[t-\rho, t]\times\mathcal{U}_1 }\int_{[z,t]\times\mathcal{Y}}s^{H-\frac{1}{2}}z^{\frac{1}{2}-H}(s-z)^{H-\frac{3}{2}}\\&\times \big|\sigma_1(\psi_s,y_2)\big||v_1|d\big[\big(P^{\epsilon,\otimes 2}+P^{\otimes 2}\big)(s,u_1,u_2,y_2,z,v_1,v_2,y_1)\big]\\&
	\leq C_{H,\sigma_1}T^{H-\frac{1}{2}}\int_{[t-\rho, t]\times\mathcal{U}_1}|v_1|z^{\frac{1}{2}-H}(t-z)^{H-\frac{1}{2}}d\big[\big(P^{\epsilon}+P\big)(z,v_1,v_2,y_1)\big]\\&
	\leq C_{H,\sigma_1,T}\lambda^{H-\frac{1}{2}}(T-\rho)^{1-H}\sup_{\epsilon>0}\ex\bigg[\int_{[0,T]\times\mathcal{U}_1 }|v_1|^2dP^{\epsilon}(z,v_1,v_2,y_1)\bigg]^{\frac{1}{2}}.
	\end{aligned}
	\end{equation}
	In view of the previous arguments, estimates \eqref{R1}-\eqref{R3} and the uniform moment bound in \eqref{occutight} we can take limits, first as as $\epsilon\to 0,$ then as $M\to\infty$, and lastly as $\lambda\to 0,$ to obtain the desired convergence in probability . 
\end{proof}

In view of Lemma \ref{locave} and Proposition \ref{sigmaave}, it follows that, along any convergent subsequence in distribution in $C([0,T];\mathcal{X})$, the dynamics converge pointwise in distribution to the law of the solution of \eqref{limdynamics}. Due to the tightness of the individual terms in $C([0,T];\mathcal{X})$ and uniqueness of the pointwise limit we conclude that this convergence takes place in $C([0,T];\mathcal{X}).$ Thus the limiting pair satisfies $(iii)$ of Definition \ref{viable} with $\Lambda$ as in Theorem \ref{main1}.

\section{Proof of the Large Deviation Principle}\label{LDPsec} In this section we prove the second main result of this paper, Theorem \ref{main2}. Our proof of the LDP for the slow process $X^{\epsilon}$ proceeds in the following steps: First we prove the Laplace Principle upper bound in Section \ref{lapluppersec}. This is a straightforward consequence of Theorem \ref{main1} along with the Portmanteau lemma. In Section \ref{compsec}, we show that the candidate rate function \eqref{ratefun} admits two equivalent ordinary control formulations and that its sublevel sets are compact. These equivalent formulations are then used in Section \ref{LPlbsec}, where we prove the Laplace Principle lower bound in two different cases. Each case comes with different assumptions for the coefficients of the slow-fast system \eqref{model} and our presentation follows an increasing order of generality. As is well known, the Laplace Principle (LP) for a rate function with compact sublevel sets is equivalent to an LDP with the same rate function. The reader is referred to \cite{dupuis2011weak}, Theorems 1.2.1 , 1.2.3 for a proof. Finally, we discuss the assumption for our second proof of the lower bound in Section \ref{assumptionsection} and provide a few examples in which it is satisfied. 	

\subsection{The Laplace Principle upper bound}\label{lapluppersec} \noindent We aim to show that for any continuous, bounded function $h:C([0,T];\mathcal{X})\rightarrow \R$ the following asymptotics hold:
\begin{equation*}\label{laplupper}
\limsup_{\epsilon\to 0}\epsilon\log\ex_{x_0}\big[ e^{-h(X^\epsilon)/\epsilon}      \big]\leq -\inf_{\phi\in C([0,T];\mathcal{X}) }\big[ S_{x_0}(\phi) +h(\phi) \big],
\end{equation*}
where $S_{x_0}:C([0,T];\mathcal{X})\rightarrow[0,\infty]$ is the rate function given in \eqref{ratefun}.  It suffices to verify the above limit along any convergent sequence in $\epsilon$. Such a sequence exists since, for $\epsilon$ small enough,
$$\bigg|\epsilon\log\ex_{x_0}\big[ e^{-h(X^{\epsilon})/\epsilon}\big]\bigg|\leq \sup_{\phi\in C([0,T];\mathcal{X})}\big|h(\phi)\big|<\infty. $$
In light of the variational representation \eqref{varform} there exists, for each fixed $\epsilon>0$, a pair of controls $u^\epsilon=(u^\epsilon_1,u^\epsilon_2)\in\mathcal{A}_b$ such that
\begin{equation*}
\begin{aligned}
\epsilon\log\ex\big[ e^{-h(X^\epsilon)/\epsilon}      \big]&\leq  -\bigg(\ex\bigg[   \frac{1}{2}\|u^\epsilon_1\|^2_{\h_H}+\frac{1}{2}\|u^\epsilon_2\|^2_{\h_{1/2}} +h\big(X^{\epsilon,\eta,u^\epsilon}\big)\bigg]-\epsilon\bigg)\\&= -\bigg(\ex\bigg[   \frac{1}{2}\int_{0}^{T}\big|K_H^{-1}u^\epsilon_1(t)\big|^2dt+\frac{1}{2}\int_{0}^{T}\big|\dot{u}^\epsilon_2(t)\big|^2dt +h\big(X^{\epsilon,\eta,u^\epsilon}\big)\bigg]-\epsilon\bigg).
\end{aligned}	
\end{equation*} 
In fact, from a standard approximation argument we can assume, without loss of generality, that $u^\epsilon\in\mathcal{A}_N$ for $N$ sufficiently large. The reader is referred to the proof of Theorem 4.3 of \cite[pp. 1655-1656, Proof of the lower bound]{boue1998variational} for a sketch of this argument which relies on truncation of the controls and Chebyshev's inequality.

Using this family of controls and associated controlled processes $X^{\epsilon,\eta,u^\epsilon}$ we construct occupation measures $P^\epsilon$ as in \eqref{occupation}. From Theorem \ref{main1} and Prokhorov's theorem, there exists a further subsequence $(X^{\epsilon,\eta,u}, P^\epsilon)$ that converges in distribution in $C([0,T];\mathcal{X})\times\mathscr{P}([0,T]\times\mathcal{U}_1\times\mathcal{U}_2\times\mathcal{Y})$ as $\epsilon\to 0$ to a viable pair $(\psi,P)\in\mathscr{V}_{\Lambda,x_0}$ (see Definition \ref{viable}). The Portmanteau lemma thus furnishes
\begin{equation*}
\begin{aligned}
\limsup_{\epsilon\to 0}&\epsilon\log\ex\big[ e^{-h(X^\epsilon)/\epsilon}      \big]\\&\leq \limsup_{\epsilon\to 0}
-\bigg(\ex\bigg[   \frac{1}{2}\int_{0}^{T}\big|K_H^{-1}u^\epsilon_1(t)\big|^2dt+\frac{1}{2}\int_{0}^{T}\big|\dot{u}^\epsilon_2(t)\big|^2dt+h\big(X^{\epsilon,\eta,u^\epsilon}\big)\bigg]-\epsilon\bigg)
\\&
=-\liminf_{\epsilon\to 0}\ex\bigg[   \frac{1}{2}\int_{[0,T]\times\mathcal{U}_1\times\mathcal{U}_2\times\mathcal{Y}}\big[|u_1|^2+|u_2|^2\big]dP^\epsilon(t,u_1,u_2,y)+h\big(X^{\epsilon,\eta,u^\epsilon}\big)\bigg]\\&
\leq -\ex\bigg[   \frac{1}{2}\int_{[0,T]\times\mathcal{U}_1\times\mathcal{U}_2\times\mathcal{Y}}\big[|u_1|^2+|u_2|^2\big]dP(t,u_1,u_2,y)+h\big(\psi\big)\bigg]\\&
\leq -\ex\bigg[\inf_{P\in\mathscr{V}_{\Lambda,x_0,\psi}}\frac{1}{2}\int_{[0,T]\times\mathcal{U}_1\times\mathcal{U}_2\times\mathcal{Y}}\big[|u_1|^2+|u_2|^2\big]dP(t,u_1,u_2,y)+  h\big(\psi\big) \bigg]\\&
\leq -\inf_{\phi\in C([0,T];\mathcal{X})}\bigg[  \inf_{P\in\mathscr{V}_{\Lambda,x_0,\phi}}\frac{1}{2}\int_{[0,T]\times\mathcal{U}_1\times\mathcal{U}_2\times\mathcal{Y}}\big[|u_1|^2+|u_2|^2\big]dP(t,u_1,u_2,y)+  h\big(\phi\big)         \bigg]\\&
= -\inf_{\phi\in C([0,T];\mathcal{X})}\bigg[  S_{x_0}(\phi)+  h\big(\phi\big)         \bigg].
\end{aligned}	
\end{equation*} 
The proof is complete.
\subsection{An equivalent form of the rate function and compactness of sublevel sets}\label{compsec} We prepare the proof of the Laplace Principle lower bound by showing that the rate function can be represented in an ordinary control formulation. In particular, we show that the relaxed infimization problem \eqref{ratefun} over the space of probability measures is equivalent to an infimization problem over square integrable controls in feedback form.
To this end, let 
\begin{equation*}
\begin{aligned}
&\mathscr{A}^{o}_{\Lambda,\phi,x_0}:=\bigg\{ u=(u_1,u_2):[0,T]\times\mathcal{Y}\rightarrow\mathcal{U}_1\times\mathcal{U}_2:\\&  \int_{0}^{T}\int_{\mathcal{Y}}\big[|u_1(t,y)|^2+u_2(t,y)|^2\big]d\mu(y)dt<\infty, \forall t\in[0,T]
\\&
\phi(t)=x_0+\int_{0}^{t}\int_{\mathcal{Y}}\Lambda_1( \phi(s),u_1(s,y_2),u_2(s,y_2),y_2)\\&+\bigg(\int_{0}^{s}\int_{\mathcal{Y}}\Lambda_2( \phi(s),s,u_1(s,y_2),u_2(s,y_2),y_2,z,u_1(z,y_1),u_2(z,y_1),y_1)d\mu(y_1)dz\bigg)d\mu(y_2)ds
\bigg\}
\end{aligned}
\end{equation*}
and 
\begin{equation}\label{Abar}
\begin{aligned}
\overline{\mathscr{A}^{o}}_{\phi,x_0}=\bigg\{ u_1&\in L^2([0,T];\mathcal{U}_1), u_2\in L^2([0,T]\times\mathcal{Y},dt\otimes d\mu;\mathcal{U}_2): \phi(0)=x_0,
\\& \dot{\phi}_t= \bar{c}(\phi_t)+\overline{\nabla\Psi g}(\phi_t)+\overline{  \big[ \nabla\Psi \tau+\sigma_2(\phi_t,\cdot\big)\big]u_2(t,\cdot)}+\bar{\sigma}_1( \phi_t)\dot{K}_Hu_1(t)\bigg\}.
\end{aligned}
\end{equation}

\begin{lem}\label{ordform} Let $T>0, x_0\in\mathcal{X}$. For all $\phi\in C([0,T];\mathcal{X})$ we have 
	\begin{equation*}
	\begin{aligned}
	(i)\quad	S_{x_0}(\phi)&=\inf_{P\in\mathscr{V}_{\Lambda,x_0,\phi}}\frac{1}{2}\int_{[0,T]\times\mathcal{U}_1\times\mathcal{U}_2\times\mathcal{Y}}\big[|u_1|^2+|u_2|^2\big]dP(t,u_1,u_2,y)\\&=\inf_{(u_1,u_2)\in \mathscr{A}^{o}_{\Lambda,\phi,x_0}}\frac{1}{2}\int_{0}^{T}\int_{\mathcal{Y}}\big[|u_1(t,y)|^2+|u_2(t,y)|^2\big]d\mu(y)dt.
	\end{aligned}
	\end{equation*}
	\begin{equation}\label{ordform2}
	(ii)\quad\quad\quad	S_{x_0}(\phi)=\inf_{(u_1,u_2)\in \overline{\mathscr{A}^{o}}_{\phi,x_0}}\frac{1}{2}\int_{0}^{T}\bigg[|u_1(t)|^2+\int_{\mathcal{Y}}|u_2(t,y)|^2d\mu(y)\bigg]dt,
	\end{equation}
\end{lem}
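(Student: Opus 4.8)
The plan is to establish the two claimed identities by showing inequalities in both directions, exploiting the decomposition \eqref{Pdec} of viable measures to pass between the relaxed (measure-valued) formulation and the ordinary (feedback-control) formulations. For part $(i)$: given $P\in\mathscr{V}_{\Lambda,x_0,\phi}$, disintegrate $P(du_1\,du_2\,dy\,dt)=\Theta(du_1\,du_2|y,t)\mu(dy)\,dt$ per Definition \ref{viable}(ii), and define the feedback control by conditional expectation, $u_i(t,y):=\int_{\mathcal{U}_1\times\mathcal{U}_2}v_i\,d\Theta(v_1,v_2|y,t)$ for $i=1,2$. By Jensen's inequality applied to $|\cdot|^2$ fibrewise, $\int_{\mathcal{U}_1\times\mathcal{U}_2}(|v_1|^2+|v_2|^2)\,d\Theta(\cdot|y,t)\geq |u_1(t,y)|^2+|u_2(t,y)|^2$, so the cost does not increase. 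One must check that $(u_1,u_2)\in\mathscr{A}^o_{\Lambda,\phi,x_0}$: the square-integrability is immediate from finiteness of the cost and \eqref{Pdec}; the dynamics \eqref{limdynamics} survive the averaging because $\Lambda_1$ is affine in $(u_1,u_2)$ (see \eqref{Lambda1}) and $\Lambda_2$ is \emph{linear} in the slot $u_1$ (see \eqref{Lambda2}), so replacing the kernel by its barycentre reproduces \eqref{limdynamics} with $u_i$ in feedback form --- here it is essential that in $\Lambda_2$ the only control-dependence is through the single factor $u_1=v_1$, so the double integral $\iint\Lambda_2\,dP\,dP$ linearizes to $\int_0^t\!\int_{\mathcal{Y}}\big(\int_0^s\!\int_{\mathcal{Y}}\Lambda_2(\dots,u_1(z,y_1),\dots)\,d\mu(y_1)\,dz\big)d\mu(y_2)\,ds$. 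Conversely, given $(u_1,u_2)\in\mathscr{A}^o_{\Lambda,\phi,x_0}$, the measure $P(du\,dy\,dt):=\delta_{(u_1(t,y),u_2(t,y))}(du)\,\mu(dy)\,dt$ is a viable pair with the same cost, giving the reverse inequality. The finite-second-moment condition Definition \ref{viable}(i) holds precisely because of the $L^2$-integrability of the feedback controls together with $\int_{\mathcal{Y}}|y|^2\,d\mu(y)<\infty$ (a consequence of Condition \ref{C3}).

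For part $(ii)$, I would show that the ordinary control problem over $\mathscr{A}^o_{\Lambda,\phi,x_0}$ reduces to the one over $\overline{\mathscr{A}^o}_{\phi,x_0}$. The point is that in the dynamics defining $\mathscr{A}^o_{\Lambda,\phi,x_0}$, after integrating out $\mu$ the term involving $u_1$ becomes $\int_{\mathcal{Y}}\big(\int_0^s\!\int_{\mathcal{Y}}C_H\,s^{H-1/2}z^{1/2-H}(s-z)^{H-3/2}\sigma_1(\phi_s,y_2)u_1(z,y_1)\,d\mu(y_1)\,dz\big)d\mu(y_2)\,ds$; carrying out the $y_1$- and $y_2$-integrals and recognizing $\int_0^s z^{1/2-H}(s-z)^{H-3/2}(\cdot)\,dz$ as (a constant multiple of) $\dot K_H$ acting on $\bar u_1(z):=\int_{\mathcal{Y}}u_1(z,y_1)\,d\mu(y_1)$, one sees the term equals $\int_0^t\bar\sigma_1(\phi_s)\dot K_H\bar u_1(s)\,ds$ by Fubini and \eqref{Kdot}. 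Thus only the $\mu$-average $\bar u_1$ enters the dynamics, whereas the cost $\tfrac12\int_0^T\!\int_{\mathcal{Y}}|u_1(t,y)|^2\,d\mu(y)\,dt\geq\tfrac12\int_0^T|\bar u_1(t)|^2\,dt$ by Jensen, with equality when $u_1$ is chosen $y$-independent. Hence the infimum over $\mathscr{A}^o_{\Lambda,\phi,x_0}$ equals the infimum over pairs $(\bar u_1,u_2)$ with $\bar u_1\in L^2([0,T];\mathcal{U}_1)$, $u_2\in L^2([0,T]\times\mathcal{Y};\mathcal{U}_2)$, subject to the dynamics in \eqref{Abar} --- the $u_2$-component already only appears through its $\mu$-average $\overline{[\nabla\Psi\tau+\sigma_2(\phi_t,\cdot)]u_2(t,\cdot)}$, but since the constraint \eqref{Abar} keeps $u_2$ as a full function of $(t,y)$ there is nothing to optimize away there. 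Renaming $\bar u_1\mapsto u_1$ yields \eqref{ordform2}.

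The main obstacle I anticipate is the careful handling of the singular kernels in $\Lambda_2$ and in $\dot K_H$: the weight $z^{1/2-H}(s-z)^{H-3/2}$ is non-integrable near $z=s$ in the naive sense, so the identification of the iterated integral with $\dot K_H\bar u_1$ requires justifying the Fubini interchange and making sense of the inner integral as an element of $L^1$ (or as a principal-value/Marchaud-type object), using that $\bar u_1\in L^2$ and that $\dot K_H$ maps $L^2([0,T];\mathcal{U}_1)$ boundedly into $L^1([0,T];\mathcal{U}_1)$ --- this follows from the explicit form \eqref{Kdot} together with a Hardy--Littlewood-type estimate, and was already implicitly used in \eqref{sigma1comp}. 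A secondary subtlety is verifying that the barycentre construction in part $(i)$ genuinely lands in $\mathscr{A}^o_{\Lambda,\phi,x_0}$, i.e. that the averaged trajectory still \emph{equals} $\phi$ and not merely some other continuous path; this is where the affine/linear structure of $\Lambda_1,\Lambda_2$ in the control variables is used decisively, and one should state this linearity explicitly before invoking it. Everything else --- measurability of the disintegration, the Jensen estimates, and the equality cases --- is routine.
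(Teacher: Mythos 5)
Your proposal follows essentially the same route as the paper: disintegrate $P$ via \eqref{Pdec}, define the feedback control by the fibrewise barycentre, apply Jensen, verify that the dynamics are preserved using the affine/linear structure of $\Lambda_1,\Lambda_2$ in the control slots (this is precisely the paper's computation \eqref{limdec}), close the reverse inequality with a Dirac kernel; and for part $(ii)$, average $u_1$ over $\mu$, use Jensen, and note the reverse inclusion is trivial. One small inaccuracy worth flagging: the kernel $z^{1/2-H}(s-z)^{H-3/2}$ \emph{is} integrable near $z=s$ since $H-3/2>-1$ for all $H\in(1/2,1)$, so the Fubini interchange you anticipate as a problem is in fact unproblematic and was already carried out in \eqref{sigma1comp}.
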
	 

\begin{proof}
	$(i)$ Let $P\in\mathscr{V}_{\Lambda,x_0,\phi}$. In view of \eqref{Pdec} we have $dP(u,v,y,s)=d\Theta(u,v|y,s)d\mu(y)ds$. Hence, we can define $(u_1,u_2):[0,T]\times\mathcal{Y}\rightarrow\mathcal{U}_1\times\mathcal{U}_2$ 
	by $u_i(t,y)=\int_{\mathcal{U}_1\times\mathcal{U}_2} u_i d\Theta(u_1,u_2|y,t),$ $ i=1,2$ and use Jensen's inequality to verify that
	\begin{equation}\label{Jensen}
	\begin{aligned}
	\int_{0}^{T}\int_{\mathcal{Y}}\big[|u_1(t,y)|^2+|u_2(t,y)|^2\big]&d\mu(y)dt\leq \int_{0}^{T}\int_{\mathcal{Y}}\big[|u_1|^2+|u_2|^2\big]d\Theta(u_1,u_2|y,t)d\mu(y)dt\\&=\int_{[0,T]\times\mathcal{U}_1\times\mathcal{U}_2\times\mathcal{Y}}\big[|u_1|^2+|u_2|^2\big]dP(t,u_1,u_2,y)<\infty.
	\end{aligned}
	\end{equation}
	From the definition of $\Lambda_i$ and \eqref{limdynamics} we have for all $t\in[0,T]$ 
	\begin{equation}\label{limdec}
	\begin{aligned}
	&\int_{[0,t]\times\mathcal{U}_1\times\mathcal{U}_2\times\mathcal{Y}}\bigg[\Lambda_1( \phi_s,u_2,v_2,y_2)\\&+\int_{[0,s]\times\mathcal{U}_1\times\mathcal{U}_2\times\mathcal{Y}}\Lambda_2( \phi_s,s,u_2,v_2,y_2,z,u_1,v_1,y_1)dP(z,u_1,v_1,y_1)\bigg]dP(s,u_2,v_2,y_2)\\&
	=\int_{[0,t]\times\mathcal{U}_1\times\mathcal{U}_2\times\mathcal{Y}}\bigg[c\big(\phi_s, y_2\big)+\nabla\Psi(y_2)g\big(\phi_s, y_2\big)\\&+\bigg(\nabla\Psi(y_2)\tau(y_2)+\sigma_2(\phi_s,y_2)\bigg)v_2\bigg]dP(s,u_2,v_2,y_2)\\&+   \frac{c_H}{\Gamma(H-\frac{1}{2})}\int_{[0,t]\times\mathcal{U}_1\times\mathcal{U}_2\times\mathcal{Y}}\int_{[0,s]\times\mathcal{U}_1\times\mathcal{U}_2\times\mathcal{Y}} s^{H-\frac{1}{2}}z^{\frac{1}{2}-H}(s-z)^{H-\frac{3}{2}} \\&\times\sigma_1(\phi_s,y_2)u_1dP(z,u_1,v_1,y_1)dP(s,u_2,v_2,y_2)\\&
	=\int_{0}^{t}\int_{\mathcal{Y}}\bigg[c\big(\phi_s, y_2\big)+\nabla\Psi(y_2)g\big(\phi_s, y_2\big)+\bigg(\nabla\Psi(y_2)\tau(y_2)+\sigma_2(\phi_s,y_2)\bigg)u_2(s,y_2)\bigg]d\mu(y_2)ds\\&
	+\frac{c_H}{\Gamma(H-\frac{1}{2})}\int_{0}^{t}\int_{\mathcal{Y}}\bigg(\int_{0}^{s}\int_{\mathcal{Y}} s^{H-\frac{1}{2}}z^{\frac{1}{2}-H}(s-z)^{H-\frac{3}{2}} \sigma_1(\phi_s,y_2)u_1(z,y_1)d\mu(y_1)dz\bigg)d\mu(y_2)ds\\&
	=\int_{0}^{t}\bigg[\bar{c}(\phi_s)+\overline{\nabla\Psi g}\big(\phi_s\big)+\overline{   \nabla\Psi \tau u_2}(s)+\overline{\sigma_2(\phi_s,\cdot)u_2(s,\cdot)}\bigg]ds
	\\&+\int_{0}^{t}\int_{\mathcal{Y}}\sigma_1(\phi_s,y_2)\bigg(\frac{c_H}{\Gamma(H-\frac{1}{2})}\int_{0}^{s} s^{H-\frac{1}{2}}z^{\frac{1}{2}-H}(s-z)^{H-\frac{3}{2}} \bar{u}_1(z)dz\bigg)d\mu(y_2)ds\\&
	=\int_{0}^{t}\bigg[\bar{c}(\phi_s)+\overline{\nabla\Psi g}(\phi_s)+\overline{  \big( \nabla\Psi \tau+\sigma_2(\phi_s,\cdot\big)\big)u_2(s,\cdot)}\bigg]ds+\int_{0}^{t}\bar{\sigma}_1( \phi_s)\dot{K}_H\bar{u}_1(s)ds\\&
	=\int_{0}^{t}\int_{\mathcal{Y}}\Lambda_1( \phi_s,s,u_1(s,y_2),u_2(s,y_2),y_2)\\&+\bigg(\int_{0}^{s}\int_{\mathcal{Y}}\Lambda_2( \phi_s,s,u_1(s,y_2),u_2(s,y_2),y_2,z,u_1(z,y_1),u_2(z,y_1),y_1)d\mu(y_1)dz\bigg)d\mu(y_2)ds.
	\end{aligned}
	\end{equation}
	Thus, 
	\begin{equation*}
	\begin{aligned}
	\inf_{(u_1,u_2)\in\mathscr{A}^{o}_{\Lambda,\phi,x_0}}\frac{1}{2}\int_{0}^{T}\int_{\mathcal{Y}}\big[|u_1(t,y)|^2+|u_2(t,y)|^2\big]d\mu(y)dt&\leq S_{x_0}(\phi)
	\end{aligned}
	\end{equation*}
	follows by taking the infimum in \eqref{Jensen}. To prove the reverse inequality let $(u_1,u_2)\in \mathscr{A}^{o}_{\Lambda,\phi,x_0}$ and define a measure $$P(A_1\times A_2\times A_3\times A_4)=\int_{A_1}\int_{A_4} \mathds{1}_{A_2}(u_1(t,y))\mathds{1}_{A_3}(u_2(t,y))d\mu(y)dt$$
	for any $A_1\times A_2\times A_3\times A_4\in\mathscr{B}([0,T]\times \mathcal{U}_1\times\mathcal{U}_2\times\mathcal{Y}).$
	Since \[\Theta(A_2\times A_3|y,t):=\mathds{1}_{A_2}(u_1(t,y))\mathds{1}_{A_3}(u_2(t,y))\] is a stochastic kernel on $\mathcal{U}_1\times\mathcal{U}_2$ given $\mathcal{Y}\times[0,T]$ it follows that \eqref{Pdec} holds. Thus, in view of \eqref{limdec}, we deduce that $P\in
	\mathscr{V}_{\Lambda,x_0,\phi}$ and $$S_{x_0}(\phi)\leq \inf_{(u_1,u_2)\in\mathscr{A}^{o}_{\Lambda,\phi,x_0}}\frac{1}{2}\int_{0}^{T}\int_{\mathcal{Y}}\big[|u_1(t,y)|^2+|u_2(t,y)|^2\big]d\mu(y)dt.$$
	$(ii)$ We proceed once again by proving an upper and a lower bound. First, let $(u_1,u_2)\in\mathscr{A}^{o}_{\Lambda,\phi,x_0}$ and define the averaged control $v_1(t)=\int_{\mathcal{Y}}u_1(t,y)d\mu(y).$ By Jensen's inequality it follows that $v_1\in L^2([0,T];\mathcal{U}_1).$ From the 7th line of \eqref{limdec} we see that  
	$$\phi_t= x_0+\int_{0}^{t}\bigg[\bar{c}(\phi_s)+\overline{\nabla\Psi g}(\phi_s)+\overline{  \big( \nabla\Psi \tau+\sigma_2(\phi_s,\cdot)\big) u_2(s,\cdot)}\bigg]ds+\int_{0}^{t}\bar{\sigma}_1( \phi_s)\dot{K}_Hv_1(s)ds.$$
	Hence, $(v_1,u_2)\in\overline{\mathscr{A}^{o}}_{\phi,x_0}$ and 
	\begin{equation*}
	\begin{aligned}
	\frac{1}{2}\int_{0}^{T}\int_{\mathcal{Y}}\big[|u_1(t,y)|^2+&|u_2(t,y)|^2\big]d\mu(y)dt\geq \frac{1}{2}\int_{0}^{T}\bigg[|v_1(t)|^2+\int_{\mathcal{Y}}|u_2(t,y)|^2d\mu(y)\bigg]dt\\&\geq \inf_{(u_1,u_2)\in \overline{\mathscr{A}^{o}}_{\phi,x_0}}\frac{1}{2}\int_{0}^{T}\bigg[|u_1(t)|^2+\int_{\mathcal{Y}}|u_2(t,y)|^2d\mu(y)\bigg]dt,
	\end{aligned}
	\end{equation*}
	where we used (i) and Jensen's inequality. Taking infimum over $\mathscr{A}^{o}_{\phi,x_0}$ concludes the argument.  The reverse inequality follows trivially from  the inclusion  $\overline{\mathscr{A}^{o}}_{\phi,x_0}\subset\mathscr{A}^{o}_{\Lambda,\phi,x_0}.$  The latter holds by identifying any $u\in L^2([0,T];\mathcal{U}_1)$ with a feedback control in $L^2([0,T]\times\mathcal{Y};\mathcal{U}_1)$ that is constant with respect to its second argument.
\end{proof}
\begin{rem} The ordinary controls $u_1,u_2$ correspond to the noises $B^H, W$ respectively and are given a-priori in feedback form, i.e. they depend on both time and the fast variable. However, Lemma \ref{ordform}$(ii)$ shows that it is sufficient to consider controls $u_1$ that only depend on the time variable. This simplification is not possible in the case $H=1/2$ (see e.g. \cite{dupuis2012large}, Theorem 5.2) and is related to the pathwise interpretation of the integral $dB^H$ (see also the discussion in the end of Section \ref{resultssec} above).	 	
\end{rem}
\noindent We conclude this subsection with the following lemma on the compactness of the rate function's sublevel sets. 
\begin{lem}
	Let $x_0\in\mathcal{X}.$ For all $M>0$ the set $S_M=\{ \phi\in C([0,T];\mathcal{X}) :  S_{x_0}(\phi)\leq M\}$ is compact in the topology of uniform convergence.
\end{lem}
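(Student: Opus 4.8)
The plan is to exploit the ordinary--control representation of the rate function in Lemma~\ref{ordform}$(ii)$, identifying $S_M$ (up to an $\varepsilon$ of slack) with the image of an energy ball under the control--to--trajectory map, and then to combine an Arzel\`a--Ascoli argument with a weak--convergence/lower--semicontinuity argument.

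First I would fix $M>0$ and, for each $\phi\in S_M$, use Lemma~\ref{ordform}$(ii)$ to pick near--minimizing controls $u^\phi=(u_1^\phi,u_2^\phi)$ with $u_1^\phi\in L^2([0,T];\mathcal{U}_1)$, $u_2^\phi\in L^2([0,T]\times\mathcal{Y},dt\otimes d\mu;\mathcal{U}_2)$, energy $\tfrac12\int_0^T\big[|u_1^\phi(t)|^2+\int_{\mathcal{Y}}|u_2^\phi(t,y)|^2\,d\mu(y)\big]dt\le M+1$, and $\phi$ solving the feedback ODE defining $\overline{\mathscr{A}^o}_{\phi,x_0}$ in \eqref{Abar}. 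The next step is a uniform bound and equicontinuity on $S_M$. Using boundedness of $\nabla\Psi$ (Condition~\ref{C7}), $\tau$ (Condition~\ref{C2}), and $\sigma_1,\sigma_2$ (the boundedness assumption in force here), the bound $|\bar c(x)|\le C(1+|x|^\nu)$ (Condition~\ref{C5}), finiteness of the first moment of $\mu$ (a consequence of Conditions~\ref{C2}--\ref{C3}), Cauchy--Schwarz on the $u_2$--term together with the energy bound, and the fact that $\dot K_H$ maps $L^2([0,T])$ boundedly into $L^{1/(1-H)}([0,T])$ (from \eqref{Kdot}, since $\mathcal{H}_H=I^{H+1/2}_{0^+}(L^2)$ so $\dot K_H u_1=I^{H-1/2}_{0^+}(\cdot)$ and Hardy--Littlewood--Sobolev applies), one estimates $|\phi_t|\le|x_0|+C\int_0^t(1+|\phi_s|)\,ds+C\sqrt{M+1}\,t^{1/2}+C\|\dot K_Hu_1^\phi\|_{L^1([0,t])}$ and Gr\"onwall's inequality yields $\sup_{\phi\in S_M}\|\phi\|_\infty=:C_M<\infty$. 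The same ingredients give $|\phi_t-\phi_s|\le C_M\big[(t-s)+(t-s)^{1/2}+(t-s)^{H}\big]$ for all $\phi\in S_M$, uniformly in $\phi$, so by the Arzel\`a--Ascoli theorem $S_M$ is relatively compact in $C([0,T];\mathcal{X})$.

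It remains to show $S_M$ is closed. Let $\phi_n\in S_M$ with $\phi_n\to\phi$ uniformly, and take near--minimizers $u^n=(u_1^n,u_2^n)\in\overline{\mathscr{A}^o}_{\phi_n,x_0}$ with energy $\le M+1/n$. These are bounded in the Hilbert space $L^2([0,T];\mathcal{U}_1)\oplus L^2([0,T]\times\mathcal{Y},dt\otimes d\mu;\mathcal{U}_2)$, so along a subsequence $u^n\rightharpoonup u=(u_1,u_2)$ weakly, and $\tfrac12\|u\|^2\le\liminf_n\tfrac12\|u^n\|^2\le M$ by weak lower semicontinuity of the norm. The crux is passing to the limit in the ODE \eqref{Abar}: $\bar c(\phi_n)$ and $\overline{\nabla\Psi g}(\phi_n)$ converge uniformly by continuity; in $\int_0^t\overline{[\nabla\Psi\tau+\sigma_2(\phi_n(s),\cdot)]u_2^n(s,\cdot)}\,ds$ the coefficient converges strongly in $L^2(dt\otimes d\mu)$ (bounded, pointwise convergent integrands, dominated convergence) while $u_2^n\rightharpoonup u_2$ weakly there, so the bilinear pairing passes to the limit; finally $\int_0^t\bar\sigma_1(\phi_n(s))\dot K_Hu_1^n(s)\,ds=\langle\mathbf 1_{[0,t]}\bar\sigma_1(\phi_n(\cdot)),\dot K_Hu_1^n\rangle$ converges because $\dot K_H$ is a bounded linear operator from $L^2$ into $L^{1/(1-H)}$, hence weak--to--weak continuous, so $\dot K_Hu_1^n\rightharpoonup\dot K_Hu_1$, while $\mathbf 1_{[0,t]}\bar\sigma_1(\phi_n(\cdot))\to\mathbf 1_{[0,t]}\bar\sigma_1(\phi(\cdot))$ strongly in the conjugate space $L^{1/H}$ (uniform convergence, $\bar\sigma_1$ Lipschitz and bounded). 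Therefore $\phi$ solves \eqref{Abar} with $(u_1,u_2)$, i.e. $(u_1,u_2)\in\overline{\mathscr{A}^o}_{\phi,x_0}$, whence $S_{x_0}(\phi)\le\tfrac12\|u\|^2\le M$ and $\phi\in S_M$. Being closed and relatively compact in the complete metric space $C([0,T];\mathcal{X})$, $S_M$ is compact.

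The step I expect to be the main obstacle is the non-local term $\int_0^t\bar\sigma_1(\phi_s)\dot K_Hu_1(s)\,ds$: one must (a) extract from \eqref{Kdot}, with its singular kernel $(s-z)^{H-3/2}$, the correct boundedness $\dot K_H:L^2([0,T])\to L^{1/(1-H)}([0,T])$ used both in the equicontinuity estimate and in the weak continuity, and (b) interchange the $\bar\sigma_1(\phi_n)$--limit with the weak limit of $u_1^n$. This is precisely the feature absent when $H=1/2$, where the analogous term is the elementary $\int_0^t\bar\sigma_1(\phi_s)u_1(s)\,ds$.
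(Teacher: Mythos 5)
Your proof is correct and follows essentially the same route as the paper's: first relative compactness of $S_M$ via a Gr\"onwall bound and Arzel\`a--Ascoli, then closedness by taking weak $L^2$ limits of near-minimizing controls and passing to the limit in the feedback ODE \eqref{Abar} via strong--weak pairing (uniform convergence of $\phi_n$, hence strong convergence of the bounded coefficients, against weak convergence of the controls, with $\dot K_H$ transferred through the pairing by linearity and boundedness). The paper phrases the closedness step as lower semicontinuity of $S_{x_0}$, but that is the same argument.

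The one point worth flagging is your treatment of $\dot K_H$. You invoke Hardy--Littlewood--Sobolev to claim $\dot K_H: L^2([0,T])\to L^{1/(1-H)}([0,T])$, reasoning from ``$\dot K_H u_1=I^{H-1/2}_{0^+}(\cdot)$.'' That identification is not literal: $\dot K_H = c_H\,\Phi\cdot I^{H-1/2}_{0^+}(\Phi^{-1}\cdot)$ with the power weights $\Phi(t)=t^{H-1/2}$, so an application of HLS requires control of the weighted operator, not the plain Riemann--Liouville integral, and this is not addressed. Fortunately nothing in your proof actually needs integrability beyond $L^2$: the paper's Lemma~\ref{udotL2} gives $\dot K_H\in\mathscr{L}(L^2;L^2)$ directly (via the inclusions \eqref{Taqquembed}), and $L^2\to L^2$ boundedness already yields the $(t-s)^{1/2}$ equicontinuity modulus for the $\bar\sigma_1\dot K_Hu_1$ term and the weak--weak continuity used in the closedness argument (with the strong factor $\mathbf 1_{[0,t]}\bar\sigma_1(\phi_n(\cdot))$ converging in $L^2$, the conjugate space). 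Replacing the HLS claim with a citation to Lemma~\ref{udotL2} makes the argument airtight and brings it into exact alignment with the paper.
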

\begin{proof} We will use the equivalent form \eqref{ordform2}. Let $\{\phi^n\}_{n\in\N}\subset S_M$ and for each $n\in\N$ choose  $u_n=(u^n_1,u^n_2)\in\overline{\mathscr{A}^{o}}_{\phi^n,x_0}$ such that 
	\begin{equation}\label{L2weak}
	\frac{1}{2}\int_{0}^{T}\bigg[|u^n_1(t)|^2+\int_{\mathcal{Y}}|u^n_2(t,y)|^2d\mu(y)\bigg]dt\leq M+\frac{1}{n}.
	\end{equation}
	It follows that $\{u_n\}_{n\in\N}\subset L^2([0,T];\mathcal{U}_1)\oplus L^2([0,T]\times\mathcal{Y},dt\otimes d\mu;\mathcal{U}_2) $ is relatively compact in the weak $L^2$ topology. Thus, up to subsequences, there exists a weak $L^2$-limit $u=(u_1,u_2)$ that also satisfies \eqref{L2weak} due to the lower semi-continuity of the norm.  Since $\phi^n$ satisfies
	\begin{equation*}
	\phi^n_t=x_0+\int_{0}^{t}\bigg(\bar{c}(\phi^n_s)+\overline{\nabla\Psi g}(\phi^n_s)+\overline{  \big[ \nabla\Psi \tau+\sigma_2(\phi^n_s,\cdot\big)\big]u^n_2(s,\cdot)}+\bar{\sigma}_1( \phi^n_s)\dot{K}_Hu^n_1(s)\bigg) ds,\; t\in[0,T],
	\end{equation*}
	the coefficients are Lipschitz, $\nabla\Psi \tau,\sigma_2,\sigma_1$ are bounded and the linear operator $\dot{K}_H:L^2\rightarrow L^2$ is continuous in the norm topology (hence weakly continuous; see also Lemma \ref{udotL2} below), we can apply Gr\"onwall's inequality along with an Arzel\`a-Ascoli argument to show that the family $\{\phi^n\}\subset C([0,T];\mathcal{X})$ is relatively compact. Passing, if necessary, to a further subsequence it follows from uniqueness of solutions that any limit point $\phi$ of $\{\phi^n\}_{n\in\N}$ satisfies
	\begin{equation*}
	\phi_t=x_0+\int_{0}^{t}\bigg(\bar{c}(\phi_s)+\overline{\nabla\Psi g}(\phi_s)+\overline{  \big[ \nabla\Psi \tau+\sigma_2(\phi_s,\cdot\big)\big]u_2(s,\cdot)}+\bar{\sigma}_1( \phi_s)\dot{K}_Hu_1(s)\bigg) ds.
	\end{equation*}
	This implies that $(u_1,u_2)\in\overline{\mathscr{A}^{o}}_{\phi,x_0}$
	and 
	
	\begin{equation*}\label{L2weak2}
	S_{x_0}(\phi)\leq \frac{1}{2}\int_{0}^{T}\bigg[|u_1(t)|^2+\int_{\mathcal{Y}}|u_2(t,y)|^2d\mu(y)\bigg]dt\leq M.
	\end{equation*}
	At this point we have shown that the sublevel set $S_M$ is relatively compact. In order to conclude it remains to show that it is closed. Hence, it suffices to show that the rate function $S_{x_0}$ is lower semicontinuous. To this end, let $\{\phi^n\}_{n\in\N}\subset C([0,T];\mathcal{X})$ and $u^n\in \overline{\mathscr{A}^{o}}_{\phi^n,x_0}$ and assume that $(\phi^n,u^n)\rightarrow (\phi,u)$ in $C([0,T];\mathcal{X})\times L^2([0,T];\mathcal{U}_1)\oplus L^2([0,T]\times\mathcal{Y},dt\otimes d\mu;\mathcal{U}_2)$ where the latter is endowed with the weak topology. Assuming without loss of generality (otherwise there is nothing to prove) that $\liminf_{n\to\infty}S_{x_0}(\phi^n)=M<\infty,$ we can pass to a subsequence that satisfies \eqref{L2weak} and 
	\begin{equation*}
	S_{x_0}(\phi^n)\geq \frac{1}{2}\int_{0}^{T}\bigg[|u^n_1(t)|^2+\int_{\mathcal{Y}}|u^n_2(t,y)|^2d\mu(y)\bigg]dt-\frac{1}{n}.
	\end{equation*}
	From our previous discussion, there exists a subsequence $(\phi^n,u^n)$ that converges to a pair $(\phi', u')$ such that $u'\in\overline{\mathscr{A}^{o}}_{\phi',x_0}$ and by uniqueness of the limit we must have $(\phi', u')=(\phi, u).$ Thus, from the lower semicontinuity of the  norms we obtain 
	\begin{equation*}
	\begin{aligned}
	\liminf_{n\to\infty}S_{x_0}(\phi^n)&\geq \liminf_{n\to\infty}\frac{1}{2}\int_{0}^{T}\bigg[|u^n_1(t)|^2+\int_{\mathcal{Y}}|u^n_2(t,y)|^2d\mu(y)\bigg]dt\\&
	\geq \frac{1}{2}\int_{0}^{T}\bigg[|u_1(t)|^2+\int_{\mathcal{Y}}|u_2(t,y)|^2d\mu(y)\bigg]dt\\&
	\geq \inf_{u\in\overline{\mathscr{A}^{o}}_{\phi,x_0}} \frac{1}{2}\int_{0}^{T}\bigg[|u_1(t)|^2+\int_{\mathcal{Y}}|u_2(t,y)|^2d\mu(y)\bigg]dt\\&
	=S_{x_0}(\phi).
	\end{aligned}
	\end{equation*}
	The proof is complete.	\end{proof}

\subsection{The Laplace Principle lower bound}\label{LPlbsec}  We wish to show that for all continuous, bounded $h:C([0,T];\mathcal{X})\rightarrow\R,$ 

\begin{equation}\label{laplower}
\liminf_{\epsilon\to 0}\epsilon\log\ex_{x_0}\big[ e^{-h(X^\epsilon)/\epsilon}      \big]\geq -\inf_{\phi\in C([0,T];\mathcal{X}) }\big[ S_{x_0}(\phi) +h(\phi) \big].
\end{equation}
We provide the proof in two cases that depend upon the generality of the model and require different arguments.
\subsubsection{\textbf{Case 1}: $\mathbf{b=\sigma_2=0, \bar{\sigma}_1\neq 0}$} In view of \eqref{ordform2} and \eqref{Abar}, the rate function takes the form
\begin{equation}\label{smallnoiseform}
S_{x_0}(\phi)=\inf_{u_1\in \overline{\mathscr{A}^{o}}_{\phi,x_0}}\frac{1}{2}\int_{0}^{T}|u_1(t)|^2dt=\inf_{u\in \mathfrak{A}^{o}_{\phi,x_0}}\frac{1}{2}\|u\|^2_{\h_H},
\end{equation} 
where
\begin{equation}\label{smallnoiscon}
\begin{aligned}
\mathfrak{A}^{o}_{\phi,x_0}=\bigg\{ u\in \h_H: \phi(0)=x_0,\; \dot{\phi}_t= \bar{c}\big(\phi_t\big)+\bar{\sigma}_1\big( \phi_t\big)\dot{u}(t)\bigg\}.
\end{aligned}
\end{equation}
Note that, in this case,  $S_{x_0}$ coincides with the Large Deviations rate function for the family $\{X^\epsilon\}_{\epsilon}$ of solutions to the small-noise averaged Young SDE
\begin{equation*}
\left \{\begin{aligned}
& dX^\epsilon_t=\bar{c}\big(X^\epsilon_t\big)dt+\sqrt{\epsilon}\bar{\sigma}_1\big(X^\epsilon_t\big)dB^H_t,\\&
X^{\epsilon}_0=x_0.
\end{aligned} \right.
\end{equation*}
In order to prove the lower bound let $\delta>0$ and choose $\phi\in C([0,T];\mathcal{X})$ such that 
\begin{equation*}
S_{x_0}(\phi)+h(\phi)<\inf_{\psi\in C([0,T];\mathcal{X}) }\big[ S_{x_0}(\psi) +h(\psi) \big]+\frac{\delta}{2}.
\end{equation*}
Next let $v\in\mathfrak{A}^{o}_{\phi,x_0}$ such that 
\begin{equation*}
\frac{1}{2}\|v\|^2_{\h_H}<S_{x_0}(\phi)+\frac{\delta}{2}.
\end{equation*}
Considering the deterministic control $\tilde{u}=(v,0)$ and the slow process $X^{\epsilon,\eta,\tilde{u}}$ controlled by $\tilde{u},$ it follows from uniqueness of the limiting deterministic dynamics and standard averaging theory that $X^{\epsilon,\eta,\tilde{u}}\rightarrow \phi,$ as $\epsilon\to 0,$ in distribution in $C([0,T];\mathcal{X}).$ Thus, in view of the variational representation \eqref{varrepcon} we have 
\begin{equation*}
\begin{aligned}
\limsup_{\epsilon\to 0} -\epsilon\log\ex\big[ e^{-h(X^\epsilon)/\epsilon}      \big]&\leq \limsup_{\epsilon\to 0}  \ex\bigg[   \frac{1}{2}\|v\|^2_{\h_H} +h\big(X^{\epsilon,\eta,\tilde{u}}\big)\bigg]\\&
= \frac{1}{2}\|v\|^2_{\h_H}+\limsup_{\epsilon\to 0}\ex\big[ h\big(X^{\epsilon,\eta,\tilde{u}}\big)\big]\\&
=\frac{1}{2}\|v\|^2_{\h_H}+h(\phi)\\&\leq S_{x_0}(\phi)+h(\phi)+\frac{\delta}{2}<\inf_{\psi\in C([0,T];\mathcal{X}) }\big[ S_{x_0}(\psi) +h(\psi) \big]+\delta.
\end{aligned}
\end{equation*}
Since $\delta$ is arbitrary, \eqref{laplower} follows.
\subsubsection{\textbf{Case 2}:  $\mathbf{\overline{QQ^T}(x)}$ \textbf{is uniformly non-degenerate}} Before we proceed to the proof of the lower bound in this more general setting, we shall introduce a few quantities of interest for the study of the variational problem \eqref{ordform2}, \eqref{Abar}. First, define a family of multiplication operators $\{\bar{\Sigma}_1(\phi)\}_{\phi\in C([0,T];\mathcal{X})}\subset \mathscr{L}(L^2([0,T];\mathcal{U}_1);L^2([0,T];\mathcal{X})),$ by
\begin{equation}\label{multop}
\bar{\Sigma}_1(\phi)[u](t):=\bar{\sigma}_1(\phi_t)u(t)\;, t\in[0,T].
\end{equation}
With this notation we can rewrite, for each $H\in(1/2,1),\phi\in C([0,T];\mathcal{X}),$ the operator $\mathcal{Q}_H(\phi)$ \eqref{QH1} as
\begin{equation*}
\begin{aligned}
\mathcal{Q}_H(\phi)[(u_1,u_2)](t)&:= \bar{\Sigma}_1(\phi)\big[\dot{K}_Hu_1\big](t)+\overline{Q(\phi_t,\cdot)u_2(t,\cdot)},
\end{aligned}		
\end{equation*}
with $Q, \dot{K}_H$ as in \eqref{Qdef1}, \eqref{Kdot} respectively.
Note that $\mathcal{Q}_H(\phi)$ is a bounded linear operator and $\sup_{\phi}\|\mathcal{Q}_H(\phi)\|_{\mathscr{L}}<\infty$ since $\bar{\sigma}_1, \sigma_2,\nabla\Psi, \tau$ are bounded (see Conditions \ref{C2}, \ref{C4}, \ref{C7}) and $\dot{K}_H:L^2([0,T];\mathcal{U}_1)\rightarrow L^2([0,T];\mathcal{X})$ is a bounded operator (see Lemma \ref{udotL2} below).  Moreover, it is straightforward to verify that the adjoint operator $(\mathcal{Q}_H(\phi))^*:L^2([0,T];\mathcal{X})\rightarrow L^2([0,T];\mathcal{U}_1)\oplus L^2([0,T]\times\mathcal{Y}, dt\otimes d\mu;\mathcal{U}_2)$ is given by
\begin{equation}\label{Qadjoint}
\big(\mathcal{Q}_H(\phi)\big)^*[\psi](t,y)=\big( \dot{K}_H^*\bar{\Sigma}^*_1(\phi)[\psi](t) ,     Q^T(\phi_t,y)\psi_t    \big)\;,\;\; (t,y)\in[0,T]\times\mathcal{Y}.
\end{equation}
In view of \eqref{QH1}, the rate function \eqref{ordform2}, \eqref{Abar} can be expressed in the form
\begin{equation}\label{lowerboundratefunction}
S_{x_0}(\phi)=\inf_{u=(u_1,u_2)\in \mathcal{Q}^{-1}_H(\phi)\big[\dot{\phi}- \bar{c}(\phi)-\overline{\nabla\Psi g}(\phi)\big] }\frac{1}{2}\big\|u\big\|^2_{L^2([0,T];\mathcal{U}_1)\oplus L^2([0,T]\times\mathcal{Y}, dt\otimes d\mu;\mathcal{U}_2)},
\end{equation}
for all $\phi\in C([0,T];\mathcal{X})$ such that $\phi_0=x_0$ and $\dot{\phi}- \bar{c}(\phi)-\overline{\nabla\Psi g}(\phi)\in Range(\mathcal{Q}_H(\phi))\subset L^2([0,T];\mathcal{X})$  and $S_{x_0}(\phi)=\infty$ otherwise. 

In the following lemma we prove the existence of exact minimizers satisfying the optimal control problem \eqref{lowerboundratefunction}.
\begin{lem}\label{minimizerlem} Let $H\in[1/2,1), \phi\in C([0,T];\mathcal{X})$ such that $S_{x_0}(\phi)<\infty.$ The  self-adjoint operator 
	$\mathcal{Q}_H(\phi)\mathcal{Q}_H(\phi)^*\in\mathscr{L}(L^2([0,T];\mathcal{X}))$  has a bounded inverse that satisfies
	\begin{equation}\label{inversebnd}
	\sup_{ \phi\in C([0,T];\mathcal{X})}\big\| \big[\mathcal{Q}_H(\phi)\mathcal{Q}_H(\phi)^*\big]^{-1}\big\|_{\mathscr{L}(L^2([0,T];\mathcal{X}))}<\infty.
	\end{equation}
	Moreover, $\mathcal{Q}_H(\phi)$ has a bounded right inverse 	$\mathcal{Q}^+_H(\phi):=\mathcal{Q}_H(\phi)^*\big[\mathcal{Q}_H(\phi)\mathcal{Q}_H(\phi)^*\big]^{-1}.$ As a consequence, the infimum in \eqref{lowerboundratefunction} is attained by
	\begin{equation}\label{minimizers}
	\begin{aligned}
	u^*(t,y)&=(u_1^*(t),u_2^*(t,y))\\&=\mathcal{Q}^+_H(\phi)[\dot{\phi}- \bar{c}(\phi)-\overline{\nabla\Psi g}(\phi)](t,y)\\&=\bigg(\dot{K}_H^*\bar{\Sigma}^*_1(\phi)\big[\mathcal{Q}_H(\phi)\mathcal{Q}_H(\phi)^*\big]^{-1}\big[\dot{\phi}- \bar{c}(\phi)-\overline{\nabla\Psi g}(\phi)\big](t),  
	\\& Q^T(\phi_t,y)\big[\mathcal{Q}_H(\phi)\mathcal{Q}_H(\phi)^*\big]^{-1} \big[\dot{\phi}- \bar{c}(\phi)-\overline{\nabla\Psi g}(\phi)\big](t)            \bigg).
	\end{aligned}
	\end{equation}  
\end{lem}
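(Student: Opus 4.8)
The plan is to read off the bounded invertibility of $\mathcal{Q}_H(\phi)\mathcal{Q}_H(\phi)^*$ from a coercivity estimate for the associated quadratic form, retaining only the contribution of the $W$-component and discarding the nonnegative $B^H$-part; the attainment of the infimum is then the standard description of the minimum-norm solution of a linear equation in a Hilbert space.

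First I would record what was already observed in the paragraph preceding the lemma: by Conditions \ref{C2}, \ref{C4}, \ref{C7} the maps $\bar{\sigma}_1,\sigma_2,\nabla\Psi,\tau$ are bounded, and by Lemma \ref{udotL2} the operator $\dot{K}_H$ is bounded on $L^2([0,T];\mathcal{U}_1)$; hence $\sup_{\phi}\|\mathcal{Q}_H(\phi)\|_{\mathscr{L}}<\infty$, so $\mathcal{Q}_H(\phi)\mathcal{Q}_H(\phi)^*$ is a bounded self-adjoint operator on $L^2([0,T];\mathcal{X})$ whose operator norm is bounded uniformly in $\phi$. Using the explicit adjoint \eqref{Qadjoint}, for every $\psi\in L^2([0,T];\mathcal{X})$ one has
\[
\big\langle \mathcal{Q}_H(\phi)\mathcal{Q}_H(\phi)^*\psi,\psi\big\rangle_{L^2}=\big\|\mathcal{Q}_H(\phi)^*\psi\big\|^2=\big\|\dot{K}_H^*\bar{\Sigma}_1^*(\phi)\psi\big\|^2+\int_0^T\!\!\int_{\mathcal{Y}}\big|Q^T(\phi_t,y)\psi_t\big|^2\,d\mu(y)\,dt.
\]
Dropping the first (nonnegative) term and using $\int_{\mathcal{Y}}|Q^T(\phi_t,y)\psi_t|^2\,d\mu(y)=\langle\overline{QQ^T}(\phi_t)\psi_t,\psi_t\rangle$ together with the uniform non-degeneracy of $\overline{QQ^T}$ (the standing assumption of Case 2), there is a constant $c>0$ independent of $\phi$ (and of $H\in[1/2,1)$, since the $B^H$-part was only used through $\dot{K}_H\dot{K}_H^*\ge 0$) with $\langle \mathcal{Q}_H(\phi)\mathcal{Q}_H(\phi)^*\psi,\psi\rangle\ge c\|\psi\|^2$. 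By the spectral theorem (equivalently Lax--Milgram), $\mathcal{Q}_H(\phi)\mathcal{Q}_H(\phi)^*$ is boundedly invertible with $\|[\mathcal{Q}_H(\phi)\mathcal{Q}_H(\phi)^*]^{-1}\|_{\mathscr{L}}\le 1/c$, which is \eqref{inversebnd}. Consequently $\mathcal{Q}_H^+(\phi):=\mathcal{Q}_H^*(\phi)[\mathcal{Q}_H(\phi)\mathcal{Q}_H^*(\phi)]^{-1}$ is bounded (with $\|\mathcal{Q}_H^+(\phi)\|\le\|\mathcal{Q}_H(\phi)\|/c$, hence uniformly bounded) and satisfies $\mathcal{Q}_H(\phi)\mathcal{Q}_H^+(\phi)=\mathrm{Id}_{L^2([0,T];\mathcal{X})}$, so $\mathcal{Q}_H(\phi)$ is surjective.

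For the last assertion, note that $S_{x_0}(\phi)<\infty$ forces $\overline{\mathscr{A}^o}_{\phi,x_0}$ in \eqref{Abar} to be nonempty, so $\phi$ is absolutely continuous and $w:=\dot{\phi}-\bar{c}(\phi)-\overline{\nabla\Psi g}(\phi)$ is a well-defined element of $L^2([0,T];\mathcal{X})$; by surjectivity the preimage $\mathcal{Q}_H^{-1}(\phi)[w]$ over which \eqref{lowerboundratefunction} infimizes is a nonempty affine set. Put $u^*:=\mathcal{Q}_H^+(\phi)[w]$. Then $\mathcal{Q}_H(\phi)u^*=\mathcal{Q}_H(\phi)\mathcal{Q}_H^*(\phi)[\mathcal{Q}_H(\phi)\mathcal{Q}_H^*(\phi)]^{-1}w=w$, so $u^*$ is feasible, and $u^*\in\mathrm{Range}(\mathcal{Q}_H^*(\phi))=\ker(\mathcal{Q}_H(\phi))^{\perp}$. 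For any other feasible $u$, write $u=u^*+(u-u^*)$ with $u-u^*\in\ker(\mathcal{Q}_H(\phi))$; orthogonality gives $\|u\|^2=\|u^*\|^2+\|u-u^*\|^2\ge\|u^*\|^2$, so $u^*$ attains the infimum in \eqref{lowerboundratefunction}. Substituting the expression \eqref{Qadjoint} for $\mathcal{Q}_H^*(\phi)$ into $u^*=\mathcal{Q}_H^*(\phi)[\mathcal{Q}_H(\phi)\mathcal{Q}_H^*(\phi)]^{-1}w$ yields the componentwise formula \eqref{minimizers}.

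The main difficulty is conceptual rather than technical and lies in the coercivity step: the pointwise matrices $QQ^T(\phi_t,y)$ need not be invertible (for instance when $\dim\mathcal{U}_2<\dim\mathcal{X}$), so there is no hope of inverting $\mathcal{Q}_H(\phi)$ pointwise in $(t,y)$; it is precisely the averaging over $\mu$ \emph{before} pairing with $\psi_t$ that transfers the ellipticity of $\overline{QQ^T}$ to the quadratic form, and one must be careful to invoke the uniform non-degeneracy constant of the averaged matrix $\overline{QQ^T}$ and not of the un-averaged ones. Everything else is the routine Hilbert-space characterization of the least-norm solution of $\mathcal{Q}_H(\phi)u=w$.
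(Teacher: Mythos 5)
Your proposal matches the paper's proof in all essential respects: the key step is the identical coercivity estimate obtained by writing $\langle\mathcal{Q}_H\mathcal{Q}_H^*\psi,\psi\rangle=\|\mathcal{Q}_H^*\psi\|^2$, discarding the nonnegative $\dot{K}_H^*\bar{\Sigma}_1^*$ contribution, and invoking the uniform non-degeneracy of $\overline{QQ^T}$, after which bounded invertibility and the least-norm characterization of $u^*$ follow by standard Hilbert-space arguments. The only difference is cosmetic: you deduce invertibility from coercivity via Lax--Milgram/spectral theorem, while the paper routes through injectivity, closed range, and the inverse mapping theorem; both yield the same bound $1/c$.
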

\begin{proof}
	Let $\psi\in L^2([0,T];\mathcal{X}).$ In view of \eqref{Qadjoint} we have 
	\begin{equation*}
	\begin{aligned}
	\langle \mathcal{Q}_H(\phi)\mathcal{Q}_H(\phi)^*\psi, \psi\rangle_{L^2([0,T];\mathcal{X})}&=\big\| \mathcal{Q}_H(\phi)^*\psi \big\|^2_{L^2([0,T];\mathcal{U}_1)\oplus L^2([0,T]\times\mathcal{Y}, dt\otimes d\mu;\mathcal{U}_2)}\\&
	=\big\| \dot{K}_H^*\bar{\Sigma}^*_1(\phi)[\psi]  \big\|^2_{L^2([0,T];\mathcal{U}_1)}+\int_{0}^{T}\int_{\mathcal{Y}}\big| Q^T(\phi_t,y)\psi_t   \big|^2d\mu(y)dt\\&
	\geq \int_{0}^{T}\int_{\mathcal{Y}}\big\langle Q^T(\phi_t,y)\psi_t, Q^T(\phi_t,y)\psi_t  \big\rangle d\mu(y)dt\\&
	= \int_{0}^{T}\int_{\mathcal{Y}}\big\langle \psi_t, Q(\phi_t,y)Q^T(\phi_t,y)\psi_t  \big\rangle d\mu(y)dt\\&
	= \int_{0}^{T}\big\langle \psi_t, \overline{Q(\phi_t)Q^T(\phi_t)}\psi_t  \big\rangle dt\\&
	\geq c\int_{0}^{T}\big| \psi_t \big|^2dt=c\|\psi\|^2_{L^2([0,T];\mathcal{X})},
	\end{aligned}
	\end{equation*}
	where we used the uniform non-degeneracy of $\overline{QQ^T}(x)$ to obtain the last line. Thus, for any $\psi\neq 0,$
	\begin{equation}\label{lowernorm}
	\begin{aligned}
	\big\| \mathcal{Q}_H(\phi) \mathcal{Q}_H(\phi)^*\psi\big\|_{L^2([0,T];\mathcal{X})}    &=\sup_{\|\tilde{\psi}\|_{L^2}\leq 1}\big| \blangle  \mathcal{Q}_H(\phi) \mathcal{Q}_H(\phi)^*\psi,\tilde{\psi} \brangle_{L^2([0,T];\mathcal{X})}\big|\\&	\geq \bigg\langle  \mathcal{Q}_H(\phi) \mathcal{Q}_H(\phi)^*\psi, \frac{\psi}{\|\psi\|_{L^2}}\bigg\rangle_{L^2([0,T];\mathcal{X})}\geq c\|\psi\|_{L^2([0,T];\mathcal{X})}.
	\end{aligned}
	\end{equation} 
	This implies that the self-adjoint operator $ \mathcal{Q}_H(\phi) \mathcal{Q}_H(\phi)^*$ is injective and has a closed range. Therefore, \begin{equation*}
	\begin{aligned}
	\mathcal{Q}_H(\phi) &\mathcal{Q}_H(\phi)^*\big[L^2([0,T];\mathcal{X})\big]=\overline{ \mathcal{Q}_H(\phi) \mathcal{Q}_H(\phi)^*\big[L^2([0,T];\mathcal{X})\big]}^{L^2}\\&=\ker( [\mathcal{Q}_H(\phi) \mathcal{Q}_H(\phi)^*]^* )^\perp=\ker( \mathcal{Q}_H(\phi) \mathcal{Q}_H(\phi)^* )^\perp=\{0\}^\perp=L^2([0,T];\mathcal{X}),
	\end{aligned}
	\end{equation*} i.e. $\mathcal{Q}_H(\phi)\mathcal{Q}_H(\phi)^*$ is bijective. By virtue of the inverse mapping theorem, its inverse is bounded. The uniform bound of the operator norm then follows from \eqref{lowernorm}. The fact that $\mathcal{Q}^+_H(\phi)$ is a right inverse is immediate from its definition. The final statement follows since $u^*$ is the minimal-norm solution of the equation $\mathcal{Q}_H(\phi)[u]=\big[\dot{\phi}- \bar{c}(\phi)-\overline{\nabla\Psi g}(\phi)\big] $.
\end{proof}
\noindent We are now ready to prove the Laplace principle lower bound. As in Case 1, let $\delta>0$ and $\phi\in C([0,T];\mathcal{X})$ such that 
\begin{equation}\label{phichoice}
\begin{aligned}
\frac{1}{2}\int_{0}^{T}\bigg[|u^*_1(t)|^2&+\int_{\mathcal{Y}}|u^*_2(t,y)|^2d\mu(y)\bigg]dt+h(\phi)\\&=S_{x_0}(\phi)+h(\phi)<\inf_{\psi\in C([0,T];\mathcal{X})}\big[S_{x_0}(\psi)+h(\psi)\big]+\frac{\delta}{2}.
\end{aligned}
\end{equation}
In view of \eqref{minimizers} and Condition \ref{C7}, the deterministic optimal control $u_2^*$ is uniformly bounded in both of its arguments and (globally) Lipschitz continuous in $y$, uniformly in $t$. Moreover, we can approximate it in $L^2([0,T]\times\mathcal{Y})$ by a control that is continuous in $t\in[0,T]$. We prove this claim in the next lemma.
\begin{lem}\label{tcont} Let $h:C([0,T];\mathcal{X})\rightarrow\R$ be continuous and bounded, $u_1^*, \phi$ as in \eqref{minimizers}, \eqref{phichoice} respectively. There exists a deterministic control $v_2^*\in L^2([0,T]\times\mathcal{Y})$ that is continuous in the first variable and Lipschitz continuous in the second variable, uniformly in the first variable, and such that for any $\delta>0$
	\begin{equation}\label{tcontbnd}
	\frac{1}{2}\int_{0}^{T}\bigg[|u^*_1(t)|^2+\int_{\mathcal{Y}}|v^*_2(t,y)|^2d\mu(y)\bigg]dt+h(\tilde{\phi})<\inf_{\psi\in C([0,T];\mathcal{X})}\big[S_{x_0}(\psi)+h(\psi)\big]+\delta,
	\end{equation}
	where $\tilde{\phi}$ solves $\phi(t)=x_0+\int_{0}^{t}\big[\bar{c}(\phi_s)+\overline{\nabla\Psi g}(\phi_s)+\mathcal{Q}_H(\phi)(u^*_1,v_2^*)(s)\big]ds, t\in[0,T].$ 	
\end{lem}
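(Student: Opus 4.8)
The plan is to exploit the explicit form \eqref{minimizers} of the optimal control $u_2^*$, which exhibits it as a product of a jointly regular factor and a factor that is merely square integrable in time; mollifying the latter produces the desired $v_2^*$.

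\textbf{Step 1: reduction to mollifying one $L^2$ function.} Set $w:=\big[\mathcal{Q}_H(\phi)\mathcal{Q}_H(\phi)^*\big]^{-1}\big[\dot\phi-\bar c(\phi)-\overline{\nabla\Psi g}(\phi)\big]$, which by Lemma \ref{minimizerlem} is a well-defined element of $L^2([0,T];\mathcal{X})$ since $S_{x_0}(\phi)<\infty$. By \eqref{minimizers} and \eqref{Qdef1}, $u_2^*(t,y)=Q^T(\phi_t,y)w(t)$, and under Conditions \ref{C2}, \ref{C4}, \ref{C7} the map $(x,y)\mapsto Q(x,y)=\nabla\Psi(y)\tau(y)+\sigma_2(x,y)$ is bounded and globally Lipschitz; hence $(t,y)\mapsto Q(\phi_t,y)$ is continuous in $t$, uniformly bounded by $\|Q\|_\infty$, and Lipschitz in $y$ uniformly in $t$. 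Choose $w_n\in C([0,T];\mathcal{X})$ with $w_n\to w$ in $L^2$ and $\sup_n\|w_n\|_{L^2}<\infty$, and put $v_2^{(n)}(t,y):=Q^T(\phi_t,y)w_n(t)$. Then $v_2^{(n)}\in L^2([0,T]\times\mathcal{Y},dt\otimes d\mu;\mathcal{U}_2)$ is continuous in $t$, Lipschitz in $y$ with constant $L_Q\|w_n\|_\infty$ independent of $t$, and $\|v_2^{(n)}-u_2^*\|_{L^2}\le\|Q\|_\infty\|w_n-w\|_{L^2}\to 0$; in particular $\int_0^T\int_{\mathcal{Y}}|v_2^{(n)}|^2\,d\mu\,dt\to\int_0^T\int_{\mathcal{Y}}|u_2^*|^2\,d\mu\,dt$.

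\textbf{Step 2: stability of the controlled trajectory.} Let $\tilde\phi^{(n)}$ be the unique solution of
\[
\tilde\phi^{(n)}(t)=x_0+\int_0^t\Big[\bar c(\tilde\phi^{(n)}_s)+\overline{\nabla\Psi g}(\tilde\phi^{(n)}_s)+\mathcal{Q}_H(\tilde\phi^{(n)})\big(u_1^*,v_2^{(n)}\big)(s)\Big]ds,
\]
which exists because the drift is Lipschitz in the state with an $L^1$-in-time Lipschitz modulus (recall $\dot{K}_H u_1^*\in L^2$ by Lemma \ref{udotL2}). Since $(u_1^*,u_2^*)\in\mathcal{Q}_H^{-1}(\phi)\big[\dot\phi-\bar c(\phi)-\overline{\nabla\Psi g}(\phi)\big]$, the function $\phi$ itself solves this equation with $(u_1^*,u_2^*)$ in place of $(u_1^*,v_2^{(n)})$ and $\mathcal{Q}_H(\phi)$ in place of $\mathcal{Q}_H(\tilde\phi^{(n)})$; moreover $\int_{\mathcal{Y}}Q(\phi_s,y)u_2^*(s,y)\,d\mu(y)=\overline{QQ^T}(\phi_s)w(s)$ and $\int_{\mathcal{Y}}Q(\tilde\phi^{(n)}_s,y)v_2^{(n)}(s,y)\,d\mu(y)=\big(\int_{\mathcal{Y}}Q(\tilde\phi^{(n)}_s,y)Q^T(\phi_s,y)\,d\mu(y)\big)w_n(s)$. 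Subtracting the two integral equations, inserting $\pm\big(\int_{\mathcal{Y}}Q(\phi_s,y)Q^T(\phi_s,y)\,d\mu(y)\big)w_n(s)$, and invoking the Lipschitz continuity of $\bar c$, $\overline{\nabla\Psi g}$, $\bar\sigma_1$ and of $Q(\cdot,y)$ together with the boundedness of $Q$ and $\bar\sigma_1$, one obtains
\[
|\tilde\phi^{(n)}(t)-\phi(t)|\le C\int_0^t\big(1+|\dot{K}_H u_1^*(s)|+|w_n(s)|\big)\,|\tilde\phi^{(n)}(s)-\phi(s)|\,ds+\|Q\|_\infty^2\int_0^T|w_n(s)-w(s)|\,ds.
\]
The weight $1+|\dot{K}_H u_1^*|+|w_n|$ lies in $L^1([0,T])$ with norm bounded uniformly in $n$, so Gr\"onwall's inequality (integral form) yields $\|\tilde\phi^{(n)}-\phi\|_\infty\le C'\sqrt{T}\,\|w_n-w\|_{L^2}\to 0$.

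\textbf{Step 3: conclusion, and the main obstacle.} Continuity of $h$ gives $h(\tilde\phi^{(n)})\to h(\phi)$; combined with the convergence of the cost from Step 1 and the choice \eqref{phichoice} of $\phi$, for all $n$ large enough
\[
\tfrac12\int_0^T\Big[|u_1^*(t)|^2+\int_{\mathcal{Y}}|v_2^{(n)}(t,y)|^2\,d\mu(y)\Big]dt+h(\tilde\phi^{(n)})<\inf_{\psi\in C([0,T];\mathcal{X})}\big[S_{x_0}(\psi)+h(\psi)\big]+\delta,
\]
and taking $v_2^*:=v_2^{(n)}$ for one such $n$ yields a control with the stated regularity (continuous in $t$, uniformly Lipschitz in $y$) satisfying \eqref{tcontbnd}. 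The delicate point is Step 2: because $\mathcal{Q}_H(\tilde\phi^{(n)})$ is evaluated along the perturbed path, the comparison with $\phi$ generates a term that can only be absorbed through the Lipschitz dependence of $Q$---equivalently of $\sigma_2$---on its state argument (this is the only place the Lipschitz hypothesis on $\sigma_2$ in Condition \ref{C4} is used), and the Gr\"onwall estimate must be run with the merely $L^1$ weight $|\dot{K}_H u_1^*|+|w_n|$; the uniform $L^2$-bound on $\{w_n\}$ is exactly what keeps the Gr\"onwall constant bounded in $n$.
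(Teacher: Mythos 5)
Your proof is correct and follows the same route as the paper: mollify the $L^2$ factor $w=[\mathcal{Q}_H(\phi)\mathcal{Q}_H(\phi)^*]^{-1}\big[\dot\phi-\bar c(\phi)-\overline{\nabla\Psi g}(\phi)\big]$ appearing in the exact form \eqref{minimizers} of $u_2^*$, set $v_2^{(n)}=Q^T(\phi,\cdot)w_n$, and close via Gr\"onwall. Your Gr\"onwall step is in fact slightly cleaner than the paper's, since you absorb the perturbation into an $L^1$-in-time weight controlled by $\sup_n\|w_n\|_{L^2}$, whereas the paper's written estimate invokes the quantity $\sup_n\sup_{(t,y)}|v^*_{2,n}(t,y)|$, which is not automatically finite for an $L^2$-approximating sequence and would need a small extra truncation argument.
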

\begin{proof} In view of  \eqref{inversebnd},  $[\mathcal{Q}_H(\phi)\mathcal{Q}_H(\phi)^*]^{-1}\big[\dot{\phi}- \bar{c}(\phi)-\overline{\nabla\Psi g}(\phi)\big]\in L^2([0,T];\mathcal{X}).$ By density, there exists a sequence $\{\psi_n\}$ of continuous functions that converges to  $[\mathcal{Q}_H(\phi)\mathcal{Q}_H(\phi)^*]^{-1}$ $\big[\dot{\phi}- \bar{c}(\phi)-\overline{\nabla\Psi g}(\phi)\big]$ in $L^2([0,T];\mathcal{X}).$ Thus, by letting $v^*_{2,n}(t,y):=Q^T(\phi_t,y)\psi_n(t)$ we have 1) $v^*_{2,n}$ satisfies the continuity properties of the statement and 2)
	\begin{equation*}
	\begin{aligned}
	&\big\| u^*_{2}-v^*_{2,n}   \big\|_{L^2([0,T]\times\mathcal{Y})}\\&	\leq \big\|Q^T(\phi_\cdot,\cdot)\big\|_{L^2([0,T]\times\mathcal{Y})}\big\|\psi_n-     [\mathcal{Q}_H(\phi)\mathcal{Q}_H(\phi)^*]^{-1}\big[\dot{\phi}- \bar{c}(\phi)-\overline{\nabla\Psi g}(\phi)\big]\big\|_{L^2([0,T];\mathcal{X})}\longrightarrow 0,
	\end{aligned}
	\end{equation*}
	as $n\to\infty.$ Now consider the sequence $\{\phi^n\}_{n\in\N}$ of solutions to the limiting dynamics controlled by $(u^*_1, v^*_{2,n})$. From the Lipschitz continuity and boundedness of the coefficients along with the square integrability of $u^*_1$ we have 
	\begin{equation*}
	\begin{aligned}
	&\big|\phi^n_t-\phi_t\big|\\&\leq \int_{0}^{T}\big|\bar{c}(\phi^n_t)-\bar{c}(\phi_t)\big|dt+\int_{0}^{T}\big|\overline{\nabla\Psi g}(\phi^n_t)-\overline{\nabla\Psi g}(\phi_t)\big|dt\\&+\int_{0}^{T}\big|\overline{\sigma}_1(\phi^n_t)-\overline{\sigma}_1(\phi_t)\big|\big|\dot{K}_Hu^*_1(t)  \big|dt\\&
	+ \int_{0}^{T}\big|\overline{Q(\phi_t,\cdot)v^*_{2,n}}(t) -\overline{Q(\phi^n_t,\cdot)v^*_{2,n}}(t)\big|dt+\int_{0}^{T}\big|[\overline{Q(\phi_t,\cdot)u_2^*}(t)-\overline{Q(\phi_t,\cdot)v_{2,n}^*}(t)\big|dt\\&
	\leq C_1\int_{0}^{T}\big|\phi^n_t-\phi_t\big|dt+C_2\bigg(\int_{0}^{T}\big|\phi^n_t-\phi_t\big|^2dt\bigg)^{\frac{1}{2}}\big\|\dot{K}_Hu_1^*\big\|_{L^2}\\&+C_3\sup_{n\in\N}\sup_{(t,y)\in[0,T]\times\mathcal{Y}} \big|v^*_{2,n}(t,y)\big|\int_{0}^{T}\big|\phi^n_t-\phi_t\big|dt
	+C_4\big\|v^*_{2,n}-u_2^*\big\|_{L^2([0,T]\times\mathcal{Y})}.
	\end{aligned}
	\end{equation*}
	Squaring and applying Gr\"onwall's inequality we obtain
	\begin{equation*}
	\begin{aligned}
	\big\| |\phi^n-\phi\big\|^2_{C([0,T];\mathcal{X})}\leq C_T\big\|v^*_{2,n}-u_2^*\big\|^2_{L^2([0,T]\times\mathcal{Y})}\longrightarrow 0,
	\end{aligned}
	\end{equation*}
	as $n\to \infty$. Since $h$ is continuous we have $h(\phi^n)\longrightarrow h(\phi)$ and the proof is complete upon choosing $n_0$ large enough, $\tilde{\phi}=\phi^{n_0}$ and $v_2^*=v^*_{2,n_0}.$
\end{proof}
The next step in the proof of the lower bound is to construct a sequence of feedback stochastic controls that approximate the left-hand side in \eqref{tcontbnd}.	
To this end,  let $Y$ denote the It\^o diffusion with infinitesimal generator $\mathcal{L}$ \eqref{generator}. In particular, $Y$ solves the SDE
\begin{equation}\label{ergodicdiff}
dY_t= f(Y_t)dt+\tau(Y_t)dW_t 
\end{equation}
In view of Conditions \ref{C2}, \ref{C3}, $Y$ is strongly mixing and its unique invariant measure is given by $\mu.$ Next let $y$ be a $\mu$-distributed random initial condition, $Y^y$ denote the corresponding stationary ergodic process and $Y^{y,\eta}_t:=Y^{y}_{t/\eta}.$ Guided by the explicit form of the optimal controls given in the previous lemmas, we define a sequence $v_2^\epsilon$ by
\begin{equation}
v^\epsilon_2(t):=v_2^*\big( [t/\eta]\eta, Y^{y,\eta}_t\big)\;, t\in[0,T],
\end{equation}
where $[\cdot]$ denotes the floor function. By the convergence of $[t/\eta]\eta\rightarrow t,$ as $\epsilon\to 0,$ the continuity properties of $v_2^*$ and the ergodic theorem we see that 
\begin{equation}
\ex\int_{0}^{T}\big|v^\epsilon_2(t)\big|^2dt
\longrightarrow \int_{0}^{T}\int_{\mathcal{Y}}\big|v^*_2(t,y)\big|^2d\mu(y)dt,\;\epsilon\to 0.
\end{equation}
Moreover, let $\tilde{u}^\epsilon=(K_H[u_1^*],\int_{0}^{\cdot}v^\epsilon_2(s)ds)$ (note that, almost surely, $\tilde{u}^\epsilon\in\h_H\oplus\h_{1/2}$ \eqref{CMdef} hence it is an admissible control) and consider the slow motion $X^{\epsilon,\eta,\tilde{u}^\epsilon}$ controlled by $\tilde{u}^\epsilon.$ By standard homogenization theory and uniqueness of the limiting (deterministic) dynamics we have $\lim_{\epsilon\to 0}X^{\epsilon,\eta,\tilde{u}^\epsilon}=\tilde{\phi}$ in distribution in $C([0,T];\mathcal{X}).$ Invoking the variational representation \eqref{varform} once again, we conclude that
\begin{equation*}
\begin{aligned}
\limsup_{\epsilon\to 0} -\epsilon\log\ex\big[ e^{-h(X^\epsilon)/\epsilon}      \big]&\leq \limsup_{\epsilon\to 0}  \ex\bigg[   \frac{1}{2}\|\tilde{u}^\epsilon\|^2_{\h_H\oplus\h_{1/2}} +h\big(X^{\epsilon,\eta,\tilde{u}^\epsilon}\big)\bigg]\\&
= \limsup_{\epsilon\to 0}  \ex\bigg[\frac{1}{2}\bigg(\int_{0}^{T}\big| u_1^*(t)\big|^2dt +\int_{0}^{T}\big| v_2^\epsilon(t)\big|^2dt   \bigg)+ h\big(X^{\epsilon,\eta,\tilde{u}}\big)\bigg]\\&
=   \frac{1}{2}\bigg(\int_{0}^{T}\big| u_1^*(t)\big|^2dt+\int_{0}^{T}\int_{\mathcal{Y}}\big|v^*_2(t,y)\big|^2d\mu(y)dt\bigg)+h(\tilde{\phi})\\&
<\inf_{\psi\in C([0,T];\mathcal{X}) }\big[ S_{x_0}(\psi) +h(\psi) \big]+\delta,
\end{aligned}
\end{equation*}
where we used  \eqref{tcontbnd} to obtain the last line. Since $\delta$ is arbitrary, the proof of the lower bound is complete. 
\subsection{On the assumption of the lower bound}\label{assumptionsection} Our second proof of the Laplace principle lower bound works under the assumption that the averaged matrix $\overline{QQ^T}$ is uniformly non-degenerate. Below we provide a few examples which demonstrate that this condition can be satisfied by imposing a number of different assumptions on the coefficients.\\ \\
$\mathbf{1)}$  $b=0$ and the diffusion matrix $\sigma_2\sigma_2^T$ is uniformly non-degenerate. Thus, $\nabla\Psi=0$ and $QQ^T$ reduces to $\sigma_2\sigma_2^T.$  \\ \\
$\mathbf{2)}$ The fast motion evolves on the torus $\mathcal{Y}=\mathbb{T}^{d-m}$ and the diffusion coefficient $\sigma_2(x,y)=\tilde{\sigma_2}(x)\tau(y)$ for some matrix-valued function $\tilde{\sigma}_2$ such that $\tilde{\sigma_2}\tilde{\sigma_2}^T$ is uniformly non-degenerate. The first condition is equivalent to taking,  for all $x\in\mathcal{X},$  $c(x,\cdot), \sigma_1(x,\cdot), g(x,\cdot),$ $b, f,\tau$ to be periodic of the same period in every direction and the drifts $f, b$ to be continuous bounded functions. Since the phase space $\mathcal{Y}$ is compact, the Krylov-Bogolyubov theorem (see \cite{da1996ergodicity}, Section 3.1 for Markov processes and \cite{katok1997introduction},Theorem 4.1.1 for more general dynamical systems) asserts the existence of a unique invariant measure $\mu$ defined in $\mathscr{B}(\mathcal{Y}).$ The measure $\mu$ is absolutely continuous with respect to Lebesgue measure on the torus and the  invariant density $\rho_\infty$ is strictly positive and twice differentiable. Moreover, the Poisson equation $\{\mathcal{L}\Psi=-b, \int_{\mathcal{Y}}bd\mu=0\},$ equipped with periodic boundary conditions, has a unique strong solution $\Psi$ that is twice continuously differentiable with bounded derivatives up to second order. All the results of this paper continue to hold in this case.
From the uniform non-degeneracy of $\tau\tau^T$ we have, for all $\xi\in\mathcal{X},$ 
\begin{equation*}
\begin{aligned}
\blangle Q(x,y)Q^T(x,y)\xi, \xi\brangle&=\blangle \tau(y)\tau^T(y)[\nabla\Psi(y)+\tilde{\sigma_2}(x)]^T\xi, [\nabla\Psi(y)+\tilde{\sigma_2}(x)]^T\xi\brangle\\&\geq C\blangle [\nabla\Psi(y)+\tilde{\sigma_2}(x)]^T\xi, [\nabla\Psi(y)+\tilde{\sigma_2}(x)]^T\xi\brangle.
\end{aligned}	
\end{equation*}
Since $\rho_\infty>0$ and the integral of the derivative of a smooth periodic function is $0$ we apply the Cauchy-Schwarz inequality to obtain
\begin{equation*}
\begin{aligned}
\blangle \overline{Q(x)Q^T(x)}\xi, \xi\brangle&\geq C\bigg(\int_{\mathcal{Y}} \big\|[\nabla\Psi(y)+\tilde{\sigma}_2(x)]\xi\big\|^2\rho_\infty(y)dy\bigg)\frac{\int_{\mathcal{Y}}\rho^{-1}_\infty(y) dy   }{\int_{\mathcal{Y}}\rho_\infty^{-1}(y)dy}\\&
\geq C\bigg(\int_{\mathcal{Y}} \big\|[\nabla\Psi(y)+\tilde{\sigma}_2(x)]\xi\big\|\rho^{1/2}_\infty(y)\rho^{-1/2}_\infty(y)dy\bigg)^{2}\bigg(\int_{\mathcal{Y}}\rho_\infty^{-1}(y)dy\bigg)^{-1}\\&
\geq C\bigg\|\int_{\mathcal{Y}} [\nabla\Psi(y)+\tilde{\sigma}_2(x)]\xi dy\bigg\|^{2}\bigg(\int_{\mathcal{Y}}\rho_\infty^{-1}(y)dy\bigg)^{-1}\\&
=C\big\|\tilde{\sigma}_2(x)\xi \big\|^{2}\bigg(\int_{\mathcal{Y}}\rho_\infty^{-1}(y)dy\bigg)^{-1}
\geq C'\bigg(\int_{\mathcal{Y}}\rho_\infty^{-1}(y)dy\bigg)^{-1}\|\xi\|^2,
\end{aligned}
\end{equation*}
where we used the uniform non-degeneracy of $\tilde{\sigma}_2$ in the last line. A very similar argument can be found in \cite{pavliotis2008multiscale}, Theorem 13.5.\\ \\
$\mathbf{3)}$  $\mathcal{Y}=\mathcal{X}=\R^{d}$, $\tau=\sqrt{2\alpha}Id, f(y)=-\alpha y, b(y)=\lambda y$ for some $\alpha,\lambda>0,$ where $Id$ is the identity matrix and $\sigma_2$ is non-negative definite. In this case the fast motion is an Ornstein-Uhlenbeck process and the $L^2-$adjoint of its generator is given by $\mathcal{L}^*h(y)=\alpha(\Delta h(y)+\nabla (yh(y))).$ The solution of the equation $\{\mathcal{L}^*\rho_\infty=0, \int_{\mathcal{Y}}\rho_\infty=1\}$ is then given by the density of a $\mathcal{N}(0,Id)$ distribution (here $\mathcal{N}$ denotes the multidimensional normal distribution) . Thus, since $\overline{b}=-\lambda\int_{\mathcal{Y}}yd\mathcal{N}(0,Id)=0$, the centering condition is satisfied and the Poisson equation $\{\mathcal{L}\Psi=-b, \int_{\mathcal{Y}}\Psi d\mu=0\}$ is solved by $\Psi(y)=\lambda y/\alpha.$ It follows that $\nabla\Psi(y)=(\lambda/\alpha)Id$,$\nabla\Psi\tau=(\sqrt{2}\lambda/\sqrt{\alpha})Id$ and
\begin{equation*}
\begin{aligned}
\blangle \overline{Q(x)Q^T(x)}\xi, \xi\brangle&=\bigg(\int_{\mathcal{Y}} \blangle\nabla\Psi(y)\tau(y)+\sigma_2(x,y)]\xi, [\nabla\Psi(y)\tau(y)+\sigma_2(x,y)]\xi \brangle d\mu(y)\bigg)\\&
=\int_{\mathcal{Y}}\bigg(\big\|\nabla\Psi\tau(y)\xi\big\|^2+\big\|\sigma_2(x,y)\xi\big\|^2+2\blangle\nabla\Psi(y)\tau(y)\xi, \sigma_2(x,y)\xi\brangle\bigg)d\mu(y)\\&
\geq \frac{2\lambda^2}{\alpha}\big\|\xi\big\|^2+\frac{2\sqrt{2}\lambda}{\sqrt{\alpha}}\blangle\xi,\bar{\sigma}_2(x)\xi\brangle\\&\geq\frac{2\lambda^2}{\alpha}\big\|\xi\big\|^2,
\end{aligned}
\end{equation*}
which holds since $\sigma_2$ is nonnegative definite. Thus the assumption for the lower bound is satisfied.
\section{ Comparison to the case H=1/2}\label{comparisonsec} \noindent Having proved a Large Deviation Principle for the slow process \eqref{model} when $H>1/2$ (or $H>3/4$ in the case of Condition \ref{C6}$(i)$) , it is natural to ask in what sense is the rate function $S_{x_0}$ \eqref{ordform2} different from the one obtained in the classical Freidlin-Wentzell theory, where $H=1/2$ and $B^{H}$ is a standard Brownian motion independent of $W$. To this end let $Q$ as in \eqref{Qdef1} and, for each $x\in\mathcal{X},$  \begin{equation*}\label{Q1/2}
\begin{aligned}
Q_{1/2}(x)&:=\int_{\mathcal{Y}}\bigg(\sigma_1(x,y)\sigma_1^T(x,y)+[\sigma_2(x,y)+\nabla\Psi(y)][\sigma_2(x,y)+\nabla\Psi(y)\tau(y)]^T\bigg)d\mu(y)\\&
=\overline{\sigma_1\sigma_1^T}(x)+\overline{QQ^T}(x).
\end{aligned}
\end{equation*}
Assuming for simplicity that  $Q_{1/2}(x)$ is uniformly non-degenerate, it is well-known that the LDP rate function for $H=1/2$ is given by the explicit formula
\begin{equation}\label{FW1}
S^{1/2}_{x_0}(\phi):=\frac{1}{2}\int_{0}^{T}\bigg\langle \dot{\phi}_t-\bar{c}(\phi_t)-\overline{\nabla\Psi g}(\phi_t), Q^{-1}_{1/2}(\phi_t)\big[ \dot{\phi_t}-\bar{c}(\phi_t)-\overline{\nabla\Psi g}(\phi_t)      \big]\bigg\rangle_{\mathcal{X}}dt,
\end{equation}
for all $\phi\in C([0,T];\mathcal{X})$ such that $\phi_0=x_0$ and $Q^{-1/2}_{1/2}(\phi)[\dot{\phi}-\bar{c}(\phi)-\overline{\nabla\Psi g}(\phi)]\in L^2([0,T];\mathcal{X})$ and $S^{1/2}_{x_0}(\phi)=\infty$ otherwise. In the simpler case $\sigma_2=b=0$ and $\sigma_1\sigma_1^T$ is uniformly non-degenerate, the rate function reduces to 
\begin{equation}\label{FW2}
S^{1/2}_{x_0}(\phi)=\frac{1}{2}\int_{0}^{T}\bigg\langle \dot{\phi}_t-\bar{c}(\phi_t),\big(\overline{\sigma_1\sigma_1^T}\big)^{-1}(\phi_t)\big[ \dot{\phi_t}-\bar{c}(\phi_t)   \big]\bigg\rangle_{\mathcal{X}}dt,
\end{equation}
for all $\phi\in C([0,T];\mathcal{X})$ such that $\phi_0=x_0$ and $\dot{\phi}-\bar{c}(\phi)\in L^2([0,T];\mathcal{X})$ and $S^{1/2}_{x_0}(\phi)=\infty$ otherwise.

In the first part of this section we show that, in certain cases, $S_{x_0}$  admits representations similar to \eqref{FW1} and \eqref{FW2} and discuss the differences between the formulas. Finally, we consider its pointwise limit as $H\to\frac{1}{2}^+$ and show that the rate function is discontinuous at $H=1/2$ in Proposition \ref{Hlimprop} and Remark \ref{Hlimrem}.  To this end we shall write  $S^H_{x_0}\equiv S_{x_0}$ to emphasize the dependence of the rate function on the Hurst index.
\subsection{On the form of the rate function} We start by considering the case that $\overline{QQ^T}$ has a  uniformly bounded inverse. The following is a simple corollary of Lemma \ref{minimizerlem}.
\begin{cor}\label{cor:RateNonVar} Let $x_0\in\mathcal{X},Q, \mathcal{Q}_H(\phi)$ as in \eqref{Qdef1}, \eqref{QH1} respectively and assume that the matrix-valued function $\overline{QQ^T}$ is uniformly non-degenerate. Then, for all $\phi\in C([0,T];\mathcal{X})$ such that $\phi_0=x_0$ and $\dot{\phi}-\bar{c}(\phi)-\overline{\nabla\Psi g}(\phi)\in L^2([0,T];\mathcal{X}),$   we have
	\begin{equation}\label{FW3}
	S^{H}_{x_0}(\phi)=\frac{1}{2}\int_{0}^{T}\bigg\langle \dot{\phi}_t-\bar{c}(\phi_t)-\overline{\nabla\Psi g}(\phi_t),[\mathcal{Q}_H(\phi)\mathcal{Q}_H(\phi)^*]^{-1}\big[ \dot{\phi}-\bar{c}(\phi)-\overline{\nabla\Psi g}(\phi)      \big](t)\bigg\rangle_{\mathcal{X}}dt,
	\end{equation}
	and $S_{x_0}(\phi)=\infty$ otherwise.
\end{cor}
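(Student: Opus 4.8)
The plan is to deduce this corollary directly from Lemma \ref{minimizerlem} together with the control representation \eqref{lowerboundratefunction} of the rate function. Since the corollary only restates the formula already displayed after Theorem \ref{main2}, the work consists in evaluating the minimal-norm control explicitly and plugging it back into the cost functional.

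First I would fix $\phi\in C([0,T];\mathcal{X})$ with $\phi_0=x_0$ and $\dot\phi-\bar c(\phi)-\overline{\nabla\Psi g}(\phi)\in L^2([0,T];\mathcal{X})$, and abbreviate $\Xi:=\dot\phi-\bar c(\phi)-\overline{\nabla\Psi g}(\phi)$. Under the uniform non-degeneracy of $\overline{QQ^T}$, Lemma \ref{minimizerlem} tells us that $\mathcal{Q}_H(\phi)\mathcal{Q}_H(\phi)^*$ is boundedly invertible, that $\mathcal{Q}_H(\phi)$ is surjective (so in particular $\Xi\in\mathrm{Range}(\mathcal{Q}_H(\phi))$ and hence $S^H_{x_0}(\phi)<\infty$), and that the infimum in \eqref{lowerboundratefunction} is attained at $u^*=\mathcal{Q}_H^+(\phi)[\Xi]=\mathcal{Q}_H^*(\phi)[\mathcal{Q}_H(\phi)\mathcal{Q}_H^*(\phi)]^{-1}[\Xi]$. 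The second step is the norm computation: using self-adjointness of $[\mathcal{Q}_H(\phi)\mathcal{Q}_H^*(\phi)]^{-1}$ and the defining property $\mathcal{Q}_H(\phi)\mathcal{Q}_H^*(\phi)[\mathcal{Q}_H(\phi)\mathcal{Q}_H^*(\phi)]^{-1}=\mathrm{Id}$, one gets
\begin{equation*}
\|u^*\|^2 = \big\langle \mathcal{Q}_H^*(\phi)[\mathcal{Q}_H(\phi)\mathcal{Q}_H^*(\phi)]^{-1}\Xi,\ \mathcal{Q}_H^*(\phi)[\mathcal{Q}_H(\phi)\mathcal{Q}_H^*(\phi)]^{-1}\Xi\big\rangle = \big\langle \Xi,\ [\mathcal{Q}_H(\phi)\mathcal{Q}_H^*(\phi)]^{-1}\Xi\big\rangle_{L^2([0,T];\mathcal{X})},
\end{equation*}
where the norms are those of $L^2([0,T];\mathcal{U}_1)\oplus L^2([0,T]\times\mathcal{Y},dt\otimes d\mu;\mathcal{U}_2)$ on the left and of $L^2([0,T];\mathcal{X})$ on the right. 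Then $S^H_{x_0}(\phi)=\tfrac12\|u^*\|^2$ by \eqref{lowerboundratefunction} and Lemma \ref{minimizerlem}, and rewriting the $L^2$ inner product as the time integral of pointwise inner products in $\mathcal{X}$ yields exactly the claimed formula. Finally, for $\phi$ not satisfying the regularity hypothesis the equality $S^H_{x_0}(\phi)=\infty$ is immediate from the convention in \eqref{ratefun} (equivalently from \eqref{lowerboundratefunction}), so there is nothing further to prove.

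There is no real obstacle here — the statement is essentially bookkeeping once Lemma \ref{minimizerlem} is in hand. The only point requiring a line of care is the identification of $\tfrac12\|u^*\|^2$ with $S^H_{x_0}(\phi)$: one must note that $u^*$ is feasible for the constrained problem \eqref{lowerboundratefunction} (it solves $\mathcal{Q}_H(\phi)[u]=\Xi$ because $\mathcal{Q}_H^+(\phi)$ is a right inverse), that it is the minimal-norm such solution (orthogonality of $\mathcal{Q}_H^*(\phi)[\mathcal{Q}_H(\phi)\mathcal{Q}_H^*(\phi)]^{-1}\Xi$ to $\ker\mathcal{Q}_H(\phi)$), and that \eqref{lowerboundratefunction} is genuinely equivalent to \eqref{ordform2} via Lemma \ref{ordform}(ii) and the rewriting of $\overline{\mathscr{A}^o}_{\phi,x_0}$ in terms of $\mathcal{Q}_H(\phi)$ carried out just before Lemma \ref{minimizerlem}. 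All of these have already been established in the excerpt, so the corollary follows by assembling them.
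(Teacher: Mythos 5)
Your proposal is correct and follows essentially the same route as the paper: both invoke Lemma \ref{minimizerlem} to obtain the minimizer $u^*=\mathcal{Q}_H^+(\phi)[\Xi]$ and then evaluate $\tfrac12\|u^*\|^2$. The only cosmetic difference is that you carry out the norm computation at the abstract operator level in one line, $\langle \mathcal{Q}_H^*B^{-1}\Xi,\mathcal{Q}_H^*B^{-1}\Xi\rangle=\langle\Xi,B^{-1}\Xi\rangle$ with $B=\mathcal{Q}_H\mathcal{Q}_H^*$, whereas the paper expands $\|u_1^*\|^2+\|u_2^*\|^2$ componentwise and then recombines the two terms into $\mathcal{Q}_H\mathcal{Q}_H^*$; the underlying algebra is identical.
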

\begin{proof} Invoking \eqref{minimizers} we have 
	\begin{equation*}
	\begin{aligned}
	S^{H}_{x_0}(\phi)&=\inf_{u=(u_1,u_2)\in \mathcal{Q}^{-1}_H(\phi)\big[\dot{\phi}- \bar{c}(\phi)-\overline{\nabla\Psi g}(\phi)\big] }\frac{1}{2}\big\|u\big\|^2_{L^2([0,T];\mathcal{U}_1)\oplus L^2([0,T]\times\mathcal{Y}, dt\otimes d\mu;\mathcal{U}_2)}\\&
	=\frac{1}{2}\big\|u^*
	\big\|^2_{L^2([0,T];\mathcal{U}_1)\oplus L^2([0,T]\times\mathcal{Y}, dt\otimes d\mu;\mathcal{U}_2)}
	\\&	=\frac{1}{2}\bigg(\int_{0}^{T}\langle u_1^*(t), u_1^*(t)\rangle dt+\int_{0}^{T}\int_{\mathcal{Y}}\langle u_2^*(t,y), u_2^*(t,y)\rangle d\mu(y) dt\bigg)
	\\&=\frac{1}{2}\bigg(\int_{0}^{T}\bigg\langle [\mathcal{Q}_H(\phi)\mathcal{Q}_H(\phi)^*]^{-1}\big[\dot{\phi}- \bar{c}(\phi)-\overline{\nabla\Psi g}(\phi)\big](t),\\&\quad\quad\quad \big[\bar{\Sigma}_1(\phi)\dot{K}_H\big] \dot{K}_H^*\bar{\Sigma}^*_1(\phi)[\mathcal{Q}_H(\phi)\mathcal{Q}_H(\phi)^*]^{-1}\big[\dot{\phi}- \bar{c}(\phi)-\overline{\nabla\Psi g}(\phi)\big](t)\bigg\rangle dt\\&+\int_{0}^{T}\int_{\mathcal{Y}}\bigg\langle[\mathcal{Q}_H(\phi)\mathcal{Q}_H(\phi)^*]^{-1} \big[\dot{\phi}- \bar{c}(\phi)-\overline{\nabla\Psi g}(\phi)\big](t),\\&\quad\quad\quad  Q(\phi_t,y) Q^T(\phi_t,y)[\mathcal{Q}_H(\phi)\mathcal{Q}_H(\phi)^*]^{-1}\big[\dot{\phi}- \bar{c}(\phi)-\overline{\nabla\Psi g}(\phi)\big](t)\bigg\rangle d\mu(y) dt\bigg)\\&
	=\frac{1}{2}\bigg(\int_{0}^{T}\bigg\langle [\mathcal{Q}_H(\phi)\mathcal{Q}_H(\phi)^*]^{-1}\big[\dot{\phi}- \bar{c}(\phi)-\overline{\nabla\Psi g}(\phi)\big](t),\\&\quad\quad\quad \big[\bar{\Sigma}_1(\phi)\dot{K}_H\big] [\bar{\Sigma}_1(\phi)\dot{K}_H]^*[\mathcal{Q}_H(\phi)\mathcal{Q}_H(\phi)^*]^{-1}\big[\dot{\phi}- \bar{c}(\phi)-\overline{\nabla\Psi g}(\phi)\big](t)\bigg\rangle dt
	\\&+\int_{0}^{T}\bigg\langle[\mathcal{Q}_H(\phi)\mathcal{Q}_H(\phi)^*]^{-1} \big[\dot{\phi}- \bar{c}(\phi)-\overline{\nabla\Psi g}(\phi)\big](t),\\&\quad\quad\quad \overline{Q(\phi_t,\cdot) Q^T(\phi_t,\cdot)}[\mathcal{Q}_H(\phi)\mathcal{Q}_H(\phi)^*]^{-1} \big[\dot{\phi}- \bar{c}(\phi)-\overline{\nabla\Psi g}(\phi)\big](t)\bigg\rangle dt\bigg)\\&
	=\frac{1}{2}\int_{0}^{T}\bigg\langle [\mathcal{Q}_H(\phi)\mathcal{Q}_H(\phi)^*]^{-1}\big[\dot{\phi}- \bar{c}(\phi)-\overline{\nabla\Psi g}(\phi)\big](t)\\&\quad\quad, [\mathcal{Q}_H(\phi)\mathcal{Q}_H(\phi)^*][\mathcal{Q}_H(\phi)\mathcal{Q}_H(\phi)^*]^{-1}\big[\dot{\phi}- \bar{c}(\phi)-\overline{\nabla\Psi g}(\phi)\big](t)\bigg\rangle dt.
	\end{aligned}
	\end{equation*}   	
\end{proof}
\begin{rem} Comparing \eqref{FW3}
and \eqref{FW1} we see that the difference between the two rate functions lies in the "effective diffusivity" operators $[\mathcal{Q}_H(\phi)\mathcal{Q}_H(\phi)^*], Q_{1/2}.$ On the one hand, $[\mathcal{Q}_H(\phi)\mathcal{Q}_H(\phi)^*]$ is a non-local operator which takes into account the covariance structure of the fBm (recall the definition of the fractional integral operator $\dot{K}_H$ \eqref{Kdot}). Moreover, $[\mathcal{Q}_H(\phi)\mathcal{Q}_H(\phi)^*]$ features the na\"ively averaged term  $\bar{\sigma}_1$ (recall \eqref{multop}). On the other hand, $Q_{1/2}$ is expressed as a matrix-valued function which essentially depends on the classically averaged coefficient $(\overline{\sigma_1\sigma_1^T})^{1/2}.$ Finally, note that if $\sigma_1=0$ the two operators coincide.	
\end{rem}

Even though \eqref{FW3} provides a non-variational form of the rate function, it depends on the inverse of the "effective diffusivity" operator $[\mathcal{Q}_H(\phi)\mathcal{Q}_H(\phi)^*]^{-1}.$ The latter is a bounded operator and we have only proved its existence. Nevertheless, in some special cases we obtain a more explicit formula which can be directly compared to \eqref{FW2} and is useful in studying continuity properties of $S^H_{x_0}$  with respect to $H.$
\begin{cor}\label{explicitcor} Let $b=\sigma_2=0,$ $c_H$ as in \eqref{cH} and assume that $\bar{\sigma}_1$ is symmetric with a bounded inverse.  Then the rate function takes the form
	\begin{equation}\label{explicit}
	\begin{aligned}
	S^H_{x_0}(\phi)&=\frac{1}{2c^2_H\Gamma(\frac{3}{2}-H)^2}\int_{0}^{T}\bigg|t^{1/2-H}\bar{\sigma}_1(\phi_t)^{-1}\big[\dot{\phi}_t-\bar{c}(\phi_t) \big]\\&
	+\bigg(H-\frac{1}{2}\bigg)t^{H-\frac{1}{2}}\int_{0}^{t}\frac{t^{\frac{1}{2}-H}\bar{\sigma}_1(\phi_t)^{-1}\big[\dot{\phi}_t-\bar{c}(\phi_t) \big]-s^{\frac{1}{2}-H}\bar{\sigma}_1(\phi_s)^{-1}\big[\dot{\phi}_s-\bar{c}(\phi_s) \big]}{(t-s)^{H+\frac{1}{2}}}ds\bigg|^2dt
	\end{aligned}
	\end{equation}
	for all $\phi\in C([0,T];\mathcal{X})$ such that $\phi_0=x_0$ and $\bar{\sigma}_1(\phi)^{-1}[\dot{\phi}-\bar{c}(\phi)]\in \dot{K}_H[ L^2([0,T];\mathcal{U}_1) ]$ and $S_{x_0}(\phi)=\infty$ otherwise.
\end{cor}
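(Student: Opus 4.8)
The plan is to derive \eqref{explicit} from the general non-variational formula by simply computing the operator $[\mathcal{Q}_H(\phi)\mathcal{Q}^*_H(\phi)]^{-1}$ explicitly in the special case $b=\sigma_2=0$. First I would note that when $b=0$ we have $\nabla\Psi\equiv 0$ (the Poisson equation \eqref{Poisson} has solution $\Psi=0$), so that $Q\equiv 0$ and the term $\overline{\nabla\Psi g}$ vanishes. Consequently the operator $\mathcal{Q}_H(\phi)$ of \eqref{QH1} reduces to $\mathcal{Q}_H(\phi)[u_1](t)=\bar\sigma_1(\phi_t)\dot K_H u_1(t)=\bar\Sigma_1(\phi)\dot K_H u_1(t)$, acting now only on $L^2([0,T];\mathcal U_1)$; the $u_2$-component disappears entirely. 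Thus the rate function \eqref{ordform2} becomes $S^H_{x_0}(\phi)=\inf\{\tfrac12\|u_1\|^2_{L^2}: \bar\Sigma_1(\phi)\dot K_H u_1=\dot\phi-\bar c(\phi)\}$.

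Next I would solve the constraint equation. Since $\bar\sigma_1$ is symmetric with a bounded inverse, the multiplication operator $\bar\Sigma_1(\phi)$ is boundedly invertible on $L^2([0,T];\mathcal X)$ with inverse given by multiplication by $\bar\sigma_1(\phi_t)^{-1}$. Hence the constraint $\bar\Sigma_1(\phi)\dot K_H u_1 = \dot\phi-\bar c(\phi)$ is equivalent to $\dot K_H u_1 = \bar\sigma_1(\phi)^{-1}[\dot\phi-\bar c(\phi)] =: w$. When $\phi$ satisfies the stated regularity hypothesis, namely $w\in \dot K_H[L^2([0,T];\mathcal U_1)]$, this equation has a (unique, by injectivity of $\dot K_H$ — which follows since $K_H$ is an isomorphism onto $I^{H+1/2}_{0^+}(L^2)$, cf. \eqref{KH}–\eqref{Kinverse}) solution $u_1 = (\dot K_H)^{-1}w$, and otherwise the infimizing set is empty so $S^H_{x_0}(\phi)=\infty$. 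This matches the convention in the statement. The key identity here is that $(\dot K_H)^{-1}$ acting on $w$ is precisely $K_H^{-1}$ applied to the antiderivative $\int_0^\cdot w(s)\,ds$, because $\dot K_H = \frac{d}{dt}\circ K_H$; equivalently one reads off $(\dot K_H)^{-1}w = K_H^{-1}\big(\int_0^\cdot w\big)$ directly, and then uses the explicit formula \eqref{Kinverse} for $K_H^{-1}$ evaluated on a function whose derivative is $w$.

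Then I would substitute the explicit formula \eqref{Kinverse} for $K^{-1}_H$. With $f$ such that $\dot f = w = \bar\sigma_1(\phi)^{-1}[\dot\phi-\bar c(\phi)]$, formula \eqref{Kinverse} gives
$$
K^{-1}_H(f)(t)=\big(c_H\Gamma(\tfrac32-H)\big)^{-1}\Big[t^{\frac12-H}w(t)+\big(H-\tfrac12\big)t^{H-\frac12}\int_0^t\frac{t^{\frac12-H}w(t)-s^{\frac12-H}w(s)}{(t-s)^{H+\frac12}}\,ds\Big].
$$
Plugging $w(t)=\bar\sigma_1(\phi_t)^{-1}[\dot\phi_t-\bar c(\phi_t)]$ into this and taking $\tfrac12\int_0^T|K^{-1}_H(f)(t)|^2\,dt = \tfrac12\|u_1\|^2_{L^2}$ yields exactly the right-hand side of \eqref{explicit}, with the prefactor $\tfrac{1}{2c_H^2\Gamma(\frac32-H)^2}$ coming from squaring the normalizing constant. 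I would close by noting that since $\bar\Sigma_1(\phi)$ is invertible, $u_1$ is the unique element satisfying the constraint, so there is nothing to minimize and $S^H_{x_0}(\phi)$ equals this single value.

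The main obstacle is a bookkeeping one rather than a conceptual one: one has to be careful to check that the hypothesis "$\bar\sigma_1(\phi)^{-1}[\dot\phi-\bar c(\phi)]\in \dot K_H[L^2([0,T];\mathcal U_1)]$" is exactly the right domain condition — i.e. that it is equivalent to the constraint set being nonempty, and that on this set the preimage under $\dot K_H$ is unique (so the infimum is attained trivially). This requires invoking the isomorphism property of $K_H$ (hence injectivity of $\dot K_H$ on the relevant space) together with the symmetry and boundedness of $\bar\sigma_1^{-1}$, both of which are available from the assumptions and from \eqref{KH}–\eqref{Kinverse}. Beyond that, the proof is a direct substitution and the identification of constants; no estimates are needed.
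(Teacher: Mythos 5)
Your proof is correct and essentially coincides with the paper's own: both reduce, when $b=\sigma_2=0$ and hence $\Psi\equiv 0$, to the small-noise representation of the rate function (the paper's \eqref{smallnoiseform}--\eqref{smallnoiscon}, which you recover from \eqref{ordform2} after dropping the $u_2$ component), invert the single pointwise constraint using the bounded inverse of $\bar\sigma_1$ to obtain the unique admissible control $u^*=\int_0^\cdot\bar\sigma_1(\phi_s)^{-1}[\dot\phi_s-\bar c(\phi_s)]\,ds$, identify the domain condition $\bar\sigma_1(\phi)^{-1}[\dot\phi-\bar c(\phi)]\in\dot K_H[L^2]$ as precisely when $u^*\in\h_H$, and then substitute the explicit formula \eqref{Kinverse} for $K_H^{-1}$.

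One small caveat on your opening framing: you announce you will "compute $[\mathcal{Q}_H(\phi)\mathcal{Q}_H^*(\phi)]^{-1}$" from the general non-variational formula \eqref{nonvarrate1}, but that formula (via Lemma~\ref{minimizerlem}) is established only under the uniform non-degeneracy of $\overline{QQ^T}$, which fails here because $Q\equiv 0$; indeed $\mathcal{Q}_H\mathcal{Q}_H^*=\bar\Sigma_1\dot K_H\dot K_H^*\bar\Sigma_1^*$ need not be boundedly invertible on $L^2([0,T];\mathcal{X})$. Fortunately the argument you actually carry out never touches that operator: you solve the constraint $\bar\Sigma_1(\phi)\dot K_Hu_1=\dot\phi-\bar c(\phi)$ directly, using injectivity of $\dot K_H$ and the identity $(\dot K_H)^{-1}w=K_H^{-1}\big(\int_0^\cdot w\big)$, which is exactly the paper's proof. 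It would be cleaner to drop the first sentence and start from \eqref{smallnoiseform} (equivalently, your rewriting of \eqref{ordform2}).
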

\begin{proof} From \eqref{smallnoiseform}, \eqref{smallnoiscon} we have $S_{x_0}(\phi)<\infty$ if and only if $\phi_0=x_0,$ $\bar{\sigma}_1(\phi)^{-1}[\dot{\phi}-\bar{c}(\phi)]\in \dot{K}_H[ L^2([0,T];\mathcal{U}_1) ]$ and the infimum is (uniquely) attained by $u^*=\int_{0}^{\cdot}\bar{\sigma}_1(\phi_s)^{-1}(\dot{\phi}_s-\bar{c}(\phi_s))ds.$ Thus,  
	\begin{equation*}\label{nonvarrate2}
	S^{H}_{x_0}(\phi)=\frac{1}{2}\|u^*\|^2_{\h_H}=\frac{1}{2}\int_{0}^{T}\big|K^{-1}_H[u^*](t)\big|^2dt
	\end{equation*} 
	and \eqref{explicit} follows from \eqref{Kinverse}.
\end{proof}

\subsection{Limit as $H\rightarrow\frac{1}{2}^+$} In view of \eqref{FW3}, we expect that, as $H\rightarrow\frac{1}{2}^+,$ $S^{H}_{x_0}$ does not, in general, converge to $S^{1/2}_{x_0}$ \eqref{FW2}. In this section we provide a proof of this discontinuity of the rate function at $H=1/2.$  In order to study the limiting behavior of \eqref{FW3} we will work in a family of weighted H\"older spaces defined as follows: Let $\alpha\geq 1,$ $\beta\in(0,1),$ define a weight $w_{\alpha}(t)=t^\alpha, t\in[0,T]$ and consider the vector space
$$C^\beta_{\alpha}([0,T];\mathcal{U}_1):=\bigg\{ f:[0,T]\rightarrow \mathcal{U}_1: \frac{f}{w_{\alpha}}\in C^\beta([0,T];\mathcal{U}_1)\bigg\}.$$
The norm $$\|f\|_{\alpha,\beta}:=\|f/w_\alpha\|_{C^\beta([0,T];\mathcal{U}_1)}$$
turns $C^\beta_{\alpha}([0,T];\mathcal{U}_1)$ to a Banach space. The latter is easy to verify by noting that the map 
$$C^\beta([0,T];\mathcal{U}_1)\ni f\longmapsto w_\alpha f\in C^\beta_{\alpha}([0,T];\mathcal{U}_1)$$
is a bijective linear isometry and $C^\beta([0,T];\mathcal{U}_1)$ is a Banach space. Finally, due to the Lipschitz continuity of the weight $w_\alpha$, the linear inclusion $C^\beta_{\alpha}([0,T];\mathcal{U}_1)\subset C^\beta([0,T];\mathcal{U}_1)$ is continuous. For the use of these weighted H\"older spaces in the context of fractional calculus, the reader is referred to \cite{Samko1993FractionalIA}, Chapter 1.

We start with by proving some mapping properties of the operator $\dot{K}^{-1}_H.$ The estimates that follow are uniform in $H$ when $H$ is close to $1/2.$
\begin{lem}\label{KHmaplem}
	Let $\rho>0$. For any $\alpha\geq 1+\rho$ the following hold:\\
	$(i)$  For all $\beta\geq \rho$, there exists a constant $C=C_{T,\alpha,\beta,\rho}>0$ such that
	\begin{equation*}\label{Hbnd1}
	\begin{aligned}
	\sup_{H\in(1/2,1/2+\rho)}\sup_{t\in[0,T]}\int_{0}^{t}\bigg|\frac{t^{\frac{1}{2}-H}f(t)-s^{\frac{1}{2}-H}f(s)}{(t-s)^{H+\frac{1}{2}}}\bigg|ds
	\leq C\|f\|_{\alpha,\beta}.
	\end{aligned}
	\end{equation*}
	$(ii)$ Let $H\in(1/2, 1/2+\rho).$ The linear operator $\dot{K}^{-1}_H$ maps $C^\beta_{\alpha}([0,T];\mathcal{U}_1)$ continuously to $L^\infty([0,T];\mathcal{U}_1)$. Moreover, for $\rho$ sufficiently small, we have 
	\begin{equation*}
	\sup_{H\in(\frac{1}{2},\frac{1}{2}+\rho)}\big\|\dot{K}_H^{-1}\big\|_{C^\beta_{\alpha}\rightarrow L^\infty}<\infty.
	\end{equation*}\\
	$(iii)$ Let $H\in(1/2, 1/2+\rho),$ $\beta\geq 1/2+\rho.$ The linear operator $\dot{K}^{-1}_H$ maps $W^{2,\beta}_{\alpha}([0,T];\mathcal{U}_1)$ continuously to $L^2([0,T];\mathcal{U}_1)$. Moreover, for all $\beta\geq \rho$ and $\rho$ sufficiently small, we have 
	\begin{equation*}
	\sup_{H\in(\frac{1}{2},\frac{1}{2}+\rho)}\big\|\dot{K}_H^{-1}\big\|_{W^{2,\beta}_{\alpha}\rightarrow L^2}<\infty.
	\end{equation*}
	
\end{lem}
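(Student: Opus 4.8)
The plan is to recall the explicit formula for $\dot{K}_H^{-1}$ and reduce everything to uniform-in-$H$ estimates on the singular kernel $(t-s)^{-H-1/2}$ acting on weighted Hölder functions. Differentiating \eqref{Kinverse} (or, equivalently, noting that $\dot K_H^{-1}$ is the time-derivative of the inverse of $K_H$ composed with $\dot K_H$), one has the representation
\begin{equation*}
\dot{K}^{-1}_H(f)(t)=\big(c_H\Gamma(\tfrac{3}{2}-H)\big)^{-1}\bigg[t^{\frac{1}{2}-H}f(t)+\Big(H-\tfrac{1}{2}\Big)t^{H-\frac{1}{2}}\int_{0}^{t}\frac{t^{\frac{1}{2}-H}f(t)-s^{\frac{1}{2}-H}f(s)}{(t-s)^{H+\frac{1}{2}}}ds\bigg],
\end{equation*}
so that the whole lemma hinges on controlling the integral term $\mathcal{J}_Hf(t):=\int_0^t \frac{t^{1/2-H}f(t)-s^{1/2-H}f(s)}{(t-s)^{H+1/2}}ds$. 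For (i), I would write $f=w_\alpha g$ with $g\in C^\beta$, so the numerator becomes $t^{1-\alpha-\ldots}$-type terms; more precisely split
$$t^{\frac{1}{2}-H}f(t)-s^{\frac{1}{2}-H}f(s)=t^{\frac{1}{2}-H}w_\alpha(t)\big(g(t)-g(s)\big)+g(s)\big(t^{\frac{1}{2}-H}w_\alpha(t)-s^{\frac{1}{2}-H}w_\alpha(s)\big).$$
The first piece contributes $\lesssim \|g\|_{C^\beta}\, t^{1/2-H+\alpha}\int_0^t (t-s)^{\beta-H-1/2}ds$, which is finite and uniformly bounded for $H<1/2+\rho$ provided $\beta\geq\rho$ (so the exponent $\beta-H-1/2>-1$) and $\alpha\geq 1+\rho$ keeps the prefactor bounded on $[0,T]$. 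The second piece uses that $t\mapsto t^{1/2-H+\alpha}$ is $C^1$ on $(0,T]$ with derivative of size $\lesssim t^{-1/2-H+\alpha}$ (bounded since $\alpha\geq 1+\rho>1/2+H$), hence the increment is $\lesssim (t-s)\,s^{-1/2-H+\alpha}$ near the diagonal (and globally Hölder of any exponent $\leq 1$), giving an integrable singularity $(t-s)^{1-H-1/2}=(t-s)^{1/2-H}$ against which $\|g\|_\infty\leq C\|f\|_{\alpha,\beta}$ is integrated; all constants are continuous in $H$ and hence uniformly bounded as $H\downarrow 1/2$. I would be slightly careful to track that $\int_0^t s^{-1/2-H+\alpha}(t-s)^{1/2-H}ds$ is a Beta integral, finite and uniformly bounded for $\alpha\geq 1+\rho$ and $H<1/2+\rho$.

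For (ii), combining (i) with the trivial bound $|t^{1/2-H}f(t)|\leq t^{1/2-H+\alpha}\|g\|_\infty\leq C\|f\|_{\alpha,\beta}$ and the fact that $\big(c_H\Gamma(3/2-H)\big)^{-1}$ and $(H-1/2)\,t^{H-1/2}$ are bounded uniformly for $H\in(1/2,1/2+\rho)$ (here $(H-1/2)\to 0$, which only helps), I get $\|\dot K_H^{-1}f\|_{L^\infty}\leq C\|f\|_{\alpha,\beta}$ with $C$ independent of $H$ for $\rho$ small. For (iii), the target is an $L^2$ rather than $L^\infty$ bound, and the source space is $W^{2,\beta}_\alpha$, which (by the embeddings analogous to \eqref{pmembeds}, transported through the weight-isometry $f\mapsto w_\alpha f$) controls $f/w_\alpha$ in the fractional Sobolev norm $\|\cdot\|_{0,1-\beta',\infty}$ or in $C^{\beta''}$ for suitable $\beta''$; the point is that the pointwise bound from (i)–(ii) gives $\dot K_H^{-1}f\in L^\infty([0,T])\subset L^2([0,T])$ with norm $\lesssim \|f\|_{W^{2,\beta}_\alpha}$, again uniformly in $H$ for $\rho$ small and $\beta\geq 1/2+\rho$ (the stronger lower bound on $\beta$ being needed so that the diagonal exponent $\beta-H-1/2$ stays $>-1/2$, giving $L^2$-integrability of any residual singularity rather than merely $L^1$).

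The main obstacle is bookkeeping the interplay of the three exponents $\alpha$, $\beta$, $H$ so that all the Beta-function constants are simultaneously finite \emph{and} continuous (hence uniformly bounded) as $H\downarrow 1/2$: one must check that the constant does not blow up through $\Gamma(3/2-H)$ (it does not, since $\Gamma$ is continuous and nonzero there) nor through the kernel exponent crossing an integrability threshold (this is exactly why the hypotheses impose $\alpha\geq 1+\rho$, $\beta\geq\rho$ in (i)–(ii) and $\beta\geq 1/2+\rho$ in (iii)). Once those constraints are in place, the estimates are routine applications of the splitting above, the mean value inequality on the weight, and the Beta integral $\int_0^t s^{a}(t-s)^{b}\,ds = t^{a+b+1}B(a+1,b+1)$; I would present (i) in full and then indicate that (ii) and (iii) follow by inserting (i) into the displayed formula for $\dot K_H^{-1}$ and using the continuity in $H$ of the scalar prefactors.
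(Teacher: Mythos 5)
Your proposal follows essentially the same route as the paper: the same weighted splitting of the numerator into a Hölder-increment piece for $g=f/w_\alpha$ and a weight-difference piece handled by the mean value theorem, with the resulting Beta integrals uniformly controlled by the hypotheses $\alpha\ge 1+\rho$, $\beta\ge\rho$. One small slip: since $\alpha-H-\tfrac12>0$ the map $r\mapsto r^{\alpha+\frac12-H}$ has \emph{increasing} derivative, so the mean value bound on the weight increment should read $\lesssim (t-s)\,T^{\alpha-H-\frac12}$ rather than $(t-s)\,s^{\alpha-H-\frac12}$; this does not affect finiteness. For (iii) the paper's proof re-does the decomposition with the Slobodeckij seminorm directly (factoring $(t-s)^{-H-\frac12}=(t-s)^{\beta-H}(t-s)^{-\beta-\frac12}$ and using $\beta\ge\tfrac12+\rho>H$), whereas you invoke the Sobolev embedding $W^{2,\beta}_\alpha\hookrightarrow C^{\beta-\frac12}_\alpha$ followed by (ii) and $L^\infty\subset L^2$; the paper explicitly notes in the remark following the lemma that this alternative derivation is valid, so both routes are acceptable, with the direct $L^2$ computation giving a cleaner quantitative constant and the embedding route being shorter.
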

\begin{proof}
	$(i)$ 	Let $H\in(1/2,1/2+\rho), f\in C^\beta_{\alpha}([0,T];\mathcal{U}_1).$   For $t\in[0,T]$ we have
	\begin{equation*}
	\begin{aligned}
	\int_{0}^{t}\frac{t^{\frac{1}{2}-H}f(t)-s^{\frac{1}{2}-H}f(s)}{(t-s)^{H+\frac{1}{2}}}ds&=\int_{0}^{t}\frac{t^{\frac{1}{2}-H+\alpha}[f(t)/w_\alpha(t)]-s^{\frac{1}{2}-H+\alpha}[f(s)/w_\alpha(s)]}{(t-s)^{H+\frac{1}{2}}}ds\\&
	=\int_{0}^{t}\frac{(t^{\frac{1}{2}-H+\alpha}-s^{\frac{1}{2}-H+\alpha})[f(t)/w_\alpha(t)]}{(t-s)^{H+\frac{1}{2}}}ds\\&
	+\int_{0}^{t}\frac{s^{\frac{1}{2}-H+\alpha}[f(t)/w_\alpha(t)-f(s)/w_\alpha(s)]}{(t-s)^{H+\frac{1}{2}}}ds.
	\end{aligned}
	\end{equation*}
	Applying the mean value theorem in the first integral we obtain the bound
	\begin{equation*}
	\begin{aligned}
&	\int_{0}^{t}\bigg|\frac{t^{\frac{1}{2}-H}f(t)-s^{\frac{1}{2}-H}f(s)}{(t-s)^{H+\frac{1}{2}}}\bigg|ds\\&\leq \sup_{t\in[0,T]}\big|f(t)/w_\alpha(t)\big|\bigg(\frac{1}{2}-H+\alpha\bigg)T^{-\frac{1}{2}-H+\alpha}\int_{0}^{t}\frac{(t-s)}{(t-s)^{H+\frac{1}{2}}}ds
	\\&+\big[f/w_\alpha\big]_{C^\beta}T^{\frac{1}{2}-H+\alpha}\int_{0}^{t}\frac{(t-s)^\beta}{(t-s)^{H+\frac{1}{2}}}ds\\&
	\leq \bigg(\frac{1}{2}-H+\alpha\bigg)T^{-\frac{1}{2}-H+\alpha}\big\|f/w_\alpha\big\|_{C[0,T]}
	\frac{t^{\frac{3}{2}-H}}{\frac{3}{2}-H}+\big[f/w_\alpha\big]_{C^\beta}T^{\frac{1}{2}-H+\alpha}\frac{t^{\beta-H+\frac{1}{2}}}{\beta-H+\frac{1}{2}}\\&
	\leq   \alpha T^{-\frac{1}{2}-H+\alpha}\big\|f/w_\alpha\big\|_{C[0,T]}
	\frac{T^{\frac{3}{2}-H}}{1-\rho}+\big[f/w_\alpha\big]_{C^\beta}T^{\frac{1}{2}-H+\alpha}\frac{T^{\beta-H+\frac{1}{2}}}{\beta-\rho}, 
	\end{aligned}
	\end{equation*}
	where we used that $H\in(1/2,1/2+\rho)$ in the last line and the integrability properties hold since $\alpha\geq 1+\rho>\frac{1}{2}+H>H-\frac{1}{2}$ and $\beta\geq \rho>\frac{1}{2}-H.$ Thus 	
	\begin{equation*}
	\begin{aligned}
	&\sup_{H\in(1/2,1/2+\rho)}\sup_{t\in[0,T]}\int_{0}^{t}\bigg|\frac{t^{\frac{1}{2}-H}f(t)-s^{\frac{1}{2}-H}f(s)}{(t-s)^{H+\frac{1}{2}}}\bigg|ds
	\\&\leq   \alpha (1\vee T^{\alpha})\big\|f/w_\alpha\big\|_{C[0,T]}
	\frac{1\vee T}{1-\rho}+\big[f/w_\alpha\big]_{C^\beta}(1\vee T^{\alpha})\frac{T^{\beta}}{\beta-\rho} \leq C_{T,\alpha,\beta,\rho}\|f\|_{\alpha,\beta}.
	\end{aligned}
	\end{equation*}
	The proof is complete.\\
	$(ii)$ Let $t\in[0,T], H\in(1/2,1/2+\rho)$. In view of $(i)$ and \eqref{Kinverse} we have 
	\begin{equation*}
	\begin{aligned}
	c_H\Gamma\bigg(\frac{3}{2}-H\bigg)\big|\dot{K}^{-1}_H[f](t)\big|&\leq t^{\frac{1}{2}-H}\big|f(t)\big|+\bigg(H-\frac{1}{2}\bigg)T^{H-\frac{1}{2}}\bigg|\int_{0}^{t}\frac{t^{\frac{1}{2}-H}f(t)-s^{\frac{1}{2}-H}f(s)}{(t-s)^{H+\frac{1}{2}}}ds\bigg|\\&\leq T^{\frac{1}{2}-H+\alpha}\sup_{t\in[0,T]}\big|f(t)/w_{\alpha}(t)\big|+\rho T^{H-\frac{1}{2}}C\|f\|_{\alpha,\beta}\\&
	\leq \big[  1\vee T^{\alpha}+C\rho (1\vee T^\rho)\big]\|f\|_{\alpha,\beta}.
	\end{aligned}
	\end{equation*}
	This proves the first assertion. 	As for the uniform bound, note that
	
	\begin{equation*}\label{cHlim}
	\lim_{H\to\frac{1}{2}^+}c_H\Gamma\bigg(\frac{3}{2}-H\bigg)=\lim_{H\to\frac{1}{2}}\bigg(\frac{2H\Gamma(\frac{3}{2}-H)\Gamma(H+\frac{1}{2})}{\Gamma(2-2H)}\bigg)^\frac{1}{2}\Gamma\bigg(\frac{3}{2}-H\bigg)=(\Gamma(1))^\frac{3}{2}=1
	\end{equation*}
	from the continuity of the Gamma function. Thus, for $H\in(1/2,1/2+\rho)$, $\rho$ sufficiently small we have 
	\begin{equation*}
	\begin{aligned}
	\big|\dot{K}^{-1}_H[f](t)\big|&
	\leq    C\|f\|_{\alpha,\beta}.
	\end{aligned}
	\end{equation*}
	where the constant is independent of $H$.\\
	$(iii)$ As in $(i),$ we have a decomposition
	\begin{equation*}
	\begin{aligned}
	\int_{0}^{t}\frac{t^{\frac{1}{2}-H}f(t)-s^{\frac{1}{2}-H}f(s)}{(t-s)^{H+\frac{1}{2}}}ds&
	=\int_{0}^{t}\frac{(t^{\frac{1}{2}-H+\alpha}-s^{\frac{1}{2}-H+\alpha})[f(t)/w_\alpha(t)]}{(t-s)^{H+\frac{1}{2}}}ds
	\\&+\int_{0}^{t}\frac{s^{\frac{1}{2}-H+\alpha}[f(t)/w_\alpha(t)-f(s)/w_\alpha(s)]}{(t-s)^{H+\frac{1}{2}}}ds\\&
	\leq \big|f(t)/w_\alpha(t)\big|\bigg(\frac{1}{2}-H+\alpha\bigg)T^{-\frac{1}{2}-H-\alpha}\frac{T^{\frac{3}{2}-H}}{\frac{3}{2}-H}\\&+
	T^{\frac{1}{2}-H+\alpha}\int_{0}^{t}\frac{(t-s)^{\beta-H}[f(t)/w_\alpha(t)-f(s)/w_\alpha(s)]}{(t-s)^{\beta+\frac{1}{2}}}ds\\&
	\leq \frac{ \alpha}{\rho}(1\vee T^{\alpha})(1\vee T)\big|f(t)/w_\alpha(t)\big|\\&+(1\vee T^{\alpha})(1\vee T^{\beta-\frac{1}{2}})\int_{0}^{t}\frac{f(t)/w_\alpha(t)-f(s)/w_\alpha(s)}{(t-s)^{\beta+\frac{1}{2}}}ds,
	\end{aligned}
	\end{equation*}
	where we used that $\beta\geq\frac{1}{2}+\rho>H$ in the last line. Therefore, 
	\begin{equation*}
	\begin{aligned}
	&\big\| \dot{K}^{-1}_Hf\big\|^2_{L^2}\\&\leq C_{\alpha,\beta,\rho,T}\bigg(T^{1-2H+2\alpha}\int_{0}^{T}\big|f(t)/w_\alpha(t)\big|^2dt\\&+\rho^2T^{2H-1}\iint_{[0,T]^2}\frac{|f(t)/w_\alpha(t)-f(s)/w_\alpha(s)|^2}{(t-s)^{2\beta+1}}dsdt\bigg)
	\\&\leq C_{\alpha,\beta,\rho,T}c^2_H\Gamma^2\bigg(\frac{3}{2}-H\bigg)\bigg( (1\vee T^{2\alpha}) +\rho^2(1\vee T^{2\rho})   \bigg)\|f\|^2_{W^{2,\beta}_\alpha}
	\end{aligned}
	\end{equation*}	
	which proves the first assertion. The uniform boundedness then follows as in $(ii)$ for $\rho$ sufficiently small.
\end{proof}
\begin{rem}
	Note that $(iii)$ of the previous lemma follows directly from $(ii)$ in view of the continuous inclusions $L^{\infty}\subset L^2$ and $C^{\beta}\subset W^{2,\beta+\frac{1}{2}}$. The latter is guaranteed by the fractional Sobolev embedding theorems (e.g. Theorem 8.2 in \cite{di2012hitchhikers}  ) which also hold for the weighted spaces we are using.
\end{rem}

\begin{prop}\label{Hlimprop}  Let $\sigma_2=b=0$ and assume that $\bar{\sigma}_1$ is symmetric with a bounded inverse. For $x_0\in\mathcal{X}, T>0, \alpha>1,$ $ \beta>1/2$ define $\mathcal{H}_w:=\big\{ \phi\in C([0,T];\mathcal{X}): \phi(0)=x_0 ,\bar{\sigma}_1(\phi)^{-1}[\dot{\phi}-\bar{c}(\phi)]\in W^{2,\beta}_\alpha([0,T];\mathcal{X})     \big\}$ and a functional $\tilde{S}^{1/2}_{x_0}: C([0,T];\mathcal{X})\rightarrow[0,\infty]$ with
	\begin{equation}\label{Sbar}
	\tilde{S}^{1/2}_{x_0}(\phi):=\frac{1}{2}\int_{0}^{T}\bigg\langle \dot{\phi}_t-\bar{c}(\phi_t), \big(\bar{\sigma}_1\bar{\sigma}^T_1\big)^{-1}(\phi_t)\big[ \dot{\phi}_t-\bar{c}(\phi_t)\big]\bigg\rangle_{\mathcal{X}}dt
	\end{equation}
	if $\dot{\phi}-\bar{c}(\phi)\in L^{2}([0,T];\mathcal{X}) $ and $\tilde{S}^{1/2}_{x_0}(\phi)=\infty$ otherwise. Then for all $\phi\in \mathcal{H}_w$ we have 
	\begin{equation*}
	\lim_{H\to\frac{1}{2}^+} S^H_{x_0}(\phi)=\tilde{S}^{1/2}_{x_0}(\phi).
	\end{equation*}
\end{prop}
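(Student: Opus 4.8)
The plan is to observe that under the standing hypotheses $\sigma_2=b=0$ and $\bar{\sigma}_1$ symmetric with bounded inverse, \emph{both} $S^H_{x_0}(\phi)$ and $\tilde S^{1/2}_{x_0}(\phi)$ collapse to squared $L^2$-norms of the same fixed function, after which the statement is just a convergence-of-operators claim handled by the explicit formula \eqref{Kinverse} and the uniform-in-$H$ estimates of Lemma \ref{KHmaplem}. Concretely, fix $\phi\in\mathcal{H}_w$ and set $g:=\bar{\sigma}_1(\phi)^{-1}[\dot\phi-\bar c(\phi)]$, so by definition of $\mathcal{H}_w$ we have $g\in W^{2,\beta}_{\alpha}([0,T];\mathcal{X})$ with $\alpha>1$, $\beta>1/2$; in particular, by the weighted fractional Sobolev embedding $W^{2,\beta}_{\alpha}\subset C^{\beta-1/2}_{\alpha}$ (the Remark after Lemma \ref{KHmaplem}), $g/w_\alpha$ is bounded, $g\in L^\infty\subset L^2$, and $\dot\phi-\bar c(\phi)=\bar{\sigma}_1(\phi)g\in L^2$. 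I would then record the two reductions. On the deterministic side: since $\bar{\sigma}_1=\bar{\sigma}_1^T$, $(\bar{\sigma}_1\bar{\sigma}_1^T)^{-1}=\bar{\sigma}_1^{-2}$, hence $\langle\dot\phi_t-\bar c(\phi_t),(\bar{\sigma}_1\bar{\sigma}_1^T)^{-1}(\phi_t)[\dot\phi_t-\bar c(\phi_t)]\rangle=\langle\bar{\sigma}_1(\phi_t)g(t),\bar{\sigma}_1^{-1}(\phi_t)g(t)\rangle=|g(t)|^2$, so $\tilde S^{1/2}_{x_0}(\phi)=\tfrac12\|g\|^2_{L^2([0,T];\mathcal{X})}$ (the case split in \eqref{Sbar} being resolved by $\dot\phi-\bar c(\phi)\in L^2$). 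On the $H>1/2$ side: choosing $\rho\in(0,\min\{\alpha-1,\beta-\tfrac12\})$, Lemma \ref{KHmaplem}$(iii)$ gives $\dot K_H^{-1}[g]\in L^2$ for all $H\in(1/2,1/2+\rho)$, which forces $g=\bar{\sigma}_1(\phi)^{-1}[\dot\phi-\bar c(\phi)]\in\dot K_H[L^2]$, so Corollary \ref{explicitcor} applies; moreover the bracket in \eqref{explicit} equals $c_H\Gamma(\tfrac32-H)\,\dot K_H^{-1}[g](t)$ by \eqref{Kinverse}, hence $S^H_{x_0}(\phi)=\tfrac12\|\dot K_H^{-1}[g]\|^2_{L^2([0,T];\mathcal{X})}$.

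With these reductions the whole proposition reduces to showing $\dot K_H^{-1}[g]\to g$ in $L^2([0,T];\mathcal{X})$ as $H\to\tfrac12^+$. I would argue directly from \eqref{Kinverse} applied to the primitive $\int_0^\cdot g$ (whose derivative is $g$):
\begin{equation*}
c_H\Gamma\big(\tfrac32-H\big)\,\dot K_H^{-1}[g](t)=t^{\alpha+\frac12-H}\,\tfrac{g(t)}{w_\alpha(t)}+\Big(H-\tfrac12\Big)t^{H-\frac12}\int_0^t\frac{t^{\frac12-H}g(t)-s^{\frac12-H}g(s)}{(t-s)^{H+\frac12}}\,ds=:A_H(t)+B_H(t).
\end{equation*}
Three observations finish it: $c_H\Gamma(\tfrac32-H)\to1$ by \eqref{cH} and continuity of $\Gamma$; $A_H(t)\to g(t)$ for every $t\in[0,T]$ with $|A_H(t)|\le(1\vee T^{\alpha+1})\|g/w_\alpha\|_\infty$ uniformly in $t$ and in $H\in(1/2,1/2+\rho)$, because $\alpha+\tfrac12-H\ge\alpha-\rho>0$; and $B_H\to0$ uniformly on $[0,T]$, since $(H-\tfrac12)t^{H-\frac12}\le(H-\tfrac12)(1\vee T)\to0$ while the inner integral is bounded uniformly in $t$ and $H$ by Lemma \ref{KHmaplem}$(i)$ (applicable since $g\in C^{\beta-1/2}_{\alpha}$ with Hölder exponent $\beta-\tfrac12\ge\rho$ and $\alpha\ge1+\rho$). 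Consequently $\dot K_H^{-1}[g](t)\to g(t)$ pointwise on $[0,T]$ with a uniform $L^\infty([0,T])$ bound, so dominated convergence on the bounded interval $[0,T]$ gives $\dot K_H^{-1}[g]\to g$ in $L^2$, hence $\|\dot K_H^{-1}[g]\|^2_{L^2}\to\|g\|^2_{L^2}$. Combining with the reductions, $\lim_{H\to1/2^+}S^H_{x_0}(\phi)=\tfrac12\|g\|^2_{L^2}=\tilde S^{1/2}_{x_0}(\phi)$.

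The main obstacle is not the limiting computation but the uniform-in-$H$ bookkeeping near $H=1/2$: one must ensure both that $S^H_{x_0}(\phi)$ is finite and representable as $\tfrac12\|\dot K_H^{-1}[g]\|^2_{L^2}$ for all $H$ in a right-neighbourhood of $1/2$, and that the factor $t^{1/2-H}$ — which for $H>1/2$ degenerates to a non-integrable power as $t\downarrow0$ — is dominated; both are handled precisely by working in the weighted spaces $C^\beta_{\alpha}$, $W^{2,\beta}_{\alpha}$ with $\alpha>1$ (so the weight $w_\alpha$ absorbs the singularity) and by invoking Lemma \ref{KHmaplem}, so once those are in place the argument is essentially assembly. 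Finally, to make the announced discontinuity explicit, I would remark that $\tilde S^{1/2}_{x_0}$ is in general \emph{not} the Freidlin--Wentzell functional $S^{1/2}_{x_0}$ of \eqref{FW2}: the latter features the classically averaged diffusion $\overline{\sigma_1\sigma_1^T}$, whereas $\tilde S^{1/2}_{x_0}$ features $\bar{\sigma}_1\bar{\sigma}_1^T$, and by Jensen's inequality $\overline{\sigma_1\sigma_1^T}(x)\neq\bar{\sigma}_1(x)\bar{\sigma}_1(x)^T$ unless $y\mapsto\sigma_1(x,y)$ is $\mu$-a.s.\ constant; hence $H\mapsto S^H_{x_0}$ is discontinuous at $H=1/2$ whenever $\sigma_1$ depends nontrivially on the fast variable.
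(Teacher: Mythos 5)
Your proof is correct and follows essentially the same route as the paper: both arguments reduce, via Corollary~\ref{explicitcor} and the symmetry of $\bar\sigma_1$, to comparing $\|\dot K_H^{-1}\psi\|_{L^2}^2$ with $\|\psi\|_{L^2}^2$ for $\psi=\bar\sigma_1(\phi)^{-1}[\dot\phi-\bar c(\phi)]$, and finish by the same decomposition of $\dot K_H^{-1}$ from \eqref{Kinverse}, the uniform-in-$H$ estimates of Lemma~\ref{KHmaplem}, the fact that $c_H\Gamma(\tfrac32-H)\to1$, and dominated convergence. The only cosmetic difference is that you establish the slightly stronger statement $\dot K_H^{-1}\psi\to\psi$ in $L^2([0,T];\mathcal{X})$ before passing to the norms, whereas the paper bounds the difference of the squared norms directly.
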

\begin{proof}Let $\phi\in \mathcal{H}_w,$ and set $\psi=\bar{\sigma}_1(\phi)^{-1}[\dot{\phi}-\bar{c}(\phi)].$ Moreover let $\rho>0$ such that $\alpha\geq 1+\rho, \beta\geq\frac{1}{2}+\rho$ and $H\in(1/2, 1/2+\rho).$ From Lemma \ref{KHmaplem}$(iii)$ we have  $\dot{K}^{-1}_H\psi\in L^2.$ Thus $S^{H}_{x_0}(\phi), \tilde{S}^{1/2}_{x_0}(\phi)<\infty$ and there exists a (possibly smaller) $\rho$ such that 
	\begin{equation*}
	\begin{aligned}
	&\big|S^H_{x_0}(\phi)-\tilde{S}^{1/2}_{x_0}(\phi)\big|=\bigg|\big\|\dot{K}_H^{-1}\psi  \big\|^2_{L^2}-\big\|\psi\big\|_{L^2}^2          \bigg|\\&
	\leq c_H^2\Gamma\bigg(\frac{3}{2}-H\bigg)^2\bigg(\int_{0}^{T}\big|t^{1-2H+2\alpha}-t^{2\alpha}\big|\big|\psi(t)/w_{\alpha}(t)\big|^2dt+C\bigg(H-\frac{1}{2}\bigg)^2T^{2H-1}\|\psi\|^2_{W^{2,\beta}_\alpha}\bigg)
	\end{aligned}
	\end{equation*}
	where $C$ does not depend on $H$ and the last line follows from the triangle inequality along with the estimates in the proof of Lemma \ref{KHmaplem}$(iii).$ Since $\alpha>1+\rho>H-1/2$ and $c_H\Gamma\bigg(\frac{3}{2}-H\bigg)\rightarrow1$ as $H\to1/2,$ we can apply the dominated convergence theorem to conclude that 
	\begin{equation*}
	\big|S^{H}_{x_0}(\phi)-\tilde{S}^{1/2}_{x_0}(\phi)\big|\longrightarrow 0,
	\end{equation*}
	as $H\rightarrow{1/2}^+$.	
\end{proof}

\begin{rem}\label{Hlimrem} The functional $\tilde{S}^{1/2}_{x_0}$ \eqref{Sbar} coincides with the Freidlin-Wentzell rate function $S^{1/2}_{x_0}$ \eqref{FW2} if $\sigma_1$ does not depend on the fast variables (i.e. $\sigma_1=\sigma_1(x)$).	In this case, Proposition \eqref{Hlimprop} shows that for all $\phi\in\mathcal{H}_w,$ $H\mapsto S^H_{x_0}(\phi)$ is right continuous at $H=1/2.$
	In general, however, the two functionals are not equal and this implies that $S^H_{x_0}$ is (pointwise) discontinuous at $H=1/2.$ Indeed, consider for example  $\mathcal{X}=\mathcal{Y}=\R,$ $\sigma_1(y)=\cos(y)$ and let $Y^{\epsilon,\eta}$ be an Ornstein-Uhlenbeck process as in the third example of Section \ref{assumptionsection}. Then
	\begin{equation*}
	\bar{\sigma}^2_1=\bigg(\int_{\mathcal{Y}}\cos(y)d\mathcal{N}(0,1)(y)\bigg)^2=\frac{1}{e}\neq\frac{1}{2}\bigg(1+\frac{1}{e^2} \bigg)=\int_{\mathcal{Y}}\cos^2(y)d\mathcal{N}(0,1)(y)=\overline{\sigma^2_1}
	\end{equation*}
	and thus $\tilde{S}^{1/2}_{x_0}\neq S^{1/2}_{x_0}.$ 
    
    In fact, when $\mathcal{X}= \R^m,$ $\mathcal{Y}=\R,$ this discontinuity holds more generally. Indeed, Jensen's inequality implies that for all $x\in\mathcal{X},$ $\bar{\sigma}^2_1(x)\leq \overline{\sigma^2_1}(x)$ and hence, in view of \eqref{Sbar}, \eqref{FW2} the limiting rate function satisfies $$ \tilde{S}^{1/2}_{x_0}(\phi)=\int_{0}^{T}\frac{\big|\dot{\phi}_t-\bar{c}(\phi_t)\big|^2}{2\bar{\sigma}^2_1(\phi_t)}dt\geq \int_{0}^{T}\frac{\big|\dot{\phi}_t-\bar{c}(\phi_t)\big|^2}{2\overline{\sigma^2_1}(\phi_t)}dt=S^{1/2}_{x_0}(\phi),$$
   whenever $\phi$ is such that both sides are finite and under the assumption that $\sigma_1, \bar{\sigma}_1$ are uniformly lower bounded. From the last display we see that $\tilde{S}^{1/2}_{x_0}(\phi)=S^{1/2}_{x_0}(\phi)$ if for all $x\in\mathcal{X},$ $\bar{\sigma}^2_1(x)=\overline{\sigma^2_1}(x).$ Again from Jensen's inequality the latter holds if and only if, for all $x\in\mathcal{X},$ $\sigma_1(x,\cdot)$ is constant $\mu-$almost surely. In other words, the map $H\mapsto S^H_{x_0}(\phi)$ is right-continuous at $H=1/2$ if $\sigma_1(x,\cdot)$ is constant $\mu-$almost surely for all $x\in\mathcal{X}$. We conclude this remark with a similar observation for the multidimensional case $\mathcal{X}= \R^m,$ $\mathcal{Y}=\R^{d-m}$ for some $\N\ni d>m.$ In this setting it holds that $\tilde{S}^{1/2}_{x_0}(\phi)=S^{1/2}_{x_0}(\phi),$ and hence $H\mapsto S^H_{x_0}(\phi)$ is right-continuous at $H=1/2,$ if and only if the $m\times k$ (recall that $B^H$ is $k-$dimensional) effective diffusivity matrices satisfy $\bar{\sigma}_1\bar{\sigma}^T_1(x)=\overline{\sigma_1\sigma_1^T}(x)$  for all $x\in\mathcal{X}.$ Therefore,  if there exists $x\in\mathcal{X}$ and $(i,j)\in\{1,\dots, m\}\times\{1,\dots, k\}$ for which $\sigma_1^{i,j}(x,\cdot)$ is not constant $\mu-$almost surely, then the rate function $H\mapsto S^H_{x_0}$ is pointwise discontinuous at $H=1/2.$
\end{rem}

\section{Conclusions, extensions and future work}\label{concsec} In this paper we studied large deviations of the slow process $X^{\epsilon,\eta}$ \eqref{model} from the homogenized limit, as $\epsilon,\eta\to 0,$ in the case where the driving fBm $B^H$ has Hurst index $H>1/2.$ Working under \eqref{regime} and Condition \ref{C6}, we were able to extend the classical theory by proving a Laplace Principle (Theorem  \ref{main2}) with good rate function $S=S^H$ \eqref{ratefun}. Moreover we showed that, in certain cases,  $S^H$ has an explicit form \eqref{FW3} and proved that it is discontinuous at $H=1/2$ (Proposition \ref{Hlimprop}, Remark \ref{Hlimrem}).

At this point, the reader might wonder whether our LDP result Theorem \ref{main2} continues to hold if we replace the fBm $B^H$ in \eqref{model} by a generic Gaussian process $Z$ with $H-$H\"older continuous sample paths, for some $H\in(1/2, 1),$ covariance operator $Q_Z$ and Cameron-Martin space $\h_Z.$ In this setting and under all the assumptions spelled out in Section \ref{subsecAssumptions}, the variational formula \eqref{varform}, our tightness analysis of controlled slow dynamics (Proposition \ref{tightnessprop} and Section \ref{tightsec}) and the auxiliary estimates from Appendix \ref{AppA} continue to hold, provided that the Hilbert space $\h_Z$ is continuously embedded to the Cameron-Martin space $\h_{1/2}$ of a standard Brownian motion (in our setting the latter is proved in Lemma \ref{udotL2}). Indeed, neither Condition \ref{C6} (which only depends on the path regularity of $Z$) nor the proofs of the aforementioned results rely on the particular representations of $B^H$ and its Cameron-Martin space $\h_H.$ Moreover, occupation measures $P^\epsilon$ can be defined analogously to \eqref{occupation} with the operator $K_H$ replaced by the square-root $K_Z$ of the covariance operator $Q_Z$ (in fact $\h_Z$ can be explicitly defined as the Hilbert space $K_Z[L^2([0,T];\mathcal{X})]$ endowed with the inner product $\langle f, g\rangle_{\h_Z}:=\langle K_Z^{-1}f,  K_Z^{-1}g\rangle_{L^2};$ see e.g. \cite{lifshits2012lectures}, Theorem 4.1). Nevertheless, our identification of weak limit points, Theorem \ref{main1} and in particular Proposition \ref{sigmaave}, relies on the knowledge of an explicit expression for the operator $K_H.$ Hence, if $Z, K_Z$ are such that an analogue of Proposition \ref{sigmaave} holds (mutatis mutandis), then Theorems \ref{main1}, \ref{main2} hold without further assumptions on the process $Z.$ Furthermore, it is worth noticing that, apart from substituting $K_H$ by $K_Z$ and modulo the aforementioned assumptions on $Z,$ the form of the effective diffusivity operator $\mathcal{Q}_H$ \eqref{QH1} as well as the proof of Corollary \eqref{cor:RateNonVar} are expected to remain unchanged.

We shall now describe a number of potential directions for future work on this topic.

\noindent We have treated the case of a general diffusion coefficient $\sigma_1$ under Condition \ref{C6}$(i).$ This allows us to obtain tightness bounds for the Young integral $\sqrt{\epsilon}\sigma_1 dB^H$ (see \eqref{maxbnd}, Lemma \ref{Yfraclem}) by taking advantage of the small noise. The latter comes at the cost of restricting both the Hurst index, i.e. $H\in(3/4, 1),$ and the asymptotic regime; see \eqref{betaregime}. We note here that a similar threshold of $H=3/4$ has appeared in the context of the Breuer-Major theorem \cite{breuer1983central} (see also \cite{nourdin2012normal}, Chapter 7 and \cite{neufcourt2016third}). We believe that these restrictions are technical in nature and that Condition \ref{C6}(i) is not necessary for an LDP to hold.   An extension of the LDP with the weak convergence approach and in the absence of this condition provides an interesting problem which we plan to investigate in the future.

Throughout this work we have assumed that the scale separation parameter $\eta$ vanishes faster than the noise intensity $\epsilon,$ as $\epsilon$ goes to zero; see \eqref{regime}. Establishing an LDP in the regimes
\begin{equation*}
\begin{aligned}
\lim_{\epsilon\to 0}\frac{\sqrt{\eta}}{\sqrt{\epsilon}}=\begin{cases}
&\gamma\in(0,\infty)\;,\\&
\infty
\end{cases}
\end{aligned}
\end{equation*}
remains an open problem. From the perspective of the weak convergence approach, proving the Laplace Principle lower bound \eqref{laplower} presents additional challenges. First, in both regimes, the invariant measure of the uncontrolled fast dynamics depends on the slow component. Moreover, in both regimes, the long-time behavior of the controlled fast dynamics $Y^{\epsilon,\eta,u}$  \eqref{consys} depends non-trivially on the control $u_2.$ Thus the $y-$marginal of the limiting occupation measure $P$ is no longer decoupled from $u_2$ (see \eqref{Pdec}) and our construction of a (nearly) optimal control that achieves the lower bound does not carry over to this setting. In the case $H=1/2,$ such a construction was achieved in \cite{dupuis2012large} for the first of the aforementioned regimes. The arguments there rely on a local form of the rate function along with tools from ergodic control of diffusion processes which are available due to the Markovianity of the dynamics.

Finally, the LDP rate function can be used to design efficient accelerated Monte Carlo methods for the simulation of rare events for multiscale dynamics perturbed by fBm. Similar work in the case $H=1/2$ has been carried out in \cite{spiliopoulos2013large}.

\appendix
\section{Appendix}\label{AppA}

\noindent In this section we collect the proofs of several auxiliary estimates. Lemmas \ref{Ybndlem}, \ref{Yfraclem} provide estimates for the controlled fast process $Y^{\epsilon,\eta,u}$ \eqref{consys}. Lemma \ref{Yfraclem} is central to the proof of Proposition \ref{tightnessprop} and is connected to the relative asymptotic rate of $\epsilon,\eta$ in Condition \ref{C6}$(i)$ (see also Remark \ref{relativeraterem}). Lemmas \ref{psider} and \ref{psiliplem} are concerned with the derivative $\nabla\Psi$ of the Poisson equation \eqref{Poisson}. In particular, we provide sufficient conditions for the coefficients of the fast motion under which Condition \ref{C6} is satisfied. Finally, in Lemma \ref{udotL2} we prove that for each family of controls $\{u^\epsilon;\epsilon>0\}=\{(u^\epsilon_1,u^\epsilon_2);\epsilon>0\}\subset \mathcal{A}_N$ \eqref{AN}, the family $\{\dot{u}^\epsilon_1;\epsilon>0\}$ of derivatives is uniformly bounded in $L^2.$

\begin{lem}\label{Ybndlem}
	Let $T>0, p\geq 1$. Under Conditions \ref{C1}-\ref{C3} the following hold:\\
	(i) There exists $C>0$ and $\epsilon_0>0$ such that 
	\begin{equation*}\label{Ybnd}
	\sup_{\epsilon<\epsilon_0, u\in\mathcal{A}_N}\ex\bigg(\int_{0}^{T}|Y^{\epsilon,\eta,u}_t|^{2}dt\bigg)^p\leq C.
	\end{equation*}
	(ii)  Let $\theta\in(0,\frac{1}{2})$. There exists $C>0$ such that for all $\epsilon>0$ 
	\begin{equation*}\label{Yholder}
	\sup_{ u\in\mathcal{A}_N}\ex[Y^{\epsilon,\eta,u}]^p_{C^{\theta}([0,T];\mathcal{Y})}\leq C	(\epsilon\eta)^{-\frac{p}{2}}.
	\end{equation*}
\end{lem}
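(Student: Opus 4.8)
The plan is to derive both bounds from It\^o's formula applied to $m_t:=|Y^{\epsilon,\eta,u}_t|^2$, using the dissipativity of the fast drift encoded in Conditions \ref{C2}--\ref{C3} and the fact that, in the regime \eqref{regime}, the singular coefficient $\tfrac{1}{\sqrt{\epsilon\eta}}=\tfrac{1}{\eta}\sqrt{\eta/\epsilon}$ is, for $\epsilon$ small, dominated by the dissipation rate $\tfrac{1}{\eta}$. For part (i): applying It\^o to the second line of \eqref{consys}, Condition \ref{C3} gives $\langle y,f(y)\rangle\le-C_f|y|^2+C$ (from $\langle(\Gamma-L_\zeta I)y,y\rangle\ge C_f|y|^2$ and the Lipschitz bound on $\zeta$), Condition \ref{C1} bounds the $g$-term by $C_g(1+|y|)$, Condition \ref{C2} bounds $\tau$, and Young's inequality absorbs $\tfrac{1}{\sqrt{\epsilon\eta}}\langle Y_t,\tau(Y_t)\dot u_2(t)\rangle$ into $\tfrac{\delta}{\eta}|Y_t|^2+\tfrac{C}{\epsilon}|\dot u_2(t)|^2$. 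Choosing $\epsilon$ small enough that $C_g\sqrt{\eta/\epsilon}+\delta<C_f/2$ one obtains
\[
dm_t\le\Bigl(-\tfrac{c}{\eta}m_t+\tfrac{C}{\eta}+\tfrac{C}{\epsilon}\,|\dot u_2(t)|^2\Bigr)dt+dM_t,
\]
with $M$ a local martingale satisfying $d\langle M\rangle_t\le\tfrac{C}{\eta}|Y_t|^2\,dt$. Integrating over $[0,T]$, using $\int_0^T|\dot u_2(t)|^2dt\le CN^2$ a.s.\ (which holds by $u\in\mathcal{A}_b$ and Remark \ref{CM1/2rem}) and multiplying by $\eta$ gives $\int_0^T m_s\,ds\lesssim\eta|y_0|^2+T+\tfrac{\eta}{\epsilon}N^2+\eta|M_T|$; since $\eta/\epsilon\to0$ the deterministic part is bounded, and raising to the $p$-th power, taking expectations, applying the Burkholder-Davis-Gundy inequality to $M$ (whose bracket is controlled by $\tfrac{1}{\eta}\int_0^T m_s\,ds$) and absorbing the resulting term $\eta^{p/2}\ex\bigl(\int_0^T m_s\,ds\bigr)^{p/2}$ by Young's inequality yields (i). The a priori integrability needed for the absorption is obtained by localization at $\inf\{t:|Y_t|\ge R\}$ and Fatou's lemma.

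For part (ii), the naive pathwise estimate of $Y^{\epsilon,\eta,u}_t-Y^{\epsilon,\eta,u}_s$ is insufficient, because the drift term $\tfrac{1}{\eta}\int_s^t f(Y_r)\,dr$ is only of order $\eta^{-1}$, larger than the target rate $(\epsilon\eta)^{-1/2}$. I would instead use the variation-of-constants representation
\[
Y^{\epsilon,\eta,u}_t=e^{-\Gamma t/\eta}y_0+\int_0^t e^{-\Gamma(t-r)/\eta}\Bigl(\tfrac{1}{\eta}\zeta(Y_r)+\tfrac{1}{\sqrt{\epsilon\eta}}g(X_r,Y_r)+\tfrac{1}{\sqrt{\epsilon\eta}}\tau(Y_r)\dot u_2(r)\Bigr)dr+\tfrac{1}{\sqrt\eta}\int_0^t e^{-\Gamma(t-r)/\eta}\tau(Y_r)dW_r,
\]
together with the analogous formula at time $s$. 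The decisive point is the identity $\tfrac{1}{\eta}\int_s^t e^{-\Gamma(t-r)/\eta}dr=\Gamma^{-1}(I-e^{-\Gamma(t-s)/\eta})$, which is uniformly bounded and, via $\|I-e^{-a\Gamma}\|\lesssim a^\theta$, turns both the $\zeta$-contribution and the difference $(e^{-\Gamma(t-s)/\eta}-I)Y_s$ into terms of order $\eta^{-\theta}$ and $\eta^{-\theta}|Y_s|$ respectively. The $g$-term is controlled by Cauchy-Schwarz and part (i), the control term by $\|\dot u_2\|_{L^2}\le CN$, and the stochastic integral by BDG and the Kolmogorov criterion, the boundedness of $\tau$ being what gives the clean rate $\eta^{-1/2}$. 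Since $\eta^{-\theta}\le\eta^{-1/2}\le(\epsilon\eta)^{-1/2}$ for $\theta<1/2$, the whole estimate reduces to the sup-norm bound $\ex\|Y^{\epsilon,\eta,u}\|^p_{C([0,T];\mathcal{Y})}\lesssim\epsilon^{-p/2}$, which comes from the same It\^o computation run through Gr\"onwall's inequality instead of integration: the exponential damping makes the deterministic part of $m_t$ of order $\epsilon^{-1}$ (inherited from the control term), while $\sup_{t\le T}m_t$ is controlled by a moment estimate for the damped stochastic convolution $\int_0^\cdot e^{-\Gamma(t-s)/\eta}\,dM_s$.

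The hard part will be this last sup-norm bound: one must extract the rate $\epsilon^{-1/2}$ rather than the crude $\eta^{-1/2}$, which requires handling the supremum of the damped convolution — which is not a martingale but decorrelates on the time scale $\eta$ — for instance by splitting $[0,T]$ into subintervals of length $\eta$, applying Doob's inequality on each, and recombining via a high-moment union bound, at the cost of at most an arbitrarily small power of $\eta$ (harmless since $\theta<1/2$ is fixed). Once this is available, assembling the increment estimate and applying Kolmogorov's continuity criterion to pass from moment bounds on $Y_t-Y_s$ to the $C^\theta$-seminorm is routine.
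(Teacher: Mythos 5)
Your proof of part (i) — It\^o's formula for $|Y_t|^2$, dissipativity from Condition \ref{C3}, Young's inequality to absorb the $\tfrac{1}{\sqrt{\epsilon\eta}}$ control term into the $\tfrac{1}{\eta}$ damping, then a BDG-plus-Young bootstrap of the martingale term with localization — is a complete and self-contained argument; the paper simply cites the second estimate of Lemma~3.1 of \cite{spiliopoulos2020importance}, so yours is a genuine elaboration and entirely sound. For part (ii) your starting point coincides with the paper's: Duhamel/variation of constants and the decomposition $Y_t-Y_s=\big[I-e^{-\Gamma(t-s)/\eta}\big]Y_s + (\text{convolutions over }[s,t])$. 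The divergence is in how $\big[I-e^{-\Gamma(t-s)/\eta}\big]Y_s$ is handled. You propose $\|I-e^{-a\Gamma}\|\lesssim a^\theta$ to bound this term by $\eta^{-\theta}(t-s)^\theta\sup_s|Y_s|$, which leaves you needing a uniform-in-$\epsilon$ sup-norm moment bound on $Y$, a step you correctly identify as delicate because the damped stochastic convolution is not a martingale. The paper avoids this entirely: it substitutes the Duhamel representation of $Y_s$ back in, writes $\big[I-e^{-\Gamma(t-s)/\eta}\big]Y_s=\sum_j\big[I-e^{-\Gamma(t-s)/\eta}\big]A_j(0,s)$, and estimates each piece directly, using the Da Prato--Zabczyk factorization formula to obtain $\ex\big(\sup_{s\neq t}|t-s|^{-\theta}|A_3(s,t)|\big)^p\lesssim\eta^{-p/2}$ in one stroke (including the contribution through the contraction $[I-e^{-\Gamma(t-s)/\eta}]$). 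Your workaround — splitting $[0,T]$ into $\eta$-intervals, applying Doob/BDG on each, and recombining via a high-moment union bound followed by Jensen — does yield $\ex\sup_t|A_3(0,t)|^p\lesssim\eta^{-p/q}$ for any $q$, and since $\theta<\tfrac{1}{2}$ is strict the resulting loss is harmless; but note that your intermediate clean claim $\ex\|Y^{\epsilon,\eta,u}\|^p_{C([0,T];\mathcal{Y})}\lesssim\epsilon^{-p/2}$ is not literally attainable uniformly in the regime $\eta/\epsilon\to0$ (the convolution contributes an unavoidable, if arbitrarily small, polynomial-in-$\eta^{-1}$ factor), so the argument really does need to be run with the $\eta^{-\delta}$ loss kept explicit, making the paper's factorization route both shorter and sharper.
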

\begin{proof}
	Condition \ref{C3} allows us to invoke Duhamel's principle and write
	\begin{equation*}\label{Duhamel}
	\begin{aligned}
	Y^{\epsilon,\eta,u}_t&=e^{-\Gamma t/\eta}y_0+\frac{1}{\eta}\int_{0}^{t}e^{-\Gamma(t-s)/\eta}\zeta(Y^{\epsilon,\eta,u}_s)ds\\&+\frac{1}{\sqrt{\epsilon\eta}}\int_{0}^{t}e^{-\Gamma(t-s)/\eta}\big(g(X^{\epsilon,\eta,u}_s,Y^{\epsilon,\eta,u}_s)+\tau(Y^{\epsilon,\eta,u}_s)\dot{u}_2(s)\big)ds\\&+\frac{1}{\sqrt{\eta}}\int_{0}^{t}e^{-\Gamma(t-s)/\eta}\tau(Y^{\epsilon,\eta,u}_s)dW_s.
	\end{aligned}
	\end{equation*}
	(i) The uniform moment bound  follows along the same lines as the second estimate in Lemma 3.1 of \cite{spiliopoulos2020importance} and, to avoid repetition, its proof is omitted.\\
	(ii) For notational simplicity we drop the superscripts and write $Y=Y^{\epsilon,\eta,u}, X=X^{\epsilon,\eta,u}$. For $0\leq s<t\leq T$ we have 
	\begin{equation}\label{Holderdec}
	\begin{aligned}
	Y_t-Y_s-\big[I-e^{-\Gamma(t-s)/\eta}\big]Y_s&=\frac{1}{\eta}\int_{s}^{t}e^{-\Gamma(t-r)/\eta}\zeta( Y_r) dr\\&+\frac{1}{\sqrt{\epsilon\eta}}\int_{s}^{t}e^{-\Gamma(t-r)/\eta}\big(g(X_r, Y_r)+\tau(Y_r)\dot{u}_2(r)\big)dr\\&+\frac{1}{\sqrt{\eta}}\int_{s}^{t}e^{-\Gamma(t-r)/\eta}\tau(Y_r)dW_r\\&
	=: \sum_{j=1}^{3}A_j(s,t).	   \end{aligned}
	\end{equation}
	Letting  $y\in\mathcal{Y}$, $\{\gamma_k\}_{k=1}^{d-m}$ be the positive eigenvalues of $\Gamma$ corresponding to an orthonormal eigenbasis $\{e_k\}_{k=1}^{d-m}$ and $\gamma:=\inf_k\gamma_k>0$ we have
	\begin{equation*}
	\big|e^{-\Gamma t}y\big|=\bigg|\sum_{k=1}^{d-m}e^{-\gamma_k t}y_k e_k\bigg|\leq Ce^{-\gamma t}|y|.
	\end{equation*}
	Hence, with $L_\zeta$ as in Condition \ref{C3} and using Conditions \ref{C1}, \ref{C2} we have
	\begin{equation*}
	\begin{aligned}
	\big|A_1(s,t)&+A_2(s,t)\big|\leq \frac{C(1+L_\zeta)}{\eta}\int_{s}^{t}e^{-\gamma(t-r)/\eta} dr\\&+\frac{1}{\sqrt{\epsilon\eta}}\int_{s}^{t}e^{-\gamma(t-r)/\eta}\big( C_g(1+|Y_r|)+|\tau(Y_r)||\dot{u}_2(r)|\big)dr\\&
	\leq \frac{C}{\eta}\frac{\sqrt{\eta}}{\sqrt{\gamma}}\int_{s}^{t}(t-r)^{-\frac{1}{2}} dr
    + \frac{C_g}{\sqrt{\epsilon\eta}}\bigg[\int_s^t  e^{-2\gamma(t-r)/\eta}dr\bigg]^{\frac{1}{2}}\bigg[\int_0^T (1+|Y_r|)^2dr\bigg]^{\frac{1}{2}}
    \\&+\frac{\|\tau\|_\infty}{\sqrt{\epsilon\eta}}\bigg[\int_{s}^{t}e^{-2\gamma(t-r)/\eta}dr\bigg]^{\frac{1}{2}}
	\bigg[\int_{0}^{T}|\dot{u}_2(r)|^2dr\bigg]^{\frac{1}{2}}\\&
	\leq \frac{C}{\sqrt{\eta}}(t-s)^{1/2}+\frac{C_g}{\sqrt{\epsilon\eta}}(t-s)^{\frac{1}{2}}\bigg[\int_0^T (1+|Y_r|)^2dr\bigg]^{\frac{1}{2}}+\frac{\|\tau\|_\infty N}{\sqrt{\epsilon\eta}}(t-s)^{\frac{1}{2}}.
	\end{aligned}
	\end{equation*} 
	Thus, after taking expectation and using the moment bound from (i), we obtain
	\begin{equation}\label{A12}
	\begin{aligned}
	\ex\big|A_1(s,t)+A_2(s,t)\big|&\leq C\bigg[	   \frac{1}{\sqrt{\eta}} (t-s)^{1/2}+\frac{1}{\sqrt{\epsilon\eta}}(t-s)^{\frac{1}{2}}\bigg].
	\end{aligned}
	\end{equation} 
	The stochastic convolution term can be treated using the following factorization formula followed by the BDG inequality
	\begin{equation}\label{factor}
	A_3(s,t)=\frac{1}{\sqrt{\eta}}\frac{\sin(a\pi)}{\pi}\int_{s}^{t}  (t-r)^{\theta-1} e^{-\Gamma(t-r)/\eta}\int_{0}^{r} (r-z)^{-\theta}e^{-\Gamma(r-z)/\eta}\tau(Y_z)dW_zdr.
	\end{equation}
	In particular, it is straightforward to show that 
	\begin{equation}\label{A3}
	\ex\big(\sup_{t\neq s} |t-s|^{-\theta}|A_3(s,t)|\big)^p\leq C_{\tau,p}\eta^{-\frac{p}{2}}.
	\end{equation}

	\noindent For the proof and applications of the factorization formula to the study of stochastic evolution equations  the reader is referred to \cite{da2014stochastic}, Section 5.3. 
	It remains to estimate the last term on the left-hand side of \eqref{Holderdec}. We have
	\begin{equation*}
	\begin{aligned}
	\big[I-e^{-\Gamma(t-s)/\eta}\big]Y_s=\sum_{j=1}^{3}\big[I-e^{-\Gamma(t-s)/\eta}\big]A_j(0,s).
	\end{aligned}
	\end{equation*}
	For the first summand we write $I-e^{-\Gamma(t-s)/\eta}= (I-e^{-\Gamma(t-s)/\eta})^\theta(I-e^{-\Gamma(t-s)/\eta})^{1-\theta}$ and use properties of exponentials to obtain
	\begin{equation*}
	\begin{aligned}
	\big|\big[I-e^{-\Gamma(t-s)/\eta}\big]A_1(0,s)\big|&\leq C\|\zeta\|_\infty (t-s)^\theta\eta^{-1-\theta}\int_{0}^{s}e^{-\gamma(s-r)/\eta}dr\\&\leq C' \|\zeta\|_\infty(t-s)^\theta\eta^{-1-\theta}\eta^{\frac{1}{2}+\theta}\int_{0}^{s}(s-r)^{-\frac{1}{2}-\theta}ds\leq C_\zeta \eta^{-\frac{1}{2}} (t-s)^{\theta}.
	\end{aligned}
	\end{equation*}
	Note that the singularity in the last integral above is integrable since $\theta<1/2$. Similarly,
	\begin{equation*}
	\begin{aligned}
	\big|&\big[I-e^{-\Gamma(t-s)/\eta}\big]A_2(0,s)\big|\\&\leq \frac{C}{\sqrt{\epsilon\eta}}(\|\tau\|_\infty+C_g)\frac{(t-s)^\theta}{\eta^{\theta}}\bigg[\int_{0}^{s}e^{-2\gamma(s-r)/\eta}dr\bigg]^{\frac{1}{2}}\cdot\\&\cdot\bigg(\|\dot{u}_2\|_{L^2([0,T];\mathcal{Y})}+\bigg[\int_0^T (1+|Y_r|)^2dr\bigg]^{\frac{1}{2}}\bigg)\\&
	\leq \frac{C'(\|\tau\|_\infty+C_g)}{\sqrt{\epsilon\eta}}(t-s)^\theta\eta^{-\theta}\eta^\theta\bigg[\int_{0}^{s}(s-r)^{-2\theta}dr\bigg]^{\frac{1}{2}}\bigg(N+\bigg[\int_0^T (1+|Y_r|)^2dr\bigg]^{\frac{1}{2}}\bigg)\\&
	\leq \frac{C}{\sqrt{\epsilon\eta}}(t-s)^\theta\bigg(N+\bigg[\int_0^T (1+|Y_r|)^2dr\bigg]^{\frac{1}{2}}\bigg),
	\end{aligned}
	\end{equation*}
	where we used the Cauchy-Schwarz inequality on the first line and the last integral is finite since $\theta<1/2$. Finally, \eqref{factor} and the BDG inequality yield 
	\begin{equation*}
	\begin{aligned}
	\ex\bigg[\sup_{t\neq s} |t-s|^{-\theta}\big|\big[I-e^{-\Gamma(t-s)/\eta}\big]A_3(0,s)\big|\bigg]^p&\leq C_{\tau,T,p}\eta^{-\frac{p}{2}}.
	\end{aligned}
	\end{equation*}
	These estimates along with \eqref{A12} and \eqref{A3} conclude the proof.
\end{proof}

\begin{lem}\label{Yfraclem}
	For $p\geq 1$ and $T>0$ and $|\Delta_a|$ as in \eqref{Deltapmabs} the following hold:\\
	(i) Let $0<a<\frac{1}{2}$ and $\theta\in(a,1/2)$. There exists $C>0$ and $\epsilon_0>0$ such that for all $\epsilon<\epsilon_0$ we have
	\begin{equation}\label{Hbig}
	\sup_{u\in\mathcal{A}_N}\ex\bigg(\sup_{t_1\neq t_2}\frac{1}{(t_2-t_1)^{\frac{1}{2}-a}}\int_{t_1}^{t_2}\big|\Delta_a\big|Y^{\epsilon,\eta,u}_{t_1,s} ds\bigg)^p\leq C\eta^{-\frac{pa}{\theta}}.
	\end{equation}
	(ii) Let $H\in(3/4,1)$ and $a\in(1-H, 1/4)$. Furthermore, assume that there exists $\beta\in(2(1-H), 1/2)$ such that $\sqrt{\epsilon}/\eta^{\;\beta}\rightarrow 0$ as $\epsilon\to 0$. Then
	\begin{equation*}
	\ex\bigg(\sqrt\epsilon\sup_{t_2\in[0,T]}\int_{0}^{t_2}(t_2-t_1)^{-a-1}\int_{t_1}^{t_2} \big|\Delta_a\big|Y^{\epsilon,\eta,u}_{t_1,s}dsdt_1\bigg)^p\leq C\bigg(\frac{\sqrt{\epsilon}}{\eta^{\;\beta}}\bigg)^p\longrightarrow 0\;,\;\epsilon\to0.
	\end{equation*}
\end{lem}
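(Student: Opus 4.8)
\textbf{Proof proposal for Lemma \ref{Yfraclem}(ii).}

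The plan is to reduce the double integral to quantities that are already controlled by part (i) of this lemma together with Lemma \ref{Ybndlem}. First I would apply Fubini's theorem to the innermost pair of integrals, writing, for fixed $t_1 < t_2$,
\begin{equation*}
\int_{t_1}^{t_2}|\Delta_a|Y^{\epsilon,\eta,u}_{t_1,s}\,ds = \int_{t_1}^{t_2}\int_{t_1}^{s}\frac{|Y^{\epsilon,\eta,u}_r - Y^{\epsilon,\eta,u}_{t_1}|}{(r-t_1)^{a+1}}\,dr\,ds,
\end{equation*}
and then estimate $\int_{t_1}^{t_2}|\Delta_a|Y^{\epsilon,\eta,u}_{t_1,s}\,ds \lesssim (t_2-t_1)^{1/2-a}\,\sup_{t_1\neq t_2}(t_2-t_1)^{a-1/2}\int_{t_1}^{t_2}|\Delta_a|Y^{\epsilon,\eta,u}_{t_1,s}\,ds$, so that the quantity appearing in (i) can be pulled out of the $t_1$-integral. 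Concretely, denote by $\mathcal{I}_\epsilon := \sup_{t_1\neq t_2}(t_2-t_1)^{a-1/2}\int_{t_1}^{t_2}|\Delta_a|Y^{\epsilon,\eta,u}_{t_1,s}\,ds$ the random variable bounded in (i). Then
\begin{equation*}
\sup_{t_2\in[0,T]}\int_{0}^{t_2}(t_2-t_1)^{-a-1}\int_{t_1}^{t_2}|\Delta_a|Y^{\epsilon,\eta,u}_{t_1,s}\,ds\,dt_1 \leq \mathcal{I}_\epsilon\,\sup_{t_2\in[0,T]}\int_{0}^{t_2}(t_2-t_1)^{-a-1}(t_2-t_1)^{1/2-a}\,dt_1,
\end{equation*}
and the deterministic integral $\int_0^{t_2}(t_2-t_1)^{-2a-1/2}\,dt_1$ is finite precisely because $a < 1/4$ forces $2a + 1/2 < 1$; it is bounded by $C_{a,T}$.

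Next I would multiply through by $\sqrt{\epsilon}$, raise to the $p$-th power, take expectations, and invoke part (i) with the exponents as stated there: since $H \in (3/4,1)$ and $a \in (1-H, 1/4)$ we may pick $\theta \in (a,1/2)$, and (i) gives $\ex\,\mathcal{I}_\epsilon^p \leq C\,\eta^{-pa/\theta}$. Thus
\begin{equation*}
\ex\bigg(\sqrt{\epsilon}\sup_{t_2\in[0,T]}\int_{0}^{t_2}(t_2-t_1)^{-a-1}\int_{t_1}^{t_2}|\Delta_a|Y^{\epsilon,\eta,u}_{t_1,s}\,ds\,dt_1\bigg)^p \leq C_{a,T,p}\,\epsilon^{p/2}\,\eta^{-pa/\theta} = C_{a,T,p}\Big(\frac{\sqrt{\epsilon}}{\eta^{a/\theta}}\Big)^p.
\end{equation*}
To finish, I must check that $a/\theta$ can be taken to be at most the $\beta$ appearing in the statement, i.e. that there is a valid choice of $\theta$ with $a/\theta \leq \beta$ and $\theta < 1/2$. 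Since $\beta > 2(1-H)$ and $a$ may be chosen arbitrarily close to $1-H$ from above, we have $a < \beta/2 \cdot (\text{something} < 1)$; more carefully, choosing $a$ close enough to $1-H$ and $\theta$ close enough to $1/2$ gives $a/\theta < 2a < \beta$ once $a < \beta/2$, which holds because $a$ can be taken just above $1-H < \beta/2$. Then $\sqrt{\epsilon}/\eta^{a/\theta} \leq \sqrt{\epsilon}/\eta^{\beta} \to 0$ by hypothesis (using $\eta \to 0$ and monotonicity of $\eta \mapsto \eta^s$ in the exponent for $\eta < 1$), which yields the claimed convergence to $0$.

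The main obstacle I anticipate is the bookkeeping around the exponents: one has to verify that the constraints $a \in (1-H,1/4)$, $\theta \in (a,1/2)$, $\beta \in (2(1-H),1/2)$ are mutually compatible with $a/\theta \leq \beta$, and in particular that the integrability threshold $2a + 1/2 < 1$ (needing $a<1/4$) is exactly what the hypothesis $H > 3/4$ buys us. A secondary point requiring care is that part (i) is stated with the supremum over $t_1 \neq t_2$ \emph{inside} the expectation, so the factorization above must keep $\mathcal{I}_\epsilon$ as a single random variable pulled out before integrating in $t_1$; this is legitimate because $\mathcal{I}_\epsilon$ does not depend on $t_1,t_2$. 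Apart from these, the argument is a routine Fubini-plus-Hölder reduction to already-established bounds.
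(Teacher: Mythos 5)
Your proposal follows the same approach as the paper: factor the inner integral as $(t_2-t_1)^{1/2-a}\,\mathcal{I}_\epsilon$ where $\mathcal{I}_\epsilon$ is the random variable of part (i), compute the deterministic integral $\int_0^{t_2}(t_2-t_1)^{-2a-1/2}\,dt_1$ using $a<1/4$, and then apply part (i). These steps are correct and match the paper.

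However, the exponent bookkeeping in your final paragraph is wrong on a point of principle. You write that one may ``choose $a$ close enough to $1-H$'' to guarantee $a<\beta/2$, but $a$ is not at your disposal: the lemma asserts the bound for every fixed $a\in(1-H,1/4)$, so $a$ is given. The only free parameter is $\theta$. The paper's choice is $\theta=a/\beta$ (so $a/\theta=\beta$ exactly), and the constraint that needs checking is $\theta<1/2$, i.e. $a<\beta/2$. This is where care is genuinely required, and it is not implied by $a\in(1-H,1/4)$ and $\beta\in(2(1-H),1/2)$ alone: for instance $H=0.9$, $a=0.2$, $\beta=0.25$ satisfies both hypotheses, yet $a/\beta=0.8>1/2$, so $\theta=a/\beta$ is inadmissible. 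The paper glosses over this (it writes $\theta=a/\beta\in(2a,1/2)$ without verifying the upper bound), but your attempted repair — re-tuning $a$ — does not address it. A correct resolution would instead restrict $a$ in the lemma statement to $a\in(1-H,\beta/2)$ (a nonempty interval precisely because $\beta>2(1-H)$), which is all that is actually used downstream in Proposition \ref{tightnessprop}.
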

\begin{proof}
	\noindent (i) For each $\epsilon$, let $\rho=\rho(\epsilon,t_1,t_2)<t_2-t_1$ such that $\rho\rightarrow 0,$ as $\epsilon\to 0,$ uniformly over $t_1,t_2\in[0,T]$. The exact dependence of $\rho$ on $\epsilon$ will be specified later. Decomposing the domain of integration we have
	\begin{equation*}
	\begin{aligned}
	&\int_{t_1}^{t_2}\big|\Delta_a\big|Y^{\epsilon,\eta,u}_{t_1,s} ds\\&=\int_{t_1+\rho}^{t_2}\int_{t_1}^{s-\rho}\frac{|Y^{\epsilon,\eta,u}_{s}-Y^{\epsilon,\eta,u}_z|}{(s-z)^{a+1}}dzds+\int_{t_1}^{t_2-\rho}\int_{z}^{z+\rho}\frac{|Y^{\epsilon,\eta,u}_{s}-Y^{\epsilon,\eta,u}_z|}{(s-z)^{a+1}}dsdz\\&+\int_{t_2-\rho}^{t_2}\int_{z}^{t_2}\frac{|Y^{\epsilon,\eta,u}_{s}-Y^{\epsilon,\eta,u}_z|}{(s-z)^{a+1}}dsdz
	=: I+II+III.
	\end{aligned}
	\end{equation*}
	Since the first term is integrated away from the diagonal, the singular kernel is integrable. A combination of Fubini's theorem and the Cauchy-Schwarz inequality thus yields
	\begin{equation*}\label{Iest}
	\begin{aligned}
	I&\leq \int_{t_1+\rho}^{t_2}|Y^{\epsilon,\eta,u}_{s}|\int_{t_1}^{s-\rho}(s-z)^{-a-1}dzds+    \int_{t_1}^{t_2-\rho}|Y^{\epsilon,\eta,u}_z|\int_{z+\rho}^{t_2}(s-z)^{-a-1}dsdz\\&
	\leq \frac{\rho^{-a}}{a}\bigg[\int_{t_1+\rho}^{t_2}|Y^{\epsilon,\eta,u}_{s}|ds
	+ \int_{t_1}^{t_2-\rho}|Y^{\epsilon,\eta,u}_z|dz  \bigg] 
	\leq \frac{\rho^{-a}}{a}(t_2-t_1)^{\frac{1}{2}}\|Y^{\epsilon,\eta,u}\|_{L^2([0,T];\mathcal{Y})}.
	\end{aligned}
	\end{equation*}
	For the second and third terms we take advantage of the H\"older regularity of $Y^{\epsilon,\eta,u}$. In particular, let $\theta\in(a,1/2)$ and fix a $\theta$-H\"older continuous version. It follows that
	\begin{equation*}
	\begin{aligned}
	II&\leq [Y^{\epsilon,\eta,u}]_{C^{\theta}([0,T];\mathcal{Y})}\int_{t_1}^{t_2-\rho}\int_{z}^{z+\rho}(s-z)^{\theta-a-1}dsdz\leq\frac{\rho^{\theta-a}}{\theta-a}[Y^{\epsilon,\eta,u}]_{C^{\theta}([0,T];\mathcal{Y})}(t_2-t_1)
	\end{aligned}
	\end{equation*}
	with probability $1$. Similarly,
	\begin{equation*}\label{IIIest}
	\begin{aligned}
	III&\leq[Y^{\epsilon,\eta,u}]_{C^{\theta}([0,T];\mathcal{Y})}\int_{t_2-\rho}^{t_2}\int_{z}^{t_2}(s-z)^{\theta-a-1}dsdz\\&\leq \frac{1}{\theta-a}[Y^{\epsilon,\eta,u}]_{C^{\theta}([0,T];\mathcal{Y})}\int_{t_2-\rho}^{t_2}(t_2-z)^{\theta-a}dz\leq C_{a,\theta}[Y^{\epsilon,\eta,u}]_{C^{\theta}([0,T];\mathcal{Y})}\rho^{\theta-a+1}.
	\end{aligned}
	\end{equation*}
	Now choose $\rho(\epsilon):=\eta^\frac{1}{\theta}(\epsilon)(t_2-t_1)$, $\epsilon$ small enough to satisfy $\rho(\epsilon)<t_2-t_1$ and combine the previous bounds to obtain:
	\begin{equation*}
	\begin{aligned}
	&\int_{t_1}^{t_2}\big|\Delta_a\big|Y^{\epsilon,\eta,u}_{t_1,s}ds  \leq C\bigg[ \eta^{-\frac{a}{\theta}} \|Y^{\epsilon,\eta,u}\|_{L^{2}}(t_2-t_1)^{\frac{1}{2}-a}
	+\eta^{1-\frac{a}{\theta}}[Y^{\epsilon,\eta,u}]_{C^{\theta}([0,T];\mathcal{Y})}(t_2-t_1)^{1+\theta-a}\\&+\eta^{1-\frac{a}{\theta}+\frac{1}{\theta}}[Y^{\epsilon,\eta,u}]_{C^{\theta}([0,T];\mathcal{Y})}(t_2-t_1)^{1+\theta-a}\bigg].
	\end{aligned}
	\end{equation*}
	Lemma \ref{Ybndlem} then furnishes
	\begin{equation*}
	\eta^{\frac{pa}{\theta}}\cdot\ex\bigg(\sup_{t_1\neq t_2}\frac{1}{(t_2-t_1)^{\frac{1}{2}-a}}\int_{t_1}^{t_2}\big|\Delta_a\big|Y^{\epsilon,\eta,u}_{t_1,s}ds\bigg)^p  \leq C_{T,a,\theta}\bigg[1+\frac{\sqrt{\eta}}{\sqrt{\epsilon}}(1+\eta^{\frac{1}{\theta}})\bigg]^p
	\end{equation*}
	which concludes the argument since $\sqrt{\eta}/\sqrt{\epsilon}$ is bounded for $\epsilon$ sufficiently small (recall \eqref{regime}).\\
	(ii)	From \eqref{Hbig} we have 
	\begin{equation*}
	\begin{aligned}
	\ex\bigg(&\sqrt\epsilon\sup_{t_2\in[0,T]}\int_{0}^{t_2}(t_2-t_1)^{-a-1}\int_{t_1}^{t_2} \big|\Delta_a\big|Y^{\epsilon,\eta,u}_{t_1,s}dsdt_1\bigg)^p\\&\leq  \epsilon^{p/2}\bigg(\int_{0}^{t_2}(t_2-t_1)^{-2a-\frac{1}{2}}dt_1\bigg)^p\sup_{\epsilon<\epsilon_0, u\in\mathcal{A}_N}\ex\bigg(\sup_{t_1\neq t_2}\frac{1}{(t_2-t_1)^{\frac{1}{2}-a}}\int_{t_1}^{t_2}\big|\Delta_a\big|Y^{\epsilon,\eta,u}_{t_1,s} ds\bigg)^p\\&
	\leq CT^{p(\frac{1}{2}-2a)}\bigg(\frac{\sqrt{\epsilon}}{\eta^{\frac{a}{\theta}}}\bigg)^p.
	\end{aligned}
	\end{equation*}
	where we used that $a<1/4.$ The proof is complete upon choosing $\theta=a/\beta\in(2a, 1/2).$
\end{proof}

\noindent Our next auxiliary estimate concerns the solution of the Poisson equation \eqref{Poisson}. Recently, the authors of \cite{ganguly2021inhomogeneous} gave a slightly modified version of estimate (21) in Theorem 2 of \cite{Pardoux} regarding the growth of $\nabla\Psi$ (see Proposition A.2 in \cite{ganguly2021inhomogeneous}). In there, $\nabla\Psi$ is shown to have quadratic growth, even if the coefficient $b$ is bounded. However, as stated by the authors, this growth rate is not optimal and in fact can be improved under more restrictive assumptions on $f,\tau$. The next lemma identifies sufficient conditions under which $\nabla\Psi$ is bounded.

\begin{lem}\label{psider} Let $\Psi$ solve \eqref{Poisson}, $C_f$ as in Condition \ref{C3}, $C_2>0$ be the optimal constant of the BDG inequality for $p=2$ and set
	$K_f:=4\bigg(\frac{\|\nabla f\|^2_{\infty}}{C_f^2}\vee 1\bigg).$
	If $\|D\tau\|^2_{\infty}<\frac{C_f}{C_2K_f}$ then	$\sup_{y\in\mathcal{Y}}|\nabla\Psi(y)|< \infty.$
\end{lem}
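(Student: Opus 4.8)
The plan is to obtain a pointwise a priori bound on $\nabla\Psi$ by exploiting the probabilistic representation of the solution of the Poisson equation \eqref{Poisson} and differentiating it. Recall that, since $\mathcal{L}$ is the generator of the ergodic diffusion $Y$ solving \eqref{ergodicdiff}, the centered solution $\Psi$ admits the representation $\Psi(y)=\int_0^\infty \ex\big[b(Y_t^y)\big]dt$, where $Y^y$ denotes the diffusion started at $y$. First I would justify differentiating under the integral sign, which reduces the problem to controlling $\nabla_y\ex[b(Y_t^y)]=\ex[\nabla b(Y_t^y)J_t^y]$, where $J_t^y=\nabla_y Y_t^y$ is the first variation (Jacobian) process solving the linearized SDE
\begin{equation*}
dJ_t^y=\nabla f(Y_t^y)J_t^y\,dt+\sum_k \partial_k\tau(Y_t^y)J_t^y\,dW_t^k,\qquad J_0^y=I.
\end{equation*}
Then $|\nabla\Psi(y)|\leq |\nabla b|_\infty\int_0^\infty \big(\ex|J_t^y|^2\big)^{1/2}dt$, so everything comes down to an exponential decay estimate $\ex|J_t^y|^2\leq C e^{-ct}$ with $c>0$, uniform in $y$.

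The core step is to derive this decay from Condition \ref{C3} together with the smallness assumption $\|D\tau\|_\infty^2<C_f/(C_2 K_f)$. I would apply It\^o's formula to $|J_t^y|^2$: the drift contribution is $2\langle \nabla f(Y_t^y)J_t^y,J_t^y\rangle$, which by the dissipativity built into Condition \ref{C3} (namely $\langle(\Gamma-L_\zeta I)y,y\rangle\geq C_f|y|^2$, giving $\langle \nabla f(y)\xi,\xi\rangle\leq -C_f|\xi|^2$ after symmetrization, up to handling the constant $K_f$ which absorbs $|\nabla f|_\infty$), is bounded above by $-2C_f|J_t^y|^2$; the diffusion contribution from the quadratic variation is controlled by $\|D\tau\|_\infty^2 C_2 K_f |J_t^y|^2$ after taking expectations and using the BDG inequality with optimal constant $C_2$ to handle the stochastic integral terms. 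The precise bookkeeping is exactly where the constant $K_f=4(|\nabla f|_\infty^2/C_f^2\vee 1)$ enters, which is why it appears in the statement; combining the two contributions yields $\frac{d}{dt}\ex|J_t^y|^2\leq -(2C_f-C_2 K_f\|D\tau\|_\infty^2)\ex|J_t^y|^2$, and the hypothesis $\|D\tau\|_\infty^2<C_f/(C_2K_f)$ makes the bracketed rate strictly positive. Gr\"onwall's inequality then gives $\ex|J_t^y|^2\leq e^{-(2C_f-C_2K_f\|D\tau\|_\infty^2)t}$, uniformly in $y$, and integrating in $t$ closes the argument: $\sup_y|\nabla\Psi(y)|\leq |\nabla b|_\infty/(C_f-\tfrac12 C_2K_f\|D\tau\|_\infty^2)<\infty$.

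I would expect the main obstacle to be the rigorous justification of the differentiation-under-the-integral step and the regularity of $y\mapsto Y_t^y$ — i.e. showing that the Jacobian process is well-defined and that $\nabla_y\ex[b(Y_t^y)]=\ex[\nabla b(Y_t^y)J_t^y]$ with enough uniform integrability in $y$ and $t$ to interchange limits. This requires $b\in C^1$ with bounded derivative (Condition \ref{C4}) and $f,\tau\in C^1_b$ (Conditions \ref{C2}, \ref{C3}), which are available, but the uniform-in-$y$ control needs the a priori moment bounds on $Y_t^y$ together with the decay estimate itself, so there is a mild circularity to unwind carefully (one first establishes the bound for the truncated integral $\int_0^N$, then passes $N\to\infty$ using the decay). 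A secondary technical point is the symmetrization argument converting the one-sided dissipativity $\langle(\Gamma-L_\zeta I)y,y\rangle\geq C_f|y|^2$ of the drift $f=-\Gamma y+\zeta$ into the bound $\langle\nabla f(y)\xi,\xi\rangle\leq -C_f|\xi|^2+$ lower-order terms absorbed by $K_f$; this is routine but must be done with the stated constants to land exactly on the hypothesis in the lemma. Apart from these, the computation is a standard Lyapunov-type estimate.
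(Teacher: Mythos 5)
Your high-level strategy matches the paper: represent $\Psi(y)=\int_0^\infty\ex[b(Y_t^y)]\,dt$, differentiate to reduce the claim to exponential $L^2$-decay of the Jacobian $Z_t^y=D_yY_t^y$ solving the first-variation SDE, then integrate. But the central estimate is not correctly derived, and the way you invoke BDG is internally inconsistent. You propose applying It\^o to $|Z_t|^2$, taking expectations, and then ``using the BDG inequality\ldots to handle the stochastic integral terms.'' Once you take expectations, however, the stochastic integral $\int_0^t\langle Z_s,D\tau(Y_s)Z_s\,dW_s\rangle$ has mean zero — there is nothing left for BDG to do — and the only surviving contribution from the diffusion is the quadratic variation $\|D\tau\|_\infty^2\,\ex|Z_t|^2$, with no $C_2$ and no $K_f$. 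That computation would in fact yield $\ex|Z_t|^2\le |h|^2 e^{-(2C_f-\|D\tau\|_\infty^2)t}$, i.e.\ a decay rate under the much weaker hypothesis $\|D\tau\|_\infty^2<2C_f$, and there is no way to make $K_f$ ``appear'' by bookkeeping. Your own remark that ``the precise bookkeeping is exactly where $K_f$ enters'' is an acknowledgement that you do not actually have a mechanism producing that constant.

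The paper produces $K_f$ and $C_2$ through a genuinely different device that you do not mention: split $Z_t=\tilde Z_t+M_t$, where $M_t=\int_0^t D\tau(Y_r)Z_r\,dW_r$ is the stochastic-integral part and $\tilde Z$ is the Lebesgue-differentiable remainder. One then writes the \emph{pathwise} ODE inequality $\tfrac12\tfrac{d}{dt}|\tilde Z_t|^2=\langle\nabla f(Y_t)(\tilde Z_t+M_t),\tilde Z_t\rangle\le -\tfrac{C_f}{2}|\tilde Z_t|^2+\tfrac{|\nabla f|_\infty^2}{2C_f}|M_t|^2$, using dissipativity on the diagonal term and Young's inequality on the cross term $\langle\nabla f\,M_t,\tilde Z_t\rangle$; this is exactly where $|\nabla f|_\infty^2/C_f^2$ (and hence $K_f$) comes from. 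Bounding $|Z_t|^2\le 2|\tilde Z_t|^2+2|M_t|^2$ and then controlling $\ex\sup_{s\le t}|M_s|^2$ by BDG is where $C_2$ enters. Beyond recovering the stated constants, this decomposition also sidesteps the integrability subtlety you wave past: to take expectations of It\^o for $|Z_t|^2$ and discard the martingale term, one needs $\int\langle Z,D\tau(Y)Z\,dW\rangle$ to be a true martingale, which requires a priori control of $\ex|Z_t|^4$ — exactly the type of bound you are trying to establish — whereas BDG applies to the local martingale $M$ with no integrability hypothesis. Finally, what you describe as a ``symmetrization argument'' to be done carefully is in fact a one-line consequence of Condition~\ref{C3}: $\langle\nabla f(y)\xi,\xi\rangle=-\langle\Gamma\xi,\xi\rangle+\langle\nabla\zeta(y)\xi,\xi\rangle\le -\langle(\Gamma-L_\zeta I)\xi,\xi\rangle\le -C_f|\xi|^2$, with no lower-order terms to absorb.
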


\begin{proof} For all $y\in\mathcal{Y}$ we have the probabilistic representation  
	\begin{equation*}
	\Psi(y)=\int_{0}^{\infty}\ex[ b(Y^y_t )]dt,
	\end{equation*}
	where $Y$ is the ergodic diffusion \eqref{ergodicdiff} with initial condition $y$.
	Next let $h\in\mathcal{Y}.$ Differentiating under the sign of expectation we obtain 
	\begin{equation}\label{poissonder}
	\nabla\Psi(y)h=\int_{0}^{\infty}\ex[ \nabla b(Y^y_t )Z^y_t]dt,
	\end{equation}
	where the derivative $Z^y=D_yY^y$ in the direction of $h$ solves the \textit{first variation equation}
	\begin{equation}\label{vareq}
	dZ_t= \nabla f(Y_t^y)Z_tdt+D\tau(Y^y_t)Z_t dW_t\;, Z_0=h.
	\end{equation}
	Now let $\tilde{Z}_\cdot:=Z_\cdot-\int_{0}^{\cdot}D\tau(Y^y_t)Z_t dW_t$ and $M=Z-\tilde{Z}$. The process $\tilde{Z}$ has almost surely differentiable paths and satisfies 
	\begin{equation*}
	\begin{aligned}
	\frac{1}{2}\frac{d}{dt}|\tilde{Z}_t|^2&=\blangle \frac{d}{dt}\tilde{Z}_t, \tilde{Z}_t\brangle\\&=	\langle \nabla f(Y^y_t)[\tilde{Z_t}+M_t], \tilde{Z}_t \rangle\\&\leq \langle \nabla f(Y^y_t)\tilde{Z_t}, \tilde{Z}_t \rangle+\|\nabla f\|_{\infty}|M_t||\tilde{Z}_t|\\&\leq -C_f|\tilde{Z}_t|^2+\frac{C_f}{2}|\tilde{Z}_t|^2+\frac{\|\nabla f\|^2_{\infty}}{2C_f}|M_t|^2,
	\end{aligned}
	\end{equation*}
	where we used Condition \ref{C3} and Young's product inequality to obtain the last line. Integrating yields
	\begin{equation*}
	\begin{aligned}
	&|\tilde{Z}_t|^2\leq e^{-C_ft}|h|^2+\frac{\|\nabla f\|^2_{\infty}}{C_f}\int_{0}^{t}e^{-C_f(t-s)}|M_s|^2ds
	\end{aligned}
	\end{equation*}
	and 
	\begin{equation*}
	\begin{aligned}
	|Z_t|^2\leq 2|\tilde{Z}_t|^2+2|M_t|^2&\leq 2e^{-C_ft}|h|^2+\frac{2\|\nabla f\|^2_{\infty}}{C_f}\int_{0}^{t}e^{-C_f(t-s)}|M_s|^2ds+ 2|M_t|^2\\&
	\leq  2e^{-C_ft}|h|^2+\frac{2\|\nabla f\|^2_{\infty}}{C_f}\sup_{s\in[0,t]}|M_s|^2\int_{0}^{t}e^{-C_f(t-s)}ds+2|M_t|^2\\&
	\leq 2e^{-C_ft}|h|^2+K_f\sup_{s\in[0,t]}|M_s|^2.
	\end{aligned}
	\end{equation*}
	From the BDG inequality, there exists a constant $C_2>0$ such that
	\begin{equation*}
	\begin{aligned}
	\ex|Z_t|^{2}&\leq 2e^{-C_ft}|h|^{2}+C_2K_f\int_{0}^{t}\ex |D\tau(Y^y_r)Z_r|^2dr      \\&
	\leq  2e^{-C_ft}|h|^2+C_2K_f\|D\tau\|_\infty^{2}\int_{0}^{t}\ex |Z_r|^2dr.
	\end{aligned}
	\end{equation*}
	Finally, Gr\"onwall's inequality furnishes
	\begin{equation}\label{Zbnd}
	\begin{aligned}
	\ex|Z_t|^2\leq 2e^{-(C_{f}-C_2K_f\|D\tau\|_\infty^{2})t}|h|^2
	\end{aligned}
	\end{equation}
	and by assumption $C_f-C_2K_fL^2_\tau>0$. Since $h$ is arbitrary we conclude from Condition \ref{C4} and \ref{poissonder} that
	\begin{equation*}
	|\nabla\Psi(y)|\leq \|\nabla b\|_\infty\int_{0}^{\infty}e^{-(C_f-C_2K_f\|D\tau\|_\infty^{2} )t/2}dt<\infty.
	\end{equation*}	\end{proof}
\begin{lem}\label{psiliplem} Let $C_f,C_2, K_f, \Psi$ as in Lemma \ref{psider}, $C_4$ be the optimal constant for the BDG inequality with $p=4$ and assume that $\nabla b,D\tau$ are Lipschitz continuous. If Condition \ref{C3} holds with $\zeta=0$ and 	
	$ C_4K^2_f\|D\tau\|_\infty^{4}+C_2\|D\tau\|^2_{\infty}<C_f$
	
	then $\nabla\Psi$ is Lipschitz continuous.
\end{lem}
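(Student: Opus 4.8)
The plan is to adapt the strategy behind the proof of Lemma~\ref{psider}, but to work at the level of \emph{differences} of the first-variation processes rather than differentiating the probabilistic representation a second time; this keeps us within the regularity actually assumed ($\nabla b$ and $D\tau$ Lipschitz), since a naive second-order argument would want $D^2\tau$ bounded. Throughout, $Y^{y}$ denotes the solution of \eqref{ergodicdiff} started at $y$, $Z^{y}=Z^{y,h}$ the associated first-variation process \eqref{vareq}, and $L_{\nabla b},L_{D\tau}$ the Lipschitz constants of $\nabla b$ and $D\tau$. Since Condition~\ref{C3} is in force with $\zeta=0$, we have $f(y)=-\Gamma y$, hence $\nabla f\equiv-\Gamma$ is \emph{constant}, $\Gamma$ is positive definite with smallest eigenvalue at least $C_f$, and \eqref{vareq} becomes $dZ^{y}_t=-\Gamma Z^{y}_t\,dt+D\tau(Y^{y}_t)Z^{y}_t\,dW_t$.

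\textbf{Step 1 (decomposition).} Fix $y_1,y_2\in\mathcal{Y}$ and $h\in\mathcal{Y}$ with $|h|\le1$. From \eqref{poissonder},
\begin{equation*}
\nabla\Psi(y_1)h-\nabla\Psi(y_2)h=\int_0^\infty\ex\big[(\nabla b(Y^{y_1}_t)-\nabla b(Y^{y_2}_t))Z^{y_1}_t\big]dt+\int_0^\infty\ex\big[\nabla b(Y^{y_2}_t)(Z^{y_1}_t-Z^{y_2}_t)\big]dt=:\mathrm{I}+\mathrm{II},
\end{equation*}
and it suffices to bound $|\mathrm{I}|$ and $|\mathrm{II}|$ by a constant multiple of $|y_1-y_2|$, uniformly in $|h|\le1$.

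\textbf{Step 2 (auxiliary exponential bounds).} By energy estimates combined with the Burkholder--Davis--Gundy inequality (with optimal constants $C_2,C_4$ for the exponents $2,4$) and Gr\"onwall's lemma, in the spirit of the derivation of \eqref{Zbnd}, I will establish a constant $C>0$ and strictly positive rates $\mu_1,\mu_2,\nu_1,\nu_2,\kappa$ such that for all $t\ge0$ and $|h|\le1$,
\begin{equation*}
\ex|Y^{y_1}_t-Y^{y_2}_t|^{2q}\le Ce^{-\mu_q t}|y_1-y_2|^{2q},\qquad \ex|Z^{y}_t|^{2q}\le Ce^{-\nu_q t}|h|^{2q}\qquad(q=1,2),
\end{equation*}
\begin{equation*}
\ex|Z^{y_1}_t-Z^{y_2}_t|^{2}\le Ce^{-\kappa t}|y_1-y_2|^{2}|h|^2.
\end{equation*}
The $Y$-bounds use that, since $\zeta=0$, the drift of $Y^{y_1}-Y^{y_2}$ equals $-\Gamma(Y^{y_1}-Y^{y_2})$, together with $|\tau(a)-\tau(a')|\le\|D\tau\|_\infty|a-a'|$; the bound $\ex|Z^{y}_t|^2\le Ce^{-\nu_1 t}|h|^2$ is precisely \eqref{Zbnd}, and $\ex|Z^{y}_t|^4\le Ce^{-\nu_2 t}|h|^4$ follows by the same factorization argument combined with the $p=4$ BDG inequality, which is where $C_4K_f^2\|D\tau\|_\infty^4$ enters. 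For the difference, the crucial simplification afforded by $\zeta=0$ is that $\nabla f$ is constant, so
\begin{equation*}
d(Z^{y_1}_t-Z^{y_2}_t)=-\Gamma(Z^{y_1}_t-Z^{y_2}_t)\,dt+\big[D\tau(Y^{y_1}_t)(Z^{y_1}_t-Z^{y_2}_t)+(D\tau(Y^{y_1}_t)-D\tau(Y^{y_2}_t))Z^{y_2}_t\big]\,dW_t,\qquad Z^{y_1}_0-Z^{y_2}_0=0;
\end{equation*}
the inhomogeneity is dominated by $L_{D\tau}|Y^{y_1}_t-Y^{y_2}_t|\,|Z^{y_2}_t|$, whose second moment is controlled via Cauchy--Schwarz and the bounds on $\ex|Y^{y_1}_t-Y^{y_2}_t|^4$ and $\ex|Z^{y_2}_t|^4$, after which a convolution-type Gr\"onwall argument closes the estimate. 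The arithmetic hypothesis $C_4K_f^2\|D\tau\|_\infty^4+C_2\|D\tau\|_\infty^2<C_f$ is exactly what keeps all the rates $\mu_q,\nu_q,\kappa$ strictly positive.

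\textbf{Step 3 (conclusion and the hard part).} Using Condition~\ref{C4} ($\nabla b$ bounded and Lipschitz) and Cauchy--Schwarz,
\begin{equation*}
|\mathrm{I}|\le L_{\nabla b}\int_0^\infty\big(\ex|Y^{y_1}_t-Y^{y_2}_t|^2\big)^{1/2}\big(\ex|Z^{y_1}_t|^2\big)^{1/2}dt\le C|y_1-y_2|,\qquad |\mathrm{II}|\le\|\nabla b\|_\infty\int_0^\infty\big(\ex|Z^{y_1}_t-Z^{y_2}_t|^2\big)^{1/2}dt\le C|y_1-y_2|,
\end{equation*}
both integrals converging thanks to the exponential factors $e^{-(\mu_1+\nu_1)t/2}$ and $e^{-\kappa t/2}$. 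Taking the supremum over $|h|\le1$ then gives $|\nabla\Psi(y_1)-\nabla\Psi(y_2)|\le C|y_1-y_2|$, as claimed. The main obstacle is Step~2: pushing the constants $C_2,C_4,K_f,\|D\tau\|_\infty$ through the factorization, BDG and Gr\"onwall steps so that the fourth-moment decay of $Z^{y}$ and the second-moment decay of $Z^{y_1}-Z^{y_2}$ both hold with positive rates under the stated condition; everything else is routine bookkeeping.
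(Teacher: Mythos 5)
Your proposal matches the paper's proof in all essentials: the same splitting of $\nabla\Psi(y_1)-\nabla\Psi(y_2)$ into the two integrals $\mathrm{I}$ and $\mathrm{II}$, the same reduction of $\mathrm{II}$ to a Gr\"onwall estimate on $\ex|Z^{y_1}_t-Z^{y_2}_t|^2$ using the Duhamel formula that $\zeta=0$ makes available, and the same use of second- and fourth-moment decay of the first-variation process (via BDG with the constants $C_2,C_4$) to close the estimate, with the stated arithmetic condition guaranteeing strictly positive decay rates. The only minor difference is that the paper controls $|Y^{y_1}_t-Y^{y_2}_t|$ by the mean-value inequality in terms of $Z$-moments, whereas you estimate $\ex|Y^{y_1}_t-Y^{y_2}_t|^{2q}$ directly from the difference SDE, a cosmetic variation leading to the same conclusion.
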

\begin{proof} Let $y_1,y_2\in\mathcal{Y}$ and $L_b, L_\tau$ denote the Lipschitz constants of $\nabla b,D\tau$ respectively. From \eqref{poissonder} we have
	\begin{equation}\label{Psiliprebnd}
	\begin{aligned}
	\big|\nabla\Psi(y_2)-\nabla\Psi(y_1)\big|&\leq \int_{0}^{\infty}\ex\big|\nabla b(Y_t^{y_2})Z^{y_2}_t -\nabla b(Y_t^{y_1})Z^{y_1}_t \big|dt\\&
	\leq \int_{0}^{\infty}\ex\big|\nabla b(Y_t^{y_2})\big|\big|Z^{y_2}_t -Z^{y_1}_t \big|dt+ \int_{0}^{\infty}\ex\big|\nabla b(Y_t^{y_2})-\nabla b(Y_t^{y_1})\big|\big|Z^{y_1}_t \big|dt\\&
	\leq \|\nabla b\|_\infty\int_{0}^{\infty}\ex\big|Z^{y_2}_t -Z^{y_1}_t \big|dt+L_{b}|y_1-y_2|\int_{0}^{\infty}\sup_{y\in\mathcal{Y}}\big\|Z^y_t\big\|^2_{L^2(\Omega)}dt\\&
	\leq  \|\nabla b\|_\infty\int_{0}^{\infty}\ex\big|Z^{y_2}_t -Z^{y_1}_t \big|dt+L_{b}|y_1-y_2|\int_{0}^{\infty}e^{(-C_f+C_2K_f\|D\tau\|^2_\infty)t}dt,
	\end{aligned}
	\end{equation}
	where we used the mean value inequality  and \eqref{Zbnd} to obtain the third and fourth lines above respectively. Since $\zeta=0,$ \eqref{vareq} has a constant linear drift and we can write
	\begin{equation*}
	\begin{aligned}
	Z^{y_2}_t -Z^{y_1}_t=\int_{0}^{t}e^{-\Gamma(t-s)}\big(D\tau(Y^{y_2}_t)Z^{y_2}_s-D \tau(Y^{y_1}_t)Z^{y_1}_s\big)dW_s
	\end{aligned}
	\end{equation*}
	with $\Gamma$ as in Condition \ref{C3}. From the BDG and mean-value inequalities it follows that 	
	\begin{equation*}
	\begin{aligned}
	&\ex|Z^{y_2}_t -Z^{y_1}_t \big|^2\leq C_2\int_{0}^{t}e^{-2C_f(t-s)} \ex\big|D \tau(Y^{y_2}_t)Z^{y_2}_s-D \tau(Y^{y_1}_t)Z^{y_1}_s\big|^2ds\\&
	\leq C_2L^2_\tau|y_2-y_1|^2\int_{0}^{t}e^{-2C_f(t-s)}\ex|Z_s^{y_2}|^4ds+C_2\|D \tau\|^2_{\infty}\int_{0}^{t}e^{-2C_f(t-s)} \ex\big|Z^{y_2}_s-Z^{y_1}_s\big|^2ds\\&
	\leq  C_2L^2_\tau|y_2-y_1|^2\int_{0}^{t}e^{-2C_f(t-s)}e^{-(2C_f-2C_4K^2_f\|D\tau\|_\infty^{4} )s}|h|^2ds\\&+C_2\|D\tau\|^2_{\infty}\int_{0}^{t}e^{-2C_f(t-s)} \ex\big|Z^{y_2}_s-Z^{y_1}_s\big|^2ds.
	\end{aligned}
	\end{equation*}
	\noindent where we adapted the estimate \eqref{Zbnd} to obtain the fourth moment bound for $Z^{y}$ that was used in the last line. Hence,
	\begin{equation*}
	\begin{aligned}
	e^{2C_ft}\ex|Z^{y_2}_t -Z^{y_1}_t \big|^2&
	\leq  C|y_2-y_1|^2|h|^2\int_{0}^{t}e^{2C_4K^2_f\|D\tau\|_\infty^{4}s}ds\\&+C_2\|D\tau\|^2_{\infty}\int_{0}^{t}e^{2C_fs} \ex\big|Z^{y_2}_s-Z^{y_1}_s\big|^2ds.
	\end{aligned}
	\end{equation*}
	and Gr\"onwall's inequality yields
	\begin{equation*}
	\begin{aligned}
	e^{2C_ft}\ex|Z^{y_2}_t -Z^{y_1}_t \big|^2&
	\leq  C|y_2-y_1|^2|h|^2e^{(2C_4K^2_f\|D\tau\|_\infty^{4}+C_2\|D\tau\|^2_{\infty} )t}.
	\end{aligned}
	\end{equation*}
	By assumption we have $-2C_f+2C_4K^2_f\|D\tau\|_\infty^{4}+C_2\|D\tau\|^2_{\infty}<0.$   Substituting the latter to \eqref{Psiliprebnd} concludes the argument.\end{proof}

\begin{lem}\label{udotL2}
	Let $H\in(1/2, 1)$ and $\{u^\epsilon;\epsilon>0\}$ be a family of real-valued processes that is uniformly bounded in $\h_H$ \eqref{CMdef}. The family $\{\dot{u}^\epsilon;\epsilon>0\}$ is uniformly bounded in $L^2([0,T];\R)$.
\end{lem}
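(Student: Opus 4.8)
The statement is that if $\{u^\epsilon\}_{\epsilon>0}$ is a family of real-valued processes uniformly bounded in $\mathcal{H}_H$, then $\{\dot{u}^\epsilon\}_{\epsilon>0}$ is uniformly bounded in $L^2([0,T];\R)$. Recall from \eqref{CMdef}–\eqref{KH} that $u\in\mathcal{H}_H$ means $u=K_H(f)$ for a (unique) $f\in L^2[0,T]$, with $\|u\|_{\mathcal{H}_H}=\|f\|_{L^2}$. Thus the claim reduces to showing that the linear operator $\dot{K}_H$ of \eqref{Kdot}, sending $f\mapsto\frac{d}{dt}K_H(f)=\dot{K}_H f$, is bounded from $L^2([0,T];\R)$ to $L^2([0,T];\R)$: once we know $\|\dot{K}_H f\|_{L^2}\le C\|f\|_{L^2}$, then $\|\dot{u}^\epsilon\|_{L^2}=\|\dot{K}_H(K_H^{-1}u^\epsilon)\|_{L^2}\le C\|K_H^{-1}u^\epsilon\|_{L^2}=C\|u^\epsilon\|_{\mathcal{H}_H}\le CN$, uniformly in $\epsilon$.

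To prove boundedness of $\dot{K}_H$, the plan is to work directly from the explicit kernel representation in \eqref{Kdot}:
\[
\dot{K}_H f(s)=\frac{c_H}{\Gamma(H-\tfrac12)}\,s^{H-\frac12}\int_0^s z^{\frac12-H}(s-z)^{H-\frac32}f(z)\,dz,\qquad s\in[0,T].
\]
First I would recognize this as (a constant multiple of) the composition $\Phi\cdot I_{0^+}^{H-\frac12}(\Phi^{-1}\cdot\,)$ with $\Phi(t)=t^{H-\frac12}$, which is precisely the operator appearing in \eqref{KH} before the outer integration $I_{0^+}^1$ is applied; differentiation in $s$ simply removes that outer $I_{0^+}^1$. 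So $\dot K_H f = c_H\,\Phi\cdot I_{0^+}^{H-\frac12}(\Phi^{-1}f)$. Now I would estimate in two stages. The Riemann–Liouville operator $I_{0^+}^{H-\frac12}$ is bounded on $L^p$ spaces on a bounded interval (Hardy–Littlewood; see \cite{Samko1993FractionalIA}), but here it is conjugated by the singular weights $\Phi^{\pm1}$, so the cleanest route is a weighted Hardy-type inequality. Writing $g=\Phi^{-1}f$, i.e. $g(z)=z^{\frac12-H}f(z)$, and $h(s)=s^{H-\frac12}I_{0^+}^{H-\frac12}g(s)$, I would bound $\|h\|_{L^2[0,T]}$ by $C\|f\|_{L^2[0,T]}$ using the classical criterion for boundedness of the operator $(Tg)(s)=s^{H-\frac12}\int_0^s (s-z)^{H-\frac32}z^{\frac12-H}\,[z^{H-\frac12}g(z)]\,dz$ between $L^2$ weighted spaces; the relevant exponent bookkeeping works because $H\in(1/2,1)$ forces $H-\frac32\in(-1,-\frac12)$, $H-\frac12\in(0,\frac12)$, and $\frac12-H\in(-\frac12,0)$, so all the weight powers are integrable against the $L^2$ pairing. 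Alternatively, and perhaps more simply, one can invoke the known isomorphism property stated right after \eqref{cH}: $K_H$ is an isomorphism of $L^2[0,T]$ onto $I_{0^+}^{H+1/2}(L^2[0,T])$, and the map $\phi\mapsto\dot\phi$ from $I_{0^+}^{H+1/2}(L^2[0,T])$ (with its natural norm $\|K_H^{-1}\phi\|_{L^2}$) into $L^2$ is nothing but composition with $K_H^{-1}$ after one differentiation, which by the explicit formula \eqref{Kinverse} for $K_H^{-1}$ and the same weighted-Hardy estimate is bounded. Either way, the constant $C$ depends only on $H$ and $T$, not on $\epsilon$.

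The main obstacle is purely the weighted fractional-integral estimate: one must check that the kernel $s^{H-\frac12}z^{\frac12-H}(s-z)^{H-\frac32}\mathds{1}_{\{z<s\}}$ defines a bounded operator on $L^2([0,T])$ despite the two genuine singularities (at $z=0$ and at $z=s$, the latter being non-integrable in the naive sense since $H-\frac32<-1$ is possible — though in fact $H-\frac32>-1$ always holds for $H>1/2$, so $(s-z)^{H-\frac32}$ is locally integrable, which is the key point that makes everything go through). A convenient way to finish is a Schur test: exhibit a test function $w(z)=z^{-\gamma}$ with a suitable $\gamma\in(0,1)$ such that both $\int_z^T s^{H-\frac12}(s-z)^{H-\frac32}z^{\frac12-H}w(s)^{?}\,ds$ and the transposed integral are controlled by $w(z)$ up to a uniform constant; with $H\in(1/2,1)$ the exponents leave enough room to choose such a $\gamma$ (e.g. $\gamma$ near $H-\tfrac12$). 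Once the Schur bound is in hand, boundedness of $\dot{K}_H$ on $L^2$ follows, and the uniform bound on $\{\dot u^\epsilon\}$ is immediate from $\sup_\epsilon\|u^\epsilon\|_{\mathcal H_H}\le N$.
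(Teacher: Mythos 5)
Your reduction is correct and identical to the paper's: write $u^\epsilon=K_H\tilde{u}^\epsilon$ with $\sup_\epsilon\|\tilde{u}^\epsilon\|_{L^2}<\infty$, so the lemma amounts to the operator $\dot{K}_H$ being bounded on $L^2([0,T])$ with an $\epsilon$-independent constant. Where you diverge is in how that boundedness is obtained. You propose a direct kernel estimate for $K(s,z)=s^{H-1/2}z^{1/2-H}(s-z)^{H-3/2}\mathds{1}_{\{z<s\}}$ via a Schur test, and this does close cleanly: with the test weight $w(z)=z^{-(H-1/2)}$ one computes
\begin{equation*}
\int_0^s K(s,z)\,w(z)\,dz = B\big(2-2H,\,H-\tfrac12\big),\qquad
\int_z^T K(s,z)\,w(s)\,ds = \frac{z^{1/2-H}(T-z)^{H-1/2}}{H-\tfrac12},
\end{equation*}
both of which are bounded by a constant multiple of $w$ on $[0,T]$ because $H-\tfrac32\in(-1,-\tfrac12)$ and $1-2H\in(-1,0)$ keep both endpoint singularities integrable; the Schur test then gives $\|\dot{K}_H\|_{L^2\to L^2}\le c_H\Gamma(H-\tfrac12)^{-1}\sqrt{AB}$. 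The paper avoids the kernel estimate entirely by invoking the Pipiras--Taqqu identity (display (4.5) of \cite{pipiras2001classes}), which evaluates $\|\dot{K}_H f\|^2_{L^2}$ exactly as a constant multiple of the long-range quadratic form $\iint_{[0,T]^2} f(s)f(t)|s-t|^{2H-2}\,ds\,dt$, and then bounds that form by $\|f\|^2_{L^2}$ via the continuous inclusion $L^2\subset|\mathfrak{H}|$ from \eqref{Taqquembed}. Both routes are valid; yours is more self-contained and elementary, while the paper's is shorter at the price of leaning on the fBm-specific reproducing-kernel machinery. Your alternative suggestion (going through the isomorphism property of $K_H$ and the formula \eqref{Kinverse} for $K_H^{-1}$) is less sharp as stated, since it merely re-expresses the question rather than supplying the estimate, but the Schur-test route is complete.
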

\begin{proof}
	The statement follows essentially from the continuity of the last inclusion in \eqref{Taqquembed}. In particular, for each $\epsilon>0$, there exists $\tilde{u}^\epsilon$ such that $u^\epsilon=K_H\tilde{u}^\epsilon$ and uniform boundedness in the topology of $\h_H$ is, by definition, equivalent to uniform boundedness of the family $\{\tilde{u}^\epsilon;\epsilon>0\}$ in $L^2$. From the second inclusion in \eqref{Taqquembed} it follows that 
	\begin{equation*}
	\|\tilde{u}^\epsilon\|_{|\mathfrak{H}|}\leq C \|\tilde{u}^\epsilon\|_{L^2}\leq  C \sup_\epsilon\|\tilde{u}^\epsilon\|_{L^2}<\infty,
	\end{equation*}
	with probability $1$ and for some constant $C>0$ independent of $\epsilon$. As shown in display (4.5) of \cite{pipiras2001classes},  we have 
	\begin{equation*}
	\begin{aligned}
	\|\dot{u}^\epsilon\|^2_{L^2}=\|\dot{K}_H\tilde{u}^\epsilon\|^2_{L^2}&= \|u^\epsilon\|^2_{\mathfrak{H}}\\&=\int_{0}^{T}t^{2H-1}I^{H-\frac{1}{2}}_{0^+}\big(\big[s^{\frac{1}{2}-H}\tilde{u}^\epsilon\big](t)\big)^2dt\\&=\frac{B(H-\frac{1}{2},2-2H)}{\Gamma(H-\frac{1}{2})^2}\int_{0}^{T}\int_{0}^{T}\tilde{u}^\epsilon(s)\tilde{u}^\epsilon(t)|s-t|^{2H-2}dsdt\leq C\|\tilde{u}^\epsilon\|_{|\mathfrak{H}|},
	\end{aligned}
	\end{equation*}
	where $B$ denotes the beta function. In view of the last two displays, the proof is complete.	\end{proof}

\section*{Acknowledgements}

The authors wish to thank the two anonymous referees whose comments and suggestions significantly improved the presentation of the results and overall quality of the present work. In particular, the authors express their gratitude to an anonymous referee for encouraging them to expand Remark \ref{Hlimrem} and parts of Section \ref{concsec}, bringing references \cite{breuer1983central, neufcourt2016third,  nourdin2012normal} to their attention and for emphasizing the connection between memory properties of the fBm and the discontinuity of the LDP rate function in Proposition \ref{Hlimprop} and Remark \ref{Hlimrem}.

IG acknowledges financial support from the EPSRC grant EP/T032146/1.

\end{document}